%% file: BBandBBS.tex
\documentclass{GJA}

\usepackage[T1]{fontenc}
\usepackage{lmodern}
\usepackage{pdfpages}
\usepackage{academicons}

\usepackage[dvipsnames,HTML]{xcolor} 
\definecolor{orcidlogocol}{HTML}{A6CE39}
\definecolor{darkred}{cmyk}{0.0, 0.7, 1.0, 0.0}
\definecolor{darkblue}{rgb}{0.0, 0.0, 0.55}

\usepackage{listings}
\usepackage{enumitem}

\lstdefinestyle{cocoaStyle}{
    basicstyle=\scriptsize\ttfamily\color{black},
    commentstyle=\color{red},
    stringstyle=\color{red},
    morecomment=[l]{--},
    moredelim=[is][\color{red}]{||}{||},
    breaklines=true,
    columns=fullflexible,
    xleftmargin=2em, 
}

\usepackage{mathtools} 
\usepackage{thmtools}
\usepackage{pictexwd}

\declaretheoremstyle[
  headfont=\bfseries,   
  notefont=\bfseries,   
  bodyfont=\normalfont, 
  headpunct={.}
]{boldremark}

\declaretheorem[style=boldremark,sibling=theorem,name=Remark]{myremark}

\newenvironment{myenumerate}
{
  \begin{enumerate}[
    label=(\arabic*),
    ref=(\arabic*),
    topsep=0pt,        
    partopsep=0pt,     
    parsep=0pt,        
    itemsep=2pt,       
    before=\vspace{-2pt} 
  ]
}
{
  \end{enumerate}
}

\newcommand{\propTitle}[1]{%
  \begin{proposition}\textbf{\textup{(#1)}}%
  \label{#1} 
  \leavevmode \\ \indent \hspace{-\parindent}%
}

\allowdisplaybreaks
\newcommand{\OO}{{\mathcal{O}}}
\newcommand{\X}{{\mathbb{X}}}
\renewcommand{\AA}{{\mathbb{A}}}
\newcommand{\NN}{{\mathbb{N}}}
\newcommand{\ZZ}{{\mathbb{Z}}}
\newcommand{\QQ}{{\mathbb{Q}}}
\newcommand{\TT}{{\mathbb{T}}}

\newcommand{\M}{{\mathfrak{M}}}

\newcommand{\m}{{\mathfrak{m}}}

\def\ind{\mathop{\rm ind}\nolimits}
\def\LI{\mathop{\rm LI}\nolimits}
\def\indO{\ind_{\OO}}
\def\bbb#1{{\mathbb{#1}}}
\def\Cal#1{{\mathcal{#1}}} 

\let\epsilon=\varepsilon

\def\phi{{\varphi}}
\let\Psi=\varPsi
\let\Phi=\varPhi
\let\theta=\vartheta
\let\rho=\varrho

\def\LT{\mathop{\rm LT}\nolimits}

\def\NF{\mathop{\rm NF}\nolimits}
\def\NR{\mathop{\rm NR}\nolimits}

\def\ND{\mathop{\rm ND}\nolimits}
\def\AR{\mathop{\rm AR}\nolimits}
\def\AS{\mathop{\rm AS}\nolimits}
\def\ARP{\mathop{\rm ARP}\nolimits}
\def\ASP{\mathop{\rm ASP}\nolimits}
\def\GL{\mathop{\rm GL}\nolimits}

\def\Mat{\mathop{\rm Mat}\nolimits}
\def\Hom{\mathop{\rm Hom}\nolimits}

\def\Supp{\mathop{\rm Supp}\nolimits}
\def\Spec{\mathop{\rm Spec}\nolimits}
\def\Hilb{\mathop{\rm Hilb}\nolimits}
\def\Cot{\mathop{\rm Cot}\nolimits}

\def\GFan{\mathop{\rm GFan}\nolimits}
\def\LTGFan{\mathop{\rm LTGFan}\nolimits}

\def\Syz{\mathop{\rm Syz}\nolimits}
\def\Ker{\mathop{\rm Ker}\nolimits}

\def\rk{\mathop{\rm rk}\nolimits}
\def\wt{\mathop{\rm wt}\nolimits}

\def\ord{{\mathop{\rm ord}\nolimits}}

\def\lcm{\mathop{\rm lcm}\nolimits}
\def\cvec{{\mathop{\rm cvec}\nolimits}}
\def\cmat{{\mathop{\rm cmat}\nolimits}}

\def\Ann{\mathop{\rm Ann}\nolimits}
\def\LGor{\mathop{\rm LocGor}\nolimits}
\def\NonLGor{\mathop{\rm NonLocGor}\nolimits}

\newcommand{\Lin}{\mathop{\rm Lin}\nolimits}
\newcommand{\edim}{\mathop{\rm edim}\nolimits}
\newcommand{\sepdim}{\mathop{\rm sepdim}\nolimits}
\newcommand{\BO}{\mathbb{B}_{\OO}}
\newcommand{\BOhom}{\mathbb{B}_{\OO}^{\rm hom}}
\newcommand{\BOdf}{\mathbb{B}_{\OO}^{\rm df}}

\newcommand{\LBO}{{\rm L}\BO}
\newcommand{\OOrim}{\OO^{\rm rim}}
\newcommand{\OOint}{\OO^{\rm int}}
\newcommand{\Cint}{C^{\rm int}}
\newcommand{\Crim}{C^{\rm rim}}
\newcommand{\Cexp}{C^{\rm exp}}

\newcommand{\df}{^{\rm df}}

\newcommand{\Cnull}{C_0}

\def\m{{\mathfrak{m}}}

\def\M{\mathfrak{M}}

\def\q{{\mathfrak{q}}}

\def\F{{\mathcal{F}}}

\def\Fbar{{\mathcal{F}}}

\def\longmono{\lhook\joinrel\longrightarrow}

\def\hom{^{\rm hom}}
\def\tr{^{\textsf{T}}}
\def\tfrac #1#2{{\textstyle\frac{#1}{#2}}}
\def\tsum_#1^#2{{\textstyle\sum\limits_{#1}^{#2}}}
\def\tbinom #1#2{{\textstyle\binom{#1}{#2}}}

\def\To{\longrightarrow}

\def\cocoa{\mbox{\rm C\kern-.13em o\kern-.07 em C\kern-.13em o\kern-.15em A}}
\def\Apcocoa{\mbox{\rm A\kern-0.13em p\kern -0.07em C\kern-.13em o\kern-.07 
em C\kern-.13em o\kern-.15em A}}

\usepackage{hyperref}
\hypersetup{
    linkcolor=black,   
    citecolor=red,
    filecolor=black,
    urlcolor=blue,
}


\newcommand{\cocoasection}[1]{\section[\small #1]{#1}}

\begin{document}

\title[BB and BBS]{Border Bases and Border Basis Schemes}

\author[L. Robbiano]{Lorenzo Robbiano\textsuperscript{1}}


\address{$^{1}$Dipartimento di Matematica, Universit\`a di Genova, 
Via Dodecaneso 35, 16146, Genova, Italy}
\email{lorobbiano@gmail.com}


\received{*****}
\revised{*****}
\accepted{*****}
\published{*****}
\communicated{******}
\msc{\raggedright 14Q20,  13P10, 14C05, 14E25.}
\keywords{ border bases, border basis schemes, re-embedding, affine cell, fibers, MaxDeg, locally Gorenstein}
\doi{*********************}
\dedicatory{}
\na{a1}

\abstract{
This survey invites the readers on a journey spanning more than twenty years of 
research through the landscape of border basis schemes. 
During most of this period, I had the pleasure of working with Martin Kreuzer, 
and more recently with L\^e Ng\d{o}c Long. Along this journey, one encounters border bases, 
which generalize Gr\"obner bases for zero-dimensional ideals 
and are characterized by the remarkable property that their associated 
multiplication matrices commute pairwise.
This property, in turn, provides a natural foundation for defining border basis schemes (BBS).
These are beautiful schemes, elegantly defined by simple quadratic equations.
However, all that glitters is not gold, and the number of indeterminates in their 
coordinate rings can be huge. This necessitates a suitable re-embedding into 
a polynomial ring with fewer indeterminates. 
This task is accomplished in a more general setting, and the reward is reaped in the BBS scenario, 
where the notions of cotangent equivalence and exposed indeterminates
play a fundamental role, for instance, in showing that planar
Box BBS are affine cells. Then, the journey takes another detour. 
The center stage is taken by positive $P_0$-algebras and the 
unimodular matrix problem, which allows
us to prove that  regular algebras of this kind are free, i.e., isomorphic to a polynomial ring.
Finally, we turn our attention to special BBS and interesting subschemes of BBS.
Are there no more open problems? Fortunately, many remain, and 
a selection of these challenges marks not a final destination, but a new horizon, 
suggesting that this journey may continue in the near future.
\begin{center}
\small\it
\begin{tabular}{r @{\hskip 1cm} l}
&on the border of the soul \\[1pt]
&bases of unseen photographs, \\[1pt]
&schemes of ancient thoughts, \\[1pt]
&slowly return \\[1pt]
&into poems and theorems\\[6pt]
\multicolumn{2}{c}{\rm L.~Robbiano, 2026}\\
\end{tabular}
\end{center}
}

\maketitle

\makeatletter
\newcommand{\ignoresectionintoc}[1]{%
  \let\oldcontentsline\contentsline
  \renewcommand{\contentsline}[4]{%
    \ifnum\value{tocignore}<#1
      \addtocounter{tocignore}{1}%
    \else
      \oldcontentsline{##1}{##2}{##3}{##4}%
    \fi
  }
}
\newcounter{tocignore}
\makeatother

\ignoresectionintoc{2}

{
\setlength{\parskip}{0pt}
\tableofcontents
}
\newpage

\section*{Introduction}
\addcontentsline{toc}{section}{Introduction}
\label{sec-Introduction}

\begin{flushright}
\small\it 
I am not old,\\
in fact, I have a good memory,\\
and remember very well\\
when I was
\end{flushright} 

It was the end of February 2026, when Mehrdad Nasernejad, founder and Editor-in-Chief 
of the Galois Journal of Algebra, invited me to write a survey for that journal, and my first 
reaction was one of genuine surprise. Having been born in Genoa, 
Italy, in~1944\footnote{  \url{https://sites.google.com/view/cocoaschool2025}}, 
my very first step was to convince myself that I was not old.

After completing this work, 
reality suddenly brought me back to earth.  The task was to squeeze about twenty years of research 
into a not too large survey. Having written a few books myself, 
I soon realized that writing this article would be a journey punctuated by worries and insights.

\smallskip
\textbf{Remarks on Notation and Terminology}   

Should I follow the notation used in the cited papers and books?
The obvious answer is yes. However, this road is filled with obstacles.

\begin{myenumerate}

\item
For a marked polynomial, some papers (see, for instance,~\cite{KLR1}) 
use the convention of writing $(t,  f)$ instead of $(f, t)$. It was quite painful 
to make the decision;  in the end the \textit{winner} was $(f, t)$.

\item
The symbol $P$ for polynomial rings  is not commonly used. 
However, does it not seem natural to use $P$ 
for~\emph{$P\!$olynomials}?

\item For the transpose of a matrix $A$ what is a better choice, 
$A^{\rm tr}$ or $A^{\mathsf{T}}$.
After  \textit{considerable deliberation}, the choice was  $A^{\mathsf{T}}$.

\item
Throughout the paper,
we let~$\mathbb T^n=\{x_1^{\alpha_1}\cdots x_n^{\alpha_n}\mid \alpha_i\ge0 \}$
be the monoid of \textit{terms} in~$P$.
Even more contentious is the choice of the word \emph{term}. What is the difference between
a \emph{term} and a \emph{monomial}? In this case, the two opposing views seem to be evenly matched. For me, $5x^2y^3$ is a monomial, whereas $x^2y^3$ is a term, i.e., a monomial with coefficient~1. According to the opposing camp, it is exactly the other way around. In any case, 
sticking with my definition was not too difficult.

\item
And what about the name \emph{order ideal}? It was chosen because order ideals are the analogues of ideals in the theory of partially ordered sets. However, these objects appear in several branches of mathematics and, naturally, under different names.  A \textit{partial} list of such names is contained
in the introduction of~\cite{KR2}. For a while,  I was tempted to steal it and propose it here.
Although \textit{stealing from oneself} is not generally regarded as a serious sin, the final 
decision was not to do this.

\item What is a re-embedding? Re-embedding of what?
This survey adopts an algebraic/computational point of view.
Consequently, $(P,I)$ and $(Q,J)$ are explicitly given polynomial rings
over a field~$K$, together with their ideals $I$ and~$J$.
Suppose that an algebra homomorphism
$\phi \colon P \To Q$ satisfies $\phi(I)=J$
and induces an isomorphism
$\Phi \colon P/I \To Q/J$.
Strictly speaking, it is the homomorphism
$\phi \colon P \To Q$ which re-embeds the ideal~$I$
into the polynomial ring~$Q$.
However, since the induced isomorphism
$\Phi \colon P/I \To Q/J$ completely determines this re-embedding,
we shall also refer to~$\Phi$ as a
\textit{re-embedding of the ideal~$I$}.
After some deliberation, I decided to adopt this terminology
(see Definition~\ref{def-edim}).

\item Throughout the paper we let~$K$ be a field, let $P=K[x_1,\dots,x_n]$ be standard graded, 
and let~$\mathbb T^n=\{x_1^{\alpha_1}\cdots x_n^{\alpha_n} \mid \alpha_i\ge 0\}$ 
be the monoid of terms  in~$P$. At least this choice required no meditation.

\item 
From now until the end of the introduction, I have decided to use the 
abbreviation BBS for both ``border basis scheme'' and ``border basis schemes''.
\end{myenumerate}

\medskip
 \textbf{Remarks on the Content}.

Selecting the topics, arranging them into a coherent whole, and giving them a uniform
treatment turned into a sequence of worries and insights.
Perhaps I should not admit it, but in the end I am quite satisfied.

\begin{myenumerate}
\item   
For each statement (Theorem, Proposition, Corollary, or Remark), should I provide a proof, or simply refer the reader to the paper or book where it appears? Deciding how to strike the right balance was one of the most difficult aspects of writing this survey.
In most cases, I chose either to cite a book and present the proof in my own words, or, whenever this helped preserve the flow of the exposition, to refer directly to the original research papers.
Needless to say, none of these decisions was made without some hesitation.

\item
From the computational point of view, the algorithms and constructions presented in this work are defined over an arbitrary field $K$. In practical applications, $K$ is typically assumed to be a field supporting exact arithmetic, such as the field of rational numbers $\mathbb{Q}$ or a finite field $\mathbb{F}_p$.
To place the border basis schemes $\BO$ into a rigorous geometric framework, however, we frequently refer to notions such as smoothness and irreducibility. 
Throughout this work, these geometric properties are understood 
over the algebraic closure of~$K$, assuming that ${\rm char}(K)=0$.

This is the standard setting for the fundamental results on Hilbert schemes, including Fogarty's theorem~\cite{Fog} for surfaces and the more recent work of Jelisiejew et al.~\cite{DJNT} establishing the irreducibility of $\Hilb^{11}(\mathbb{A}^3)$.

\item  In a fortune cookie, I once found the following sentence:
\textit{The best gift you can bestow on others is a good example.}
Guided by this piece of practical wisdom from the very beginning of this project,
I was determined to include several examples illustrating the underlying theory.
One question, however, kept recurring: should some of them also be worked out explicitly
using a computer algebra system? In the end, I chose \cocoa\ (see~\cite{CoCoA}),
my favorite.
\end{myenumerate}

\bigskip\bigskip\bigskip
\centerline{\textbf{Content}}

\begin{center}
\begin{minipage}{0.65\textwidth} 
Having confessed my emotional journey through the challenge of writing this survey,
it is time to outline its actual content. As every mathematician knows (or should know),
every mathematical text should contain at least a proof and a joke, and they should not be the same.
There are many proofs in this paper, so...
\end{minipage}
\hskip .8cm
\begin{minipage}{0.23\textwidth} 
\small\it
\vskip 1cm
why work\\
with border bases?\\
because at last\\
the matrices commute
\end{minipage}
\end{center} 

\goodbreak
However, this is not the only reason why one should work with border
bases and BBS. In~\cite{Gro}, A.~Grothendieck introduced the Hilbert
scheme $\Hilb^\mu(\AA^n_K)$, which parametrizes all
zero-dimensional subschemes of length~$\mu$ of the affine
space~$\mathbb{A}^n_K$, as a quotient of a suitable Grassmannian
variety. If this construction is translated into a computable
presentation, its defining relations give rise to a rather large and
cumbersome system of equations. There is a close relationship between
Hilbert schemes and BBS, each providing a different perspective on the
other.
Indeed, BBS corresponding to order ideals of length~$\mu$ form an
open covering of the Hilbert scheme
$\Hilb^\mu(\mathbb{A}^n_K)$, and they admit simple presentations by
means of easily describable quadratic equations
(see~\cite{MS}, Section~18.4). An explicit description of how to glue
these open sets is given in~\cite[Section~2]{KR4}.

To start reading this survey, the readers are expected to have some basic knowledge
of commutative algebra and computer algebra.
For instance, they should be familiar with \textbf{Gr\"obner bases}, for which we
refer to~\cite{KR1, KR2}. Other excellent sources are~\cite{AL, BWK, Bu, CLOS, Froeb}.
Moreover, the readers should be familiar with the notion of an
\textbf{affine $K$-algebra}, i.e., an algebra of the form $P/I$, where
$P=K[x_1,\dots,x_n]$ is a polynomial ring over a field~$K$ and $I$ is a proper ideal of~$P$.

Clearly, to understand BBS, we first need to understand border bases.
They originated in the final years of the last millennium with the
seminal works~\cite{MMM, MS, Mou, Ste}. These were followed by numerous further
contributions, including~\cite{KK1, KK2, KKR, Sau}.

\textbf{Section~\ref{sec-Border Bases} (Border Bases)} 
provides an introduction to border bases.
The first two subsections contain the necessary background on border bases and
their relationship with Gr\"obner bases. Let~$K$ be a field, let
$P=K[x_1,\dots,x_n]$, and let
$I\subseteq P$ be a zero-dimensional ideal.
The basic idea of border basis theory is to describe the
zero-dimensional algebra $P/I$
using an order ideal of terms~$\OO$ (Definition~\ref{def-border!closure}) 
whose residue classes
form a $K$-basis of~$P/I$, together with the multiplication tables
associated with this basis.
A number of statements are presented without proof for two reasons.
First, this section is intended as an introduction to the main ideas of
border basis schemes. Second, most of the proofs can be found in
\cite[Section~6.4]{KR2}. There is, however, one exception, namely
\textit{Subsection~\ref{subsec-Commuting Matrices}}, where the connection
between border bases and commuting multiplication matrices is explained in
detail, as it is the key ingredient in the construction of BBS.

We also mention that several generalizations of the notion of a border basis
have recently been proposed; see, for instance,~\cite{BeBo, RoSa, SSFN}.

\medskip
\textbf{Section~\ref{sec-Border Bases Schemes} (Border Basis Schemes)} is meant to 
introduce the reader to the notion of a~BBS. The theory of these schemes has been developed over the
last twenty years, notably in~\cite{Hai}, \cite{Hui1}, \cite{MS}, and
\cite{Rob}, followed by many other papers that will be mentioned and used in
this survey.

\begin{myenumerate}

\item[] \textit{Subsection~\ref{subsec-IntroToBBS}} starts with
Definition~\ref{def-BBS}, which sets the stage by introducing the polynomial
ring $K[C]$. It contains the ideal $I(\BO)$ defined by the commutator matrices
$\mathcal{M}_i \mathcal{M}_j-\mathcal{M}_j\mathcal{M}_i$. In this way, the
\textbf{border basis scheme} $\BO$ takes center stage together with its
coordinate ring $K[C]/I(\BO)$ and the \textbf{universal $\OO$-border basis family}
$U_\OO$. Then Definition~\ref{def-represent} shows how the closed points of
$\BO$ represent zero-dimensional affine algebras whose coordinate rings have
the residue classes of the elements of $\OO$ as a $K$-basis.
The final part of the subsection is devoted to an alternative description of
BBS, namely through the use of \textbf{Neighbor Generators}
(Definition~\ref{def-neighbors}).

\smallskip
\newpage
\item[] \textit{Subsection~\ref{subsec-First Properties of BBS}} is devoted to
the fundamental properties of BBS. In particular, Proposition~\ref{prop-principal}
shows the connection between Hilbert schemes and BBS. Then,
Proposition~\ref{prop-irreducible} describes the extent to which BBS are
irreducible and recalls that planar BBS are smooth and irreducible.
The notion of an \textbf{affine cell} is recalled (Definition~\ref{def-cell}).

\smallskip

\item[] \textit{Subsection~\ref{subsec-Arrow Gradings}} introduces an important 
grading on the polynomial ring $K[C]$, namely the \textbf{total arrow grading},
and Proposition~\ref{prop-arrowhomog} shows that $I(\BO)$ is homogeneous with
respect to this grading, denoted by $\deg_W$.
\end{myenumerate}

\medskip
\textbf{Section~\ref{sec-Re-embedding} (Re-embedding)} marks a shift in perspective.
We have seen that BBS are described by nice equations which are suitable for computer
computations. However, there is a drawback: the number of indeterminates tends to be huge,
which in most cases makes the actual computations infeasible. This naturally raises the
question of whether it is possible to re-embed the ideal $I(\BO)$ into a polynomial ring
with fewer indeterminates. Since this is an interesting question in its own right, we
temporarily leave BBS aside and concentrate on general affine algebras $P/I$.

\begin{myenumerate}
\item[] \textit{Subsection~\ref{subsec-Z-Separating Tuples and Term Orderings}} starts
with Definition~\ref{def-sepindets}, which introduces the notion of $Z$-separating and
\textbf{coherently $Z$-separating} tuples of polynomials in $I$. A first important
consequence is given by Proposition~\ref{prop-ElimViaSubst}, which shows how coherently
$Z$-separating tuples can be used to compute elimination simply by substitution.

\smallskip

\item[] \textit{Subsection~\ref{subsec-Cotangent Spaces and GFans}} recalls the notion
of \textbf{cotangent space}, and Proposition~\ref{prop-cotancomp} shows how to compute
it, since the cotangent space coincides with $\M_1/\Lin(I)$, and $\Lin(I)$ can be
computed using any set of generators of $I$. Then Definition~\ref{def-coeffmat}
introduces the \textbf{coefficient matrix} $\cmat(G)$ and its submatrix
$\cmat_Z(G)$. These matrices can be fruitfully used to prove in
Proposition~\ref{prop-sepZ} that, if $I$ is $Z$-separating, then~$Z$ is contained in
$\LT_\sigma(\Lin(I))$ for every elimination term ordering for $Z$. This observation
connects the problem of finding suitable tuples $Z$ with the computation of suitable
\textbf{Gr\"obner fans}.
Definition~\ref{def-GFan} provides this connection by introducing the notion of a
marked Gr\"obner basis
$\overline{G}=((g_1,\LT_\sigma(g_1)),\dots,(g_r,\LT_\sigma(g_r)))$,
where $G=(g_1,\dots, g_r)$ is the reduced $\sigma$-Gr\"obner basis of $I$ for a term
ordering $\sigma$. The set
$\{z\in X\mid \LT_\sigma(g_i)=z \text{ for some } i\in\{1,\dots,r\}\}$, denoted
by $\LI(\overline{G})$,
is called the \textbf{set of leading indeterminates} of $\overline{G}$. 
Remark~\ref{rem-GFanLin} explains how computing the Gr\"obner fan 
of $\text{Lin}(I)$ is a useful way to obtain suitable $Z$  for detecting 
coherently $Z$-separating tuples of polynomials in the ideal~$I$.

\smallskip
\item[] \textit{Subsection~\ref{subsec-Z-Separating Re-embeddings and 
Optimal Re-embeddings}} reaps the benefits of the preceding subsections. First,
Proposition~\ref{prop-isoZseparating} clearly explains that a coherently
$Z$-separating tuple of polynomials in~$I$ allows one to re-embed $I$ into a
polynomial ring with fewer indeterminates.

Then Definition~\ref{def-edim} introduces a precise notion of re-embedding of $I$,
defines the notion of an \textbf{optimal embedding} and the corresponding
\textbf{embedding dimension} (edim), together with the \textbf{separating embedding dimension} and the notions of \textbf{best and optimal separating embedding} (sepdim). 
It also introduces the notion of a \textbf{free $K$-algebra} and of 
a \textbf{separating free $K$-algebra}.
Finally, Theorem~\ref{thm-ineqsep} gives a nice condition under which
$\edim=\sepdim$, and Theorem~\ref{thm-checkopt} gives a condition under which a
$Z$-separating re-embedding is optimal, together with a stronger condition under which
$P/I$ is a free $K$-algebra.
\end{myenumerate}

\newpage
\textbf{Section~\ref{sec-Back to BBS} (Back to Border Basis Schemes)} 
returns to BBS and shows how the results
of the preceding section can be put to good use. There is good news! In the case
of BBS, the linear parts of the polynomials in~$I(\BO)$ are very easy to
describe: they are either a single indeterminate or the difference of two
indeterminates in $K[C]$. This leads to a classification of the indeterminates
in~$K[C]$.

\begin{myenumerate}
\item[] \textit{Subsection~\ref{subsec-cotequiv}} introduces the above
classification using the notion of \textbf{cotangent equivalence}, which
partitions the indeterminates of $K[C]$ into \textbf{trivial},
\textbf{basic}, and \textbf{proper} ones (Definition~\ref{def-cotequiv}). Recall from
Remarks~\ref{rem-GFanLin} and~\ref{rem-LinLT} that we are interested in sets
$Z$ of indeterminates contained in
$\LT_\sigma(\Lin(I(\BO)))$, where $\sigma$ ranges over all term orderings.
These sets are described explicitly in Theorem~\ref{thm-shapeofSsigma}. 

\smallskip
\item[] \textit{Subsection~\ref{subsec-IntRimExp}} classifies the
indeterminates of $K[C]$ as \textbf{rim}, \textbf{interior}, and
\textbf{exposed} (Definitions~\ref{def-rim-and-interior} and~\ref{def-exposedIndets}).
Then Proposition~\ref{prop-CharExposed} provides an important characterization
of $\Cexp$, the set of exposed indeterminates, which is used in the next
subsection.

\smallskip

\item[] \textit{Subsection~\ref{subsec-PlanarBoxBBS}} starts with
Proposition~\ref{prop-elimNonExp}, which shows that if
$Z=C\setminus\Cexp$, then the ideal $I(\BO)$ of a planar BBS is
coherently $Z$-separating. The subsection ends with
Proposition~\ref{prop-PlanarBox}, which proves that, for planar Box BBS, the
$Z$-separating re-embedding defined by $Z=C\setminus\Cexp$ is an isomorphism
between the coordinate ring of $\BO$ and $K[\Cexp]$. It follows that
planar Box BBS are affine cells.
\end{myenumerate}

\medskip
\textbf{Section~\ref{sec-Positive P0-algebras} (Positive $P_0$-algebras)} is devoted to the study of a
special class of affine algebras, called positive $P_0$-algebras,
which will later be used to study an important family of BBS.

\begin{myenumerate}

\item[] \textit{Subsection~\ref{subsec-Best and Optimal Re-embeddings
of Positive P0-algebras}} starts with the definition of a positive
$P_0$-algebra (Definition~\ref{def-posalg}), followed by
Proposition~\ref{prop-optimalgraded}, which shows that if $P/I$ is a
\textbf{positively graded} algebra and
$d=\dim_K(\Lin(I))$, then there exist a $d$-tuple of indeterminates and
a $d$-tuple of polynomials in $I$ which define an optimal separating
re-embedding of $I$.

However, a positive $P_0$-algebra, when viewed as a $K$-algebra is, in general, only
\textbf{non-negatively graded}. In this setting, it is nevertheless
possible to define the \textbf{$P_0$-linear part} of a polynomial and of
an ideal (Definition~\ref{def-linZ}). Then
Proposition~\ref{prop-indepOfGen} shows that
$\Lin_{P_0}(I)$ can also be computed from any set of homogeneous
generators of $I$. 
The subsection ends with Theorem~\ref{thm-existZsep}, which describes
a condition under which a $W$-homogeneous $Z$-separating tuple exists.

\smallskip
\item[] \textit{Subsection~\ref{subsec-UMP}} introduces a tool which
will play an important role in what follows, namely the
\textbf{Unimodular Matrix Problem (UMP)} (Definition~\ref{def-UMP}). 
Under suitable assumptions,
Theorem~\ref{thm-GoodIso} shows how UMP can be used to construct a
$P_0$-graded automorphism $\phi$ of $P$ such that
$\langle\phi(G)\rangle$ is $X_k$-separating, where
$G=(g_1,\dots, g_k)$ is a tuple of $P_0$-homogeneous polynomials where
is  less than or equal to the number of indeterminates of positive
degree in $P$.

\smallskip

\item[] \textit{Subsection~\ref{subsec-Fibers}} introduces the notion of
the \textbf{fibers} of a positive $P_0$-algebra
(Definition~\ref{def-fibers}). A key observation is that, if $P/I$ is a
positive $P_0$-algebra, there is a canonical injective homomorphism
$\phi\colon P_0\To P/I$. Since $P_0$ is a polynomial ring, one can
consider the generic and the special fibers of $\phi$. They are
positively graded algebras and therefore admit optimal separating
re-embeddings, as shown in Proposition~\ref{prop-optFibers}. The
subsection ends with two important theorems. First,
Theorem~\ref{thm-fibers} shows that, if certain local rings are regular,
then the fibers are free algebras. Second,
Theorem~\ref{thm-connected} proves that the spectrum of a positive
$P_0$-algebra is \textbf{connected} by exhibiting an explicit way to join any
pair of points.

\smallskip

\item[] \textit{Subsection~\ref{subsec-Freeness of Regular Positive
P0-Algebras}} culminates in one of the main results of this section.
Using some preliminary results
(Proposition~\ref{prop-standard} and
Lemma~\ref{lem-unitJ}),
Theorem~\ref{thm-Free} proves that every \textbf{regular positive
$P_0$-algebra} $P/I$ is a free algebra.

\end{myenumerate}

\medskip
\textbf{Section~\ref{sec-SpecialBBS} (Special Border Basis Schemes)} 
studies several classes of BBS.

\begin{myenumerate}
\item[] \textit{Subsection~\ref{subsec-HomogeneousBBS}} introduces
\textbf{homogeneous $\OO$-border basis schemes} $\BOhom$
(Definition~\ref{def-homgBBS}) and
\textbf{MaxDeg border basis schemes}
(Definition~\ref{def-maxdegBBS}), followed by an important result:
Theorem~\ref{thm-homcommute} shows that, if $\OO$ is MaxDeg, the
generic homogeneous multiplication matrices are \textbf{pairwise commuting},
hence $\BOhom$ is an affine space, as shown in Proposition~\ref{prop-homcommute}.
Moreover, this proposition proves that, under the same
assumption, $I(\BO)\cap K[\Cnull]=\langle0\rangle$.
These two facts imply that the coordinate ring of a MaxDeg BBS is a \textbf{positive
$K[\Cnull]$-algebra}. Consequently, if~$K$ is a perfect field, every planar
MaxDeg BBS defined over $K$ is an affine cell of dimension~$2\mu$
(Theorem~\ref{thm-Free}).

The subsection ends with Example~\ref{ex-Lshape}, in which a specific planar
MaxDeg BBS is examined in detail. In agreement with
Theorem~\ref{thm-Free}, it turns out to be an affine cell. However, the
expected isomorphism between its coordinate ring and a polynomial ring is far
from being trivial, since the images of some indeterminates in $\QQ[C]$ are
polynomials with very large support (see Example~\ref{ex-Lshape}).

\smallskip

\item[] \textit{Subsection~\ref{subsec-Simplicial BBS}}\,  is devoted to
\textbf{simplicial BBS}. They are introduced in
Definition~\ref{def-simplicial}, and
Proposition~\ref{prop-postotgrad} characterizes them as the BBS whose
coordinate rings are positively graded with respect to the total arrow
grading. Several properties are established in
Lemma~\ref{lem-formulas},
Proposition~\ref{prop-simplicialOptimal}, and
Proposition~\ref{prop-optimalCint}. In particular, the positive grading
implies that simplicial BBS admit optimal separating re-embeddings.
The subsection concludes with
Proposition~\ref{prop-AffineOrSing}, which shows that planar simplicial BBS
are affine cells, whereas non-planar simplicial BBS have singularities.
\end{myenumerate}

\textbf{Section~\ref{sec-SubBBS} (Subschemes of the Border Basis Schemes)} 
studies subschemes of BBS.

\begin{myenumerate}
\item[] \textit{Subsection~\ref{subsec-BOdf}} presents an important
subscheme of a border basis scheme, namely the
\textbf{degree-filtered border basis scheme} $\BOdf$.
After introducing the notion of a \textbf{degree filtration}
(Definition~\ref{def-degfiltration}) and that of a
\textbf{degree-filtered $\OO$-border basis}
(Definition~\ref{def-degfiltrbasis}),
Definition~\ref{def-degfiltrScheme} formally constructs the scheme $\BOdf$.
It is determined by the ideal
$I(\BO)+I_\OO^{\mathrm{df}}$, where $I_\OO^{\mathrm{df}}$ is generated by the
indeterminates $c_{ij}$ such that
$\deg(t_i)>\deg(b_j)$. Consequently, the coordinate ring of $\BOdf$ is
non-negatively graded, although in general it is not a positive
$P_0$-algebra. Finally,
Proposition~\ref{prop-CharMaxdeg} shows that
$\BO=\BOdf$ if and only if $\BO$ is MaxDeg.

\smallskip

\item[] \textit{Subsection~\ref{subsec-LocGor}} deals with the computation
of special loci inside a BBS, with particular emphasis on the
\textbf{locally Gorenstein locus}. One of the main ingredients is the
\textbf{Cyclicity Test}, explained in the book~\cite{KR25}. Its relevance in
the present context is established in
Theorem~\ref{thm-CharGor}, which shows that a zero-dimensional affine
$K$-algebra $R$ is locally Gorenstein if and only if its \textbf{canonical module} is
cyclic. As a consequence, deciding whether $R$ is locally Gorenstein reduces
to computing the determinant of a suitable matrix
(Corollary~\ref{cor-cycBB}). The main application of this criterion is given
in Theorem~\ref{thm-LocGorLocus}, which provides an explicit description of
the locally Gorenstein locus inside a given BBS. The subsection concludes
with guidance on how to compute other loci.
\end{myenumerate}

\newpage
\textbf{Section~\ref{sec-Questions and Problems} (Questions and Problems)} presents
\textbf{questions and open problems} related to the topics covered in this survey.

\medskip
\textbf{Section~\ref{sec-CoCoA Examples} (\cocoa-Examples)} is the final section. It collects
several examples used throughout the survey, presented as
\cocoa-examples, that is, in the form of input/output produced by the
computer algebra system \cocoa\ (see~\cite{CoCoA}).

\bigskip
Now it is time to express deep gratitude to my collaborators.
First of all, I wish to acknowledge that without my longstanding collaboration 
with Martin Kreuzer, this survey would never have come into being.
Indeed, most of its content is based on a series of joint papers and books
written with Martin and, more recently, with L\^e Ng\d{o}c Long, which form
the backbone of this work and are cited throughout the text.

\smallskip 
It is customary to include formal thanks to the anonymous reviewers. 
In this case, however, I am especially pleased  to  express my sincere appreciation for 
the extremely careful reading of the paper and the truly constructive feedback they provided.

\bigskip\bigskip
Let me conclude this introduction with a question that is always lurking in
the background. Can BBS be used to produce models for \textit{real-world
problems}? The chances may be small, as the following short poem suggests,
but\ldots who knows?

\begin{center}
\small\it
\begin{tabular}{r @{\hskip 1cm} l}
{\bf modelli} & {\bf models} \\[4pt]
perpendicolare sublime & sublime perpendicular \\[1pt]
trafigge volute di nebbia, & pierces swirls of mist, \\[1pt]
rivela teoremi sepolti, & revealing buried theorems, \\[1pt]
arcane equazioni perdute, & lost arcane equations, \\[1pt]
ignoti, perfetti modelli & unknown, perfect models \\[6pt]
\multicolumn{2}{c}{\rm L.~Robbiano, 2023}\\[4pt]
\multicolumn{2}{c}{\rm English version, 2026}
\end{tabular}
\end{center}

\newpage   
\section{Border Bases}
\label{sec-Border Bases}

\begin{flushright}
\small\it 
on one side the mountains,\\
on the other side the sea,\\
on the \textbf{border} the city
\end{flushright} 

\index{border!basis}%
%
%

The interest in this subject is due to several reasons.

\begin{enumerate}[label=\textbf{\arabic*.}, ref=\arabic*]

\item \label{reason1} \textbf{Border bases generalize Gr\"obner bases}. 
If $\OO$ is taken to be the complement of the leading term ideal of~$I$
with respect to a term ordering~$\sigma$, the corresponding border basis
contains the reduced $\sigma$-Gr\"obner basis of~$I$ (Proposition~\ref{prop-bordandGr}).

\item \label{reason2} \textbf{Border bases are, in general, more versatile 
than reduced Gr\"obner bases}. 
For instance, if an ideal~$I$ is invariant under the action of a group of
symmetries, it is often possible to find a border basis that preserves these
symmetries. In contrast, a Gr\"obner basis is constrained by a term 
ordering, which frequently breaks the natural symmetry of the problem 
(Examples~\ref{ex-bb-no-gb} and~\ref{ex-bb-no-gb continued}).

\item \label{reason3} \textbf{Border bases are better suited for ``real world'' computations}.
They are more stable with respect to small variations in the coefficients 
of the polynomials generating~$I$. This property facilitates symbolic-numeric 
computations with polynomial systems involving approximate coefficients 
(see, for instance,~\cite{AFT, HKPP, Sau}).
\end{enumerate}

\medskip
\subsection{Introduction to Border Bases}
\label{subsec-IntroBB}


We begin by introducing some fundamental objects.
Given an order ideal, we define a sequence of associated  order ideals as follows.

\begin{definition}\label{def-border!closure}
Let $\TT^n$ be  the monoid of terms in $n$ indeterminates.

\begin{myenumerate}
\item  A set of terms $\OO=\{t_1,\dots,t_\mu\}$
is called an {\bf order ideal} in~$\mathbb{T}^n$ if
$t\in\OO$ implies that every term $t'\in\mathbb{T}^n$
which divides~$t$ is also contained in~$\OO$. 

\item The \textbf{border} of~$\OO$ is the set
$\partial\OO=\TT_1^n\cdot \OO\setminus \OO
=(x_1\OO\cup\dots\cup x_n\OO)\setminus\OO$.
The \textbf{first border closure} of~$\OO$ is the set
$\overline{\partial \OO}=\OO\cup\partial \OO$.
\index{border!of an order ideal}%

\item For every $k\ge 1$, we inductively define
the \textbf{$(k+1)^{\rm st}$ border} of~$\OO$ by
$\partial^{k+1}\OO=\partial(\overline{\partial^{k}\OO})$
and the {\bf $(k+1)^{\rm st}$ border closure} of~$\OO$ by the rule
$\overline{\partial^{k+1}\OO}=\overline{\partial^{k}\OO}\cup
\partial^{k+1}\OO$.
For convenience, we let $\partial^0\OO
=\overline{\partial^0\OO}=\OO$.
\index{border!closure}%
\index{higher borders}%

\end{myenumerate}
\end{definition}

Notice that the $k^{\rm th}$ border closure of an order ideal
is again an order ideal for every $k\ge 0$.

\begin{example}\label{ex-borders}
Let $\OO=\{1, x, y, x^2, xy, y^2, x^3, x^2y, y^3, x^4, x^3y\}\subset\TT^2$. Then 
$\OO$ is an order ideal. 
The order ideal~$\OO$
together with its first two borders is visualized below.

\medskip
\noindent
\hspace{1.5cm} 
\begin{minipage}[c]{0.40\textwidth} 
\leavevmode
\beginpicture
\setcoordinatesystem units <0.42cm,0.42cm> 
\setplotarea x from 0 to 7, y from 0 to 5.5
\axis left /
\axis bottom /

\arrow <2mm> [.2,.67] from  6.5 0  to 7 0
\arrow <2mm> [.2,.67] from  0 5  to 0 5.5

\put {$\scriptstyle x$} [lt] <0.5mm,0.8mm> at 7.1 0
\put {$\scriptstyle y$} [rb] <1.7mm,0.7mm> at 0 5.6

\multiput {$\bullet$} at 0 0  1 0  2 0  3 0  4 0  0 1  1 1  2 1  3 1  0 2  0 3 /

\multiput {$\circ$} at 5 0  4 1  3 2  2 2  1 2  1 3  0 4 /

\multiput {$\scriptstyle\times$} at 6 0  5 1  4 2  3 3  2 3  1 4  0 5 /
\endpicture
\end{minipage}
\hfill
\begin{minipage}[c]{0.45\textwidth} 
\small
\begin{itemize}
    \item[$\bullet$] terms of $\OO$
    \item[$\circ$] terms of $\partial\OO$
    \item[$\times$] terms of $\partial^2\OO$
\end{itemize}
\end{minipage}
\hspace{0.5cm} 
\end{example}

The following lemma gathers some properties of borders and border closures.

\begin{lemma}\label{lem-BorderProps}
Let $\OO\subseteq \TT^n$ be an order ideal.

\begin{myenumerate}
\item For every $k\ge 0$, we have a disjoint union
$\overline{\partial^k\OO}=\bigsqcup_{i=0}^k\partial^i\OO$.
Consequently, we have a disjoint union $\TT^n=\bigsqcup_{i=0}^\infty
\partial^i\OO$.

\item For every $k\ge 1$, we have
$\partial^k\OO=\TT_k^n\cdot \OO\setminus 
\TT_{<k}^n\cdot \OO$.

\item A term $t\in\TT^n$ is divisible by a term in~$\partial\OO$
if and only if $t\in\TT^n\setminus \OO$.

\end{myenumerate}
\end{lemma}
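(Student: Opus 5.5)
The plan is to prove (2) first, by induction on $k$, and then derive (1) and (3) from it. Throughout, $\TT_k^n$ denotes the terms of degree $k$ and $\TT_{<k}^n\cdot\OO$ the terms $t'\cdot t$ with $\deg(t')<k$, $t\in\OO$. Since $\OO$ is an order ideal containing $1$, the set $\TT^n_{\le k}\cdot\OO$ equals $\TT^n_{<k+1}\cdot\OO$, and the sets $\TT^n_{<k}\cdot\OO$ form an increasing chain.

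\textbf{Part (2).} The first step is to show, by induction on $k\ge 0$, that
\[
\overline{\partial^k\OO}=\TT^n_{\le k}\cdot\OO .
\]
The case $k=0$ is the statement $\OO=\TT_0^n\cdot\OO$. For the inductive step we have
\[
\overline{\partial^{k+1}\OO}=\overline{\partial^k\OO}\cup\TT_1^n\cdot\overline{\partial^k\OO}=\TT^n_{\le k}\OO\cup\TT^n_1\TT^n_{\le k}\OO=\TT^n_{\le k+1}\OO ,
\]
using $\TT_1^n\TT^n_{\le k}=\TT^n_{1..k+1}$. It follows that
\[
\partial^{k}\OO=\overline{\partial^k\OO}\setminus\overline{\partial^{k-1}\OO}=\TT^n_{\le k}\OO\setminus\TT^n_{<k}\OO .
\]
To reach the stated form we must replace $\TT^n_{\le k}\OO$ by $\TT^n_k\OO$. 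Take $t=t'u$ with $\deg t'=j\le k$, $u\in\OO$, and $t\notin\TT^n_{<k}\OO$. Then $j=k$ already. This gives (2).

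This is the step I expect to need the most care: keeping straight the identity $\partial^{k}\OO=\overline{\partial^{k}\OO}\setminus\overline{\partial^{k-1}\OO}$ with the inductive definition. It holds because $\partial(\overline{\partial^{k-1}\OO})$ is by definition disjoint from $\overline{\partial^{k-1}\OO}$.

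\textbf{Part (1).} Disjointness is immediate from the definition: $\partial^{k}\OO$ is disjoint from $\overline{\partial^{k-1}\OO}$, and by induction $\overline{\partial^{k-1}\OO}$ contains all $\partial^i\OO$ with $i<k$. The finite union $\overline{\partial^k\OO}=\bigcup_{i\le k}\partial^i\OO$ then follows from the recursive definition. For $\TT^n=\bigsqcup_i\partial^i\OO$, I assume $\OO\neq\emptyset$, so $1\in\OO$. Any $t$ of degree $d$ then lies in $\TT_d^n\cdot\{1\}\subseteq\overline{\partial^d\OO}$.

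\textbf{Part (3).} Let $t$ be divisible by some $b\in\partial\OO$. If $t$ were in $\OO$, then $b\in\OO$ by the order ideal property, which is a contradiction. Conversely, let $t\notin\OO$. Then $t\in\partial^k\OO$ for some $k\ge1$ by (1), so by (2) $t=t'u$ with $\deg t'=k$ and $u\in\OO$. Now walk along $u\mid x_{i_1}u\mid\cdots\mid t$, multiplying $u$ by the indeterminates of $t'$ one at a time. Let $v$ be the first term in this chain that is not in $\OO$. Its predecessor lies in $\OO$, so $v\in\partial\OO$, and $v$ divides $t$.
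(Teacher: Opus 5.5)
Your proof is correct and complete. The paper gives no argument of its own for this lemma, only the reference \cite[Proposition~6.4.6]{KR2}, so your proof works as a self-contained substitute. The identity $\overline{\partial^k\OO}=\TT^n_{\le k}\cdot\OO$, proved by induction from $\overline{\partial^{k+1}\OO}=\overline{\partial^k\OO}\cup\TT^n_1\cdot\overline{\partial^k\OO}$, gives (1) and (2) at once, and (2) does not even need the order ideal property. You are also right to make explicit the tacit assumption $\OO\neq\emptyset$, i.e.\ $1\in\OO$; without it both the covering $\TT^n=\bigsqcup_i\partial^i\OO$ and the converse in (3) fail. For that converse you could skip (1) and (2) and walk from $1$ to $t$ one indeterminate at a time, but your route is equally valid.
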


\begin{proof}
See~\cite[Proposition~6.4.6.]{KR2}
\end{proof}

The disjoint decomposition in part~(1) of the previous lemma allows us
to measure the ``distance'' of a term from an order ideal as follows.

\begin{definition}\label{def-index}
Let $\OO\subseteq \TT^n$ be an order ideal.

\begin{myenumerate}
\item For every $t\in\TT^n\!$, the unique number 
$k\in\mathbb{N}$ such that $t\in\partial^k\OO$ is 
called the \textbf{index} of~$t$ with respect to~$\OO$ 
and is denoted by~$\indO(t)$.
\index{index!of a term}%

\item For a polynomial $f\in P\setminus \{0\}$, we define
the \textbf{index} of~$f$ with respect to~$\OO$ (or the
\textbf{$\OO$-index} of~$f$) by $\indO(f)=
\max\{\indO(t) \mid t\in\Supp(f)\}$.\index{index!of a polynomial}%

\end{myenumerate}
\end{definition}

We now collect  some useful properties of the index.

\begin{lemma}\label{lem-indprop}
Let $\OO\subseteq\TT^n$ be an order ideal.

\begin{myenumerate}
\item For a term $t\in\TT^n$, the number $k=\indO(t)$ is
the smallest natural number such that $t=t' t''$
where $t'\in \TT^n_k$ and $t''\in\OO$.

\item Given two terms $t, t'\in \TT^n$, we have $\indO(t\,t')
\le \deg(t)+\indO(t')$.

\item For non-zero polynomials $f, g\in P$ 
such that $f+g \ne 0$, we have the inequality
$\indO(f+g)\le \max\{\indO(f),\indO(g)\}$.

\item For non-zero polynomials $f, g\in P$, we have 
the inequality 
\begin{equation*}
\indO(f\,g)\le \min\{\deg(f)+\indO(g),\,
\deg(g)+\indO(f)\}
\end{equation*}
\end{myenumerate}
\end{lemma}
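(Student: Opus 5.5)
The plan is to prove part~(1) first, since parts~(2)--(4) follow from it with little extra work. The main tool is Lemma~\ref{lem-BorderProps}(2): $\partial^k\OO=\TT^n_k\cdot\OO\setminus\TT^n_{<k}\cdot\OO$ for $k\ge1$.

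For (1), let $k=\indO(t)$. If $k=0$, then $t\in\OO$ and $t=1\cdot t$ is a factorization of the required kind with $t'\in\TT^n_0$. If $k\ge1$, Lemma~\ref{lem-BorderProps}(2) gives $t\in\TT^n_k\cdot\OO$, so $t=t't''$ with $t'\in\TT^n_k$ and $t''\in\OO$. The same lemma gives $t\notin\TT^n_{<k}\cdot\OO$, so no factorization with $\deg(t')<k$ exists. Hence $k$ is the minimal such number. Note that every term has some such factorization, for instance $t=t\cdot 1$ with $1\in\OO$ whenever $\OO\neq\emptyset$. I will assume, as the paper implicitly does, that $\OO$ is nonempty, so that $1\in\OO$.

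For (2), write $t'=s\,s''$ with $s\in\TT^n_{k'}$, $k'=\indO(t')$, and $s''\in\OO$, using (1). Then $t\,t'=(t\,s)\,s''$ with $t\,s\in\TT^n_{\deg(t)+k'}$. The minimality statement in (1) then yields $\indO(t\,t')\le\deg(t)+k'$.

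For (3), observe that $\Supp(f+g)\subseteq\Supp(f)\cup\Supp(g)$. Taking the maximum of the index over this support gives the claimed bound.

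For (4), every term of $\Supp(fg)$ has the form $t\,t'$ with $t\in\Supp(f)$ and $t'\in\Supp(g)$. By (2), $\indO(t\,t')\le\deg(t)+\indO(t')\le\deg(f)+\indO(g)$. By symmetry, $\indO(t\,t')\le\deg(g)+\indO(f)$ as well. Taking the maximum over $\Supp(fg)$ gives the inequality.

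The only delicate point is (1): there one must use both halves of Lemma~\ref{lem-BorderProps}(2), and also the disjointness from Lemma~\ref{lem-BorderProps}(1), which makes the index well defined. The remaining parts are routine consequences of (1).
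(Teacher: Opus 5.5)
Your proof is correct and complete. Part (1) uses both halves of $\partial^k\OO=\TT^n_k\cdot\OO\setminus\TT^n_{<k}\cdot\OO$, with $k=0$ handled separately, and (2)--(4) follow from (1) by support arguments; the paper gives no argument of its own, only citing \cite[Proposition~6.4.8]{KR2}, and your proof is the standard argument behind that statement.
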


\begin{proof}
See \cite[Proposition~6.4.8.]{KR2}
\end{proof}

\begin{myremark}\label{rem-incompatibe}
Using the $\OO$-index to order terms has one drawback:
this ordering is incompatible with multiplication, i.e.\
the inequality $\indO(t)\le \indO(t')$ does not, in general, imply
the inequality $\indO(t\,t'')\le \indO(t'\,t'')$, as shown in the next example.
\end{myremark}

\begin{example}\label{ex-minipageindex}
Let $\OO=\{1, x, x^2\}\subseteq\TT^2$. 
Then~$\OO$ is an order ideal, and its border is
$\partial\OO=\{y, xy, x^2y, x^3\}$.
The following diagram illustrates the situation.

\smallskip
\noindent
\hspace*{0.5cm} 
\begin{minipage}[c]{0.25\textwidth} 
\leavevmode 
\beginpicture
\setcoordinatesystem units <0.55cm,0.55cm> 
\setplotarea x from 1 to 5, y from 0 to 2.5
\axis left /
\axis bottom /

\arrow <2.5mm> [.2,.67] from  4.5 0  to 5.1 0 
\arrow <2.5mm> [.2,.67] from  1 2  to 1 2.6

\put {$\scriptstyle x$} [lt] <0.5mm,0.8mm> at 5.1 0
\put {$\scriptstyle y$} [rb] <1.7mm,0.7mm> at 1 2.6
\put {$\bullet$} at 1 0
\put {$\bullet$} at 2 0
\put {$\bullet$} at 3 0
\put {$\circ$} at 1 1
\put {$\circ$} at 2 1
\put {$\circ$} at 3 1
\put {$\circ$} at 4 0
\endpicture
\end{minipage}
\hfill 
\begin{minipage}[c]{0.6\textwidth} 
\small
We have $0=\indO(x^2)<\indO(y)=1$. Multiplying
both sides of the inequality by~$x^2$, 
we get $2=\indO(x^2\cdot x^2)>\indO(x^2\cdot y)=1$.
Similarly, we have $1=\indO(y)=\indO(x^2y)$. Multiplying both sides
of the equality by~$x$, we get 
$1 =\indO(x\cdot y)<\indO(x\cdot x^2y)=2$.
\end{minipage}
\hspace*{0.5cm} 
\end{example}

\medskip
Recalling  the notation  $\OO=\{t_1,\dots,t_\mu\}$ and  $\partial\OO=\{b_1,\dots,b_\nu\}$,
we are ready to introduce a key definition. 

\begin{definition}\label{def-border!prebasis}
A set of polynomials $G=\{g_1,\dots,g_\nu\}$ is called an 
\textbf{$\OO$-border prebasis} if the polynomials have the form
$g_j=b_j-\sum_{i=1}^\mu\,\alpha_{ij}t_i$ with $\alpha_{ij}\in K$
for $1\le i\le\mu$ and $1\le j\le\nu$.
\index{border!prebasis}%
\end{definition}
 
Border prebases are already sufficient to perform polynomial
division with remainder. The following algorithm provides 
a fundamental tool in working with border prebases.

\begin{proposition}\textbf{\textup{(The Border Division Algorithm)}}\\
\label{prop-BorderDiv}%
Let $\OO=\{t_1,\dots,t_\mu\}$ be an order ideal in~$\TT^n$, 
let $\partial\OO= \{b_1,\dots,b_\nu\}$ be its border, 
and let $\{g_1,\dots,g_\nu\}$
be an $\OO$-border prebasis.
Given a non-zero polynomial $f\in P$, consider the following 
sequence of instructions.

\begin{myenumerate}
\item Let $f_1=\cdots=f_\nu=0$, $c_1=\cdots =c_\mu=0$,
and $h=f$.

\item If $h=0$, return $(f_1,\dots, f_\nu, c_1,\dots, c_\mu)$
and stop.

\item If $\indO(h)=0$, then write $h=c_1t_1+\cdots+c_\mu t_\mu$
with $c_1,\dots,c_\mu\in K$.\\
Return $(f_1,\dots, f_\nu, c_1,\dots,c_\mu)$ and stop.

\item If $\indO(h)>0$, then let $h=a_1 h_1 + \cdots +a_s h_s$
with $a_1,\dots,a_s\in {K\setminus \{0\}}$ and $h_1,\dots,h_s
\in\TT^n$ such that we have $\indO(h_1)=\indO(h)$. Compute the smallest
index~$i\in\{1,\dots,\nu\}$ for which~$h_1$ factors as
$h_1=t'\,b_i$ with a term $t'\in\TT^n$ of degree $\indO(h)-1$.
Subtract $a_1 t' g_i$ from~$h$, add~$a_1 t'$ to~$f_i$,
and continue with step~2).
\end{myenumerate}

\vskip-.1cm
\noindent This algorithm  returns a tuple
$(f_1,\dots, f_\nu, c_1,\dots,c_\mu)\in P^\nu \times K^\mu$ such that
$$
f= f_1 g_1 + \cdots + f_\nu g_\nu + c_1 t_1 +\cdots + c_\mu t_\mu
$$
and $\deg(f_i)\le \indO(f)-1$ for all~$i\in\{1,\dots,\nu\}$
with $f_i g_i\ne 0$. This representation does not depend on the
choice of the term~$h_1$ in step~4).
\end{proposition}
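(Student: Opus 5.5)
We prove three things about the Border Division Algorithm: it terminates, the returned tuple satisfies the displayed identity with $\deg(f_i)\le\indO(f)-1$, and the output does not depend on the choice of~$h_1$ in step~4). The approach is to track an invariant and a well-founded measure on~$h$ along the iterations.

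\textbf{Invariant.} Throughout the algorithm we have
$$
f = f_1g_1+\cdots+f_\nu g_\nu + h .
$$
This holds at step~1), where all $f_i=0$ and $h=f$. In step~4) we subtract $a_1t'g_i$ from~$h$ and add $a_1t'$ to~$f_i$, so the sum is unchanged. When the algorithm stops in step~2) or~3), we have $h=\sum_j c_jt_j$, which gives the claimed representation.

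\textbf{Existence of the factorization in step~4).} If $k=\indO(h_1)>0$, Lemma~\ref{lem-indprop}(1) gives $h_1=t''t'''$ with $t''\in\TT^n_k$ and $t'''\in\OO$. Write $t''=x_j\,t'$ with $\deg t'=k-1$. Then $x_jt'''$ lies in $\overline{\partial\OO}$, and it is not in~$\OO$: otherwise $h_1=t'\cdot(x_jt''')$ would have index at most $k-1$, by the minimality in Lemma~\ref{lem-indprop}(1). Hence $x_jt'''=b_i\in\partial\OO$ for some~$i$.

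\textbf{Termination.} The step replaces $a_1h_1$ in~$h$ by
$$
a_1t'\,\textstyle\sum_m \alpha_{mi}t_m .
$$
Each new term $t't_m$ has index at most $\deg(t')+0=k-1$, by Lemma~\ref{lem-indprop}(2). So one step removes the term~$h_1$ of maximal index~$k$ and introduces only terms of index $<k$. The number of terms of index~$k$ in~$h$ therefore strictly drops and terms of higher index never appear. Consider the multiset of indices of the support of~$h$, or equivalently the tuple
$$
(\#\text{terms of index } k)_{k=\indO(f),\dots,1}
$$
ordered lexicographically. It strictly decreases at each step, is well-founded, and $\indO(h)\le\indO(f)$ is preserved. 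Hence the algorithm stops.

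\textbf{Degree bound.} Every term $t'$ added to some~$f_i$ has degree $\indO(h)-1\le\indO(f)-1$, so $\deg(f_i)\le\indO(f)-1$ whenever $f_i\ne0$.

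\textbf{Independence of the choice of~$h_1$.} This is the step I expect to be the main obstacle. The plan is to observe that each step processes one term of maximal index using a factorization determined by that term alone: the smallest~$i$ with $h_1=t'b_i$ and $\deg t'=\indO(h_1)-1$. The reductions of the different maximal-index terms are therefore independent of one another, and the procedure is linear in~$h$.

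Concretely, define a $K$-linear map $D$ on terms by recursion on the index:
\begin{itemize}
\item for $t\in\OO$, let $D(t)$ be the tuple with coefficient~$1$ at $t$ in the $c$-part and all $f_i=0$;
\item for $\indO(t)=k>0$, let $D(t)=e_i\,t' + \sum_m \alpha_{mi}\,D(t't_m)$, where $i$ is the smallest index with $t=t'b_i$, $\deg t'=k-1$, and $e_i\,t'$ means $t'$ added in the $f_i$-slot.
\end{itemize}
This is well defined because each $t't_m$ has smaller index. Extend $D$ linearly to~$P$.

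We then show by induction on the lexicographic measure above that the algorithm, started on any~$h$ and with any choices, returns $(f_\bullet,c_\bullet)+D(h)$ on top of the already accumulated $(f_\bullet,c_\bullet)$. The key check is a single step: by linearity,
$$
D(h)=a_1e_it'+D\bigl(h-a_1t'g_i\bigr),
$$
which is exactly the bookkeeping performed in step~4). Applied to the starting data, the output is $D(f)$, which involves no choices. This proves independence.
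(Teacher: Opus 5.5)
Your proof is correct and complete. The survey gives no argument of its own and simply defers to \cite[Proposition~6.4.11]{KR2}, which runs along the same standard lines you use: the invariant $f=\sum_i f_ig_i+h$, the factorization $h_1=t'b_i$ obtained from the minimality in Lemma~\ref{lem-indprop}(1), the replacement of $h_1$ by terms of index at most $\indO(h)-1$ via Lemma~\ref{lem-indprop}(2), and the degree of $t'$. Your recursively defined linear map $D$, together with the check that every run of the algorithm outputs $D(f)$, is a clean way to make rigorous the claim that the output does not depend on the choice of $h_1$.
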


\begin{proof} See~\cite[Proposition~6.4.11]{KR2}.
\end{proof}

The above algorithm suggests the following definition.

\begin{definition}
Let $\OO=\{t_1,\dots,t_\mu\}$ be an order ideal in~$\TT^n$, 
let $\partial\OO= \{b_1,\dots,b_\nu\}$ be its border, 
let $\{g_1,\dots,g_\nu\}$
be an $\OO$-border prebasis, and let $f \in P$ be 
 a non-zero polynomial. Then  the polynomial $c_1 t_1 +\cdots + c_\mu t_\mu$ 
 is denoted by $\NR_{\OO,G}(f)$ and called the 
 \textbf{normal $\OO$-remainder} of~$f$.
 \end{definition}

However, as it happens with the normal remainder in Gr\"obner basis theory,
remainders may depend on the order of the polynomials in the $\OO$-border 
prebasis. So, it is natural to introduce the following notion.

\goodbreak
\begin{definition}\label{defofbb}
Let $G=\{g_1,\dots,g_\nu\}$ be an $\OO$-border prebasis,
let $G$ be the tuple~$(g_1,\dots,g_\nu)$, and let
$I\subseteq P$ be an ideal containing~$G$.
The set (or the tuple)~$G$  is called an
\textbf{$\OO$-border basis} of~$I$ if one of the following
equivalent conditions is satisfied.
\index{border!basis}%
\index{border basis}%

\begin{myenumerate}
\item The residue classes $\overline{\OO}=
\{\bar t_1,\dots,\bar t_\mu\}$ form a $K$-vector space
basis of~$P/I$.

\item We have $I\cap \langle \OO\rangle_K =\{0\}$.

\item We have $P=I\oplus \langle \OO\rangle_K$.
\end{myenumerate}
\end{definition}

The following example illustrates the definition and 
will serve as a \textit{running example} throughout this section
(see also \cocoa-Example~\ref{coex-bb-no-gb}).

 \begin{example}\label{ex-bb-no-gb}
Let $P=\mathbb{Q}[x, y]$ and $I = (xy -y^2 -x^2 ,\, x^3,\, x^2y,\, xy^2,\, y^3)$.
The set $\OO = \{1, y, x, y^2, x^2\}$ is an order ideal
consisting of $\dim_K(P/I)=5$ terms. The border is 
$\partial\OO = \{xy, y^3, xy^2, x^2y, x^3\}$. Let $G = \{g_1, \dots, g_5\}$ be the 
$\OO$-border prebasis where $g_1 = xy -y^2 -x^2$, $g_2 = y^3$, $g_3 = xy^2$, 
$g_4 = x^2y$, and $g_5 = x^3$. 
Since $I = \langle G \rangle$ is a homogeneous ideal and we have 
$I_{\le 2} = \langle xy -y^2 -x^2  \rangle_K$ (see \cocoa-Example~\ref{coex-bb-no-gb}), 
the condition $I \cap \langle \OO \rangle_K = \{0\}$ is satisfied. 
This implies that $G$ is indeed an $\OO$-border basis of the ideal~$I$.
\end{example}
 
Next, we see that by slightly changing Example~\ref{ex-bb-no-gb},
we obtain a fundamentally different outcome.

\begin{example}\label{ex-nobb}
Let $P$, $\OO$, and $\partial\OO$ be as in Example~\ref{ex-bb-no-gb},
and let $G = \{g_1, \dots, g_5\}$ where
$g_1 = xy - x^2 - y^2 - x$, $g_2 = y^3$, $g_3 = xy^2$, $g_4 = x^2y$, and $g_5 = x^3$.
Then $G$ forms an $\OO$-border prebasis of~$I = \langle G \rangle$. 
For $\sigma = \text{\texttt{DegRevLex}}$, the reduced $\sigma$-Gr\"obner basis of~$I$ is 
$(x^2,\, xy,\, y^2 - x)$.
Consequently, we have $x^2 \in I$, which implies $I \cap \langle\OO\rangle_{\mathbb{Q}} \neq \{0\}$ 
since $x^2 \in \OO$.
According to Definition~\ref{defofbb}, the set $G$ is \textbf{not} an $\OO$-border basis of $I$.
\end{example}

Now we show that the definition ensures that 
an \hbox{$\OO$-border} basis of~$I$ actually generates~$I$.

\begin{proposition} \label{prop-bbgen}
\index{border basis!generates the ideal}
Let $G$ be an $\OO$-border basis of an ideal $I \subseteq P$. 
Then the ideal $I$ is generated by $G$.
\end{proposition}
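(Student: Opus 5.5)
The inclusion $\langle G\rangle\subseteq I$ holds by hypothesis, since Definition~\ref{defofbb} requires $I$ to contain $G$. So the plan is to prove the reverse inclusion $I\subseteq\langle G\rangle$, using the Border Division Algorithm (Proposition~\ref{prop-BorderDiv}) together with the characterization $I\cap\langle\OO\rangle_K=\{0\}$ from Definition~\ref{defofbb}(2).

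Let $f\in I$ be non-zero. Note that $G$ is in particular an $\OO$-border prebasis, so Proposition~\ref{prop-BorderDiv} applies. It yields polynomials $f_1,\dots,f_\nu\in P$ and constants $c_1,\dots,c_\mu\in K$ such that
$$
f=f_1g_1+\cdots+f_\nu g_\nu+c_1t_1+\cdots+c_\mu t_\mu .
$$
Then
$$
c_1t_1+\cdots+c_\mu t_\mu=f-\textstyle\sum_j f_jg_j .
$$
The right-hand side lies in $I$, because $f\in I$ and each $g_j\in I$. The left-hand side lies in $\langle\OO\rangle_K$. By condition~(2) of Definition~\ref{defofbb} this common element is zero. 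Hence $f=\sum_j f_jg_j\in\langle G\rangle$.

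There is no real obstacle here. The only substantive ingredient is the existence of the division representation, and that is already supplied by Proposition~\ref{prop-BorderDiv}. The one point to check is that the algorithm needs only the prebasis shape of the $g_j$, namely $g_j=b_j-\sum_i\alpha_{ij}t_i$, which $G$ has by assumption.
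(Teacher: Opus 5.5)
Your proof is correct and follows the paper's argument: both apply the Border Division Algorithm to $f\in I$ and show that the remainder $c_1t_1+\cdots+c_\mu t_\mu$ vanishes, so $f\in\langle G\rangle$. The only cosmetic difference is the condition from Definition~\ref{defofbb} that is invoked. The paper uses condition~(1), the linear independence of $\bar t_1,\dots,\bar t_\mu$ in $P/I$, whereas you use the equivalent condition~(2), $I\cap\langle\OO\rangle_K=\{0\}$.
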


\begin{proof} 
By definition, we have $\langle g_1, \dots, g_\nu \rangle \subseteq I$.
To prove the converse inclusion, let $f \in I$. Using the Border Division Algorithm 
(Proposition~\ref{prop-BorderDiv}), the polynomial $f$ can be expanded as:
$ f = f_1 g_1 + \dots + f_\nu g_\nu + c_1 t_1 + \dots + c_\mu t_\mu$
where $f_1, \dots, f_\nu \in P$ and $c_1, \dots, c_\mu \in K$.
In the quotient ring $P/I$, this implies the equality of residue classes:
$ 0 = \bar{f} = c_1 \bar{t}_1 + \dots + c_\mu \bar{t}_\mu. $
By assumption, the residue classes $\bar{t}_1, \dots, \bar{t}_\mu$ are $K$-linearly 
independent in $P/I$.  Hence, we must have $c_1 = \dots = c_\mu = 0$.
The expansion of $f$ therefore becomes $f = f_1 g_1 + \dots + f_\nu g_\nu$, which shows that $f \in \langle G \rangle$.
\end{proof}

The following proposition addresses the existence and uniqueness of border bases.

\begin{proposition}\textbf{\textup{(Existence and Uniqueness of Border Bases)}}\\
\label{prop-UniqueBorder}%
Let $\OO=\{t_1,\dots,t_\mu\}$ be an order ideal, let~$I\subseteq P$ 
be a zero-dimensional ideal, and assume that the residue classes 
of the elements of~$\OO$ form a $K$-vector space basis of~$P/I$.

\begin{myenumerate}
\item There exists a unique $\OO$-border basis of~$I$.

\item  Let $G$ be an $\OO$-border prebasis whose elements are in~$I$.
Then~$G$ is the $\OO$-border basis of~$I$.

\item Let $k$ be the field of definition of~$I$.
Then the $\OO$-border basis of~$I$ is contained 
in~$k[x_1,\dots,x_n]$.

\end{myenumerate}
\end{proposition}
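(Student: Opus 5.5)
For existence in part~(1), I will use the decomposition $P = I\oplus\langle\OO\rangle_K$ from Definition~\ref{defofbb}. Since $\bar t_1,\dots,\bar t_\mu$ form a $K$-basis of $P/I$, each border term $b_j$ has a residue class of the form $\bar b_j=\sum_{i=1}^\mu \alpha_{ij}\bar t_i$ for uniquely determined $\alpha_{ij}\in K$. Setting $g_j=b_j-\sum_i\alpha_{ij}t_i$ gives $g_j\in I$, so $G=\{g_1,\dots,g_\nu\}$ is an $\OO$-border prebasis contained in~$I$. Condition~(1) of Definition~\ref{defofbb} holds by hypothesis, hence $G$ is an $\OO$-border basis of~$I$.

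For uniqueness, suppose $G'=\{g'_1,\dots,g'_\nu\}$ is another $\OO$-border basis of~$I$, with $g'_j=b_j-\sum_i\alpha'_{ij}t_i$. Then $g_j-g'_j=\sum_i(\alpha'_{ij}-\alpha_{ij})t_i$ lies in $I\cap\langle\OO\rangle_K=\{0\}$. Linear independence of the $\bar t_i$ then forces $\alpha_{ij}=\alpha'_{ij}$. The same computation proves part~(2): any prebasis with elements in~$I$ satisfies Definition~\ref{defofbb}, because the basis condition is a hypothesis on $I$ and $\OO$ alone. By the uniqueness just shown, such a prebasis coincides with the border basis constructed above.

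Part~(3) is the only step needing care. The field of definition $k$ is the smallest subfield of $K$ such that $I$ is generated by polynomials in $k[x_1,\dots,x_n]$; let $I_k=I\cap k[x_1,\dots,x_n]$, so that $I=I_k\cdot P$. I plan to show that $\bar t_1,\dots,\bar t_\mu$ form a $k$-basis of $k[x_1,\dots,x_n]/I_k$.

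\begin{itemize}
\item \textbf{Linear independence.} This is immediate, since $I_k\subseteq I$.
\item \textbf{Spanning.} Since $K$ is flat over~$k$, we have $P/I\cong (k[x_1,\dots,x_n]/I_k)\otimes_k K$. Hence $\dim_k(k[x_1,\dots,x_n]/I_k)=\dim_K(P/I)=\mu$, so the $\mu$ independent classes must span.
\end{itemize}

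Part~(1), applied over $k$, then yields a $k$-rational $\OO$-border prebasis inside $I_k\subseteq I$. By part~(2), this prebasis is the $\OO$-border basis of~$I$. The main obstacle is the base-change dimension equality. As an alternative route, I could take the reduced Gröbner basis of $I$, which is defined over $k$, and perform the linear algebra of expressing the $b_j$ modulo~$I$ over~$k$.
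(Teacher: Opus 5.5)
Your proof is correct and complete. The paper gives no argument of its own here, only the reference to \cite[Proposition~6.4.17]{KR2}, and your treatment of (1) and (2) is the standard linear-algebra argument.

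For (3), your base-change count is sound as written. Since $I$ has generators in $k[x_1,\dots,x_n]$, one has $I=(I\cap k[x_1,\dots,x_n])\,P$. Right-exactness of $-\otimes_k K$ then gives $P/I\cong (k[x_1,\dots,x_n]/I_k)\otimes_k K$; flatness is not even needed. The equality of dimensions follows, so the Gr\"obner-basis alternative you mention is unnecessary (though it would also work).
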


\begin{proof}
See~\cite[Proposition~6.4.17]{KR2}.
\end{proof}

\medskip
\subsection{Border Bases and Gr\"obner Bases}
\label{subsec-BBandGB}

Given a zero-dimensional ideal, is there a relation between border 
bases and Gr\"obner bases? To answer this question, we recall that,
given an order ideal~$\OO\subset \TT^n$, its complement
$\TT^n \setminus \OO$ is the set of terms of a monomial ideal.
We know that every monomial ideal has a unique minimal set
of generators (see~\cite[Proposition~1.3.11]{KR1}).

\begin{definition}\label{def-corners}
Given an order ideal~$\OO\subset \TT^n$, the elements in the unique minimal
set of generators of the monomial ideal corresponding to~$\TT^n\setminus \OO$
are called the \textbf{corners} of~$\OO$. 
\end{definition}

\noindent
\hspace{0.5cm}
\begin{minipage}[c]{0.45\textwidth}
\small
The diagram illustrates the appropriateness of the name:
the \textbf{corners} (marked with~$\times$) are the ``minimal" 
monomials outside~$\OO$. They are the unique minimal 
generators of the monomial ideal $\langle \mathbb{T}^n \setminus \OO \rangle$.
\end{minipage}
\hfill
\begin{minipage}[c]{0.40\textwidth}
\leavevmode
\beginpicture
\setcoordinatesystem units <0.42cm,0.42cm>
\setplotarea x from 0 to 6, y from 0 to 4
\axis left /
\axis bottom /

\arrow <2mm> [.2,.67] from  5.5 0  to 6 0
\arrow <2mm> [.2,.67] from  0 3.5  to 0 4

\put {$\scriptstyle x$} [lt] <0.5mm,0.8mm> at 6.1 0
\put {$\scriptstyle y$} [rb] <1.7mm,0.7mm> at 0 4.1

\multiput {$\bullet$} at 0 0  1 0  2 0  3 0  0 1  1 1  2 1 /

\multiput {$\times$} at 0 2  3 1  4 0 /
\endpicture
\end{minipage}

Our next proposition clarifies the relationship between 
border bases and Gr\"obner bases of a zero-dimensional polynomial ideal. 
Moreover, it provides strong support for \textcolor{blue}{Reason}~\ref{reason1} 
stated at the beginning of this section.

\begin{proposition}
\textbf{\textup{(Border Bases, Gr\"obner Bases, and Corners)}}\\
\label{prop-bordandGr}%
Let~$\sigma$ be a term ordering on~$\TT^n$, and let
$\OO_\sigma(I)$ be the order ideal $\TT^n\setminus \LT_\sigma\{I\}$. 
\begin{myenumerate}
\item There exists a unique $\OO_\sigma(I)$-border basis~$G$ of~$I$, 

\item The reduced $\sigma$-Gr\"obner basis of~$I$ 
is the subset of~$G$ corresponding to the corners of~$\OO_\sigma(I)$.
\end{myenumerate}
\end{proposition}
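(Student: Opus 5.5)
For part (1), the plan is to reduce to Proposition~\ref{prop-UniqueBorder}. We use Macaulay's Basis Theorem: for a zero-dimensional ideal~$I$ and any term ordering~$\sigma$, the residue classes of the terms in $\OO_\sigma(I)=\TT^n\setminus\LT_\sigma\{I\}$ form a $K$-basis of $P/I$. The set $\OO_\sigma(I)$ is finite because $I$ is zero-dimensional. It is an order ideal because $\LT_\sigma\{I\}$ is the set of terms of a monomial ideal, so any divisor of a term outside that ideal also lies outside it. Proposition~\ref{prop-UniqueBorder}(1) then gives existence and uniqueness of the $\OO_\sigma(I)$-border basis~$G$.

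For part (2), write $\OO=\OO_\sigma(I)$. The first step is to describe the elements of~$G$ explicitly. For each border term $b_j\in\partial\OO$ we have $b_j\in\LT_\sigma\{I\}$, so $b_j$ has a normal form $\NF_{\sigma,I}(b_j)\in\langle\OO\rangle_K$, and $b_j-\NF_{\sigma,I}(b_j)\in I$. The collection of these polynomials is an $\OO$-border prebasis contained in~$I$, so by Proposition~\ref{prop-UniqueBorder}(2) it is the $\OO$-border basis. In other words, $g_j=b_j-\NF_{\sigma,I}(b_j)$ for every~$j$.

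Next, recall that the reduced $\sigma$-Gröbner basis of~$I$ consists of the polynomials $t-\NF_{\sigma,I}(t)$, where $t$ runs over the minimal monomial generators of $\LT_\sigma\{I\}$. By Definition~\ref{def-corners}, these generators are exactly the corners of~$\OO$. To identify these Gröbner basis elements inside~$G$, we must check that every corner lies in $\partial\OO$. Let $c$ be a corner and $x_i$ any indeterminate dividing~$c$. Minimality of~$c$ gives $c/x_i\notin\LT_\sigma\{I\}$, so $c/x_i\in\OO$. Hence $c=x_i\cdot(c/x_i)\in x_i\OO\setminus\OO\subseteq\partial\OO$. The element $1$ cannot be a corner, since $I$ is proper. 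Therefore, for each corner $c=b_j$, the reduced Gröbner basis element $c-\NF_{\sigma,I}(c)$ coincides with~$g_j$, which proves~(2).

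The main point needing care is the identification $g_j=b_j-\NF_{\sigma,I}(b_j)$, together with the standard fact that the reduced Gröbner basis has precisely this shape, with tails supported in~$\OO$. Both follow from Macaulay's theorem and the uniqueness statement already proved. Everything else in the argument is elementary bookkeeping with monomial ideals.
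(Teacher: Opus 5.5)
Your proof is correct and follows the paper's route. For (1), Macaulay's Basis Theorem together with Proposition~\ref{prop-UniqueBorder} gives existence and uniqueness. For (2), each reduced Gr\"obner basis element $b-\NF_{\sigma,I}(b)$ lies in $I$ and has border-prebasis shape, so by uniqueness it equals the border basis element for $b$. Your explicit check that every corner lies in $\partial\OO_\sigma(I)$, and that $1$ is not a corner, supplies a step the paper leaves implicit.
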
%
  \vspace{-0.5cm} 
\begin{proof}
By Macaulay's Basis Theorem (see~\cite[Theorem~1.5.7]{KR1}), 
the residue classes of the elements 
in~$\OO_\sigma(I)$ form a $K$-vector space basis of~$P/I$. 
Thus, Proposition~\ref{prop-UniqueBorder} ensures the existence and 
uniqueness of the $\OO_\sigma(I)$-border basis of~$I$.

To prove the second claim, let $b \in \mathbb{T}^n \setminus \mathcal{O}_\sigma(I)$ 
be a corner of $\mathcal{O}_\sigma(I)$. The element of the reduced 
$\sigma$-Gr\"obner basis of $I$ with leading term $b$ has the form 
$g = b - \text{NF}_{\sigma,I}(b)$, where $\text{NF}_{\sigma,I}(b) \in \langle \mathcal{O}_\sigma(I) \rangle_K$. 
Since $g \in I$ and it has the form of a border prebasis element, the uniqueness part of 
Proposition~\ref{prop-UniqueBorder} implies that this Gr\"obner basis element 
must coincide with the border basis element corresponding to the corner $b$.
\end{proof}

\begin{definition}\label{def-marked}
Let $P$ be a polynomial ring. 

\begin{myenumerate}
\item Let $f$ be a non-zero polynomial in~$P$. 
Then the pair $(f, t)$ is said to be a \textbf{marked polynomial} if  
$t \in \Supp(f)$ with coefficient~1. 

\item Let $(f_1,\dots, f_\nu)$ be a  tuple of non-zero polynomials in~$P$.  
Then the tuple $(f_1,\dots, f_\nu)$ is said to be \textbf{marked} 
by a tuple of terms $(t_1, \dots, t_\nu)$  if 
$(f_1, t_1),\dots, (f_\nu, t_\nu)$ are marked polynomials.

\item Let $(f_1,\dots, f_\nu)$ be a  tuple of non-zero polynomials in~$P$. 
Then the tuple $(f_1,\dots, f_\nu)$ 
is said to be \textbf{coherently marked} by a tuple of terms 
$(t_1, \dots, t_\nu)$ if there exists a term ordering $\sigma$ such 
that $t_i = \LT_\sigma(f_i)$ for $i=1,\dots, \nu$.
\end{myenumerate}
\end{definition}

\begin{myremark}\label{rem-marked}
In the following, when we refer to a \textbf{marked border prebasis} 
$G=(g_1, \dots, g_\nu)$ associated with an order ideal $\mathcal{O}$, 
where $g_j = b_j - \sum_{i=1}^\mu \alpha_{ij}t_i$, we implicitly assume 
that each $g_j$ is marked by its corresponding border term 
$b_j \in \partial\OO$.
\end{myremark}

\begin{proposition}\label{prop-cohmark}
Let $\OO$ be an order ideal such that the residue classes of
the elements of $\OO$ form a $K$-vector space basis of $P/I$.
 Let $G$  be the $\OO$-border basis of $I$, and let~$G'$ be the subset of $G$
 consisting of the elements marked by
the corners of $\OO$. 
Then the following conditions are equivalent.

\begin{myenumerate}
\item There exists a term ordering $\sigma$ such that $\OO =\OO_\sigma(I)$.

\item The elements in $G'$ are marked coherently.

\item The elements in $G$ are marked coherently.
\end{myenumerate}%
Moreover, if these conditions are satisfied, then $G'$ is the 
reduced $\sigma$-Gr\"obner basis of~$I$.
\end{proposition}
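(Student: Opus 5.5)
We prove the cycle of implications (1)$\Rightarrow$(3)$\Rightarrow$(2)$\Rightarrow$(1), and read off the final claim from the proof of (2)$\Rightarrow$(1).

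\emph{(1)$\Rightarrow$(3).} Assume $\OO=\OO_\sigma(I)$ and let $g_j=b_j-\sum_i\alpha_{ij}t_i$ be an element of $G$. Since $b_j\in\partial\OO\subseteq \TT^n\setminus\OO_\sigma(I)=\LT_\sigma\{I\}$ and $g_j\in I$ is non-zero, we have $\LT_\sigma(g_j)\in\LT_\sigma\{I\}$. Every term of $\Supp(g_j)$ other than $b_j$ lies in $\OO=\TT^n\setminus\LT_\sigma\{I\}$. Hence $\LT_\sigma(g_j)=b_j$, so $G$ is coherently marked by $\sigma$.

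\emph{(3)$\Rightarrow$(2)} is immediate, since $G'\subseteq G$.

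\emph{(2)$\Rightarrow$(1).} This is the main step. Let $\sigma$ be a term ordering with $\LT_\sigma(g)=b$ for every $g\in G'$ marked by the corner $b$. Let $b_1,\dots,b_s$ be the corners of $\OO$. The monomial ideal $\langle \TT^n\setminus\OO\rangle$ is generated by the corners, and each corner is the leading term of an element of $G'\subseteq I$. Therefore $\langle\TT^n\setminus\OO\rangle\subseteq \LT_\sigma(I)$, which gives $\OO_\sigma(I)\subseteq\OO$. By Macaulay's Basis Theorem, $\#\OO_\sigma(I)=\dim_K(P/I)$. By hypothesis, $\#\OO=\dim_K(P/I)$ as well. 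Both sets are finite with the same cardinality, so $\OO_\sigma(I)=\OO$. The only thing to check is that the inclusion of leading term ideals really gives the inclusion of complements; this is clear.

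\emph{Final claim.} If the conditions hold, then $\OO=\OO_\sigma(I)$, and Proposition~\ref{prop-bordandGr}(2) shows that the reduced $\sigma$-Gr\"obner basis of $I$ is the subset of the $\OO$-border basis corresponding to the corners, namely $G'$. In (1), $\sigma$ is any ordering with $\OO=\OO_\sigma(I)$; in (2), it is one marking $G'$ coherently. The argument above shows these are the same orderings, so the $\sigma$ in the final claim is unambiguous.
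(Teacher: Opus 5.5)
Your proof is correct. The step (2)$\Rightarrow$(1) and the final claim are the same as in the paper: corners are $\sigma$-leading terms of elements of $I$, so $\OO_\sigma(I)\subseteq\OO$; equality follows from Macaulay's Basis Theorem and counting; then Proposition~\ref{prop-bordandGr}(2) applies.

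Where you differ is in how you reach (3). The paper proves (1)$\Rightarrow$(2) by identifying $G'$ with the reduced $\sigma$-Gr\"obner basis. It then proves (2)$\Rightarrow$(3) by writing the marked term of $g\in G\setminus G'$ as $b=t\,\LT_\sigma(g')$ with $g'\in G'$, asserting that $\Supp(g-tg')\subseteq\OO$, and concluding $g=tg'$. You instead prove (1)$\Rightarrow$(3) directly: all non-marked terms of $g_j$ lie in $\OO=\TT^n\setminus\LT_\sigma\{I\}$, so $\LT_\sigma(g_j)=b_j$.

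Your ordering is not only shorter but more robust, because the paper's assertion fails in general: the terms $t\,t_i$ of $tg'$ need not lie in $\OO$. For example, take $\OO=\{1,x\}$ and $I=\langle y-x,\,x^2-1\rangle$. Then $g=xy-1$, $g'=y-x$, $t=x$, and $g-tg'=x^2-1$, so $g\ne tg'$. The implication (2)$\Rightarrow$(3) is still true, but it is most naturally obtained as you do, by passing through (1).

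Your closing observation that (1) and (2) are witnessed by the same term orderings is also worthwhile. It makes precise which $\sigma$ the ``moreover'' clause refers to, which the statement leaves implicit.
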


\goodbreak
\begin{proof}
Let us prove that (1) implies both (2) and the additional claim. The fact
that $G'$ is the reduced $\sigma$-Gr\"obner basis of~$I$  follows 
from Proposition~\ref{prop-bordandGr}, and hence $G'$ is marked 
coherently.
Now we show that (2) implies (3). For every
polynomial $g\in G\setminus G'$, there exists a polynomial $g' \in G'$ 
such that the marked term of $g$ is of the form $b= t\LT_\sigma(g')$. 
Then the support of the polynomial $g - t g'$ is contained in $\OO_K$, 
and therefore $g= t g'$. Consequently, $\LT_\sigma(g) = t\LT_\sigma(g') = b$
which proves that also $g$ is marked coherently with respect to $\sigma$.
Since it is obvious that (3) implies (2), it remains to be shown that (2) implies~(1).
Let $\sigma$  be a term ordering which marks $G'$ coherently. 
Denote the monomial ideal generated by the leading terms 
of the elements in $G'$ by $\LT_\sigma(G')$. 
Since we have $\LT_\sigma(G')\subseteq \LT_\sigma(G)$
we get $\OO_\sigma(I)= {\mathbb T}^n\setminus  \LT_\sigma(I) 
\subseteq {\mathbb T}^n \setminus \LT_\sigma(G') = \OO$. 
Both the residue classes of the elements of $\OO_\sigma(I)$ and those of $\OO$ 
form a $K$-vector space basis of $P/I$. Since they are both bases of the 
same finite-dimensional vector space, the inclusion $\OO_\sigma(I) \subseteq \OO$ 
must be an equality, which proves (1).
\end{proof}

\begin{example}\textbf{(Example~\ref{ex-bb-no-gb} continued)}\\
\label{ex-bb-no-gb continued}%
We continue the discussion initiated in  Example~\ref{ex-bb-no-gb}.
 There, the ideal $I$ is symmetric under the interchange of~$x$ and~$y$.
 If we look at the polynomial $g_1= xy -x^2 -y^2$ we note that for 
 every term ordering, its leading term is either $x^2$ or $y^2$.
 Consequently the tuple $(xy -x^2 -y^2,\,x^3,\,x^2y,\,xy^2,\,y^3)$ cannot be 
 coherently marked by the tuple of border terms $(xy,\,x^3,\,x^2y,\,xy^2,\,y^3)$.
 
 There are only two possible Gr\"obner bases of $I$. They are 
 $(x^2 -xy +y^2,  y^3,  xy^2)$ and $(y^2 -xy +x^2,  x^3,  x^2y)$.
 Neither is symmetric, thus the corresponding quotient bases
$(1,  y,  x,  y^2,  xy)$ and $( 1,  x,  y,  x^2,  xy)$ 
(see \cocoa-Example~\ref{coex-bb-no-gb}) 
are not symmetric. 
However, we have seen in Example~\ref{ex-bb-no-gb} that the 
symmetric set of terms
$\OO= \{1,\,x,\,y,\,x^2,\,y^2\}$
represents a vector space basis of~$\bbb Q[x,y]/I$.
In this case,  Gr\"obner bases break the symmetry while border 
bases do not!
\end{example}

The example above  provides evidence for the correctness 
of \textcolor{blue}{Reason \ref{reason2}} 
stated at the beginning of this section.
However, the following example shows that the merit of
border bases in \textit{preserving} symmetry
must be taken with a grain of salt.

\begin{example}\label{ex-notsymmetric}
Let $P=\mathbb{Q}[x,y]$ and $I = \langle x^2 + y^2 - 1, \, xy - 1 \rangle$.
We have $\dim_{\mathbb{Q}}(P/I) = 4$ and the only symmetric order ideal 
consisting of four terms is $\OO = \{ 1, x, y, xy \}$. However, the ideal
$I$ does not have an $\OO$-border basis because $xy-1 \in I$.
\end{example}

The following remark shows that, notwithstanding the fact that a
zero-dimensional ideal~$I$ may have more border bases than Gr\"obner bases,
not every order ideal  represents a basis of $P/I$.

\begin{myremark}\label{rem-notallO}
Let $P=\QQ[x, y]$ and $I = \langle x^2, y^2 \rangle$. Then the order ideal 
$\OO =\{1, x, y, xy\}$ is the only one that represents a  
$\QQ$-vector space basis of $\QQ[x, y]/I$.
\end{myremark}

The following proposition shows that, when we divide by a border basis, 
the normal remainder does not depend on the order of the elements.

\begin{proposition}\label{prop-normalrem}
Let $G=(g_1,\dots,g_\nu)$ be the $\OO$-border
basis of an ideal $I\subseteq P$, let $\pi \colon \{1,\dots,\nu\}\To 
\{1,\dots,\nu\}$ be a permutation, and
let ${G'=(g_{\pi(1)},\dots,g_{\pi(\nu)})}$ be the corresponding
permutation of the tuple~$G$. Then we have 
$\NR_{\OO,G}(f)=\NR_{\OO,G'}(f)$ 
for every polynomial $f\in P$.
\end{proposition}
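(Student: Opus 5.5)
The plan is to exploit the direct sum decomposition $P=I\oplus\langle\OO\rangle_K$ from Definition~\ref{defofbb}(3), which is available because $G$ is an $\OO$-border basis of~$I$. Fix a non-zero polynomial $f\in P$; the case $f=0$ is trivial, since both remainders are~$0$. Apply the Border Division Algorithm (Proposition~\ref{prop-BorderDiv}) twice, once with the tuple $G$ and once with the tuple~$G'$. Note first that $G'$ is again an $\OO$-border prebasis: its elements are the same polynomials, only indexed differently, and each is still marked by a border term.

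From Proposition~\ref{prop-BorderDiv} we get representations
\[
f=\sum_{j=1}^\nu f_j g_j+\NR_{\OO,G}(f)
\quad\text{and}\quad
f=\sum_{j=1}^\nu f'_j g_{\pi(j)}+\NR_{\OO,G'}(f),
\]
with $f_j,f'_j\in P$. In both, the sum $\sum f_jg_j$, respectively $\sum f'_jg_{\pi(j)}$, lies in $\langle G\rangle\subseteq I$. The remainders $\NR_{\OO,G}(f)$ and $\NR_{\OO,G'}(f)$ both lie in $\langle\OO\rangle_K$ by construction.

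Subtracting the two representations shows that
\[
\NR_{\OO,G}(f)-\NR_{\OO,G'}(f)\in I\cap\langle\OO\rangle_K .
\]
By Definition~\ref{defofbb}(2) this intersection is $\{0\}$, so the two remainders coincide. Equivalently, both remainders are the unique $\langle\OO\rangle_K$-component of $f$ in the decomposition $P=I\oplus\langle\OO\rangle_K$.

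There is no real obstacle here. The only point needing care is to check that the algorithm applied to $G'$ genuinely produces an element of $\langle\OO\rangle_K$ congruent to $f$ modulo~$I$. This is guaranteed because every $g_{\pi(j)}$ belongs to $G\subseteq I$ and the output satisfies the identity stated in Proposition~\ref{prop-BorderDiv}.
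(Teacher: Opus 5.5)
Your proof is correct: both outputs of the Border Division Algorithm lie in $\langle\OO\rangle_K$ and are congruent to $f$ modulo $\langle G\rangle=\langle G'\rangle\subseteq I$, so their difference lies in $I\cap\langle\OO\rangle_K=\{0\}$. The paper gives no argument of its own and simply cites \cite[Proposition~6.4.19]{KR2}; your self-contained argument is the standard one, and it uses the same mechanism as the paper's proof of Proposition~\ref{prop-bbgen} (border division followed by the linear independence of $\bar t_1,\dots,\bar t_\mu$).
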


\begin{proof}
See~\cite[Proposition~6.4.19]{KR2}.
\end{proof}

This result allows us to generalize the concept of a normal form 
to border basis theory.

\begin{definition}\label{def-NF}
Let $G=\{g_1,\dots,g_\nu\}$ be the $\OO$-border basis of~$I$. 
The \textbf{normal form} of a polynomial $f\in P$
with respect to~$\OO$ is the polynomial 
$\NF_{\OO,I}(f)=\NR_{\OO,G}(f)$. 
\end{definition}

\smallskip
\begin{myremark}\label{rem-NF}
The normal form $\NF_{\OO,I}(f)$ of $f\in P$ can be calculated by
dividing~$f$ by the $\OO$-border basis of~$I$. It is zero if and
only if $f\in I$. 
\end{myremark}

We now turn to \textcolor{blue}{Reason~\ref{reason3}}, 
stated at the beginning of the section, 
which concerns the numerical stability of border bases.  
To this end, we use the following example (see also \cocoa-Example~\ref{coex-numstabil}).

\begin{example}\label{ex-numstabil}
Let $P= \QQ[x,y]$ and let $I=\langle f_1, f_2 \rangle$ where $f_1 = \tfrac{1}{4}\, x^2+y^2-1$ 
and $f_2 = x^2 + \tfrac{1}{4}\, y^2-1$. 
Let  $\epsilon $ be a \textit{small number}, and let
$I^\epsilon = \langle f_1^\epsilon, f_2^\epsilon \rangle$ where
$$
f_1^\epsilon =  \tfrac{1}{4}\, x^2 + y^2 +\epsilon\, xy -1 \qquad
f_2^\epsilon = x^2+ \tfrac{1}{4}\, y^2 +\epsilon\, xy-1
$$

The zeros of the ideals $I$ and $I^\epsilon$ are illustrated in the 
following \textcolor{blue}{Figure~\ref{fig:zeros}}, depicted in the real plane.
The intersection of the two conics~$\Cal{Z}(f_1)$ and $\Cal{Z}(f_2)$ in $\mathbb R$ 
consists of the set $\bbb X$ of four points with coordinates $(\pm \sqrt{4/5},\, \pm\sqrt{4/5})$, 
while the intersection of the two perturbed conics $\Cal{Z}(f_1^\epsilon)$ and $\Cal{Z}(f_2^\epsilon)$ 
consists of  four points  close to those in~$\bbb X$.

\begin{figure}[ht]
    \centering
    \input{pic64a.tex} \qquad \input{pic64b.tex}
    \caption{$\Cal{Z}(f_1)$, $\Cal{Z}(f_2)$  (left), \qquad \quad  $\Cal{Z}(f_1)$, $\Cal{Z}(f_2)$,
    $\Cal{Z}(f_1^\epsilon)$, $\Cal{Z}(f_2^\epsilon)$   (right)}
    
    \label{fig:zeros}
\end{figure}

The set $\{x^2-\tfrac{4}{5},\,y^2-\tfrac{4}{5}\}$ 
is the reduced Gr\"obner basis
of the ideal $I$. Therefore we have
$\LT_\sigma(I)= (x^2,\,y^2)$, and if we let $\OO = \{1,\,x,\,y,\,xy\}$, then
the residue classes of the
terms in $\TT^2\setminus \LT_\sigma\{I\}=\OO$ form a
\hbox{$\QQ$-vector} space basis of~$\QQ[x,y]/I$.

The border of~$\OO$ is $\partial\OO=\{x^2, x^2y, xy^2, y^2\}$,
and both $I$ and $I^\epsilon$  have an $\OO$-border basis.
The $\OO$-border basis of~$I$ is 
$$
\{y^2-\tfrac{4}{5},\ x^2-\tfrac{4}{5},\  xy^2-\tfrac{4}{5}x, \, x^2y-\tfrac{4}{5}y, \} \eqno{(a)}
$$
The $\OO$-border basis of~$I^\epsilon$ is
\begin{equation}
\left\{ 
\begin{aligned}
&y^2- (\tfrac{4}{5} -\tfrac{4}{5}\,\epsilon xy),  & &x^2- (\tfrac{4}{5} -\tfrac{4}{5}\,\epsilon xy), \\
&xy^2- (\tfrac{16\epsilon}{16\epsilon^2-25}\,y -\tfrac{20}{16\epsilon^2-25}\,x \big),& &
x^2y -( \tfrac{20}{16\epsilon^2-25}\,y -  \tfrac{16\epsilon}{16\epsilon^2-25}\,x \big)
\end{aligned} 
\right\} \tag{b} \label{base-b}
\end{equation}

Next, using \cocoa\, we compute the Gr\"obner Fan  (see Definition~\ref{def-GFan}) of $I^\epsilon$  
viewed as an ideal in $R=\QQ(\epsilon)[x,y]$ (see \cocoa-Example~\ref{coex-numstabil}). 
In this way we get all marked reduced Gr\"obner bases of the ideal $I^\epsilon$
\begin{gather*}
\big( (x^2 -y^2, x^2), \ \  (xy +\tfrac{5}{4e}y^2 -\tfrac{1}{e}, y^2), \ \  
(y^3  -\tfrac{16e}{16e^2 -25}x +\tfrac{20}{16e^2 -25}_{\mathstrut} y, y^3) \big)  \\
 \big( (y^2 -x^2, y^2), \   (xy  +\tfrac{5}{4e}x^2 -\tfrac{1}{e}, xy), \ \  
 (x^3 -\tfrac{16e}{16e^2 -25}y +\tfrac{20}{16e^2 -25}_{\mathstrut}  x, x^3) \big) \\
\big( (y  -\tfrac{16e^2 +25}{16e}x^3 -\tfrac{5}{4e} x, y),\ \  
(x^4 +\tfrac{40}{16e^2 -25}x^2 -\tfrac{6}{16e^2 -25} _{\mathstrut}, x^4) \big) \\
\big( (x  -\tfrac{16e^2 +25}{16e}y^3  -\tfrac{5}{4e}  y, x) , \ \ 
(y^4 +\tfrac{40}{16e^2 -25} y^2 -\tfrac{6}{16e^2 -25}, y^4) \big)
\end{gather*}
and hence the generators of the  corresponding leading term ideals and  the order ideals which represent vector space bases of $R/I^\epsilon$
\begin{gather*}
  \big( \{x^2,  xy,  y^3\}, \quad \{1,  y,  x,  y^2\} \big) \\
 \big( \{y^2,  xy,  x^3\}, \quad \{1,  x,  y,  x^2\} \big) \\
  \big( \{y,  x^4\},   \qquad       \{1,  x,  x^2,  x^3\} \big) \\
  \big( \{x,  y^4\},    \qquad         \{1,  y,  y^2,  y^3\} \big).
\end{gather*}

We may consider the ideal $I^\epsilon$ as a family of ideals with $\epsilon$ acting as a parameter.
Then, when we vary the coefficients of $xy$ in the two generators from zero to $\epsilon$, 
we see that the border basis of~$I$ changes continuously into that of $I^\epsilon$.
Thus, the border basis  is numerically stable under small perturbations of the coefficient of $xy$.

On the other hand, we have seen that a {\it small}\/ change in the coefficients of~$xy$
has led to a {\it big}\/ change in the Gr\"obner bases of~$I$
and in the associated vector space bases of~$\QQ[x,y]/I$, although the
zeros of the system have not changed much. 

In addition, this example is another instance of the phenomenon wherein 
Gr\"obner bases fail to preserve the symmetry of the given system of equations, 
whereas certain border bases do. 

In conclusion,  this example provides further  support for 
the \textcolor{blue}{Reasons} mentioned at the beginning of the section.
\end{example}

\medskip
\subsection{Commuting Matrices}
\label{subsec-Commuting Matrices}

In \cite[Subsection 6.4.B]{KR2}, further characterizations of border 
bases are given (see \cite[Propositions~6.4.23, 6.4.25, and 6.4.34]{KR2}), 
and it is shown that one can fully imitate the characterization of Gr\"obner bases, 
including an algorithm for computing a border basis of a zero-dimensional
ideal (see \cite[Theorem~6.4.36]{KR2}).

Here, we concentrate on a special characterization of border bases 
introduced by Bernard Mourrain in \cite{Mou}, which turns out to be 
essential in preparing the ground for the discussion of border basis schemes.

Throughout the subsection, we let $P=K[x_1, \dots, x_n]$, 
let $\OO = \{t_1, \dots, t_\mu\}$ be an order ideal, and 
let $\langle \OO \rangle_K$
be the $K$-vector space spanned by $\OO$. 
Let $\partial\OO = \{b_1, \dots, b_\nu\}$ be the border of $\OO$, and let 
$G = \{g_1, \dots, g_\nu\}$ be an $\OO$-border prebasis, where we have 
$g_j = b_j - \sum_{i=1}^\mu \alpha_{ij}t_i$ with $\alpha_{ij} \in K$
for $1 \le i \le \mu$ and $1 \le j \le \nu$.
Moreover, the column vector $(0, \dots, 0, 1, 0, \dots, 0)\tr$ with $1$ in position $k$ 
will be denoted by $e_k$.

\begin{definition}\label{def-linoper}
For every $i = 1, \dots, n$, the endomorphism of $\langle \OO \rangle_K$ defined by 
sending~$t$ to $\text{NR}_{\OO, G}(x_i t)$ for all $t \in \OO$ is denoted by $m_{x_i}$
and called the \textbf{formal multiplication endomorphism} with respect to $x_i$.
Its associated matrix with respect to the basis $\OO$ is denoted by $M_{x_i}$
and called the \textbf{formal multiplication matrix} with respect to $x_i$.
Consequently, if $x_i t_k = t_s \in \OO$, then 
$m_x(t_k)=\text{NR}_{\OO, G}(x_i t_k) = t_s$, 
and the $k$-th column of $M_{x_i}$ is $e_s$. 
Instead, if we have $x_i t_k = b_j \in \partial\OO$, then 
$m_{x_i}(t_k)=\text{NR}_{\OO, G}(x_i t_k) = \sum_{r=1}^\mu \alpha_{rj} t_r$, 
and the $k$-th column of $M_{x_i}$ is $(\alpha_{1j}, \dots, \alpha_{\mu j})\tr$.
\end{definition}

\begin{example}\label{ex-multmat}
We revisit Example~\ref{ex-bb-no-gb}. 

\smallskip
\noindent
\begin{minipage}[c]{0.55\textwidth}
With the basis $\OO = \{1, y, x, y^2, x^2\}$, the images of the 
formal multiplication by $x$ are
$m_x(1) = x$, $m_x(y) = y^2 + x^2$, $m_x(x) = x^2$, and $m_x(y^2) = m_x(x^2) = 0$. 
The resulting formal multiplication matrix $M_x$ is shown on the right.
\end{minipage}
\hfill
\begin{minipage}[c]{0.4\textwidth}
\centering
$M_x = \begin{bmatrix}
0 & 0 & 0 & 0 & 0 \\
0 & 0 & 0 & 0 & 0 \\
1 & 0 & 0 & 0 & 0 \\
0 & 1 & 0 & 0 & 0 \\
0 & 1 & 1 & 0 & 0 
\end{bmatrix}$
\end{minipage}
\end{example}

With  the next result we introduce a new structure on $\langle \OO\rangle_K$ 
under the special condition that the formal multiplication endomorphisms
are pairwise commuting.

\begin{lemma}\label{lem-cyclic}
Assume that the formal multiplication endomorphisms are pairwise commuting.
Then the vector space $\langle \OO\rangle_K$ acquires the structure of 
a cyclic $P$-module with generator $1$  via 
$$f\circ (c_1 t_1+\dots +c_\mu t_\mu)=
(t_1,\dots,t_\mu)f(M_{x_1},\dots,M_{x_n})(c_1,\dots,c_\mu)^{\rm tr}$$
with the convention that $f(M_{x_1},\dots,M_{x_n}) = f\cdot \rm{Id}_\mu$ if $f$ is a constant.
\end{lemma}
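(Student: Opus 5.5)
The plan is to obtain the module structure from a ring homomorphism, and then check cyclicity by induction on the order ideal.

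\textbf{Step 1: the ring homomorphism.} Since the matrices $M_{x_1},\dots,M_{x_n}$ pairwise commute, the $K$-subalgebra of $\Mat_\mu(K)$ they generate is commutative. By the universal property of the polynomial ring $P=K[x_1,\dots,x_n]$, there is a unique $K$-algebra homomorphism
$$\eta\colon P\To\Mat_\mu(K),\qquad x_i\mapsto M_{x_i}.$$
Thus $\eta(f)=f(M_{x_1},\dots,M_{x_n})$ is well defined: the order of the factors in a monomial is irrelevant precisely because the matrices commute. Also $\eta(c)=c\cdot{\rm Id}_\mu$ for constants $c\in K$, which matches the stated convention.

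\textbf{Step 2: the module axioms.} Identify $\langle\OO\rangle_K$ with $K^\mu$ via the coordinate vector with respect to $(t_1,\dots,t_\mu)$. Then $f\circ v$ is exactly the vector obtained by letting the matrix $\eta(f)$ act on the coordinate vector of $v$. Any ring homomorphism from $P$ into the endomorphism ring of a vector space makes that space a $P$-module. Concretely, the axioms follow directly:
\begin{itemize}
\item distributivity in $v$ comes from linearity of matrix multiplication;
\item $(f+g)\circ v=f\circ v+g\circ v$ because $\eta$ is additive;
\item $(fg)\circ v=f\circ(g\circ v)$ because $\eta(fg)=\eta(f)\eta(g)$;
\item $1\circ v=v$ because $\eta(1)={\rm Id}_\mu$.
\end{itemize}

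\textbf{Step 3: cyclicity with generator $1$.} This is the only step that uses the shape of the $M_{x_i}$ and the fact that $\OO$ is an order ideal. We may assume $t_1=1$; note that $1\in\OO$ because $\OO$ is nonempty and closed under divisors. We show by induction on $\deg(t)$ that $t\circ 1=t$ for every $t\in\OO$.
\begin{itemize}
\item For $t=1$ this is Step 2.
\item If $t\in\OO$ has positive degree, write $t=x_i t'$. Then $t'\in\OO$ because $\OO$ is an order ideal, and $t'\circ 1=t'$ by induction.
\item Hence $t\circ 1=x_i\circ(t'\circ 1)=x_i\circ t'$. By the module structure, this is the vector whose coordinates are $M_{x_i}$ applied to $e_{k}$, where $t'=t_k$.
\item Since $x_it'=t\in\OO$, Definition~\ref{def-linoper} says the $k$-th column of $M_{x_i}$ is $e_s$, where $t=t_s$. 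So $x_i\circ t'=t$.
\end{itemize}
Therefore every basis element $t_j$ equals $t_j\circ 1$. Every element $\sum c_jt_j$ then equals $\bigl(\sum c_jt_j\bigr)\circ 1$, so the module is cyclic with generator $1$.

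\textbf{Main obstacle.} There is no real difficulty. The one point needing care is in Step 3: the formula $t\circ 1=t$ relies on applying $M_{x_i}$ only to terms $t'$ with $x_it'\in\OO$, where the column of $M_{x_i}$ is a standard basis vector. The other columns, coming from border terms, are never used in this argument.
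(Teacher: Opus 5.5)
Your proof is correct and follows essentially the same route as the paper: you verify the module axioms, which the paper only calls ``easily checked'' and you justify via the evaluation homomorphism $x_i\mapsto M_{x_i}$ into the commutative subalgebra generated by the $M_{x_i}$. You then use induction on the degree to show $t\circ 1=t$ for all $t\in\OO$, based on the fact that the $k$-th column of $M_{x_i}$ is $e_s$ whenever $x_it_k=t_s\in\OO$; your use of $(x_it')\circ 1=x_i\circ(t'\circ 1)$ is the same fact as the paper's factorization $t_i(M_{x_1},\dots,M_{x_n})=M_{x_j}\,t_k(M_{x_1},\dots,M_{x_n})$.
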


\begin{proof}
Since $1 \in \OO$, we may assume that $t_1=1$.
The standard properties required for the operation $\circ$ to define a  $P$-module 
structure are easily checked.
It remains to prove that the module is cyclic. To do so, we use induction on the
degree to show that $t_i\circ 1=t_i$ for $i=1,\dots,\mu$. Let~$e_i$ denote the
matrix of size $\mu\times 1$ whose $i^{\rm th}$ entry is~1
and whose other entries are~0. The induction starts 
with $t_1=1$. By definition we have    $1\circ 1= (1, t_2,\dots,t_\mu)\text{Id}_\mu\, e_1 =1$.
Next, we let $t_i=x_j\,t_k$, so that by induction we have 
$t_k\circ 1=  (1, t_2,\dots,t_\mu)t_k(M_{x_1},\dots,\ M_{x_n})\, e_1  = t_k$ and hence 
$t_k(M_{x_1},\dots, M_{x_n})\,e_1\overset{\text{(0)}}{=} e_k$. Consequently, we get
\begin{eqnarray*}
t_i\circ 1&=& (1, t_2,\dots,t_\mu)t_i(M_{x_1},\dots,M_{x_n})\, e_1 
\overset{\text{(1)}}{=}
(1, t_2,\dots,t_\mu)M_{x_j}\,t_k(M_{x_1},\dots,M_{x_n})\, e_1\\
&\overset{\text{(2)}}{=}& (1, t_2,\dots,t_\mu)M_{x_j}\, e_k \overset{\text{(3)}}{=}
(1, t_2,\dots,t_\mu)e_i = t_i
\end{eqnarray*}
where equality~(1) follows  from $t_i = x_jt_k$, equality~(2)  follows from equality~(0)
proved above, and equality~(3) is a consequence of  $M_{x_j}e_k =e_i$ which follows 
from $t_i = x_jt_k$, and the definition of formal multiplication matrices.
\end{proof}

The following example illustrates why the assumption of 
commutativity is essential in the above lemma.

\begin{example}\label{ex-commuteessential}
Let $f = xy$. Since $x y=y x$ in~$P$,  the module action must satisfy
$$
f\circ (c_1 t_1+\dots +c_\mu t_\mu) =
(t_1,\dots,t_\mu)M_x\, M_y(c_1,\dots,c_\mu)^{\rm tr} \\
= (t_1,\dots,t_\mu)M_y\, M_x(c_1,\dots,c_\mu)^{\rm tr} 
$$
Since $\{t_1, \dots, t_\mu\}$ is a basis of $\langle \OO \rangle_K$, we must have
$$M_x\, M_y(c_1,\dots,c_\mu)^{\rm tr}=
M_y\, M_x(c_1,\dots,c_\mu)^{\rm tr}$$
for every $(c_1,\dots,c_\mu)\in K^\mu$, which forces the condition
$M_x\, M_y = M_y\, M_x$.
\end{example}

We are ready to prove the main result of this section.

\medskip
\begin{theorem}\label{thm-Mourrain}%
\textbf{\textup{(Border Bases and Commuting Matrices)}}\\
Let $P=K[x_1, \dots, x_n]$, 
let $\OO = \{t_1, \dots, t_\mu\}$ be an order ideal,
let $\partial\OO = \{b_1, \dots, b_\nu\}$ be the border of $\OO$, 
let  $G = \{g_1, \dots, g_\nu\}$ be an $\OO$-border prebasis, 
and let $I=\langle G \rangle$. Then the following conditions are equivalent.
\begin{myenumerate}
 \item The set~$G$ is an $\OO$-border basis of~$I$. 

\item The formal multiplication matrices  are pairwise commuting.
\end{myenumerate}
In that case, the formal multiplication matrices represent the 
formal multiplication endomorphisms of~$P/I$ with respect to the basis 
$\{\bar t_1,\dots,\bar t_\mu\}$.
\end{theorem}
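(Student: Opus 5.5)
For the implication (1)$\Rightarrow$(2), I will use that $G$ is an $\OO$-border basis, so the residue classes $\bar t_1,\dots,\bar t_\mu$ form a $K$-basis of $P/I$. By Definition~\ref{defofbb}, every element of $P$ has a unique representative in $\langle\OO\rangle_K$ modulo~$I$. For $t_k\in\OO$, the element $\NR_{\OO,G}(x_it_k)$ is either $x_it_k$ itself, if $x_it_k\in\OO$, or $\sum_r\alpha_{rj}t_r$, which is congruent to $b_j=x_it_k$ modulo $g_j\in I$. Hence $m_{x_i}$ is exactly the multiplication map by $\bar x_i$ on $P/I$, transported to $\langle\OO\rangle_K$ via the isomorphism $\langle\OO\rangle_K\cong P/I$. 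Multiplication maps on a commutative ring commute, so $M_{x_i}M_{x_j}=M_{x_j}M_{x_i}$. The same argument also gives the final assertion of the theorem.

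For (2)$\Rightarrow$(1), I will invoke Lemma~\ref{lem-cyclic}, which makes $\langle\OO\rangle_K$ a cyclic $P$-module generated by $1$ with $t_i\circ 1=t_i$. I will then consider the $P$-linear surjection $\epsilon\colon P\to\langle\OO\rangle_K$, $f\mapsto f\circ 1$, and set $J=\ker\epsilon$. Then $P/J\cong\langle\OO\rangle_K$, and since $\epsilon(t_i)=t_i$, the classes of $t_1,\dots,t_\mu$ form a $K$-basis of $P/J$.

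The key step is to show $I=J$. First I will show $g_j\in J$. Writing $b_j=x_\ell t_k$ with $t_k\in\OO$, we get $\epsilon(b_j)=x_\ell\circ(t_k\circ1)=x_\ell\circ t_k$. By the definition of the action, this equals the vector $M_{x_\ell}e_k$ read in the basis $\OO$, namely $\sum_r\alpha_{rj}t_r=\epsilon(\sum_r\alpha_{rj}t_r)$. So $\epsilon(g_j)=0$. One point needs care here: $b_j$ may factor as $x_\ell t_k$ in several ways, so I should note that each such column of $M_{x_\ell}$ is $(\alpha_{1j},\dots,\alpha_{\mu j})\tr$ by Definition~\ref{def-linoper}. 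This shows $I\subseteq J$.

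For $J\subseteq I$, take $f\in J$ and apply the Border Division Algorithm (Proposition~\ref{prop-BorderDiv}) to write $f=\sum f_jg_j+\sum c_it_i$. Applying $\epsilon$ and using $g_j\in J$ gives $0=\sum c_it_i$, so all $c_i=0$ and $f\in I$.

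Thus $I=J$, so $\bar\OO$ is a basis of $P/I$ and $G$ is an $\OO$-border basis. I expect the main obstacle to be the bookkeeping in verifying $\epsilon(g_j)=0$, which depends on the convention in Lemma~\ref{lem-cyclic} that $t_k\circ1=t_k$. The rest is formal.
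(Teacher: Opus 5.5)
Your proposal is correct and follows the paper's proof: (1)$\Rightarrow$(2) identifies $m_{x_i}$ with multiplication by $\bar x_i$ on $P/I$, and (2)$\Rightarrow$(1) uses the surjection $f\mapsto f\circ 1$ from Lemma~\ref{lem-cyclic}, whose kernel contains every $g_j$. The only difference is the final step: the paper concludes from the equality $\dim_K(P/I)=\dim_K(\langle\OO\rangle_K)$, which tacitly needs that $\bar\OO$ spans $P/I$ (again a consequence of border division), whereas you apply Proposition~\ref{prop-BorderDiv} directly to get $\ker\epsilon\subseteq I$, which makes that step explicit.
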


\begin{proof}
Let $m_{x_1}, \dots, m_{x_n}$ and  $M_{x_1}, \dots, M_{x_n}$  be  the formal 
multiplication endomorphisms and matrices. 
Recall that  $g_j = b_j - \sum_{i=1}^\mu \alpha_{ij}t_i$ with $\alpha_{ij} \in K$
for $1 \le i \le \mu$ and $1 \le j \le \nu$.

First,  we prove  $(1)\implies (2)$. 
Since~$G$ is an $\OO$-border basis, the set
$\{\bar t_1,\dots,\bar t_\mu\}$ is a $K$-vector space basis
of~$P/I$, hence there exists an isomorphism of $K$-vector spaces
$\theta \colon \langle \OO \rangle_K \to P/I$, defined by $t_i \to \bar{t}_i$ 
for $i =1,\dots, \mu$.
Via $\theta$, the formal multiplication endomorphisms 
$m_{x_1}, \dots, m_{x_n}$  define $K$-linear endomorphisms
$\phi_{x_1}, \dots, \phi_{x_n}$ on $P/I$ whose associated matrices 
are still $M_{x_1}, \dots, M_{x_n}$. 
The product $x_r\,t_\ell$ either equals some term~$t_i$ in the order 
ideal~$\OO$ or some border term $b_j\in \partial\OO$. 
In the former case, we have $\phi_r(\bar t_\ell)=\bar t_i=
x_r \bar t_\ell$, and in the latter case we have
$\phi_r(\bar t_\ell)=\alpha_{1j}\bar t_1 +\cdots +\alpha_{\mu j}
\bar t_\mu= \bar b_j=x_r\bar t_\ell$. It follows that 
the map $\phi_r$  is multiplication  by~$x_r$ for $r=1,\dots,n$.
Therefore we have $M_{x_r}M_{x_s}= M_{x_s}M_{x_r}$ for 
$r,s\in\{1,\dots,n\}$, i.e.\ the matrices $M_{x_1},\dots, M_{x_n}$ 
are pairwise commuting.

Next, we prove $2\implies 1$. It follows from Lemma~\ref{lem-cyclic} that 
there exists a surjective homomorphism $\theta \colon P \To \langle \OO\rangle_K$ 
of $P$-modules defined by $\theta(f) = f\circ 1$. 
Let us show that $I \subseteq \Ker(\theta)$. 
Assume that  $b_j=x_k\,t_\ell$ with $t_\ell \in \OO$. Then we have
\begin{equation*}
\begin{array}{lcl}
g_j\circ 1
&=& {b_j(M_{x_1}}_{\mathstrut}, \dots, M_{x_n})e_1-\sum_{i=1}^\mu \alpha_{ij}
      t_i(M_{x_1}, \dots, M_{x_n})e_1\\
&=& M_{x_k}\,t_\ell(M_{x_1}, \dots, M_{x_n})e_1-
    \sum_{i=1}^\mu  \alpha_{ij}e_i
= {M_{x_k}}_{\mathstrut}\,e_\ell-\sum_{i=1}^\mu \alpha_{ij}e_i \\
&=& \sum_{i=1}^\mu \alpha_{ij}e_i-\sum_{i=1}^\mu \alpha_{ij}e_i=0
\end{array}
\end{equation*}
 Consequently, the map $\theta$
induces a surjective homomorphism  $\bar{\theta} \colon P/I \To \langle \OO\rangle_K$ of $P$-modules.
From $\dim_K(P/I) = \dim_K(\langle \OO\rangle_K)$ we deduce that $\bar{\theta}$
is an isomorphism. This shows that~$G$
is an $\OO$-border basis of~$I$, and concludes the proof.
\end{proof}

We go back to Examples~\ref{ex-bb-no-gb} and~\ref{ex-nobb}, and  check
their conclusions using formal multiplication matrices.

\begin{example}\label{ex-comm-bb-no-gb}
In Example~\ref{ex-bb-no-gb} the formal multiplication matrices are
$$M_x = \begin{bmatrix}
0 & 0 & 0 & 0 & 0 \\
0 & 0 & 0 & 0 & 0 \\
1 & 0 & 0 & 0 & 0 \\
0 & 1 & 0 & 0 & 0 \\
0 & 1 & 1 & 0 & 0 
\end{bmatrix} \qquad
M_y = \begin{bmatrix}
0 & 0 & 0 & 0 & 0 \\
1 & 0 & 0 & 0 & 0 \\
0 & 0 & 0 & 0 & 0 \\
0 & 1 & 1 & 0 & 0 \\
0 & 0 & 1 & 0 & 0 
\end{bmatrix}$$
and 
$$M_x M_y -M_yM_x= \begin{bmatrix}
0 & 0 & 0 & 0 & 0 \\
0 & 0 & 0 & 0 & 0 \\
0 & 0 & 0 & 0 & 0 \\
0 & 0 & 0 & 0 & 0 \\
0 & 0 & 0 & 0 & 0 
\end{bmatrix}
$$
(see \cocoa-Example~\ref{coex-bb-no-gb}). 
This confirms that $G$ is a border basis.
\end{example}

\begin{example}\label{ex-comm-nobb}
In Example~\ref{ex-nobb} the formal multiplication matrices are
$$M_x = \begin{bmatrix}
0 & 0 & 0 & 0 & 0 \\
0 & 0 & 0 & 0 & 0 \\
1 & 1 & 0 & 0 & 0 \\
0 & 1 & 0 & 0 & 0 \\
0 & 1 & 1 & 0 & 0 
\end{bmatrix} \qquad
M_y = \begin{bmatrix}
0 & 0 & 0 & 0 & 0 \\
1 & 0 & 0 & 0 & 0 \\
0 & 0 & 1 & 0 & 0 \\
0 & 1 & 1 & 0 & 0 \\
0 & 0 & 1 & 0 & 0 
\end{bmatrix}$$
and 
$$M_x M_y -M_yM_x= \begin{bmatrix}
0 & 0 & 0 & 0 & 0 \\
0 & 0 & 0 & 0 & 0 \\
0 & -1 & 0 & 0 & 0 \\
0 & -1 & 0 & 0 & 0 \\
0 & -1 & 1 & 0 & 0 
\end{bmatrix}
$$
which confirms that $G$ is not a border basis.
\end{example}

\newpage   
\section{Border Basis Schemes}
\label{sec-Border Bases Schemes}

We begin this section by recalling the final part of Section~\ref{sec-Border Bases}, 
specifically Theorem~\ref{thm-Mourrain}, which establishes the connection 
between border bases and commuting matrices.

Let~$\OO = \{t_1, \dots, t_\mu\}$ be an order ideal in~$\mathbb{T}^n$, and let
$\partial\OO = \{b_1, \dots, b_\nu\}$ be its border. Recall that an $\OO$-border prebasis is a tuple
of polynomials $G=(g_1,\dots,g_\nu)$, where 
$g_j=b_j-\sum_{i=1}^\mu\,\alpha_{ij}t_i$ with $\alpha_{ij}\in K$ 
(see Definition~\ref{def-border!prebasis}).

\medskip
\subsection{Introduction to Border Basis Schemes}
\label{subsec-IntroToBBS}

From Definition~\ref{def-linoper} we recall the notions of formal multiplication endomorphism
and formal multiplication matrices. In particular, it was shown that 
\begin{myenumerate}
\item if $x_i t_k = t_s \in \OO$, then 
the $k$-th column of $M_{x_i}$ is $e_s$, 
\item if  $x_i t_k = b_j \in \partial\OO$, then the
$k$-th column of $M_{x_i}$ is $(\alpha_{1j}, \dots, \alpha_{\mu j})\tr$.
\end{myenumerate}

The key idea here is to replace  the coefficients $\alpha_{ij} \in K$ with 
new indeterminates. We now proceed in this way.

\begin{definition}\label{def-BBS} 
Let $\OO = \{t_1, \dots, t_\mu\}$ be an order ideal in~$\mathbb{T}^n$, and let
$\partial\OO = \{b_1, \dots, b_\nu\}$ be its border. 
Let $X = (x_1, \dots, x_n)$ and, for all $i=1,\dots,\mu$ and $j=1,\dots,\nu$, let~$c_{ij}$ 
be new indeterminates, and let ${C=(c_{11}, c_{12}, \dots, c_{\mu\nu})}$ 
be the tuple of these indeterminates.
Moreover,  let $K[X] = K[x_1, \dots, x_n]$, let ${K[C] = K[c_{11},\dots,c_{\mu\nu}]}$, and let
$K[C,X] = K[c_{11}, c_{12}, \dots, c_{\mu\nu}, x_1,\dots, x_n]$.
\begin{myenumerate}

\item The tuple of polynomials $\mathsf{G}=(g_1,\dots,  g_\nu)$ in $K[C,X]$, 
where  $g_j =b_j-\sum_{i=1}^\mu\,c_{ij}t_i$ for $j=1,\dots,\nu$, is called the 
{\bf generic $\OO$-border prebasis}.  

\item For $r=1,\dots,n$, the matrix $\mathcal{M}_r  \in \Mat_\mu(K[C])$,
whose $k$-th column is $e_s$ if  $x_r t_k = t_s \in \OO$ and is $(c_{1j},  \dots, c_{\mu j})\tr$
if $x_r t_k = b_j \in \partial\OO$,
is called the $r$-th {\bf generic multiplication matrix} for~$\OO$.

\item Let $I(\BO) \subseteq K[C]$ be the ideal generated
by the entries, called \textbf{natural generators}, of the commutator 
matrices $\mathcal{M}_i \mathcal{M}_j - \mathcal{M}_j
\mathcal{M}_i$, where $1\le i < j \le n$. Then the subscheme of $\mathbb{A}^{\mu \nu}_K$
defined by $I(\BO)$ is called the {\bf $\OO$-border basis scheme} and is denoted by~$\BO$.

\item The coordinate ring  $K[C] / I(\BO)$ of~$\BO$ is denoted by $B_\OO$.

\item Let $U_\OO = B_\OO[X] / \langle \bar{g}_1,\dots, \bar{g}_\nu\rangle 
\cong K[C,X]/\langle I(\BO), g_1,\dots,g_\nu\rangle$.
The canonical ring homomorphism $B_\OO \to U_\OO$ is 
called the \textbf{universal $\OO$-border basis family}, 
and~$U_\OO$ is called the \textbf{universal $\mathcal{O}$-border basis algebra}.
\end{myenumerate}
\end{definition}

\medskip
\begin{myremark}\label{rem-Rob}
Gr\"obner basis schemes were introduced in~\cite{Rob}, where it is shown
that, in some cases, border basis schemes can be described
in terms of suitable Gr\"obner basis schemes.
\end{myremark}

\bigskip
The following example illustrates the above definitions.

\bigskip
\goodbreak
\begin{example}{\bf (The (2,2)-Box)}\label{ex-box22}\\
\begin{minipage}[c]{0.53\textwidth}
\medskip
\small
In the polynomial ring $P=K[x, y]$ over a field~$K$, 
the order ideal  $\OO = \{t_1,t_2,t_3,t_4\} = \{1,\, y,\, x,\, xy\}$ 
is called  the  {\bf (2,2)-box}. Its border is 
$\partial\OO= \{b_1, b_2, b_3, b_4\} = \{y^2,  x^2,  xy^2,  x^2y\}$.
The picture shows the order ideal ($\bullet$)
and its border  ($\circ$).
\end{minipage}
\hfill
\begin{minipage}[c]{0.30\textwidth}
\leavevmode
\beginpicture
\setcoordinatesystem units <0.6cm,0.6cm>
\setplotarea x from 0 to 4, y from 0 to 4
\axis left /
\axis bottom /
\arrow <2mm> [.2,.67] from  4 0  to 4.5 0
\arrow <2mm> [.2,.67] from  0 4  to 0 4.5
\put {$\scriptstyle x$} [lt] <0.5mm,0.8mm> at 4.5 0
\put {$\scriptstyle y$} [rb] <1.7mm,0.7mm> at 0 4.5
\multiput {$\bullet$} at 0 0  1.5 0   0 1.5  1.5  1.5 /
\multiput {$\circ$} at 0 3  1.5  3   3 1.5   3 0 /
\put {\footnotesize $t_1$} at -.5 .4 
\put {\footnotesize $ t_2$} at -.5 1.7 
\put {\footnotesize $t_3$} at  1  .4 
\put {\footnotesize $t_4$} at  1 1.7 

\put {\footnotesize $b_1$} at -.5 3.2
\put {\footnotesize $b_2$} at  2.55 .4
\put {\footnotesize $b_3$} at 1  3.2
\put {\footnotesize $b_4$} at  2.55 1.7 

\endpicture
\end{minipage}

\smallskip
For the computations see  \cocoa-Example~\ref{coex-22Box}.

We have 
\footnotesize
\begin{align*}
g_1 &= y^2 -c_{11} -c_{21}y  -c_{31}x - c_{41}xy, \qquad  & 
g_2 &= x^2 -c_{12} -c_{22}y  -c_{32}x - c_{42}xy, \\
g_3 &= xy^2 -c_{13} -c_{23}y  -c_{33}x - c_{43}xy, \qquad  & 
g_4 &= x^2y -c_{14} -c_{24}y  -c_{34}x - c_{44}xy.
\end{align*}

\normalsize
The generic multiplication matrices are 
$$\mathcal{M}_x = 
\begin{bmatrix}
0 & 0 & c_{12} & c_{14} \\
0& 0 & c_{22} & c_{24} \\
1 & 0 & c_{32} &  c_{34}  \\
0 & 1 & c_{42} & c_{44} 
\end{bmatrix} \qquad
\mathcal{M}_y = \begin{bmatrix}
0 & c_{11} & 0 & c_{13} \\
1 & c_{21} & 0 & c_{23} \\
0 & c_{31} & 0 & c_{33}  \\
0 & c_{41} & 1 & c_{43}
\end{bmatrix}$$
The non-zero entries of $\mathcal{M}_x\mathcal{M}_y - \mathcal{M}_y\mathcal{M}_x$ are 
\footnotesize 
\begin{align*}
& f_{1} = -c_{12}c_{31} -c_{14}c_{41} +c_{13}, 
&& f_{2} = -c_{22}c_{31} -c_{24}c_{41} +c_{23}, \\
& f_{3} = -c_{31}c_{32} -c_{34}c_{41} -c_{11} +c_{33}, 
&& f_{4} = -c_{31}c_{42} -c_{41}c_{44} -c_{21} +c_{43}, \\
& f_{5} = -c_{11}c_{22} -c_{13}c_{42} +c_{14}, 
&& f_{6} = -c_{21}c_{22} -c_{23}c_{42} -c_{12} +c_{24}, \\
& f_{7} = -c_{22}c_{31} -c_{33}c_{42} +c_{34}, 
&& f_{8} = -c_{22}c_{41} -c_{42}c_{43} -c_{32} +c_{44}, \\
& f_{9} = -c_{11}c_{24} +c_{12}c_{33} +c_{14}c_{43} -c_{13}c_{44}, 
&& f_{10} = -c_{21}c_{24} +c_{22}c_{33} +c_{24}c_{43} -c_{23}c_{44} -c_{14}, \\
& f_{11} = -c_{24}c_{31} +c_{32}c_{33} +c_{34}c_{43} -c_{33}c_{44} +c_{13}, 
&& f_{12} = -c_{24}c_{41} +c_{33}c_{42} +c_{23} -c_{34}.
\end{align*}
\normalsize
Hence $I(\BO) = \langle f_1,\dots, f_{12}\rangle$ and the 
universal $\OO$-border family is 
$$
\BO=K[C]/\langle f_1,\dots, f_{12}\rangle 
\To U_\OO=K[C,X]/ \langle f_1,\dots, f_{12}, g_1,\dots, g_4\rangle
$$
\end{example}

One main reason why $\BO$ is a good moduli space is the following result.

\begin{theorem}\label{thm-freeness}
The residue classes of the elements of~$\OO$ form
a $B_\OO$-basis of~$U_\OO$.
\end{theorem}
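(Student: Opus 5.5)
We prove the theorem by running the argument of Theorem~\ref{thm-Mourrain} over the base ring $B_\OO$ instead of over a field. Write $\bar{\mathcal M}_1,\dots,\bar{\mathcal M}_n\in\Mat_\mu(B_\OO)$ for the images of the generic multiplication matrices. By definition of $I(\BO)$ they commute pairwise. Let $F=B_\OO^\mu$ be the free $B_\OO$-module with basis $e_1,\dots,e_\mu$, and assume $t_1=1$. We show two things: the residue classes $\bar t_1,\dots,\bar t_\mu$ generate $U_\OO$ as a $B_\OO$-module, and they are $B_\OO$-linearly independent.

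\textbf{Generation.} The Border Division Algorithm (Proposition~\ref{prop-BorderDiv}) never divides by a coefficient. Each reduction step subtracts $a\,t'\,g_j$, where $g_j$ is monic in its border term $b_j$. The algorithm therefore works verbatim for the generic prebasis $\mathsf G$ over the coefficient ring $B_\OO$. Its termination rests only on the $\OO$-index bookkeeping of Lemma~\ref{lem-indprop}, which is independent of the coefficients. Hence every $f\in B_\OO[X]$ can be written as $f=\sum_j f_j\bar g_j+\sum_i c_it_i$ with $c_i\in B_\OO$, and in $U_\OO$ the classes $\bar t_i$ generate.

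\textbf{Independence.} Imitate Lemma~\ref{lem-cyclic}. Since the $\bar{\mathcal M}_r$ commute, $F$ becomes a $B_\OO[X]$-module via $f\circ v=f(\bar{\mathcal M}_1,\dots,\bar{\mathcal M}_n)v$. The induction in the proof of Lemma~\ref{lem-cyclic} uses only the combinatorial structure of the columns of $\mathcal M_r$ indexed by pairs with $x_rt_k\in\OO$. Applied here, it gives $t_i\circ e_1=e_i$ for all $i$. The computation in the proof of Theorem~\ref{thm-Mourrain}, direction (2)$\Rightarrow$(1), then shows $g_j\circ e_1=0$: writing $b_j=x_kt_\ell$, the $\ell$-th column of $\bar{\mathcal M}_k$ is $(\bar c_{1j},\dots,\bar c_{\mu j})\tr$, so $b_j\circ e_1=\sum_i\bar c_{ij}e_i=\sum_i \bar c_{ij}\,t_i\circ e_1$. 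Thus the $B_\OO[X]$-linear map $\theta\colon B_\OO[X]\to F$, $f\mapsto f\circ e_1$, kills $\langle\bar g_1,\dots,\bar g_\nu\rangle$ and induces $\bar\theta\colon U_\OO\to F$ with $\bar\theta(\bar t_i)=e_i$.

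Now suppose $\sum_i c_i\bar t_i=0$ in $U_\OO$ with $c_i\in B_\OO$. Since $\bar\theta$ is $B_\OO$-linear, applying it gives $\sum_i c_ie_i=0$, so all $c_i=0$. Hence $\bar t_1,\dots,\bar t_\mu$ form a $B_\OO$-basis, and $\bar\theta$ is an isomorphism.

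The main point to verify carefully is that the module structure is well defined. Well-definedness needs only commutativity of the matrices over the commutative ring $B_\OO$, which holds by construction. Beyond that, nothing in Lemma~\ref{lem-cyclic} or Theorem~\ref{thm-Mourrain} requires $K$ to be a field except the dimension count. That dimension count is replaced here by the generation step together with the explicit inverse $\bar\theta$.
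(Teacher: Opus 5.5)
Your proof is correct. The paper gives no argument here and only cites \cite[Theorem~3.4]{KR3}. Your route is the proof of Theorem~\ref{thm-Mourrain} run over the base ring $B_\OO$: border division for generation, which is valid because each $\bar g_j$ is monic in $b_j$, and the commuting images $\bar{\mathcal M}_r$ making $B_\OO^\mu$ a cyclic $B_\OO[X]$-module with $\bar\theta(\bar t_i)=e_i$ for independence. This is the standard argument behind the cited result, and replacing the dimension count by generation together with the explicit map $\bar\theta$ is exactly the right adjustment.
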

\begin{proof}
See~\cite[Theorem~3.4]{KR3}.
\end{proof}

\medskip
The scheme $U_\OO$ is a free, hence flat, $\BO$-module,
and the scheme $\BO$ parametrizes all $\OO$-border bases.
Its $K$-rational points correspond to $\OO$-border bases
in the following way.

\begin{definition}\label{def-represent}
Let $\Gamma=(\gamma_{ij})\in K^{\mu\nu}$
be a $K$-rational point of~$\BO$. 
Then the polynomials $g_j(X,\Gamma)$ with $j\in\{1,\dots,\nu\}$ form an $\OO$-border
basis. Let $I_\Gamma$ be the ideal in~$P$ which is generated
by these polynomials. 
Then the zero-dimensional scheme $\X_\Gamma$ in~$\AA^n_K$
defined by~$I_\Gamma$ is called the zero-dimensional scheme
{\bf represented by~$\Gamma$}.

Conversely, given a zero-dimensional scheme~$\X$ in~$\AA^n_K$
whose vanishing ideal $I_\X$ has an $\OO$-border basis,
the coefficients of that $\OO$-border basis define a
$K$-rational point $\Gamma_\X$ of~$\BO$. We say that the
point $\Gamma_\X$ {\bf represents} the zero-dimensional scheme~$\X$.

Let $\Gamma_0=(\underbrace{0, \dots, 0}_{\mu\nu}) \in K^{\mu\nu}$ be the point of~$\BO$ which represents 
the $0$-dimensional scheme in $\AA_K^n$ defined by the  ideal generated by $\partial\OO$.
Then $\Gamma_0$ is called the \textbf{origin of $\BO$}.
\end{definition}

\begin{example}\textbf{(Example~\ref{ex-box22} continued)}

\label{ex-box22 continued}%
In Example~\ref{ex-box22} we have $\mu=\nu=4$, hence there are 16 
indeterminates $c_{11},\dots, c_{44}$.
Let $\Gamma_0 =(0,0,0,0,0,0,0,0,0,0,0,0,0,0,0,0 )$. Then $f_k(\Gamma_0) = 0$ 
for $k =1,\dots, 12$,
hence $\Gamma_0$ is a $K$-rational point of $\BO$. Moreover, $g_1(X,\Gamma_0) = y^2$,
$g_2(X,\Gamma_0) = x^2$, $g_3(X,\Gamma_0) = xy^2$, $g_4(X,\Gamma_0) = x^2y$.
Therefore $I_{\Gamma_0} = \langle y^2,\, x^2, \, xy^2,\, x^2y \rangle =\langle y^2,\, x^2\rangle$
and $\X_{\Gamma_0} = \Spec(K[x,y]/I_{\Gamma_0})$
is the zero-dimensional scheme \textit{represented} by $\Gamma_0$.

Conversely,  denote $\Spec(K[x,y]/I)$ by $\X$ where $I =\langle x^2,\, y^2 \rangle$.
We notice that $I$ has the $\OO$-border basis  $\{y^2,\, x^2,\, xy^2,\, x^2y\}$.
The coefficients of that $\OO$-border basis define the $K$-rational 
point $\Gamma_\X =(0,0,0,0,0,0,0,0,0,0,0,0,0,0,0,0 )$ of $\BO$ which \textit{represents} $\X$.
\end{example}

\begin{example}\textbf{(Example~\ref{ex-numstabil} continued)}

\label{ex-numstabil continued}%
Recall that in Example~\ref{ex-numstabil} we claimed that 
The $\OO$-border basis of~$I^\epsilon$ is
\begin{equation*}
\left\{ 
\begin{aligned}
&y^2- (\tfrac{4}{5} -\tfrac{4}{5}\,\epsilon xy),  & &x^2- (\tfrac{4}{5} -\tfrac{4}{5}\,\epsilon xy), \\
&xy^2- (\tfrac{16\epsilon}{16\epsilon^2-25}\,y -\tfrac{20}{16\epsilon^2-25}\,x \big),& &
x^2y -( \tfrac{20}{16\epsilon^2-25}\,y -  \tfrac{16\epsilon}{16\epsilon^2-25}\,x \big)
\end{aligned} 
\right\}
\end{equation*}

Let $\Gamma_\epsilon = (\tfrac{4}{5}, 0,0,-\tfrac{4}{5}, \   \tfrac{4}{5}, 0,0,-\tfrac{4}{5},\
0,\tfrac{16\epsilon}{16\epsilon^2-25}, -\tfrac{20}{16\epsilon^2-25}, 0, \ 
0, \tfrac{20}{16\epsilon^2-25}, -\tfrac{16\epsilon}{16\epsilon^2-25}, 0 )$.
Then $f_k(\Gamma_\epsilon) = 0$ for $k =1,\dots, 12$,
hence $\Gamma_\epsilon$ is a $K$-rational point of $\BO$.
Moreover,
\begin{equation*}
\begin{aligned}
&g_1(X,\Gamma_\epsilon) = y^2- (\tfrac{4}{5} -\tfrac{4}{5}\,\epsilon xy),  
& &g_2(X,\Gamma_\epsilon) = x^2- (\tfrac{4}{5} -\tfrac{4}{5}\,\epsilon xy), \\
&g_3(X,\Gamma_\epsilon) = xy^2- (\tfrac{16\epsilon}{16\epsilon^2-25}\,y 
     -\tfrac{20}{16\epsilon^2-25}\,x),& &
g_4(X,\Gamma_\epsilon) =  x^2y -( \tfrac{20}{16\epsilon^2-25}\,y -  \tfrac{16\epsilon}{16\epsilon^2-25}\,x)
\end{aligned} 
\end{equation*}
Therefore $I_{\Gamma_\epsilon} = \langle g_1(X,\Gamma_\epsilon), \, g_2(X,\Gamma_\epsilon),\,
g_3(X,\Gamma_\epsilon),\,  g_4(X,\Gamma_\epsilon) \rangle$ and 
$\X_{\Gamma_\epsilon} = \Spec(K[x,y]/I_{\Gamma_\epsilon})$
is the zero-dimensional scheme \textit{represented} by $\Gamma_\epsilon$.

Next,  let $I =\ \langle g_1(X,\Gamma_\epsilon), \, g_2(X,\Gamma_\epsilon), \, 
g_3(X,\Gamma_\epsilon),\,  g_4(X,\Gamma_\epsilon) \rangle$, and 
let $\X = \Spec(K[x,y]/I)$.
It turns out that $I$ has the $\OO$-border basis  
$\{g_1(X,\Gamma_\epsilon),\,  g_2(X,\Gamma_\epsilon),\, g_3(X,\Gamma_\epsilon), \,
g_4(X,\Gamma_\epsilon)\}$.
The coefficients of that $\OO$-border basis define the $K$-rational 
point 
$$\Gamma_\X = (\tfrac{4}{5}, 0,0,-\tfrac{4}{5}, \   \tfrac{4}{5}, 0,0,-\tfrac{4}{5},\
0,\tfrac{16\epsilon}{16\epsilon^2-25}, -\tfrac{20}{16\epsilon^2-25}, 0, \ 
0, \tfrac{20}{16\epsilon^2-25}, -\tfrac{16\epsilon}{16\epsilon^2-25}, 0 )$$ 
of $\BO$ which \textit{represents} $\X$.
\end{example}

\bigskip
At the beginning of Subsection~\ref{subsec-Commuting Matrices} we 
wrote that in \cite[Subsection 6.4.B]{KR2}, further characterizations of border 
bases are given. Along the same lines, we recall an alternative way to 
describe border basis schemes.

\begin{definition}{\bf (Neighbor Generators)}\label{def-neighbors}

Let $\OO = \{t_1,\dots,t_\mu \}$ be an order ideal in~$\mathbb{T}^n$ with border
$\partial\OO = \{ b_1,\dots,b_\nu \}$.
Let $\mathcal{M}_1,\dots,\mathcal{M}_n$ be the
generic multiplication matrices, and 
let $c_j=(c_{1j},\dots,c_{\mu j})\tr$ be the $j$-th column
of~$(c_{ij})$, for $j=1,\dots,\nu$.
\begin{myenumerate}
\item Let $j, j'\in\{1,\dots,\nu\}$ be such that
$b_j=x_\ell b_{j'}$ for some $\ell\in\{1,\dots,n\}$. Then $b_j, b_{j'}$
are called {\bf next-door neighbors}, $(b_j, b_{j'})$ is called a \textbf{ND pair}),
and the tuple of polynomials in
$(c_j - \mathcal{M}_\ell c_{j'})\tr$ is denoted by $\ND(j, j')$.

\item The union of all entries of the tuples $\ND(j, j')$ is called the set of
{\bf next-door generators} of~$I(\BO)$ and is denoted by
$\ND_\OO$.

\item Let $j, j'\in\{1,\dots,\nu\}$ be such that $b_j = x_\ell t_m$ and
$b_{j'} = x_k t_m$ for some $m\in\{1,\dots,\mu\}$ and
 for some $k,\ell\in\{1,\dots, n\}$. Then $b_j, b_{j'}$
are called {\bf across-the-rim neighbors}, $(b_j, b_{j'})$ is called an \textbf{AR pair}), 
and the tuple of polynomials in 
$(\mathcal{M}_k c_j - \mathcal{M}_\ell c_{j'})\tr$
is denoted by $\AR(j, j')$.

\item The union of all entries of the tuples $\AR(j, j')$ is called the set
of {\bf across-the-rim generators} of~$I(\BO)$ and is denoted by $\AR_\OO$.

\item The polynomials in $\ND_\OO \cup \AR_\OO$ are called the 
{\bf neighbor generators} of~$I(\BO)$.

\item  Let $j, j'\in\{1,\dots,\nu\}$ be such that $x_kb_j = x_\ell b_{j'}$ 
 for some $k,\ell\in\{1,\dots, n\}$. Then $b_j, b_{j'}$
are called {\bf across-the-street neighbors}, $(b_j, b_{j'})$ is called an \textbf{AS pair}),
and the tuple of polynomials in $(\mathcal{M}_k c_j - \mathcal{M}_\ell c_{j'})\tr$
is denoted by $\AS(j, j')$.

\item The union of all entries of the tuples $\AS(j, j')$ is called the set
of {\bf across-the-street generators} of~$I(\BO)$ and is denoted by $\AS_\OO$.

\end{myenumerate}
\end{definition}

\begin{myremark}\label{rem-neighb=commut}
In~\cite[Proposition~4.1]{KR3}, it is shown that 
the neighbor generators are precisely the non-trivial 
entries of the commutators $\mathcal{M}_i\, \mathcal{M}_j - \mathcal{M}_j\, 
\mathcal{M}_i$. Consequently,  computing $\ND_\OO \cup \AR_\OO$
is an alternative method for computing the natural set of generators of~$I(\BO)$.
\end{myremark}

\begin{myremark}\label{rem-a-t-s useful for exposed}
Let ASP denote  the set of AS pairs and ARP the set of 
AR pairs.
It is clear from the definition that $\ARP \subseteq \ASP$. To show that the inclusion can be strict,
see Example~\ref{ex-exposedNeedAcrossTheCorner}, where $(b_3, b_4) \in \ASP\setminus \ARP$ 
(called across-the-corner pairs in~\cite[Section 4]{KR3}).
Moreover, in~\cite[Proposition~4.1]{KR3} it is shown that the set of polynomials in 
$\AS_\OO \setminus \AR_\OO$ is contained in the ideal generated 
by the neighbor generators of~$I(\BO)$.
\end{myremark}

We reuse Example~\ref{ex-box22} to illustrate  Remark~\ref{rem-neighb=commut}.

\begin{example}\label{ex-box22continued}
In Example~\ref{ex-box22} there are two next-door  and one across-the-rim neighbors.
The first next-door neighbors are $b_3, b_1$ since $b_3 = x b_1$ and 
$\ND(3,1)$ consists of the entries of  $c_3 - \mathcal{M}_x c_1$. They are 
\begin{align*}
&-c_{1,2}c_{3,1} -c_{1,4}c_{4,1} +c_{1,3},\\
 &-c_{2,2}c_{3,1} -c_{2,4}c_{4,1} +c_{2,3},\\
&-c_{3,1}c_{3,2} -c_{3,4}c_{4,1} -c_{1,1} +c_{3,3},\\
&-c_{3,1}c_{4,2} -c_{4,1}c_{4,4} -c_{2,1} +c_{4,3}.
\end{align*}
The second next-door neighbors are $b_4, b_2$ since $b_4 = y b_2$, and 
$\ND(4,2)$ consists of the entries of  $c_4 - \mathcal{M}_y c_2$. They are 
\begin{align*}
&-c_{1,1}c_{2,2} -c_{1,3}c_{4,2} +c_{1,4} \\
&-c_{2,1}c_{2,2} -c_{2,3}c_{4,2} -c_{1,2} +c_{2,4} \\
& -c_{2,2}c_{3,1} -c_{3,3}c_{4,2} +c_{3,4} \\
& -c_{2,2}c_{4,1} -c_{4,2}c_{4,3} -c_{3,2} +c_{4,4}.
\end{align*}
The across-the-rim neighbors are $b_3, b_4$ since $b_3 = y t_4$ and $b_4 = x t_4$, and 
$AR(3, 4)$ consists of the entries of $\mathcal{M}_x b_3 - \mathcal{M}_y b_4$.  They are
\begin{align*}
& -c_{1,1}c_{2,4} +c_{1,2}c_{3,3} +c_{1,4}c_{4,3} -c_{1,3}c_{4,4}, \\
& -c_{2,1}c_{2,4} +c_{2,2}c_{3,3} +c_{2,4}c_{4,3} -c_{2,3}c_{4,4} -c_{1,4}, \\
& -c_{2,4}c_{3,1} +c_{3,2}c_{3,3} +c_{3,4}c_{4,3} -c_{3,3}c_{4,4} +c_{1,3}, \\
& -c_{2,4}c_{4,1} +c_{3,3}c_{4,2} +c_{2,3} -c_{3,4}.
\end{align*}
Putting together the three sets we get exactly the same set computed in  Example~\ref{ex-box22},
i.e.\ the non-zero entries of $\mathcal{M}_x\mathcal{M}_y - \mathcal{M}_y\mathcal{M}_x$.
\end{example}

\medskip
\subsection{First Properties of Border Basis Schemes}
\label{subsec-First Properties of BBS}

To put the next results in the correct perspective, 
let us recall some fundamental facts about  punctual Hilbert schemes.
A punctual Hilbert scheme $\Hilb^\mu(\mathbb{A}^n_K)$ is a scheme which 
parametrizes all zero-dimensional subschemes of the affine 
space $\mathbb{A}^n_K$ having  multiplicity~$\mu$.

\begin{proposition}\label{prop-principal}
Let $\mu$ be a positive natural number.
\begin{myenumerate}
\item Let $\OO = \{t_1, \dots, t_\mu\}$  be an order ideal in~$\mathbb{T}^n$. 
Then the scheme $\BO$ is an open  subscheme of $\Hilb^\mu(\mathbb{A}^n_K)$.

\item Let $\{\OO_1, \dots, \OO_s\}$  be the set of all order ideals in~$\mathbb{T}^n$ 
with cardinality $\mu$.
Then $\{ \mathbb{B}_{\OO_1}, \dots, \mathbb{B}_{\OO_s}\}$ forms an open 
covering of $\Hilb^\mu(\mathbb{A}^n_K)$.
\end{myenumerate}
\end{proposition}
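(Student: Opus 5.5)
The approach is through functors of points, identifying $\BO$ with the subfunctor of $\Hilb^\mu(\AA^n_K)$ given by the condition that the classes of $\OO$ form a basis. For a $K$-algebra $A$, the set $\Hilb^\mu(\AA^n_K)(A)$ consists of ideals $J\subseteq A[x_1,\dots,x_n]$ such that $A[X]/J$ is a locally free $A$-module of rank~$\mu$. Inside it, let $\mathcal{H}_\OO(A)$ be the subset of those $J$ for which the residue classes of $t_1,\dots,t_\mu$ form an $A$-basis of $A[X]/J$. The plan has three steps.

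\textbf{Step 1: $\BO$ represents $\mathcal{H}_\OO$.} An $A$-point $\Gamma$ of $\BO$ is a specialization $C\mapsto\Gamma\in A^{\mu\nu}$ killing $I(\BO)$. Base change of Theorem~\ref{thm-freeness} along $B_\OO\to A$ shows that $A[X]/\langle g_j(X,\Gamma)\rangle$ is free with basis~$\OO$, so $\Gamma$ gives an element of $\mathcal{H}_\OO(A)$. Conversely, take $J\in\mathcal{H}_\OO(A)$. Each $b_j$ is congruent modulo $J$ to a unique combination $\sum_i\gamma_{ij}t_i$ with $\gamma_{ij}\in A$. The multiplication maps by the $x_r$ on the free module $A[X]/J$ commute, and their matrices in the basis $\OO$ are the generic matrices $\mathcal{M}_r$ evaluated at $\Gamma$. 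Hence $\Gamma$ kills $I(\BO)$. This is the $A$-valued version of the argument for $(1)\Rightarrow(2)$ in Theorem~\ref{thm-Mourrain}. To see that the two constructions are inverse to each other, I will show that $J$ is generated by the $g_j(X,\Gamma)$. For this I use the Border Division Algorithm (Proposition~\ref{prop-BorderDiv}), which works verbatim over~$A$ because the prebasis is monic in the border terms. It writes every $f\in J$ as a combination of the $g_j(X,\Gamma)$ plus an element of $\langle\OO\rangle_A\cap J=0$, exactly as in Proposition~\ref{prop-bbgen}.

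\textbf{Step 2: $\mathcal{H}_\OO$ is an open subfunctor.} Work on $H=\Hilb^\mu(\AA^n_K)$ with its universal quotient $\mathcal{Q}$, a locally free $\mathcal{O}_H$-module of rank~$\mu$. The terms $t_1,\dots,t_\mu$ define a map $\mathcal{O}_H^\mu\to\mathcal{Q}$ between locally free sheaves of the same rank. The locus where it is an isomorphism is the non-vanishing locus $D(\det)$ of its determinant, computed locally in trivializations. It is therefore open, and its formation commutes with base change, so it represents $\mathcal{H}_\OO$. Combined with Step~1, this gives part~(1).

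\textbf{Step 3: covering.} It suffices to check points with values in fields~$L$. For a zero-dimensional ideal $J\subseteq L[X]$ of colength~$\mu$, choose any term ordering~$\sigma$. By Macaulay's Basis Theorem, as used in Proposition~\ref{prop-bordandGr}, the order ideal $\OO_\sigma(J)$ of $\mu$ terms gives a basis of $L[X]/J$. So the point lies in $\mathbb{B}_{\OO_\sigma(J)}$, and these open sets cover~$H$. This gives part~(2).

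The main obstacle is Step~1 over an arbitrary base ring~$A$: one must be sure that Theorem~\ref{thm-freeness} and the division argument hold for non-field coefficients. Since Theorem~\ref{thm-freeness} is universal over $B_\OO$, base change handles freeness. Division needs only that each $g_j$ has leading border term with coefficient~$1$. Step~2 relies on the standard existence of $\Hilb^\mu$ via Grothendieck~\cite{Gro}, which I will take as given, as the paper does; the proof can also cite~\cite[Section~2]{KR4} and~\cite[Section~18.4]{MS}.
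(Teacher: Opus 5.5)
Your argument is correct, but it does not follow the paper, which gives no proof of its own: it cites Huibregtse~\cite{Hui1, Hui3}, and \cite[Theorem~2.7 and Remark~2.8]{KR4} for the gluing. Your functor-of-points proof is a self-contained substitute, and its steps hold up:
\begin{itemize}
\item Base change of Theorem~\ref{thm-freeness} gives the $A$-valued version of $(2)\Rightarrow(1)$ in Theorem~\ref{thm-Mourrain}.
\item The commuting multiplication maps give the converse.
\item Border division works over any $A$ because the prebasis is monic in the border terms.
\item The locus where $\mathcal{O}_H^\mu\to\mathcal{Q}$ is an isomorphism is open and compatible with base change.
\item Residue-field points suffice for the covering, via Macaulay's Basis Theorem.
\end{itemize}

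The real difference from the cited route is where the Hilbert scheme comes from. You take Grothendieck's existence theorem~\cite{Gro} as input and obtain openness abstractly. The references instead work with the $\BO$'s and their explicit gluing, which can be used to construct $\Hilb^\mu(\AA^n_K)$ rather than presuppose it.

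Your proof already contains both of these features. First, the determinant argument applies to any $A$-point of the Hilbert functor. So the $\mathcal{H}_\OO$ are open subfunctors covering a Zariski sheaf, each representable by your Step~1, and this yields representability without~\cite{Gro}. Second, on $\BO$ the universal quotient is trivialized by $\OO$. Hence the overlap $\BO\cap\mathbb{B}_{\OO'}$ is the non-vanishing locus of the determinant of the $\mu\times\mu$ matrix whose columns are $t'(\mathcal{M}_1,\dots,\mathcal{M}_n)\,e_1$ for $t'\in\OO'$. That is exactly the kind of explicit transition data the gluing description provides.
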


\begin{proof}
Proofs are contained in \cite{Hui1} and \cite{Hui3}. A detailed description 
of the gluing used to achieve the covering is contained in~\cite[Theorem~2.7 
and Remark~2.8]{KR4}. 
\end{proof}

\begin{definition}\label{def-cell}
In geometric jargon, when a scheme is isomorphic to an affine space, 
it is said to be an \textbf{affine cell}.
\end{definition}

\begin{definition}\label{def-planar}
Let  $\OO = \{t_1,\dots,  t_\mu\}$ be an order ideal  in $K[x, y]$. Then the 
corresponding $\OO$-border basis scheme $\BO$ is called \textbf{planar}.

Let $\OO$ be an order ideal in $K[x, y, z]$. Then the corresponding $\OO$- border basis 
scheme~$\BO$ is called  \textbf{spatial}.
\end{definition}

\begin{proposition}\label{prop-irreducible}
Let $\OO = \{t_1, \dots, t_\mu\}$  be an order ideal in~$\mathbb{T}^n$ and 
let $\mu$ be a positive natural number.
\begin{myenumerate}
\item Border basis schemes are irreducible for any $n$ and $\mu \le 7$.

\item Planar border basis schemes are smooth and irreducible for any $\mu$.

\item  Spatial border basis schemes are irreducible for $\mu \le 11$.
\end{myenumerate}
\end{proposition}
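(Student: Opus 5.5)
The statement is a survey-level summary of deep results, so the plan is to reduce each part to the corresponding statement about Hilbert schemes, via Proposition~\ref{prop-principal}, and then invoke the known theorems. The key reduction is this. By Proposition~\ref{prop-principal}(1), $\BO$ is an open subscheme of $\Hilb^\mu(\AA^n_K)$. Hence any property local on the Hilbert scheme, such as smoothness, passes directly to $\BO$. For irreducibility we also need $\BO$ to be non-empty: an open subset of an irreducible space is irreducible as soon as it is non-empty. Non-emptiness is immediate, since the origin $\Gamma_0$ of Definition~\ref{def-represent} is a $K$-rational point of $\BO$, because the monomial ideal $\langle\partial\OO\rangle$ has $\OO$ as a basis of its quotient. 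So all three claims reduce to irreducibility (or smoothness) of $\Hilb^\mu(\AA^n_K)$ in the stated ranges, understood over $\overline K$ with ${\rm char}(K)=0$, as fixed in the introduction.

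For part~(2), I would cite Fogarty's theorem~\cite{Fog}: $\Hilb^\mu(\AA^2_K)$ is smooth and irreducible of dimension $2\mu$ for every $\mu$. Openness then gives both smoothness and irreducibility of the planar $\BO$. As an alternative for smoothness that stays inside the border basis framework, one can compute the tangent space at each point via the linear parts of the neighbor generators of Remark~\ref{rem-neighb=commut}. One would show it has dimension $2\mu$ everywhere and that $2\mu$ equals the dimension of the component of reduced point sets. I would only mention this route.

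For parts~(1) and~(3), I would rely on the literature on irreducibility of Hilbert schemes of points. In any $n$, $\Hilb^\mu(\AA^n_K)$ is irreducible for $\mu\le 7$; this is the Cartwright--Erman--Velasco--Viray classification, which also shows that reducibility first appears at $\mu=8$ for $n\ge4$. For $n=3$, irreducibility holds for $\mu\le 11$ by~\cite{DJNT}, extending earlier work up to $\mu\le 10$. Each of these applies to $\BO$ through the open-immersion argument above.

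The main obstacle is not logical but referential. The irreducibility statements rest on substantial external theorems that cannot be reproved here; the only genuine check is that the open-subscheme identification of Proposition~\ref{prop-principal} is compatible with the scheme structures used in those papers. It is, since both are defined by the same universal family: by Theorem~\ref{thm-freeness}, $U_\OO$ is free over $B_\OO$ with basis $\OO$. The proof would therefore consist of the non-emptiness remark, the open-immersion transfer, and precise citations.
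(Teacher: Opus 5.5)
Your proposal is correct and follows the paper's proof: both transfer the known results on $\Hilb^\mu(\AA^n_K)$ (Fogarty~\cite{Fog} for $n=2$, and \cite{DJNT} for $n=3$, $\mu\le 11$) to $\BO$ through the open immersion of Proposition~\ref{prop-principal}\,(1). Your remark that $\BO\ne\emptyset$, since it contains the origin $\Gamma_0$, makes explicit a point the paper leaves tacit; the only discrepancy is attribution in part~(1), where the paper cites Mazzola~\cite{Maz}, the original source of the bound $\mu\le 7$, while the Cartwright--Erman--Velasco--Viray work you invoke is concerned with $\mu=8$.
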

\begin{proof}
The first claim follows from~\cite{Maz} (see also  the introduction of~\cite{DJNT}).
The second claim follows from a classical result of Fogarty (see~\cite[Theorem~2.4]{Fog}).
For the third claim, it is proved in~\cite{DJNT}  that 
$\Hilb^{\mu}(\mathbb{A}^3_K)$ is irreducible for $\mu\le 11$,
hence the claim  follows from Proposition~\ref{prop-principal}\,(1).
\end{proof}

\begin{myremark}\label{rem-HilbConnected}
While a classical result of R. Hartshorne states that the Hilbert scheme 
$\Hilb^\mu(\mathbb{A}^n_K)$ is connected, it remains an open question whether the open 
subschemes $\mathbb{B}_\mathcal{O}$ share this property in general (see~\cite{Rob}). 
In the following sections, we provide a positive answer to this problem for a specific class of 
order ideals (see  Proposition~\ref{prop-homcommute}\,(5)).
\end{myremark}

Next, we introduce the notion of the principal component of $\BO$.
As noted in \cite[Section 18.2]{MS}, the principal component of the Hilbert scheme  
$\Hilb^\mu(\mathbb{A}^n_{\overline{K}})$
is irreducible by definition, as it is the closure of the locus of $\mu$ 
distinct points in $\mathbb{A}^n_{\overline{K}}$,
where $\overline{K}$ is the algebraic closure of $K$.
In the following, we let $\overline{P} = \overline{K}[x_1, \dots, x_n]$ and
let $\OO = \{t_1, \dots, t_\mu\}$  be an order ideal in~$\mathbb{T}^n$.
Finally, we denote $\BO \times_{\Spec(K)}\Spec(\overline{K})$ by  $\overline{\mathbb{B}}_\OO$.

\begin{definition}\label{def-PrincComp}
For each zero-dimensional scheme $\X$ in the affine space $\mathbb{A}^n_{\overline{K}}$ whose 
vanishing ideal in $\overline{P}$ has an $\OO$-border basis, let $\Gamma_\X$ 
be the $\overline{K}$-rational point in ${\overline{\mathbb{B}}_\OO}$ which represents $\X$
(see Definition~\ref{def-represent}),
and let $Y \subset \overline{\mathbb{B}}_\OO$ be the set of all points $\Gamma_\X$ 
such that $\X \subset \mathbb{A}^n_{\overline{K}}$ is a reduced scheme of length $\mu$. 
The \textbf{principal component} of $\BO$, denoted by $\mathbb{C}_\OO$, 
is the closed subscheme of $\BO$ such that $\mathbb{C}_\OO \times_{\Spec(K)} \Spec(\overline{K})$ 
is the closure of $Y$ in $\overline{\mathbb{B}}_\OO$, endowed with its reduced 
subscheme structure.
\end{definition}

\begin{myremark}\label{rem-irreduc-}
Since $Y$ is an open subset of the principal component of $\Hilb^\mu(\mathbb{A}^n_{\overline{K}})$, 
which is irreducible, its closure is irreducible. 
Consequently, $\mathbb{C}_\OO \times_{\Spec(K)} \Spec(\overline{K})$ is irreducible, 
in other words $\mathbb{C}_\OO$ is geometrically irreducible.
\end{myremark}

\begin{myremark}\label{rem-nmu}
The dimension of the principal component $\mathbb{C}_\OO$ is $n\mu$, as this is 
the number of parameters required to describe $\mu$ distinct points in $\mathbb{A}^n_{\overline{K}}$. 
An explicit description of this component, particularly in terms of the equations 
defining it within the border basis scheme, can be found in~\cite[Section 3]{KR4}.
\end{myremark}

\medskip
\subsection{Arrow Gradings}
\label{subsec-Arrow Gradings}

An important property of $I(\BO)$ is its homogeneity with respect to the following gradings.

\begin{definition}\label{def-arrdeg}
Let $\OO \!=\! \{t_1,\dots,t_\mu\}$ be an order ideal in~$\mathbb{T}^n$, let
$\partial\OO\!=\! \{b_1,\dots, b_\nu\}$ be its border, and let
$C=\{c_{11}, c_{12}, \dots, c_{\mu\nu}\}$ be the set of indeterminates 
representing the coefficients of the generic $\OO$-border prebasis.
\begin{myenumerate}
\item The $\mathbb{Z}^n$-grading on~$K[C]$ defined by 
$\deg_U(c_{ij}) = \log(b_j) - \log(t_i)$ for $i=1,\dots,\mu$
and $j=1,\dots,\nu$, is called the {\bf arrow grading}.

\item The $\mathbb{Z}$-grading on~$K[C]$ defined by
$\deg_W(c_{ij}) = \deg(b_j) - \deg(t_i)$ for $i=1,\dots,\mu$
and $j=1,\dots,\nu$ is called the {\bf total arrow grading} and denoted by $\deg_W$.

\item The $\mathbb{Z}$-grading on~$K[C,X]$ defined by
the total arrow grading on $K[C]$ and the standard grading on $K[X]$ is 
called the \textbf{extended total arrow grading} and 
denoted by $\deg_{\overline{W}}$.
\end{myenumerate}
\end{definition}

The terminology stems from the fact that each $c_{ij}$ can be viewed 
as an \textbf{arrow} pointing from $b_j$ to $t_i$, as explained in 
\cite[p.~210]{Hai} and \cite[Section~3]{Hui2}.

The following proposition shows why the total arrow degree is particularly useful.

\begin{proposition}\label{prop-arrowhomog}
Let $\OO$ be an order ideal, let $\deg_W$ be the total arrow grading on $K[C]$, 
let $\deg_{\overline{W}}$ be the extended total arrow grading on $K[C,X]$,
let $\mathsf{G}$ be the generic $\OO$-border prebasis,
let $\BO$ be the $\OO$-border basis scheme,  and let 
$U_\OO =K[C,X]/\langle I(\BO), \mathsf{G}\rangle$ be the universal $\OO$-border basis algebra.
\begin{myenumerate}
\item The ideal $I(\BO)$ is $W$-homogeneous. 

\item The ideal $\langle \textsf{G}\, \rangle$ is $\overline{W}$-homogeneous.

\item The ring $U_\OO$ is  $\overline{W}$-graded.

\end{myenumerate}
\end{proposition}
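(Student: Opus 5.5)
The proof is a direct check of degrees, done first for the generators of $\langle\mathsf{G}\rangle$ and then for the natural generators of $I(\BO)$. Part~(3) then follows formally. The cleanest route to~(1) is through the neighbor generators of Definition~\ref{def-neighbors}, using Remark~\ref{rem-neighb=commut}: they coincide with the non-trivial entries of the commutators $\mathcal{M}_i\mathcal{M}_j-\mathcal{M}_j\mathcal{M}_i$. As a cross-check one can also compute directly with matrix entries.

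\emph{Step 1 (part (2)).} Take $g_j=b_j-\sum_{i}c_{ij}t_i$. The term $b_j$ has $\overline{W}$-degree $\deg(b_j)$. Each summand $c_{ij}t_i$ has degree $\deg(b_j)-\deg(t_i)+\deg(t_i)=\deg(b_j)$. So every $g_j$ is $\overline{W}$-homogeneous, and hence so is the ideal it generates.

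\emph{Step 2 (part (1)).} Assign to row $k$ and column $\ell$ of $\mu\times\mu$ matrices the weights $\deg(t_k)$ and $\deg(t_\ell)$. We check that every entry $(\mathcal{M}_r)_{k\ell}$ is $W$-homogeneous of degree $\deg(t_\ell)+1-\deg(t_k)$ (with $0$ counted as homogeneous of any degree). There are two cases.
\begin{myenumerate}
\item If $x_rt_\ell=t_s\in\OO$, the only nonzero entry of column $\ell$ is a $1$ in row $s$, and $\deg(t_\ell)+1-\deg(t_s)=0$.
\item If $x_rt_\ell=b_j$, the entry in row $k$ is $c_{kj}$, whose degree is $\deg(b_j)-\deg(t_k)=\deg(t_\ell)+1-\deg(t_k)$.
\end{myenumerate}
Next, the $(k,\ell)$ entry of $\mathcal{M}_r\mathcal{M}_s$ is $\sum_m(\mathcal{M}_r)_{km}(\mathcal{M}_s)_{m\ell}$. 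Each summand has degree $(\deg t_m+1-\deg t_k)+(\deg t_\ell+1-\deg t_m)=\deg(t_\ell)+2-\deg(t_k)$, which does not depend on $m$. The same holds for $\mathcal{M}_s\mathcal{M}_r$. Therefore every entry of the commutator is $W$-homogeneous, so the natural generators are homogeneous and $I(\BO)$ is a $W$-homogeneous ideal.

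\emph{Step 3 (part (3)).} Since $K[C]\subseteq K[C,X]$ carries the restricted grading, $I(\BO)K[C,X]$ is $\overline{W}$-homogeneous. Adding the homogeneous ideal $\langle\mathsf{G}\rangle$ from Step~1 gives a homogeneous ideal, so the quotient $U_\OO$ inherits the $\overline{W}$-grading.

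I expect the main obstacle to be only bookkeeping in Step~2: one must verify that the degree of the product entries is independent of the summation index $m$, with the constant entries $1$ correctly assigned degree $0$. The row/column weight formulation handles this uniformly. An alternative is to argue that all of $\mathcal{M}_r$ is "homogeneous of degree $1$" in the sense that $\mathcal{M}_r=D^{-1}\mathcal{M}_r(\lambda\cdot C)D\lambda^{-1}$ for the torus action $c_{ij}\mapsto\lambda^{\deg_W c_{ij}}c_{ij}$ and $D=\mathrm{diag}(\lambda^{\deg t_k})$. This conjugation is then preserved by products and commutators and gives the same conclusion.
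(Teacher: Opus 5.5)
Your proof is correct, and its core mechanism is the same as the paper's. Each generic multiplication matrix is homogeneous of degree one with respect to the row and column weights $\deg(t_k)$, so products and commutators have homogeneous entries. The difference is in how the two arguments justify this.

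The paper packages the fact by viewing $U_\OO$ as a free $B_\OO$-module with basis $\OO$ (Theorem~\ref{thm-freeness}). It then asserts that $\mathcal{M}_i$ defines a degree-zero map $\bigoplus_j B_\OO(-\deg(t_j)-1)\to\bigoplus_j B_\OO(-\deg(t_j))$. It never writes out the entrywise degree $\deg(t_\ell)+1-\deg(t_k)$ that justifies this claim. Moreover, the shifted modules $B_\OO(-d)$ are graded only once one already knows that $I(\BO)$ is $W$-homogeneous, so that formulation strictly speaking presupposes part~(1).

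Your Step~2 is exactly that missing verification, carried out over $K[C]$ itself. It therefore needs neither the freeness theorem nor any prior grading on $B_\OO$. What the paper's formulation buys in exchange is the conceptual reading of $\mathcal{M}_i$ as multiplication by $x_i$ on the universal family.

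Two small remarks. First, the detour through neighbor generators and Remark~\ref{rem-neighb=commut} is unnecessary, since $I(\BO)$ is by definition generated by the commutator entries. Second, in your optional torus argument the conjugation is reversed. With $D=\mathrm{diag}(\lambda^{\deg(t_k)})$ one has $\mathcal{M}_r(\lambda\cdot C)=\lambda\, D^{-1}\mathcal{M}_r D$, that is, $\mathcal{M}_r=\lambda^{-1}D\,\mathcal{M}_r(\lambda\cdot C)\,D^{-1}$. Your version would give the factor $\lambda^{2(\deg t_\ell-\deg t_k)}$ instead of $1$. Also, $\lambda$ should be taken as an indeterminate for the homogeneity conclusion to follow over an arbitrary field.
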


\begin{proof}
To prove the first claim, we consider $U_\OO$ as a free $B_\OO$-module of rank $\mu$
with basis~$\OO$ (see~Theorem~\ref{thm-freeness}). 
For $i=1,\dots,n$, the generic multiplication
matrix~$\mathcal{M}_i$ represents multiplication by~$x_i$
in this basis. Consequently it defines a homogeneous $B_\OO$-linear map
\begin{equation*}
\mu_{x_i} \colon \bigoplus_{j=1}^\mu B_\OO({-}\deg(t_j){-}1) \longrightarrow
\bigoplus_{j=1}^\mu B_\OO(-\deg(t_j))
\end{equation*}
of $\overline{W}$-degree zero.
It follows that the entries of the commutators 
$\mathcal{M}_k \mathcal{M}_\ell - \mathcal{M}_\ell \mathcal{M}_k$ are $\overline{W}$-homogeneous.
Since these entries belong to $K[C]$, they are $W$-homogeneous, which proves that $I(\BO)$ is $W$-homogeneous.

The second claim follows directly from the definition of $\overline{W}$ 
and the construction of the generic $\OO$-border prebasis.  
The third claim follows  from the homogeneity 
of the ideals~$I(\BO)$ and~$\langle \mathsf{G} \rangle$, established above.
\end{proof}

\begin{myremark}\label{rem-arrowGrading}
In~\cite[Lemma~3.4]{KSL}, it is shown that~$I(\BO)$ is even homogeneous
with respect to the arrow grading. 
\end{myremark}

 We now verify Proposition~\ref{prop-arrowhomog} using the data
from Example~\ref{ex-box22}.

\begin{example}\textbf{(Example~\ref{ex-box22} continued)}\\
\label{ex-box22-continued}
In Example~\ref{ex-box22} the  total arrow degrees of the parameters $c_{ij}$  are
$$
(\deg_W(c_{11}), \deg_W(c_{12}), \dots, \deg_W(c_{44})) \;=\; 
(2,2,3,3,\ 1,1,2,2,\ 1,1,2,2,\ 0,0,1,1).
$$
As predicted, the generators $f_1, \dots, f_{12}$ of $I(\mathbb{B}_\OO)$ 
are homogeneous, with degrees
\begin{equation*}
\begin{aligned}
&\deg_W(f_1) = 3, \deg_W(f_2) = 2, \deg_W(f_3) = 2, \deg_W(f_4) = 1, \deg_W(f_5) = 3, \deg_W(f_6) = 2, \\
&\deg_W(f_7) = 2, \deg_W(f_8) = 1, \deg_W(f_9) = 4, \deg_W(f_{10}) = 3, \deg_W(f_{11}) = 3, \deg_W(f_{12}) = 2.
\end{aligned}
\end{equation*}
\end{example}

\newpage   
\section{Re-embedding}
\label{sec-Re-embedding}

Remember that rings of the form $P/I$, where $P$ is a polynomial ring over a field $K$
and~$I$ is an ideal in $P$, are called \textbf{affine $K$-algebras}, or simply 
\textbf{affine algebras} if $K$ is clear from the context.

In this section, we introduce a technique that allows us to construct an isomorphism 
between an affine algebra and another affine algebra with a lower embedding dimension.

Let $K$ be a field, let $P = K[x_1,\dots,x_n]$, let $X =(x_1,\dots, x_n)$ and
let $\M =\langle X \rangle$.
In the following, we assume that~$I$ is an ideal in~$P$ such that $I \subseteq\M$.
We let  $\m = \M/I$ be the corresponding maximal ideal of $P/I$, so that
$I\subseteq \M$ implies $(P/I)/\m \cong P/\M \cong K$.

\medskip
\subsection{Z-Separating Tuples and Term Orderings}
\label{subsec-Z-Separating Tuples and Term Orderings}

The following definition plays a central role in this subsection.

\begin{definition}{\bf (Separating and Coherently Separating Tuples)}\label{def-sepindets}

Let $Z=(z_1,\dots,z_t)$ be a tuple of distinct indeterminates in $X=(x_1,\dots,x_n)$, 
and let $I$ be an ideal in $P$.
\begin{myenumerate}
\item Let $i\in \{1,\dots, t\}$. A polynomial $f\in P$ is called {\bf $z_i$-separating}
if it is of the form $f=z_i - h$ where $h \in P$ is such that $z_i$ divides
no term in $\Supp(h)$.

\item A tuple of polynomials $F=(f_1,\dots, f_t) \in P^t$ is called
{\bf $Z$-separating} if there exists a term ordering~$\sigma$ such that
$\LT_\sigma(f_i)=z_i$ for $i=1,\dots, t$, hence  if $F$ is a minimal 
$\sigma$-Gr\"obner basis of $\langle F \rangle$. If $F\subset I$,
the term ordering $\sigma$ is called a \textbf{$Z$-separating term ordering for~$I$}.

\item A tuple of polynomials $F=(f_1,\dots, f_t) \in P^t$ is called
{\bf coherently $Z$-separating} if there exists a term ordering~$\sigma$ such that
each $f_i$ is of the form $f_i = z_i - h_i$,
where $h_i \in P$ and $z_i$ divides no term in $\Supp(h_j)$ for all $i,j \in \{1,\dots, t\}$,
hence  if $F$ is the reduced $\sigma$-Gr\"obner basis of $\langle F \rangle$.

\item The tuple~$Z$ is called a {\bf separating tuple of indeterminates} 
for~$I$ if there exists a tuple of polynomials in $I$ which is $Z$-separating.

\end{myenumerate} 
\end{definition}

\begin{myremark}\label{rem-SepCohSep}
Passing from a minimal Gr\"obner basis to the reduced Gr\"obner basis 
is a standard operation. Hence,
from items (2) and (3) of the above definition, it follows that the existence of a 
$Z$-separating tuple of polynomials in $I$ is equivalent to the existence of a 
coherently $Z$-separating tuple of polynomials in $I$.
\end{myremark}

The following proposition highlights a useful property of $Z$-separating
tuples and term orderings.
 
\begin{proposition}\label{prop-sepandcoherentlysep}
Let~$F$ be a $Z$-separating tuple of polynomials in $I$,
and let $\sigma$ be a corresponding $Z$-separating term ordering for~$I$. 
Then every elimination term ordering for~$Z$ is a $Z$-separating term ordering for~$I$.
\end{proposition}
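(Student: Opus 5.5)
Since $F$ is $Z$-separating with respect to $\sigma$, we have $\LT_\sigma(f_i)=z_i$ for every $i$. We may first pass to a coherently $Z$-separating tuple in $I$, as in Remark~\ref{rem-SepCohSep}, by interreducing. This gives polynomials $f_i' = z_i - h_i$ in $I$ such that no $z_j$ divides any term of $\Supp(h_i)$; that is, each $h_i$ lies in $K[X\setminus Z]$.

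The reduction step needs one check. Reducing $f_i$ by the other $f_j$ with respect to $\sigma$ only replaces occurrences of the leading terms $z_j$ by tails that are $\sigma$-smaller. Reducing $f_i$ by itself removes the terms of its tail divisible by $z_i$. Such terms $z_i t$ with $t\neq 1$ would satisfy $z_i t >_\sigma z_i$, contradicting $\LT_\sigma(f_i)=z_i$. So no tail term is divisible by $z_i$, and after full reduction we indeed obtain $f_i' = z_i - h_i$ with $h_i \in K[X\setminus Z]$ and $f_i'\in I$. The remaining worry is whether some $z_j$ could reappear during substitution; this is settled by noting that the reduced $\sigma$-Gr\"obner basis elements with leading terms $z_1,\dots,z_t$ have tails in normal form, and so contain no $z_j$.

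Now let $\tau$ be any elimination term ordering for $Z$. By definition, $\tau$ ranks every term divisible by some $z_i$ above every term of $K[X\setminus Z]$. Hence $\LT_\tau(z_i - h_i) = z_i$, because every term of $h_i$ lies in $K[X\setminus Z]$ while $z_i$ does not. Therefore the tuple $(f_1',\dots,f_t')\subseteq I$ satisfies $\LT_\tau(f_i')=z_i$ for all $i$. It is thus $Z$-separating with $\tau$ as a corresponding term ordering, and by Definition~\ref{def-sepindets}\,(2), $\tau$ is a $Z$-separating term ordering for $I$.
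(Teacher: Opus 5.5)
Your proof is correct and follows the paper's argument. You interreduce $F$ with respect to $\sigma$ to obtain a coherently $Z$-separating tuple $z_i-h_i$ in $I$ with $h_i\in K[X\setminus Z]$, and then observe that any elimination ordering $\tau$ for $Z$ must give $\LT_\tau(z_i-h_i)=z_i$. Your direct leading-term check is slightly cleaner than the paper's detour through the shape $(F,G')$ of the reduced $\tau$-Gr\"obner basis of $I$.
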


\begin{proof}
By applying interreduction to~$F$, we may assume that~$F$ is the reduced  
$\sigma$-Gr\"obner basis of $\langle F \rangle$. 
Hence, the reduced $\sigma$-Gr\"obner basis of~$I$ has the 
shape $(F,G)$ where $G$ is a tuple of polynomials not involving any $z_i \in Z$ in their supports.
On the other hand, since~$F$ consists of $Z$-separating polynomials, 
every elimination term ordering for $Z$ yields a reduced Gr\"obner 
basis of the shape $(F,G')$, hence it is a $Z$-separating term ordering for~$I$.
\end{proof}

The following example illustrates the above proposition.

\begin{example}\label{ex-sepandcohsep}
Let $P = \QQ[x,y,z]$ and let $F=(f_1,f_2)$, where $f_1 = x - z^2$ and $f_2 = y - x^2 - z^5$.
Then $F$ is $(x,y)$-separating, since with respect to the term ordering $\sigma$ defined by 
a matrix whose first row is $(3,7,1)$, we have $\LT_\sigma(f_1) = x$ and $\LT_\sigma(f_2) = y$.
Moreover, the reduced $\sigma$-Gr\"obner basis of $\langle F \rangle$ is $\{f_1, f_2' \}$, 
where $f_2' = y - z^4 - z^5$, and $(f_1, f_2')$ is coherently $(x,y)$-separating.
\end{example}

The computational strength of coherently $Z$-separating tuples lies in their
ability to eliminate the variables in $Z$ as follows.

\begin{definition}\label{def-rewriting}
Let $Z=(z_1,\dots, z_t)$ be a tuple of distinct indeterminates in~$X$, and let
$F=(f_1,\dots, f_t)$ be a coherently $Z$-separating tuple of polynomials in~$I$,
where $f_i = z_i - h_i$ for $i=1,\dots, t$.
\begin{myenumerate}
\item  For every polynomial $g\in P$ and every $i\in \{1,\dots, t\}$,
we replace each occurrence of~$z_i$ in~$g$ by~$h_i$.
The resulting polynomial $\hat{g}$ is said to be obtained by \textbf{rewriting~$g$ using~$F$}.

\item  For a tuple of polynomials $G=(g_1,\dots,g_r)$ in~$P$, we let
$\widehat{G} = (\hat{g}_1,\dots,\hat{g}_r)$ and call it the tuple obtained by
\textbf{rewriting~$G$ using~$F$}.
\end{myenumerate}
\end{definition}

\begin{proposition}{\bf (Computing Elimination By Substitution)}
\label{prop-ElimViaSubst}

Let $K$ be a field, let $P=K[x_1,\dots,x_n]$. Let $I=\langle G\rangle$ be an ideal in~$P$
generated by a tuple of polynomials $G=( g_1,\dots,g_r)$, let $Z=(z_1,\dots,z_t)$
be a tuple of distinct indeterminates in~$X =(x_1,\dots, x_n)$, and let $F=(f_1,\dots, f_t)$ be a coherently
$Z$-separating tuple of polynomials in~$I$.
Then the tuple $\widehat{G} =(\hat{g}_1, \dots, \hat{g}_r)$ obtained by 
rewriting~$G$ using~$F$ is a system of generators (not necessarily a Gr\"obner basis)
of the elimination ideal $I\cap K[X{\setminus}Z]$.
\end{proposition}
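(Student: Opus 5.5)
The plan is to use the substitution homomorphism. Let $\psi\colon P\To K[X{\setminus}Z]$ be the $K$-algebra homomorphism defined by $\psi(z_i)=h_i$ for $i=1,\dots,t$ and $\psi(x)=x$ for every $x\in X\setminus Z$. Since $F$ is coherently $Z$-separating, no $z_j$ divides any term of $\Supp(h_i)$, so each $h_i$ lies in $K[X{\setminus}Z]$. Therefore $\psi$ is well defined with image in $K[X{\setminus}Z]$, and it restricts to the identity on $K[X{\setminus}Z]$. By construction we have $\hat g=\psi(g)$ for every $g\in P$. This is the key observation: the substitution is performed in one step, and coherence guarantees that no $z_j$ reappears.

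First I will show that $\langle \widehat G\rangle\subseteq I\cap K[X{\setminus}Z]$. For every $g\in P$ we have $g-\psi(g)\in\langle z_1-h_1,\dots,z_t-h_t\rangle=\langle F\rangle\subseteq I$. To see this, write $g$ as a polynomial in the $z_i$ with coefficients in $K[X{\setminus}Z]$; then each $z^\alpha-h^\alpha$ lies in $\langle z_i-h_i\rangle$ by telescoping. Hence $\hat g_k=g_k-(g_k-\hat g_k)\in I$, and $\hat g_k\in K[X{\setminus}Z]$.

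Conversely, let $f\in I\cap K[X{\setminus}Z]$ and write $f=\sum_k a_kg_k$ with $a_k\in P$. Applying $\psi$ gives $f=\psi(f)=\sum_k\psi(a_k)\,\hat g_k$, with coefficients $\psi(a_k)\in K[X{\setminus}Z]$. Thus $f$ lies in the ideal of $K[X{\setminus}Z]$ generated by $\widehat G$, which proves equality.

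There is no real obstacle here. The only delicate point is justifying that $\psi$ lands in $K[X{\setminus}Z]$, which is exactly where the coherent condition (rather than mere $Z$-separation) is needed. It would also be worth remarking that $\widehat G$ need not be a Gröbner basis, since $\psi$ does not preserve leading terms.
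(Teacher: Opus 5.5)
Your proposal is correct and follows essentially the same route as the paper. Both arguments use the substitution homomorphism $z_i\mapsto h_i$, derive $\widehat{G}\subseteq I\cap K[X{\setminus}Z]$ from $z_i-h_i\in I$, and get the reverse inclusion by applying the homomorphism to a representation $f=\sum_k a_k g_k$. Your telescoping argument for $g-\psi(g)\in\langle F\rangle$ simply makes explicit a step the paper states without justification.
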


\begin{proof}
For each $i=1, \dots, t$, let $f_i = z_i - h_i$ where $h_i \in P$. Since $F$ is coherently 
$Z$-separating, the polynomials $h_i$ involve no variables from $Z$, hence $h_i \in K[X{\setminus}Z]$.
Since $z_i-h_i \in I$ for all $i$, the result of applying the substitution
$z_i\mapsto h_i$ to an element $g \in I$ yields a polynomial still in~$I$. 
Because the result  $\hat{g}$  of this substitution no longer 
contains any indeterminates from $Z$, we have $\hat{g} \in I\cap K[X{\setminus}Z]$.
Applying this to  $G$ shows that 
$\widehat{G} = \{ \hat{g}_1, \dots, \hat{g}_r \} \subseteq I \cap K[X{\setminus}Z]$.
Conversely, let $h \in I \cap K[X{\setminus}Z]$ and consider the substitution 
homomorphism $\sigma \colon P \to P$ defined by:
\begin{equation*}
\sigma(x_j) = 
\begin{cases} 
h_i & \text{if } x_j = z_i \in Z, \\ 
x_j & \text{if } x_j \in X{\setminus}Z. 
\end{cases} 
\end{equation*}
For any polynomial $q \in P$, let $\hat{q}$ denote its image $\sigma(q)$.
Since $h \in I$,  there exist polynomials  $p_1,\dots, p_r \in P$ 
such that $h=p_1 g_1 +\cdots + p_r g_r$.
Observing that $h$ does not involve any indeterminates from $Z$, we  obtain
$
h \;=\; \hat{h} \;=\; \hat{p}_1\, \hat{g}_1 + \cdots + \hat{p}_r\, \hat{g}_r
$.
This shows that $h$ is contained in the ideal generated by $\widehat{G}$ in $K[X{\setminus}Z]$, 
concluding the proof.
\end{proof}

\medskip
\subsection{Cotangent Spaces and Gr\"obner Fans}
\label{subsec-Cotangent Spaces and GFans}

So far, we have discussed several useful properties of $Z$-separating tuples and
the associated $Z$-separating term orderings. We now turn to the problem of
checking and, more generally, detecting them.

The first fundamental object required for our discussion is 
 the cotangent space of~$P/I$ at~$\m$.  If we deal with a  linear maximal ideal
 different from $\M =\langle X \rangle$, a simple translation of coordinates can be used 
 to modify the following definition.

\begin{definition}\label{def-cotangent}
Let $P$, $\M$ and  $I$  be as defined in the introduction to the current  section, 
with $\m = \M/I$ and, for simplicity, let $R = P/I$.
\begin{myenumerate}
\item The $K$-vector space $\Cot_\m(R) = \Cot_\m(R_\m)= \m/\m^2$ is called 
the Zariski cotangent space, or simply  the \textbf{cotangent space} of the ring~$R$ at $\m$.

\item Given $f \in P$, let $\ell(x_1,\dots, x_n) \in \M_1$ be the homogeneous component 
of standard degree one of $f$.
Then the polynomial $\Lin(f)=   \ell(x_1,\dots, x_n)$ is called the \textbf{linear part} of~$f$.

\item The $K$-vector space $\Lin(I) = \langle \Lin(f) \mid f\in I \rangle_K$ is called
the \textbf{linear part} of~$I$.
\end{myenumerate}
\end{definition}

\begin{myremark}\label{rem-Lin}
In the above definition and in the following text we use the symbol $\Lin$, in accordance 
with~\cite{KR5}, instead of ${\rm Lin}_\M$, the notation used, for instance, in~\cite{KLR0, KLR1}.
There are two reasons for this choice. First, $\M$ is fixed, so there is no need to specify it.
Second, in Section~\ref{sec-Positive P0-algebras} we use the symbol $\Lin_A$ 
to highlight an analogy, while at the same time emphasizing an important distinction.
\end{myremark}

\begin{myremark}\label{rem-tangent}
We recall that the dual space $\Hom_K(\m/\m^2, K)$ of $\Cot_\m(R)$  
is called the Zariski tangent space of $R$ at $\m$.
\end{myremark}

The cotangent space of $R$ at $\m$ can be described as follows. 
This description is particularly useful for practical computations.

\begin{proposition}\label{prop-cotancomp}
In the setting described above, let $\M=\langle X\rangle$, and 
let $I = \langle f_1, \dots, f_s \rangle$ be an ideal in~$P$ which is contained in~$\M$.
\begin{myenumerate}
\item We have $\Lin(I) = \M_1 \cap (I+\M^2)$.

\item We have $\Lin(I) = \langle \Lin(f_1), \dots, \Lin(f_s) \rangle_K$.

\item There is an isomorphism of $K$-vector spaces
\begin{equation*}
\Cot_\m(R) \cong \M_1 / \langle \Lin(f_1), \dots, \Lin(f_s) \rangle_K.
\end{equation*}

\item We have $\dim(\Cot_\m(R)) = n - \dim_K(\Lin(I))$.
\end{myenumerate}
\end{proposition}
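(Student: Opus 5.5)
The four claims depend on one another in order: (1) feeds (2), (2) feeds (3), and (3) feeds (4).

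For (1), first show $\Lin(I)\subseteq \M_1\cap(I+\M^2)$. Take $f\in I$. Since $I\subseteq\M$, $f$ has no constant term, so $f=\Lin(f)+q$ with $q\in\M^2$. Hence $\Lin(f)=f-q\in I+\M^2$, and of course $\Lin(f)\in\M_1$. The right-hand side is a $K$-vector space, so it contains the $K$-span of all such $\Lin(f)$, which is $\Lin(I)$. For the reverse inclusion, let $\ell\in\M_1$ with $\ell=f+q$, $f\in I$, $q\in\M^2$. Taking linear parts (the map $\Lin$ is $K$-linear) gives $\ell=\Lin(\ell)=\Lin(f)+\Lin(q)=\Lin(f)$. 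So $\ell\in\Lin(I)$.

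For (2), the inclusion $\supseteq$ is clear. For $\subseteq$, write $f\in I$ as $f=\sum_i p_if_i$ and split $p_i=a_i+p_i'$ with $a_i\in K$ and $p_i'\in\M$. Each $f_i\in\M$, so $p_i'f_i\in\M^2$ and has zero linear part. Therefore $\Lin(f)=\sum_i a_i\Lin(f_i)$. This is the key small observation, and it uses $I\subseteq\M$ essentially.

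For (3), start from $\m/\m^2=(\M/I)/((\M^2+I)/I)\cong \M/(\M^2+I)$. Every element of $\M$ is congruent modulo $\M^2$ to its linear part, so the composite $\M_1\hookrightarrow\M\to\M/(\M^2+I)$ is surjective. Its kernel is $\M_1\cap(I+\M^2)$, which equals $\Lin(I)$ by (1) and hence $\langle\Lin(f_1),\dots,\Lin(f_s)\rangle_K$ by (2). Claim (4) then follows by counting dimensions, since $\dim_K\M_1=n$.

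I expect no real obstacle. The only point needing care is the identification $\m^2=(\M^2+I)/I$ and the surjectivity of $\M_1\to\M/(\M^2+I)$, both of which are routine.
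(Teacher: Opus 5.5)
Your proof is correct and follows the paper's argument essentially step by step. You use the same decomposition $f=\Lin(f)+q$ with $q\in\M^2$ for (1), the same identification $\Cot_\m(R)\cong \M/(I+\M^2)\cong \M_1/(\M_1\cap(I+\M^2))$ for (3), and dimension counting for (4). The only deviation is in (2): you compute $\Lin(\sum_i p_if_i)=\sum_i a_i\Lin(f_i)$ directly, whereas the paper first derives $I+\M^2=\langle \Lin(f_1),\dots,\Lin(f_s)\rangle_K+\M^2$ and then applies (1) and the modular law. Your version is a harmless shortcut resting on the same observation, namely that $p_i'f_i\in\M^2$.
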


\begin{proof} 
To prove (1), we start with the containment ``$\subseteq$''.
An element $f\in I$ can be written uniquely in the form $f = \ell + h$ with
$\ell \in \M_1$ and $h\in \M ^2$.
Then $\ell \in \M_1$ and 
$\ell = f - h \in I + \M^2$ imply the claim.  
Conversely, let $\ell \in \M_1$ be of the form $\ell=f+g$ with $f\in I$ and $g\in \M^2$. 
Then $f=\ell-g$, hence $\ell=\Lin(f)$ and $\ell \in \Lin(I)$.

To prove~(2), for each $i=1,\dots,s$, we write $f_i = \ell_i + g_i$ 
with $\ell_i = \Lin(f_i)$ and $g_i \in \M^2$. Since any element in $I$ is a polynomial 
combination of the $f_i$, reducing modulo $\M^2$ implies that 
$I + \M^2 = \langle \ell_1, \dots, \ell_s \rangle_K + \M^2$. 
Using~(1), we obtain 
\begin{equation*}
\Lin(I) = \M_1 \cap (I+\M^2) = \M_1 \cap (\langle \ell_1,\dots,\ell_s\rangle_K+ \M^2).
\end{equation*} 
By the modular law for vector spaces, since $\langle \ell_1,\dots,\ell_s\rangle_K \subseteq \M_1$, 
and using the equality $\M^2 \cap \M_1 = \{0\}$, we get the equality 
$\Lin(I) = \langle \ell_1,\dots,\ell_s \rangle_K$.

To prove~(3), we use the following chain of $K$-vector space isomorphisms.
\begin{align*}
\Cot_\m(R) &= \m/\m^2 \cong \M/(I + \M^2) \cong (\M_1 + I + \M^2)/(I+\M^2) \\
&\cong \M_1/(\M_1 \cap (I + \M^2)).
\end{align*}
The conclusion follows from~(1) and~(2).

To prove~(4), we note that $\dim_K(\M_1) = n$, hence the conclusion follows from~(3).
\end{proof}

\begin{example}\label{ex-cotanglinear}
Let $P=\QQ[x,y,z]$, let $\M=\langle x,y,z\rangle$, and let $I=\langle f_1,f_2,f_3\rangle$,
where $f_1= x^3 -x -3z$, $f_2=y^2 +2x$, and $f_3 = xy + 5z$. 
Then we get 
$$\Lin(I) = \langle \Lin(f_1),\; \Lin(f_2),\; \Lin(f_3) \rangle_\QQ=
\langle -x-3z,\, 2x,\, 5z \rangle_\QQ = \langle x,\, z\rangle_\QQ$$
Hence, we get $\Cot_\m(P/I)\cong  
\langle x, y, z\rangle_\QQ / \langle x, z \rangle_\QQ \cong \QQ\cdot \bar{y}$.
\end{example}

Given a tuple of indeterminates $Z$, suppose we wish to determine whether 
a tuple of polynomials $(f_1,\dots, f_t)$ with $f_i \in I$ is $Z$-separating. 
One possible approach is to use the methods described in~\cite[Section~3]{KLR1}, 
which rely on feasibility solvers for linear programming problems. 
We now present an alternative approach.

\begin{definition}\label{def-coeffmat}
Let $P= K[x_1, \dots, x_n]$, and let $\M =\langle x_1,\dots, x_n\rangle$.
\begin{myenumerate}
\item Let $L=(\ell_1, \dots, \ell_r)$ be a tuple of linear forms in $P$. For $i=1,\dots,r$, 
we write $\ell_i = c_{i1} x_1 + \cdots + c_{in} x_n$ with $c_{ij} \in K$. 
Then the matrix $(c_{ij}) \in \Mat_{r, n}(K)$ 
is called the \textbf{coefficient matrix} of $L$.

\item Let $G {=} (g_1,\dots, g_r)$ be a tuple of polynomials in~$\M$, and 
$L=(\Lin(g_1),\dots, \Lin(g_r))$. Then the coefficient matrix of $L$ is denoted by $\cmat(G)$.

\item  Let $Z=(z_1,\dots, z_t)$ be a tuple of distinct indeterminates. Then the 
submatrix of~$\cmat(G)$  formed by the columns corresponding to the indeterminates in~$Z$
is denoted by $\cmat_Z(G)$.
\end{myenumerate}
\end{definition}

The next proposition explains why we are interested in the condition 
$Z \subseteq \LT_\sigma(\langle \Lin(I) \rangle)$.
Indeed, separating tuples~$Z$ for~$I$ whose entries lie in $\Lin(I)$ satisfy 
the following properties.

\begin{proposition}\label{prop-sepZ}
Let $P=K[x_1, \dots, x_n]$,  $\M =\langle x_1,\dots, x_n\rangle$,
 $I=\langle g_1, \dots, g_r \rangle \subset \M$, 
and let $Z=(z_1,\dots, z_t)$ be a tuple of distinct indeterminates.
Assume that $I$ is $Z$-separating.

\begin{myenumerate}
\item We have  $Z \subseteq \LT_\sigma( \langle \Lin(I) \rangle)$ 
for every elimination term ordering $\sigma$ for $Z$.

\item The matrix $\cmat_Z(G)$ has rank $t$. 
In particular, we have $t \le \dim_K(\Lin(I))$.
\end{myenumerate}
\end{proposition}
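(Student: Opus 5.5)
Both parts will follow from one observation about the $Z$-separating tuple itself. By hypothesis (and Remark~\ref{rem-SepCohSep}) there is a coherently $Z$-separating tuple $F=(f_1,\dots,f_t)$ in~$I$, with $f_i=z_i-h_i$ and $h_i\in K[X\setminus Z]$. I will read the linear parts directly off these $f_i$ and then pass to~$\sigma$.

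First I check that $h_i$ has no constant term. Since $I\subseteq\M$, every $f_i$ lies in~$\M$, so $f_i(0)=0$ and hence $h_i(0)=0$. It follows that
\[
\Lin(f_i)=z_i-\Lin(h_i),
\]
where $\Lin(h_i)$ is a linear form in the indeterminates of $X\setminus Z$ only. Each $\Lin(f_i)$ belongs to $\Lin(I)$, and $\langle\Lin(I)\rangle$ denotes the ideal generated by these linear forms.

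For~(1), let $\sigma$ be an elimination term ordering for~$Z$. Every indeterminate in~$Z$ is then $\sigma$-larger than every term in $K[X\setminus Z]$. Applied to $\Lin(f_i)$, this gives $\LT_\sigma(\Lin(f_i))=z_i$, so $z_i\in\LT_\sigma(\langle\Lin(I)\rangle)$ for every~$i$. An alternative route is Proposition~\ref{prop-sepandcoherentlysep}: $\sigma$ is $Z$-separating for~$I$, so some $f\in I$ has $\LT_\sigma(f)=z_i$. Since $\LT_\sigma(f)$ has degree~1, $f$ has no constant term, and its linear part again has leading term~$z_i$ (terms of higher degree not involving $z_i$ are smaller, because $\sigma$ eliminates~$Z$). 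The direct computation is cleaner, so I will use it.

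For~(2), the rows of $\cmat_Z(F)$ are the $Z$-coefficients of the $\Lin(f_i)$, so $\cmat_Z(F)$ is the identity matrix $I_t$. The point I expect to need care is relating this to the given generators $G=(g_1,\dots,g_r)$.

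1. By Proposition~\ref{prop-cotancomp}(2), $\Lin(I)=\langle\Lin(g_1),\dots,\Lin(g_r)\rangle_K$.
2. Hence each $\Lin(f_i)$ is a $K$-linear combination of the rows of $\cmat(G)$.
3. Restricting to the $Z$-columns, the rows $e_1,\dots,e_t$ of $I_t$ lie in the row space of $\cmat_Z(G)$.
4. Therefore $\rk\cmat_Z(G)\ge t$; since the matrix has only $t$ columns, its rank is exactly~$t$.
5. Finally, $t=\rk\cmat_Z(G)\le\rk\cmat(G)=\dim_K\Lin(I)$.

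The only subtle step is the hypothesis $I\subseteq\M$, which is exactly what guarantees that $h_i$ has no constant term and hence that $\Lin(f_i)$ contains $z_i$ with coefficient~1.
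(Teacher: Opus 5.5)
Your argument is correct and is essentially the paper's. For (1) you inline the content of Proposition~\ref{prop-sepandcoherentlysep} by working directly with a coherently $Z$-separating tuple, which also gives $\cmat_Z(F)=I_t$ instead of the paper's invertible triangular matrix; for (2) you use the same row-space comparison with $\cmat_Z(G)$ via Proposition~\ref{prop-cotancomp}(2). One small correction to your closing remark: $\Lin(f_i)$ contains $z_i$ with coefficient~$1$ whatever the constant term, since $\Lin$ is just the degree-one component, and the place where $I\subseteq\M$ is genuinely needed is Proposition~\ref{prop-cotancomp}(2), which your step~1 relies on.
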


\begin{proof} 
We first prove (1).
By assumption, there exists a $Z$-separating term ordering $\sigma$ for~$I$. 
By Proposition~\ref{prop-sepandcoherentlysep}, 
it follows that every elimination term ordering $\tau$ for $Z$ is also a
$Z$-separating term ordering for $I$. Hence, by Definition~\ref{def-sepindets}\,(2), we have
$Z \subseteq \LT_\tau(I)$. Let $z \in Z$. Then there exists $f \in I$ such that $z = \LT_\tau(f)$.
Thus $z = \LT_\tau(\Lin(f))$, and therefore 
$z \in \LT_\tau(\langle \Lin(I) \rangle)$.
This proves (1).

To prove (2), let $F=(f_1,\dots,f_t)$ be a $Z$-separating tuple in $I$ and
let $\sigma$ be a term ordering such that $\LT_\sigma(f_i)=z_i$ for $i=1,\dots, t$.
Without loss of generality, we may assume that $z_1 >_\sigma \cdots >_\sigma z_t$.
Then  $\cmat_Z(F)$ is an upper triangular matrix in $\GL_t(K)$. 
By Proposition~\ref{prop-cotancomp}\,(2),  
we know that $\Lin(f_i) \in \langle\Lin(g_1), \dots, \Lin(g_r)\rangle_K$ for $i=1,\dots, t$. 
Hence there exists a matrix $B \in \Mat_{ t, r}(K)$ such that $\cmat(F) = B \cdot \cmat(G)$, and therefore
$\cmat_Z(F) = B \cdot \cmat_Z(G)$. Since $\cmat_Z(F) $ is invertible, 
it follows that $\cmat_Z(G)$ has rank $t$.
\end{proof}

This proposition suggests the following definition.

\begin{definition}\label{def-toprank}
Let $I \subseteq P$ be an ideal contained in $\M$, let $Z$ be a tuple of distinct indeterminates, 
and let $G=(g_1,\dots, g_r) \subset I$.
If $\rk(\cmat_Z(G)) = \#Z$, we say that $Z$ is a tuple of \textbf{top rank} for $G$.
\end{definition}

\smallskip
Moreover, Proposition~\ref{prop-sepZ} brings us back to the $K$-vector space $\Lin(I)$ already 
analyzed in Proposition~\ref{prop-cotancomp}. The  idea  is to explore all  possible 
monomial ideals $\LT_\sigma(\langle \Lin(I) \rangle)$, hence the Gr\"obner 
fan  of $\Lin(I)$. 

The following definition recalls the notion of the Gr\"obner fan of an ideal.

\begin{definition}\label{def-GFan}
Let $I$ be an ideal in $P$.
\begin{myenumerate}
\item A tuple of pairs
$\overline{G} = ((g_1, \LT_\sigma(g_1)), \dots, (g_r, \LT_\sigma(g_r)))$,
 where $G=(g_1,\dots, g_r)$ is the reduced $\sigma$-Gr\"obner
basis of~$I$ for a term ordering $\sigma$, is called a \textbf{marked Gr\"obner basis} of~$I$. 

\item Let $\overline{G} = ((g_1, \LT_\sigma(g_1)), \dots, (g_r, \LT_\sigma(g_r)))$ be 
a marked Gr\"obner basis of~$I$.
The set $\{ z\in X \mid \LT_\sigma(g_i) = z \text{ for some } i\in\{1,\dots,r\} \}$ is called 
the \textbf{set of leading indeterminates} of~${\overline{G}}^{\mathstrut}$ and denoted by 
$\LI(\overline{G})$.

\item The set of all marked reduced Gr\"obner bases of~$I$ is called the
\textbf{Gr\"obner fan} of~$I$ and denoted by ${\rm GFan}(I)$.

\item The set of all leading term sets of marked
reduced Gr\"obner bases in $\GFan(I)$ is called the \textbf{leading term Gr\"obner fan}
of~$I$ and denoted by $\LTGFan(I)$.

\end{myenumerate}
\end{definition}

\begin{myremark}\label{rem-GFan}
Recalling the original paper where the Gr\"obner fan was introduced (see \cite{MR}), 
we note that the Gr\"obner fan is actually a polyhedral complex. Since all term 
orderings associated with a given reduced Gr\"obner basis are described by the vectors 
in a polyhedral cone, with a slight abuse of terminology, we adopt the 
simplified definition given above.
\end{myremark}

\begin{myremark}\label{rem-LI}
It follows from the definition that $\#(\LI(\overline{G})) \le \dim_K(\Lin(I))$,
since the leading indeterminates of a reduced Gr\"obner basis of $\langle \Lin(I) \rangle$ 
form a basis of the monomial ideal $\LT_\sigma(\langle \Lin(I) \rangle)_1$, 
whose $K$-dimension equals $\dim_K(\Lin(I))$.
In the terminology of Definition~\ref{def-marked}\,(3), the tuple $\overline{G}$ 
is coherently marked. 
\end{myremark}

\begin{myremark}\label{rem-GFanLin}
Proposition~\ref{prop-sepZ} motivates the idea of using $\Lin(I)$, in particular 
the Gr\"obner fan of $\Lin(I)$, which is much easier to compute than the Gr\"obner fan of $I$,
to find tuples $Z$ for which the ideal $I$ is $Z$-separating. This idea is explained in  great detail 
in \cite[Section 3]{KLR2}. 
The following example shows how it works. (See also Example~\ref{ex-PropTrivBas}).
\end{myremark}

\begin{example}\label{ex-LinearGFan}
Let $P = \QQ[x, y, z, w]$,  $\M= \langle x, y, z, w \rangle$,
 $f_1= x +y -z +4w -x^2$, $f_2= x -y -z +xy$,
$F=(f_1,f_2)$, and $I = \langle F \rangle$.  We have 
$\ell_1 = \Lin(f_1) = x +y -z +4w$ and $\ell_2 = \Lin(f_2) = x -y -z$, and 
\begin{equation*}
\cmat(F) = \begin{bmatrix}
1 & 1  & -1 & 4 \\  
1 & -1  & -1 & 0
\end{bmatrix}.
\end{equation*}
The $2\times2$-submatrix $C_{13}$ is singular. The others are 
\[
C_{12} = \begin{bmatrix} 1 & \;\; 1 \\  1 & -1  \end{bmatrix},\,
C_{14} = \begin{bmatrix} 1 & 4 \\  1 & 0  \end{bmatrix},\,
C_{23} = \begin{bmatrix}  1 & -1 \\ -1 & -1  \end{bmatrix},\,
C_{24} = \begin{bmatrix} 1 &  4 \\  -1 & 0  \end{bmatrix},\,
C_{34} = \begin{bmatrix} -1 & 4 \\ -1 & 0 \end{bmatrix}.
\]
This shows that $(x, z)$ is not of top rank, while $(x, y)$, $(x, w)$, $(y, z)$, $(y, w)$, and $(z, w)$ are 
of top rank.
Multiplying  their inverses by $\cmat(F)$, we obtain the matrices
\[
\begin{bmatrix} 1 & 0 & -1 & 2   \\ 0 & 1  & 0 & 2 \end{bmatrix},\ 
\begin{bmatrix} 1 & -1 & -1 & 0 \\  0 & \tfrac{1}{2}  & 0  & 1 \end{bmatrix},\ 
\begin{bmatrix} 0 & 1  & 0  & 2\\  -1 & 0  & 1 & -2 \end{bmatrix},\ 
\begin{bmatrix} -1 & 1 & 1 & 0 \\ \tfrac{1}{2}  &  0 & -\tfrac{1}{2}  & 1 \end{bmatrix},\ 
\begin{bmatrix} -1 & 1  & 1 & 0 \\ 0  & \tfrac{1}{2} & 0 & 1 \end{bmatrix}.
\]
They correspond to  the following marked reduced Gr\"obner bases of~$\langle \ell_1, \ell_2 \rangle$. 
\begin{align*}
\big((\ x-z +2w, \ x),\  (y +2w, \ y) \big), & \quad \big( (x -y - z, \ x),\  (w + \tfrac{1}{2}y, \ w) \big) \\
\big((y+2w, \ y), \ (z-x-2w, \ z) \big), & \quad \big( (y-x+z, \ y),\  (w +\tfrac{1}{2}x -\tfrac{1}{2}z,\ w) \big), \\
\big((z-x+y,\ z), \ (w +\tfrac{1}{2}y, \ w)\big) 
\end{align*}

Next, we multiply the inverses of $C_{12},\dots, C_{34}$ by 
$\begin{bmatrix} f_1 \\ f_2 \end{bmatrix}$, obtaining five different pairs of generators of $I$.
They are 
\begin{align*}
G_1 &= ( -\tfrac{1}{2}x^2 +\tfrac{1}{2}xy +x -z +2w,\  -\tfrac{1}{2}x^2 -\tfrac{1}{2}xy +y +2w)\\
G_2 &= ( xy +x -y -z,\  -\tfrac{1}{4}x^2  -\tfrac{1}{4}xy +\tfrac{1}{2}y +w) \\
G_3 &= (-\tfrac{1}{2}x^2  - \tfrac{1}{2}xy +y +2w, \  \tfrac{1}{2}x^2  - \tfrac{1}{2}xy -x +z -2w) \\
G_4 &= (-xy -x +y +z,\  -\tfrac{1}{4}x^2 +\tfrac{1}{4}xy +\tfrac{1}{2}x  -\tfrac{1}{2}z +w) \\
G_5 &=  (-xy -x +y +z, \ -\tfrac{1}{4}x^2  -\tfrac{1}{4}xy +\tfrac{1}{2}y +w)
\end{align*}
Note that $x$ cannot be the leading term of the first polynomial in $G_1$, since it cannot 
be bigger than $x^2$. Using similar arguments, we discard all but $G_5$.
A direct check using an elimination term ordering for $(z, w)$ shows that the reduced Gr\"obner basis 
of $I$ is indeed~$G_5$ written in the correct order as
$ (z -xy -x +y,  \ \ w -\tfrac{1}{4}x^2 -\tfrac{1}{4}xy +\tfrac{1}{2}y) $.
An important use of this information is illustrated in Example~\ref{ex-LinGFanContinued}.
\end{example}

\medskip
\subsection{Z-Separating Re-embeddings and Optimal Re-embeddings}
\label{subsec-Z-Separating Re-embeddings and Optimal Re-embeddings}

The coherently $Z$-separating tuples (see Definition~\ref{def-sepindets})
allow us to re-embed ideals in the following sense.

\begin{proposition}\label{prop-isoZseparating}
Let $I$ be an ideal in $P$, let $Z=(z_1,\dots, z_t)$ be a tuple of distinct indeterminates in $X$, 
and let $F=(f_1,\dots, f_t)$ be a coherently $Z$-separating tuple in~$I$.
For $i=1,\dots, t$, write $f_i = z_i - h_i$.
Then, letting $\widehat{P} = K[X{\setminus}Z]$, the $K$-algebra homomorphism
$\phi \colon P \To \widehat{P}$ defined by $\phi(z_i) = h_i$ for $i=1,\dots, t$ 
and $\phi(x_j) = x_j$ for $x_j \in X{\setminus}Z$, induces a $K$-algebra isomorphism
$\Phi \colon P/I \longrightarrow \widehat{P} / (I\cap \widehat{P})$.
\end{proposition}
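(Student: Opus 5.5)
The strategy is to show that $\phi$ is a surjective $K$-algebra homomorphism $P\To\widehat{P}$ with $\phi(I)\subseteq I\cap\widehat P$ and kernel exactly $I$ after composing with the projection; more precisely, that the composite $\psi\colon P\To\widehat P\To \widehat P/(I\cap\widehat P)$ is surjective with $\Ker(\psi)=I$.

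\emph{Well-definedness and surjectivity.} Since $F$ is coherently $Z$-separating, no $z_j$ divides a term in $\Supp(h_i)$. Hence $h_i\in\widehat P$ for all $i$, and $\phi$ is a well-defined $K$-algebra homomorphism into $\widehat P$. It restricts to the identity on $\widehat P\subseteq P$, so $\phi$, and therefore $\psi$, is surjective.

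\emph{The inclusion $I\subseteq\Ker(\psi)$.} For $g\in P$, the image $\phi(g)$ is precisely the polynomial $\hat g$ obtained by rewriting $g$ using $F$ (Definition~\ref{def-rewriting}). As in the first half of the proof of Proposition~\ref{prop-ElimViaSubst}, one has $g-\hat g\in\langle z_1-h_1,\dots,z_t-h_t\rangle\subseteq I$. This follows since $z_i\equiv h_i$ modulo $\langle F\rangle$, so every monomial changes by an element of $\langle F\rangle$ under the substitution. Thus for $g\in I$ we get $\hat g\in I\cap\widehat P$, i.e.\ $\psi(g)=0$.

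\emph{The inclusion $\Ker(\psi)\subseteq I$.} If $\psi(g)=0$, then $\hat g\in I\cap\widehat P\subseteq I$. Since $g-\hat g\in I$ by the congruence above, it follows that $g\in I$.

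Therefore $\psi$ induces the isomorphism $\Phi\colon P/I\To\widehat P/(I\cap\widehat P)$, $\bar g\mapsto\overline{\hat g}$. Its inverse is induced by the inclusion $\widehat P\hookrightarrow P$, since $\phi$ is the identity on $\widehat P$.

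The only point requiring care is the congruence $g\equiv\phi(g)\pmod{\langle F\rangle}$, together with the fact that $h_i\in\widehat P$. The latter is exactly where coherence, rather than mere $Z$-separation, is used: with only $Z$-separation, $h_i$ could still involve some $z_j$ with $j\ne i$. With both facts in hand, the rest of the argument is routine.
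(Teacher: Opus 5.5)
Your proof is correct and follows essentially the same route as the paper. The paper uses the division algorithm to write $f=\sum_j q_j(z_j-h_j)+\tilde f$ with $\tilde f\in\widehat P$ and notes that $\phi(f)=\tilde f$; this is exactly your congruence $g\equiv\phi(g)\pmod{\langle F\rangle}$, and from it the paper concludes $f\in I\iff\phi(f)\in I\cap\widehat P$, just as you do. Your explicit remark that coherence is what guarantees $h_i\in\widehat P$ (so that $\phi$ lands in $\widehat P$ and fixes it pointwise) spells out a point the paper leaves implicit.
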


\begin{proof} 
By definition, $\phi$ and hence $\Phi$ are surjective.
Using the division algorithm, we know that every polynomial $f \in P$ 
can be written as $f = \sum_{j=1}^t q_j(z_j - h_j) + \tilde{f}$ with $\tilde{f} \in \widehat{P}$.
We have $f \in I$ if and only if $\tilde{f} \in I$. Since $\phi(f) = \tilde{f}$, we get
$f \in I$ if and only if $\phi(f) \in I\cap \widehat{P}$, which shows that $\Phi$ is well-defined, 
injective and surjective.
\end{proof}

\begin{definition}\label{def-isoZseparating}
 The $K$-algebra isomorphism $\Phi$ defined in the above proposition is called the \textbf{$Z$-separating re-embedding} of $I$ (or of $P/I$).
\end{definition}

The following examples illustrate the use of coherently $Z$-separating tuples.
For the actual computations, see \cocoa-Example~\ref{coex-Zsepar}.

\begin{example}\label{ex-Zseparation}
Let $P = \QQ[x, y, z, w]$, and consider the ideal $I = \langle f_1, f_2, f_3 \rangle$,
where
\begin{equation*}
f_1 = x^3 - y + z, \quad f_2 = xw^2 - x - y, \quad f_3 = x^2 - yw. 
\end{equation*}
Let $X = \{x, y, z, w\}$ and $\M = \langle X \rangle$. 
The linear parts are $\Lin(f_1) = -y + z$, $\Lin(f_2) = -x - y$, and $\Lin(f_3) = 0$.
Let $\sigma$ be a term ordering on $\mathbb{T}^4$ defined by a weight matrix 
whose first row is $(0, 1, 2, 0)$. Under this ordering, we have 
$\LT_\sigma(f_1) = z$ and $\LT_\sigma(f_2) = y$. 
The reduced $\sigma$-Gr\"obner basis of $I$ is
\begin{equation*}
G = ( z + x^3 - xw^2 + x, \quad y - xw^2 + x, \quad xw^3 - x^2 - xw ). 
\end{equation*}
Let $Z = (y, z)$. The first two polynomials in $G$ form a coherently $Z$-separating tuple. 
In~particular, the third polynomial is obtained by substituting $y \mapsto xw^2 - x$ into $f_3$, 
as described in Proposition~\ref{prop-ElimViaSubst}.
Since $\sigma$ is an elimination term ordering for $Z$, we deduce that the 
elimination ideal is $I \cap \QQ[x, w] = \langle xw^3 - x^2 - xw \rangle$. 
This yields the following isomorphism of $\QQ$-algebras.
\begin{equation*}
\Phi \colon P/I \To \QQ[x, w] / \langle xw^3 - x^2 - xw \rangle.
\end{equation*}
More information about this example is given in Example~\ref{ex-Zseparation continued}.
\end{example}

We now address the following question: using coherently $Z$-separating tuples, 
can one obtain a re-embedding that uses the minimal number of indeterminates among all re-embeddings? 
In view of our goal to find presentations of $P/I$ with minimal ambient dimensions,
we introduce the following terminology.

\begin{definition}\label{def-edim}
Let $P = K[x_1,\dots, x_n]$ and let~$I$ be an ideal in $P$.

\begin{myenumerate}
\item A $K$-algebra isomorphism $\Psi \colon P/I  \longrightarrow Q/J$,
where $Q$ is a polynomial ring over~$K$ and~$J$ is an ideal in~$Q$, is
called a \textbf{re-embedding} of~$I$.

\item A re-embedding $\Psi \colon P/I \longrightarrow Q/J$ is called
\textbf{optimal} if every $K$-al\-ge\-bra isomorphism $P/I \longrightarrow Q'/J'$ 
with a polynomial ring~$Q'$ over~$K$ and~$J'$ an ideal in~$Q'$ 
satisfies $\dim(Q')\ge \dim(Q)$.

\item For an optimal re-embedding $\Psi \colon P/I \longrightarrow Q/J$, we define
$\edim(P/I) = \dim(Q)$ and call it the \textbf{embedding dimension} of~$P/I$.

\item The ring $P/I$ is called a  free commutative $K$-algebra
if there exist a polynomial ring~$Q$ over~$K$ and a $K$-algebra isomorphism 
$P/I \cong Q$. Since we are exclusively 
dealing with commutative rings here, we also call $P/I$ a \textbf{free $K$-algebra}.

\item A  $Z$-separating re-embedding
$\Phi \colon P/I \longrightarrow \widehat{P}/(I\cap \widehat{P})$, 
where $\widehat{P} = K[X{\setminus}Z]$,
is called \textbf{best separating} if for every subset $Z' \subseteq X$
for which the ideal $I$ admits a $Z'$-separating tuple, one has $\#Z' \le \#Z$.

\item For a best $Z$-separating re-embedding $\Phi$, we define
$\sepdim(P/I) = n - \#Z$
and call it the \textbf{separating embedding dimension} of~$P/I$.

\item A $Z$-separating re-embedding that is optimal is called an
\textbf{optimal separating re-embedding}.

\item The ring~$P/I$ is called a \textbf{separating free $K$-algebra} if there exists 
a $Z$-separating re-embedding $\Phi \colon P/I \longrightarrow K[X{\setminus}Z]$.  

\end{myenumerate}
\end{definition}

\medskip
\begin{myremark}\label{rem-best-optimal}
In~\cite[Definition~3.3.d]{KLR0}, the term ``optimal separating re-embedding'' 
was used for $Z$-separating re-embeddings such that $\#Z$ is as large as possible. 
To avoid confusion, in~\cite[Section~2]{KR6} we introduced the term 
\textit{best separating re-embedding}, which we also adopt here (see item~(5) of Definition~\ref{def-edim}).
Consequently, when we use the term \textit{optimal separating re-embedding} we mean a 
$Z$-separating re-embedding that is optimal (see item~(7) of Definition~\ref{def-edim}).
\end{myremark}

\medskip
\begin{myremark}\label{rem-correction}
In~\cite[Definition~3.3]{KLR0} and~\cite[Proposition~3.2]{KLR2},
the re-embedding defined above is called either a re-embedding of~$I$
or a re-embedding of~$P/I$. Throughout this survey, we consistently use
the terminology \textit{re-embedding of~$I$}, as explained in
Notation~6 of the Introduction.
\end{myremark}

\begin{example}\label{ex-LinGFanContinued}
We return to Example~\ref{ex-LinearGFan}. 
Recall that, using an elimination term ordering for $(z, w)$, we obtained 
the  Gr\"obner basis $( z - xy - x + y,\; w - \tfrac{1}{4}x^2 - \tfrac{1}{4}xy + \tfrac{1}{2}y )$.
Consequently, the associated separating re-embedding of the ideal~$I$ is the isomorphism 
$\Phi \colon P/I \To \QQ[x,y]$ defined by $\Phi(\bar{x}) = x$, $\Phi(\bar{y}) = y$, 
$\Phi(\bar{z}) = xy + x - y$, and $\Phi(\bar{w}) = \tfrac{1}{4}x^2 + \tfrac{1}{4}xy - \tfrac{1}{2}y$.
Hence $P/I$ is a separating free $\QQ$-algebra with $Z = (z, w)$.
\end{example}

Using a \textit{reverse-engineering technique}, the following example shows that an ideal $I$ can be
$Z$-separating, although this property is not apparent from a given set of generators of~$I$.

\begin{example}\label{ex-reverse}
Consider the ring $P=\QQ[x,y,z]$  and the ideal
$I=\langle f_1,\dots, f_8\rangle$, where
\begin{align*}
f_1 &= y^3 - xy - y^2 + x,\\
f_2 &= xy^2 - \tfrac{1}{3}y^2z - x^2 - y^2 + \tfrac{1}{3}xz + x,\\
f_3 &= y^2z^2 - 3y^2z - xz^2 + 3xz,\\
f_4 &= x^2y^2 - x^3 - 2y^2z + 2xz,\\
f_5 &= x^2y + y^3 - x^2 - y^2,\\
f_6 &= x^3 + xy^2 - \tfrac{1}{3}x^2z - \tfrac{1}{3}y^2z - x^2 - y^2,\\
f_7 &= x^2z^2 + y^2z^2 - 3x^2z - 3y^2z,\\
f_8 &= x^4 + x^2y^2 - 2x^2z - 2y^2z.
\end{align*}

The ideal $I$ does not appear to be $(x)$-separating. 
However, for any elimination term ordering~$\sigma$ for $(x)$, 
the reduced $\sigma$-Gr\"obner basis of $I$ is $\{x - y^2,\; y^4 + y^2\}$, 
which reveals that $I$ is, in fact, $(x)$-separating. 
Moreover, we have an optimal $(x)$-separating re-embedding 
$\Phi \colon P/I \longrightarrow \mathbb{Q}[y, z] / \langle y^4 + y^2 \rangle$. 
Note that $P/I$ is not a free $\mathbb{Q}$-algebra, since it is 
not an integral domain.
\end{example}

The following example shows that the embedding dimension can be smaller than the 
separating embedding dimension.

 \begin{example}\label{ex-isomorphtoK[x]} 
Let  $P=\QQ[x,y]$, let $f = x +2x^8 +8x^6y +12x^4y^2 +8x^2y^3 +2y^4$, and
let $I=\langle f \rangle$.
If we let $\M =\langle x, y\rangle$, we have $\Lin(f) = x$. 

Clearly, $x$ cannot be the leading term of $f$,
hence we have $\sepdim(P/I) = 2$.
On the other hand,  the $\QQ$-algebra homomorphism 
$\phi \colon \QQ[x, y] \To \QQ[x]$ defined by $\phi(x) = -2x^4$ and $\phi(y) = x - 4x^8$
satisfies  $\phi(x^2 +y) = x$ as well as $\Ker(\phi) = \langle f \rangle$
(see \cocoa-Example~\ref{coex-isotoK[x]}).
It follows that $\Phi \colon \QQ[x, y]/ \langle f \rangle \To \QQ[x]$ 
is an optimal re-embedding of~$I$,  hence $\edim(P/I) = 1$,
and $P/I$ is a free $\QQ$-algebra, not a separating free $\QQ$-algebra.
 \end{example}

Next, we provide further insight into the inequality $\edim(R) \le \sepdim(R)$.

\goodbreak
\begin{theorem}\label{thm-ineqsep}
Let $P=K[x_1,\dots,x_n]$, let $\M=\langle x_1,\dots,x_n\rangle$, let~$I\subseteq \M$,
 let $R=P/I$, and let $\m=\M/I$.
We have
\begin{equation*}
\begin{aligned}
n - \dim_K(\Lin(I)) 
&= \dim_K(\Cot_\m(P/I)) \\
&\le \edim(R) \\
&\le \sepdim(R) \\
&\le n - \#\LI(\overline{G}).
\end{aligned}
\end{equation*}
\end{theorem}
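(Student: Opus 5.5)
The plan is to verify the chain of (in)equalities one link at a time.

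\emph{The equality.} This is exactly Proposition~\ref{prop-cotancomp}\,(4): $\dim_K(\Cot_\m(R)) = n - \dim_K(\Lin(I))$.

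\emph{First inequality.} Let $\Psi\colon P/I \To Q/J$ be an optimal re-embedding with $Q=K[y_1,\dots,y_m]$, so $m=\edim(R)$. The image $\Psi(\m)$ is a maximal ideal $\m'$ of $Q/J$ with residue field $K$. Its preimage in $Q$ is a $K$-rational maximal ideal $\langle y_1-a_1,\dots,y_m-a_m\rangle$. After the translation $y_i\mapsto y_i+a_i$ (which gives another re-embedding into a polynomial ring of the same dimension), we may assume this preimage is $\langle y_1,\dots,y_m\rangle$ and $J\subseteq\langle y_1,\dots,y_m\rangle$. Since $\Psi$ is a ring isomorphism carrying $\m$ to $\m'$, it induces a $K$-linear isomorphism $\m/\m^2\cong \m'/\m'^2$. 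Applying Proposition~\ref{prop-cotancomp}\,(4) in $Q$ gives $\dim_K(\m'/\m'^2)=m-\dim_K(\Lin(J))\le m$. Hence $\dim_K(\Cot_\m(R))\le \edim(R)$.

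\emph{Second inequality.} Take a best $Z$-separating re-embedding $\Phi\colon P/I\To K[X{\setminus}Z]/(I\cap K[X{\setminus}Z])$, which exists by Remark~\ref{rem-SepCohSep} and Proposition~\ref{prop-isoZseparating}. It is a re-embedding into a polynomial ring of dimension $n-\#Z=\sepdim(R)$. By the minimality in Definition~\ref{def-edim}\,(2),(3), we get $\edim(R)\le\sepdim(R)$. One point needs care: if no nonempty $Z$ is separating, the identity ($Z=\emptyset$) serves, so $\sepdim(R)\le n$ always makes sense.

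\emph{Last inequality.} This is the step needing real thought, since $\overline{G}$ is a marked reduced Gr\"obner basis of $I$ itself for some term ordering $\sigma$, as in Definition~\ref{def-GFan}. Let $Z'=\LI(\overline{G})$ and let $F$ be the tuple of elements $g_i$ with $\LT_\sigma(g_i)=z\in Z'$. Each such $g_i$ has leading term an indeterminate $z$. Since $G$ is reduced, no term in the support of any element of $G$ other than $\LT_\sigma$ of that element lies in $\LT_\sigma(I)$. In particular $z$ divides no term of $\Supp(g_j)$ except the leading term of $g_i$. Hence $F$ has the form $z-h_z$ with no $z'\in Z'$ dividing any term of any $h_z$, so $F$ is a coherently $Z'$-separating tuple in $I$ (Definition~\ref{def-sepindets}\,(3)). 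By the maximality of $\#Z$ for a best separating re-embedding (Definition~\ref{def-edim}\,(5)), $\#Z'\le\#Z$, so $\sepdim(R)=n-\#Z\le n-\#\LI(\overline{G})$.

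The main obstacle is the translation argument in the first inequality: one must make sure the maximal ideal corresponding to $\m$ is $K$-rational, so that Proposition~\ref{prop-cotancomp} applies in $Q$ with the origin, and that the cotangent space dimension is intrinsic to $R$ and $\m$.
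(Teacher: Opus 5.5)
Your proof is correct and follows the same route as the paper's. The equality comes from Proposition~\ref{prop-cotancomp}\,(4), the first inequality from passing to an optimal presentation and using that the cotangent space is intrinsic, the second from the definitions, and the last from comparing $\LI(\overline{G})$ with a best separating tuple. You also make explicit two points the paper leaves implicit: the translation that moves the $K$-rational maximal ideal to the origin of $Q$, and the fact that the leading indeterminates of a reduced Gr\"obner basis form a coherently separating tuple, which is what the paper's ``by definition, $\sepdim(P/I)=n-\max\#\LI(\overline{H})$'' actually relies on.
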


\begin{proof} 
The first equality was proved in Proposition~\ref{prop-cotancomp}\,(4).
To prove the first inequality, we note that both $\edim(R)$ and the cotangent space 
$\Cot_\m(R) = \m/\m^2$ are invariants of~$R$ (see~\cite[Chapter 10]{Ei}), 
and hence do not depend on the chosen presentation.
Consequently, we may assume that $R=P/I$ is a presentation with $n=\dim(P)=\edim(R)$.
Then
\begin{equation*}
\dim_K(\Cot_\m(P/I)) = n - \dim_K(\Lin(I)) \le n = \edim(R).
\end{equation*}
The second inequality follows from the definition of $\edim(R)$.

The last inequality follows from the fact that, by definition,  $\sepdim(P/I) = n - m$, where 
$m = \max\{\, \#\LI(\overline{H}) \mid \overline{H} \in \GFan(I) \}$.
\end{proof}

From the theorem we see that a sufficient condition for $\edim(R)=\sepdim(R)$ is that 
$\dim_K(\Lin(I)) = \#\LI(\overline{G})$. We now make this condition more explicit.
 Moreover, the following theorem provides a criterion for an affine $K$-algebra 
 to be  a separating free $K$-algebra.

\begin{theorem}{\bf{(Criteria for Optimality and Free Algebra)}}\label{thm-checkopt}

Let $K$ be a field, let $P = K[x_1, \dots, x_n]$, let $X = (x_1, \dots, x_n)$, and
let $\M = \langle X \rangle$. Let~$I$ be an ideal in~$P$ such that $I \subseteq \M$, 
and let $Z$ be a tuple of distinct indeterminates in~$X$ 
such that $I$ is $Z$-separating. Assume that  $\#(Z) = \dim_K(\Lin(I))$. 
Then the following claims hold.

\begin{myenumerate}
\item 
The $Z$-separating re-embedding
$\Phi \colon P/I \longrightarrow \widehat{P} / (I\cap \widehat{P})$ 
where $ \widehat{P} = K[X{\setminus}Z]$
is an optimal  re-embedding of~$I$, and $\edim(P/I) = n - \#(Z)$.

\item If in addition we assume that  $P_\M/I_\M$ is regular, then 
$P/I$ is a free $K$-algebra.
\end{myenumerate}
\end{theorem}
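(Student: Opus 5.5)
For part~(1), the plan is to squeeze $\edim(P/I)$ between two equal bounds. Since $I$ is $Z$-separating, Remark~\ref{rem-SepCohSep} provides a coherently $Z$-separating tuple $F=(f_1,\dots,f_t)$ in~$I$, where $t=\#Z$. Proposition~\ref{prop-isoZseparating} then yields the $Z$-separating re-embedding $\Phi\colon P/I\To\widehat{P}/(I\cap\widehat{P})$, and $\dim(\widehat{P})=n-t$. This gives the upper bound $\edim(P/I)\le n-t$.

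For the lower bound I would invoke Theorem~\ref{thm-ineqsep}. It states that $n-\dim_K(\Lin(I))=\dim_K(\Cot_\m(P/I))\le\edim(P/I)$. The underlying reason is that the cotangent space is an intrinsic invariant of $R$ at the $K$-rational maximal ideal $\m$. Moreover, any presentation $R\cong Q'/J'$ with $\dim(Q')=m$ gives $\dim_K(\m/\m^2)\le m$. The one point to check is that $\m$ is carried to a maximal ideal with residue field $K$, which one may translate to the origin. Since $\dim_K(\Lin(I))=t$ by hypothesis, we get $\edim(P/I)\ge n-t$. Combining the two bounds, $\Phi$ realizes the minimum, so $\Phi$ is optimal and $\edim(P/I)=n-\#Z$.

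For part~(2), the step I expect to be the main obstacle is showing that $J:=I\cap\widehat{P}$ is zero. First note that $\phi(\M)\subseteq\widehat{\M}:=\langle X\setminus Z\rangle$, because each $h_i$ lies in $\M$: indeed $f_i\in I\subseteq\M$ and $z_i\in\M$. Hence $\Phi$ maps $\m$ to $\widehat{\M}/J$, and localization gives $P_\M/I_\M\cong\widehat{P}_{\widehat{\M}}/J_{\widehat{\M}}$. Next I would show $J\subseteq\widehat{\M}^2$. Compute $\Lin(I)$ using the generators $F$ together with generators of $J$; this is legitimate by Proposition~\ref{prop-ElimViaSubst} and Proposition~\ref{prop-cotancomp}\,(2). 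The linear parts of $F$ are linearly independent with invertible $Z$-block, so they already span a space of dimension $t=\dim_K(\Lin(I))$. Therefore every $g\in J$ has $\Lin(g)$ in the span of $\Lin(f_1),\dots,\Lin(f_t)$. But $\Lin(g)$ involves no $z_i$, while the $Z$-block is invertible, so $\Lin(g)=0$.

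It then follows that the local ring $A=\widehat{P}_{\widehat{\M}}/J_{\widehat{\M}}$ has embedding dimension $n-t$, which equals $\dim\widehat{P}_{\widehat{\M}}$. Regularity of $A$ forces $\dim A=n-t$. Since $\widehat{P}_{\widehat{\M}}$ is a domain of dimension $n-t$, the quotient by $J_{\widehat{\M}}$ has the same dimension only if $J_{\widehat{\M}}=0$. Finally, $\widehat{P}$ is a domain, so it embeds into its localization, and hence $J=0$. Thus $P/I\cong\widehat{P}=K[X\setminus Z]$, and $P/I$ is a free $K$-algebra, in fact a separating free one.
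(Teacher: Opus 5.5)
Your proof is correct and follows essentially the paper's argument: in (1) you squeeze $\edim(P/I)$ between the cotangent lower bound of Theorem~\ref{thm-ineqsep} and the $Z$-separating re-embedding (the paper gets the same collapse via $\#Z\le\#\LI(\overline{G})\le\dim_K(\Lin(I))$), and in (2) regularity plus a dimension count forces $I\cap\widehat{P}=\langle 0\rangle$. The one difference is your explicit check that $I\cap\widehat{P}\subseteq\langle X{\setminus}Z\rangle^2$; it is correct but unnecessary, because $\dim_K(\m/\m^2)=n-\#Z$ is an invariant of $P_\M/I_\M$, and the paper uses that fact directly.
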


\begin{proof}
Let us prove (1).
Let $G$ be the reduced Gr\"obner basis of~$I$ with respect to an elimination term ordering for $Z$.
We know from Remark~\ref{rem-LI} that 
$\#\LI(\overline{G}) \le \dim_K(\Lin(I))$.
On the other hand, since $I$ contains a coherently $Z$-separating tuple,
$G$ contains polynomials whose leading indeterminates are the elements of $Z$.
Hence we have 
\begin{equation*}
\#(Z)\le \#\LI(\overline{G}) \le \dim_K(\Lin(I)) = \#(Z).
\end{equation*}
Consequently, $\dim_K(\Lin(I)) = \#\LI(\overline{G})$, 
so all inequalities in Theorem~\ref{thm-ineqsep}
become equalities, and the assertions follow.

Next, we prove (2). From the regularity of $P_\M/I_\M$,
we know that $\dim_K(\Cot_{\M/I}(P/I)) = \dim(P_\M/I_\M)$, hence 
$\dim_K(\Lin(I)) = n - \dim(P_\M/I_\M)$ follows from Proposition~\ref{prop-cotancomp}\,(4).
Consequently, we obtain $\#(X{\setminus}Z) =  \dim(P_\M/I_\M)$.
Thus, the $Z$-sep\-arating re-embedding of~$I$ is of the form 
$\Phi \colon P/I \longrightarrow \widehat{P}/(I \cap \widehat{P})$,
where $\widehat{P} = K[X{\setminus}Z]$ is a polynomial ring in $d=\dim(P_\M/I_\M)$ indeterminates.
From the fact that  $ \dim(P_\M/I_\M)$ is invariant under the isomorphism $\Phi$, we deduce
$\dim(\widehat{P} / (I \cap \widehat{P})) \ge d$, which, together with $\dim(\widehat{P}) = d$,
implies $I \cap \widehat{P} = \langle 0 \rangle$.
\end{proof}

The following example continues Example~\ref{ex-Zseparation}.

\begin{example}\label{ex-Zseparation continued}
From Example~\ref{ex-Zseparation} we recall that 
$\Lin(f_1) = -y + z$, $\Lin(f_2) = -x - y$, and $\Lin(f_3) = 0$.
It was shown that the tuple $Z = (y, z)$ is separating,
and that it can be used to produce the re-embedding 
$\Phi \colon P/I \To \QQ[x, w] / \langle xw^3 - x^2 - xw \rangle$.

Note that in this case we have $\#Z = \dim_K(\Lin(I)) = 2$.
By Theorem~\ref{thm-checkopt}, we deduce that $\Phi$ is an optimal re-embedding.

This fact can also be checked directly as follows.
Since $\dim(P/I) = 1$, the only possibility for $\edim(P/I) < 2$ is that 
there exists an isomorphism $P/I \cong Q$, where $Q$ is a polynomial ring in one indeterminate. 
This is not possible, since $\QQ[x, w] / \langle xw^3 - x^2 - xw \rangle$ is not an integral domain.
We can also conclude that $P/I$ is not a free $\QQ$-algebra.
\end{example}

We have already seen Example~\ref{ex-isomorphtoK[x]} where $\edim(P/I) < \sepdim(P/I)$.
In general, computing $\edim(P/I)$ is a difficult problem.
For instance, consider the following example.
 
\begin{example}\label{ex-Crachiola}
Let $P=\QQ[x, y, z, t]$ and  let $I=\langle f\rangle$, where $f = x + x^2y + z^2 + t^3$.
Clearly, the only reduced Gr\"obner basis of $I$ is $\{f\}$.
Since $f$ is not separating for any indeterminate, the optimal separating re-embedding
of~$I$ is the identity map, and hence $\sepdim(P/I) = 4$.
A natural question is whether this is also an optimal re-embedding, i.e., whether 
$\edim(P/I)=4$. Since $\dim(P/I) = 3$, the only possibility for $\edim(P/I) = 3$ 
is that $P/I$ is isomorphic to a polynomial ring in three indeterminates.
Using a technique based on special invariants of polynomial rings,
the existence of such an isomorphism can be excluded (see~\cite{Cr}).
\end{example}

\newpage   
\section{Back to Border Basis Schemes}
\label{sec-Back to BBS}

In this section we come back to the theory of border basis schemes.
The following is an important remark which motivates the entire subsection.

\begin{myremark}(\bf{Either} $c_{ij}$ \bf{or} $c_{ij}-c_{k\ell}$)\label{rem-easylin}\\
\normalfont  In~\cite{KSL} and~\cite[Propositions~6.3 and~6.4]{KLR2}, a detailed 
description of the linear and quadratic parts of the generators of $I(\BO)$ is 
given using the definitions introduced in Subsection~\ref{subsec-IntRimExp}. 
The key observation is that the linear parts (if nonzero)
are only of two types, namely $c_{ij}$ and $c_{ij}-c_{k\ell}$. 
Consequently, the idea of using $\Lin(I)$, explained in Remark~\ref{rem-GFanLin}, 
becomes particularly effective in the case of border basis schemes, as we now
describe.
\end{myremark}

\medskip
\subsection{Cotangent Equivalence}
\label{subsec-cotequiv}

First,  we subdivide the tuple of indeterminates $C=(c_{ij})$ as follows.

\begin{definition}\label{def-cotequiv}
Let $\BO$ be a border basis scheme and let $B_\OO=K[C]/I(\BO)$ be its coordinate ring.
We denote the maximal ideal $\langle C\rangle$ of $K[C]$ by $\M$ and $\langle C\rangle/I(\BO)$
by~$\m$.
For every indeterminate $c_{ij} \in C$, we let $\bar{c}_{ij}$ denote
its residue class in the cotangent space $ \m/ \m^2$ 
of $B_\OO$ at the origin.
\begin{myenumerate}
\item The relation~$\sim$ on~$C$ defined
by $c_{ij}\sim c_{kl} \Leftrightarrow K\cdot \bar{c}_{ij}= K \cdot \bar{c}_{kl}$
is an equivalence relation called \textbf{cotangent equivalence}.

\item An indeterminate $c_{ij} \in C$ is called \textbf{trivial}
if $\bar{c}_{ij}=0$. The trivial indeterminates form the \textbf{trivial
cotangent equivalence class} in~$C$.

\item An  indeterminate $c_{ij} \in C$ is called \textbf{basic} if $\bar{c}_{ij} \ne 0$
and its cotangent equivalence class  contains only ${c}_{ij}$.
In this case, the cotangent equivalence class $\{ c_{ij} \}$ is also called \textbf{basic}.

\item An indeterminate $c_{ij}\in C$ is called \textbf{proper}
if $\bar{c}_{ij} \ne 0$ and its cotangent equivalence class contains at least two elements. 
In this case, the cotangent equivalence class of~$c_{ij}$ is also called \textbf{proper}.
\end{myenumerate}
\end{definition}

\begin{lemma}\label{lem-equivclasses}
Let $\BO$ be a border basis scheme, let $B_\OO=K[C]/I(\BO)$ 
be its coordinate ring, and let $U$ be the union of the supports
of the linear polynomials in a set of generators of~$\Lin(I(\BO))$.
\begin{myenumerate}
\item The set~$U$ does not depend on the choice of a
set of generators  of~$\Lin(I(\BO))$. 

\item The set of basic indeterminates is $C\setminus U$.

\item The union of the sets of trivial and proper indeterminates is~$U$.
\end{myenumerate}
\end{lemma}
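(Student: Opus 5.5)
We would work throughout inside the cotangent space $\m/\m^2 \cong \M_1/\Lin(I(\BO))$, using Proposition~\ref{prop-cotancomp}. The key input is Remark~\ref{rem-easylin}: every nonzero linear part of a natural generator of $I(\BO)$ has the form $c_{ij}$ or $c_{ij}-c_{k\ell}$. By Proposition~\ref{prop-cotancomp}\,(2), $\Lin(I(\BO))$ is spanned by these linear parts, so it is spanned by vectors of the two shapes $c_{ij}$ and $c_{ij}-c_{k\ell}$.

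To prove (1), we characterize $U$ intrinsically. We claim that $c_{ij}\in U$ if and only if $c_{ij}$ appears with nonzero coefficient in some element of $\Lin(I(\BO))$, i.e.\ the coordinate functional $c_{ij}^*$ does not vanish on $\Lin(I(\BO))$. This is immediate: if every element of a generating set has zero $c_{ij}$-coefficient, then so does every element of the span; conversely, if some generator has $c_{ij}$ in its support, then the functional is nonzero on the space. Since this characterization refers only to the vector space $\Lin(I(\BO))$, the set $U$ is independent of the chosen generators. This step works for any subspace and needs no special structure.

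For (2) and (3), the special shape of the generators matters. Let $V$ be the graph on $C$ with an edge $c_{ij}$---$c_{k\ell}$ for each generator $c_{ij}-c_{k\ell}$, and call a vertex marked if $c_{ij}$ itself is a generator. Then $\Lin(I(\BO))$ is the span of edge-differences and marked vertices. Consequently $\bar c_{ij}=\bar c_{k\ell}$ whenever the two lie in the same connected component. Moreover, the quotient $\M_1/\Lin(I(\BO))$ has a basis indexed by the unmarked components, and on a marked component every $\bar c$ is $0$. The main point to verify carefully is this quotient description: mapping each $c$ to the basis vector of its component, or to $0$ if the component is marked, gives a surjection whose kernel is exactly the span of the generators. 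Hence two nontrivial indeterminates are cotangent equivalent if and only if they lie in the same unmarked component, and $c$ is trivial if and only if its component is marked. Here we use that the components are disjoint, so their images are linearly independent, which rules out accidental proportionalities across components.

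Now $c\notin U$ means $c$ is an isolated, unmarked vertex. Its class is then a nonzero vector, alone in its component, so $c$ is basic. Conversely, if $c\in U$, then either its component is marked, making $c$ trivial, or the component is unmarked with at least one edge, so it has at least two elements and $c$ is proper. Since trivial, basic and proper partition $C$, this gives both (2) and (3).

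The expected obstacle is the correctness of the component description of the quotient, in particular its dependence on Remark~\ref{rem-easylin}. That remark is stated for the natural generators, and by part (1) this choice of generating set is legitimate.
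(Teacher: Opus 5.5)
Your proof is correct. Part (1) is the paper's argument in dual form. The paper writes each generator of one set in terms of the other and then swaps roles; you instead characterize $U$ as the set of $c_{ij}$ whose coordinate functional is nonzero on $\Lin(I(\BO))$.

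For (2) and (3) your route genuinely differs. The paper never computes the cotangent space; it checks the kinds of indeterminates one at a time:
\begin{itemize}
\item A trivial $c_{ij}$ lies in $\Lin(I(\BO))$, and a proper one gives an element $c_{ij}-\lambda c_{k\ell}\in\Lin(I(\BO))$. So both lie in $U$, and this half holds for any subspace of $\M_1$.
\item If a basic $c_{ij}$ were in $U$, the binomial shape of Remark~\ref{rem-easylin} would force some linear part to be $c_{ij}$ itself. Otherwise a linear part $c_{ij}-c_{k\ell}$ would give $\bar c_{ij}=\bar c_{k\ell}\neq 0$ and put $c_{k\ell}$ into the class of $c_{ij}$. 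Hence $c_{ij}$ would be trivial, a contradiction.
\end{itemize}

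Your graph argument instead determines $\m/\m^2$ completely, with a basis indexed by the unmarked connected components. It therefore identifies the classes explicitly: the proper classes are the unmarked components with at least two vertices, and the trivial class is the union of the marked components.

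The price is the kernel computation you flagged, which you should write out. Inside a component, vectors with coefficient sum zero are spanned by edge differences along paths; on a marked component, every vertex satisfies $c=(c-m)+m$ with $m$ marked. The gain is an explicit description of the equivalence classes, of the kind exploited in Theorem~\ref{thm-shapeofSsigma} and Example~\ref{ex-PropTrivBas}.

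The paper's route is shorter and isolates the single place where the binomial shape is indispensable, namely that basic indeterminates avoid $U$. Without that shape this fails: for a subspace spanned by $c_1+c_2+c_3$, all three indeterminates are basic and yet lie in $U$.
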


\begin{proof}
To prove (1) we note that any set of generators of the ideal
$\Lin(I(\BO))$ is also a set of generators 
of the $K$-vector space $\Lin(I(\BO))_K$.
Let~$A$ and~$B$ be two such sets. Since every linear form in~$A$ 
is a linear combination of linear forms in~$B$, each indeterminate in its support
is in the support of some linear form in~$B$. By interchanging the roles of~$A$ and~$B$, 
the conclusion follows.

Since $\sim$ is an equivalence relation on~$C$, to prove claims~(2) and~(3) it suffices 
to show that basic indeterminates are not in~$U$, while trivial and proper indeterminates
are in~$U$.
Firstly, let $c_{ij}$ be a basic indeterminate. For a contradiction, assume that $c_{ij} \in U$.
From~$c_{ij} \in U$ and the fact that $c_{ij}$ is the only element in its equivalence class, 
we deduce that there is a polynomial in~$I(\BO)$ of the form $c_{ij} +q$ 
with $q \in \M^2$. Hence we get that $c_{ij}$ is trivial, a contradiction.
Secondly, let $c_{ij}$ be trivial.  Then there is a polynomial in~$I$ of the form $c_{ij}+q$
with $q \in \M^2$, and hence we get $c_{ij} \in U$. 
Thirdly, let $c_{ij}$ be proper. Then there exist another indeterminate 
$c_{kl}$ and a polynomial in~$I(\BO)$ 
of the form  $c_{ij} -c_{k l} +q$ with  $q \in \M^2$. Hence we get $c_{ij} \in U$.
\end{proof}

Our next step is to order the indeterminates in the cotangent equivalence classes
using a term ordering. 

\begin{definition}\label{def-leadsetofPr}
Let $\TT^{\mu\nu}$ denote the  monoid of terms in the indeterminates $c_{ij}\in C$ and
let~$\sigma$ be a term ordering on~${\mathbb T}^{\mu\nu}$.
\begin{myenumerate}
\item Let $E= \{c_{(i_1, j_1)}, \dots, c_{(i_p, j_p)} \}$ be a proper equivalence class in~$C$
such that we have 
$c_{(i_1, j_1)} >_\sigma \cdots  >_\sigma c_{(i_p, j_p)} $. Then the set
$E\setminus\{ c_{(i_p, j_p)} \}$ is called the {\bf $\sigma$-leading set} of~$E$ and
denoted by~$E^\sigma$.

\item The unique minimal set of indeterminates generating the ideal 
$\LT_\sigma(\Lin(I(\BO)))$ is denoted by~$S_\sigma$.
\end{myenumerate}
\end{definition}

\begin{theorem}\label{thm-shapeofSsigma}
Let $E_0$ be the trivial equivalence class and $E_1, \dots, E_q$ 
the proper equivalence classes in~$C$. 

\begin{myenumerate}
\item Let~$\sigma$ be a term ordering on $\mathbb{T}^{\mu\nu}$.
Then we have $S_\sigma = E_0 \cup E_1^\sigma \cup \cdots \cup E_q^\sigma$.

\item  For $i=1,\dots,q$, let $E_i^\ast$ be a set obtained from~$E_i$ 
by deleting one of its elements. Then there exists a term ordering~$\sigma$
on $\mathbb{T}^{\mu\nu}$ such that we have
$S_\sigma = E_0 \cup E_1^\ast \cup \cdots \cup E_q^\ast$.
\end{myenumerate}
\end{theorem}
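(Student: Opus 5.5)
The first step is to describe the $K$-vector space $V=\Lin(I(\BO))\subseteq\M_1=\langle C\rangle_K$ completely in terms of the cotangent equivalence classes. By Proposition~\ref{prop-cotancomp}, we have $\m/\m^2\cong\M_1/V$, and $\bar c_{ij}$ is the class of $c_{ij}$ modulo $V$. Hence $c_{ij}$ is trivial exactly when $c_{ij}\in V$, and $c_{ij}\sim c_{kl}$ with both non-trivial exactly when $c_{ij}-\lambda c_{kl}\in V$ for some $\lambda\in K^\times$. By Remark~\ref{rem-easylin}, $V$ is spanned by linear forms of the two types $c_{ij}$ and $c_{ij}-c_{kl}$. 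I would therefore first prove the following claim:
\begin{equation*}
V=\langle E_0\rangle_K\oplus\bigoplus_{s=1}^q W_s,\qquad W_s=\langle c-c' \mid c,c'\in E_s\rangle_K .
\end{equation*}
The inclusion ``$\supseteq$'' should be immediate if one checks that the scalars $\lambda$ are all $1$. For the reverse inclusion, I would consider the graph on $C\setminus E_0$ whose edges are the generators $c_{ij}-c_{kl}$ that remain after reducing modulo $E_0$. Each generator of the form $c-c'$ with $c$ or $c'$ trivial reduces modulo $\langle E_0\rangle_K$ to a multiple of a single indeterminate. That indeterminate is then trivial, so it is absorbed into $\langle E_0\rangle_K$. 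The connected components of this graph are exactly the proper classes, and $W_s$ is the span of the edges in a connected component. Basic indeterminates do not occur at all, by Lemma~\ref{lem-equivclasses}. I expect this step to be the main obstacle. The difficulty is to show that no linear combination of generators produces an unexpected relation, for instance one forcing $c-2c'\in V$ or making a whole class trivial. The way around it is to use the fact that the span of the edge vectors $c-c'$ of a connected graph is exactly the space of forms with coefficient sum zero on its vertex set. This makes the decomposition direct, and it shows that $\dim W_s=\#E_s-1$ and that no element of $E_s$ lies in $V$.

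Granting the claim, part~(1) follows from a computation of the reduced $\sigma$-Gr\"obner basis of the ideal generated by $V$, which consists of linear forms. For the summand $\langle E_0\rangle_K$, the reduced basis consists of the indeterminates themselves, so their leading terms are $E_0$. For $W_s$, write $E_s=\{c_1>_\sigma\cdots>_\sigma c_p\}$. Then the forms $c_1-c_p,\dots,c_{p-1}-c_p$ form a basis of $W_s$ and are already reduced, with leading terms $E_s^\sigma$. The summands involve pairwise disjoint sets of indeterminates, so the union of these bases is the reduced Gr\"obner basis of $\langle V\rangle$. This gives $S_\sigma=E_0\cup E_1^\sigma\cup\cdots\cup E_q^\sigma$. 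As a consistency check, $\#S_\sigma=\dim_K V$.

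For part~(2), let $e_s$ be the element deleted from $E_s$ to obtain $E_s^\ast$. I would pick a weight vector $w$ with $w(e_s)=0$ for every $s$ and $w(c)=1$ for all other indeterminates, and let $\sigma$ be any term ordering refined by $w$, for instance $w$ followed by \texttt{DegRevLex}. Then $e_s$ is the $\sigma$-smallest element of $E_s$, so $E_s^\sigma=E_s^\ast$, and part~(1) yields $S_\sigma=E_0\cup E_1^\ast\cup\cdots\cup E_q^\ast$.
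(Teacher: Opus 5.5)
Your proof is correct. For part~(1) it takes a different and more complete route than the paper's.

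The paper only proves the sandwich $E_0\cup E_1^\sigma\cup\cdots\cup E_q^\sigma\subseteq S_\sigma\subseteq E_0\cup E_1\cup\cdots\cup E_q$. It gets this from $E_0\subseteq\Lin(I(\BO))$, Lemma~\ref{lem-equivclasses}\,(3), and the binomials $c-c_{\min}$ inside each proper class. It then asserts that the $\sigma$-smallest element of each class is not in $S_\sigma$. That step does not follow from the sandwich alone. It needs that no form $c_{\min}+(\text{$\sigma$-smaller indeterminates})$ lies in $\Lin(I(\BO))$. Equivalently, the residues in $\m/\m^2$ of the basic indeterminates and of one representative per proper class must be linearly independent.

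Your decomposition $\Lin(I(\BO))=\langle E_0\rangle_K\oplus\bigoplus_s W_s$ supplies exactly this. You derive it from the binomial shape of the linear parts (Remark~\ref{rem-easylin}) and the fact that the edge vectors of a connected graph on $p$ vertices span the $(p-1)$-dimensional space of zero-sum forms. It also shows that all the scalars $\lambda$ equal $1$, which the paper takes for granted. The cost is a longer argument. The gain is an explicit reduced $\sigma$-Gr\"obner basis of $\langle\Lin(I(\BO))\rangle$ and the count $\dim_K\Lin(I(\BO))=\#E_0+\sum_s(\#E_s-1)$.

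When you write it up, fix the order of the steps. First derive $\Lin(I(\BO))=\langle E_0\rangle_K\oplus V'$, where $V'$ is the span of the edges between non-trivial indeterminates. Only then read off that the proper classes are the connected components and that $\lambda=1$. Otherwise you argue the inclusion ``$\supseteq$'' before the fact it depends on.

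For part~(2) you do the same as the paper: make the deleted element $\sigma$-smallest in its class and apply part~(1). Your single $0/1$ weight vector refined by a term ordering achieves this a bit more directly than the paper's reduction to $q=1$ via block orderings.
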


\begin{proof}
To prove claim~(1) we observe that $E_0 \subseteq S_\sigma$ follows from 
$E_0 \subseteq \Lin_\M(I(\BO))$, and that  the inclusion 
$S_\sigma \subseteq E_0 \cup E_1\cup \cdots \cup E_q$
follows from Lemma~\ref{lem-equivclasses}\,(3).

For $k\in \{1,\dots,q\}$, we write the proper equivalence class 
$E_k= \{c_{i_1j_1}, \dots, c_{i_p j_p} \}$ 
such that $c_{i_1 j_1} >_\sigma \cdots  >_\sigma c_{i_p j_p}$. 
Using the definition of a proper equivalence class, it follows that 
$c_{i_\ell j_\ell} - c_{i_p j_p} \in \Lin_\M(I(\BO))$, so that
$\LT_\sigma(c_{i_\ell j_\ell} - c_{i_p j_p}) = c_{i_\ell j_\ell}$ 
for every $\ell \in \{1,\dots, p-1\}$.
Hence we have proved 
$$
E_0 \cup E_1^\sigma \cup \cdots \cup E_q^\sigma \;\subseteq\;  
S_\sigma \;\subseteq\;   E_0 \cup E_1\cup \cdots \cup E_q
$$ 
Therefore the $\sigma$-smallest 
element in each proper equivalence class does not belong to~$S_\sigma$, 
which finishes the proof of~(1).
 
Claim~(2) follows from~(1) if we show that there exists a term ordering~$\sigma$ 
such that $E_i^\sigma = E_i^\ast$ for $i=1,\dots,q$.
By definition, the sets~$E_i$ are pairwise disjoint. Consequently, a term ordering 
which solves the problem can be chosen as a block term ordering, and hence it 
suffices to consider the case $q=1$. 
So, let $E_1 = \{c_{i_1j_1}, \dots, c_{i_p j_p} \}$  and w.l.o.g. assume 
that $E_1^\ast = \{c_{i_2 j_2}, \dots, c_{i_p j_p} \}$. As we observed before, we have 
$c_{i_\ell j_\ell} - c_{i_1 j_1} \in \Lin(I(\BO))$ for $\ell=2,\dots, p$.
To finish the proof it suffices to take a term ordering~$\sigma$ such that 
$c_{i_\ell j_\ell} >_\sigma c_{i_1 j_1}$ for $\ell=2,\dots, p$.
\end{proof}

\begin{myremark}\label{rem-LinLT}
Recall from Proposition~\ref{prop-sepZ} that  $I$ is $Z$-separating
if $Z \subset \LT_\sigma(\Lin(I))$ where $\sigma$ is an elimination ordering for $Z$.
In the case of border basis schemes, Theorem~\ref{thm-shapeofSsigma} 
gives an important restriction on where to look for suitable $Z$.
\end{myremark}

\begin{myremark}
In~\cite[Section~5]{KLR2} a generalization of the above results is described, in particular  
to ideals generated by polynomials whose linear parts are binomials.
\end{myremark}

The following example illustrates the results of this section. 
For some explicit  computation see \cocoa-Example~\ref{coex-PropTrivBas}.
It was first mentioned in~\cite[Remark~7.5.3]{Hui1}. 

\begin{example}\label{ex-PropTrivBas}
In $P = \QQ[x,y]$, consider the order ideal $\OO = \{t_1,\dots,t_8\}$ given by 
$t_1=1$, $t_2=y$, $t_3=x$, $t_4=y^2$, $t_5=xy$, $t_6=x^2$, $t_7=y^3$, and $t_8 = xy^2$.
Then, its border is $\partial\OO = \{b_1,\dots,b_5\}$, where 
$b_1=x^2y$, $b_2=x^3$, $b_3=y^4$, $b_4 = xy^3$, and $b_5=x^2y^2$.

\begin{center}
\begin{minipage}[c]{0.45\textwidth}
Thus, $\QQ[C] = \QQ[c_{11}, \dots, c_{85}]$ is a polynomial ring in $40$ indeterminates. 
Note that the dimension of $\BO$ is $\dim(\BO) =n \mu  = 16$ and that there are 
32 natural generators of the ideal $I(\BO)$.
\end{minipage}%
\qquad\qquad
\begin{minipage}[c]{0.40\textwidth}
\centering
\beginpicture
        \setcoordinatesystem units <0.6cm,0.6cm>
        \setplotarea x from 0 to 3.5, y from 0 to 4.5
        \axis left /
        \axis bottom /
        \arrow <2mm> [.2,.67] from 3.5 0 to 4 0
        \arrow <2mm> [.2,.67] from 0 4.2 to 0 4.7
        \put {$\scriptstyle x^i$} [lt] <0.5mm,0.8mm> at 4.1 0.1
        \put {$\scriptstyle y^j$} [rb] <1.7mm,0.7mm> at 0 4.6
        \put {$\bullet$} at 0 0
        \put {$\bullet$} at 1 0
        \put {$\bullet$} at 0 1
        \put {$\bullet$} at 1 1
        \put {$\bullet$} at 1 2
        \put {$\bullet$} at 0 2
        \put {$\bullet$} at 0 3
        \put {$\bullet$} at 2 0
        \put {$\scriptstyle 1$} [lt] <-1mm,-1mm> at 0 0
        \put {$\scriptstyle t_1$} [rb] <-1.3mm,0.4mm> at 0 0
        \put {$\scriptstyle t_3$} [rb] <-1.3mm,0.4mm> at 1 0
        \put {$\scriptstyle t_2$} [rb] <-1.3mm,0mm> at 0 1
        \put {$\scriptstyle t_4$} [rb] <-1.3mm,0mm> at 0 2
        \put {$\scriptstyle t_5$} [rb] <-1.3mm,0mm> at 1 1
        \put {$\scriptstyle t_6$} [rb] <-1.3mm,0.4mm> at 2 0
        \put {$\scriptstyle t_7$} [rb] <-1.3mm,0mm> at 0 3
        \put {$\scriptstyle t_8$} [rb] <-1.3mm,0mm> at 1 2
        \put {$\scriptstyle b_1$} [lb] <1.5mm,0mm> at 2 1
        \put {$\scriptstyle b_2$} [lb] <1.5mm,1.2mm> at 3 0
        \put {$\scriptstyle b_3$} [lb] <1.5mm,1.2mm> at 0 4
        \put {$\scriptstyle b_4$} [lb] <1.5mm,1.2mm> at 1 3
        \put {$\scriptstyle b_5$} [lb] <1.5mm,1.2mm> at 2 2
        \put {$\times$} at 0 0

        \put {$\circ$} at 3 0
        \put {$\circ$} at 2 2
        \put {$\circ$} at 2 1
        \put {$\circ$} at 1 3
        \put {$\circ$} at 0 4
\endpicture 
\end{minipage}
\end{center}

The linear parts of these generators are 
$$
\begin{array}{l}
c_{65},\;  c_{51} - c_{85},\;  c_{45},\;   c_{44},\;  c_{55},\;  c_{43} - c_{54},\;   
c_{42},\;  c_{41} - c_{75},\;  c_{52} - c_{75},\;  c_{35},  \cr 
c_{34},\;  c_{33},\;  c_{31},\;  c_{25},\;  c_{24},\;  c_{23},\;  c_{22},\;  c_{21},\;  
c_{32},\;  c_{15},\;  c_{14},\;  c_{13},\;  c_{12},\;   c_{11}  
\end{array}
$$
If we let $U$ be the union of the supports of these elements, we get 
$$
C\setminus U = \{c_{53},\,  c_{61},\,  c_{62},\,  c_{63},\,  c_{64},\,  c_{71},\,  
c_{72},\,  c_{73},\,  c_{74},\,  c_{81},\,  c_{82},\,  c_{83},\,  c_{84} \}
$$
which is exactly the set of basic indeterminates by Lemma~\ref{lem-equivclasses}\,(2).
For the trivial cotangent equivalence class $E_0$ and the 
proper cotangent equivalence classes $E_1,\dots,E_q$, we get
\begin{align*}
E_0 = & \{ c_{11},  c_{12},  c_{13},  c_{14},  c_{15},  c_{21},  c_{22},  c_{23},  
c_{24},  c_{25},  c_{31},  c_{32},  c_{33},  c_{34},  c_{35}, c_{42},  c_{44},  c_{45},  c_{55},  c_{65} \}\\
E_1 = & \{c_{51},  c_{85}\},\qquad  E_2 = \{ c_{43}, c_{54}\},\qquad
E_3 = \{c_{41}, c_{52}, c_{75}\}
\end{align*}
Using $\sigma= \tt DegRevLex$, we obtain $E_1^\sigma = \{c_{51}\}$, $E_2^\sigma = \{c_{43}\}$,
and $E_3^\sigma = \{c_{41},  c_{52} \}$.
Next, we compute the set $S_\sigma$ of the minimal generators of the leading term ideal 
of $\Lin(I(\BO))$ and get
$$
\begin{array}{l}
S_\sigma = \{  c_{11},  c_{12},  c_{13},  c_{14},  c_{15},  c_{21},  c_{22},  c_{23},  
c_{24},  c_{25},  c_{31},  c_{32},  c_{33},  c_{34},  c_{35},  \\
\quad\quad\   c_{41}, c_{42},  c_{43},  c_{44},  c_{45},  c_{51},  c_{52},  c_{55},  c_{65}
  \}
\end{array}
$$
which coincides with $E_0\cup E_1^\sigma \cup E_2^\sigma \cup E_3^\sigma$ 
(see Theorem~\ref{thm-shapeofSsigma}\,(1)).

In view of Remark~\ref{rem-LinLT}, we compute all sets of the form 
$S_\sigma = E_0 \cup E_1^\ast \cup \cdots \cup E_q^\ast$.\\
We obtain the following 12 sets
\begin{alignat*}{3}
	Z_1  &= E_0\cup \{ c_{51} \} \cup \{ c_{43} \} \cup \{ c_{41},\ c_{52}\} \qquad \quad & Z_2  &= E_0\cup \{ c_{51} \} \cup \{ c_{43} \} \cup \{ c_{41},\ c_{75}\} \\
	Z_3  &= E_0\cup \{ c_{85} \} \cup \{ c_{43} \} \cup \{ c_{41},\ c_{52}\} \qquad \quad & Z_4  &= E_0\cup \{ c_{85} \} \cup \{ c_{43} \} \cup \{ c_{41},\ c_{75}\} \\
	Z_5  &= E_0\cup \{ c_{51} \} \cup \{ c_{54} \} \cup \{ c_{41},\ c_{52}\} \qquad \quad & Z_6  &= E_0\cup \{ c_{51} \} \cup \{ c_{54} \} \cup \{ c_{41},\ c_{75}\} \\
	Z_7  &= E_0\cup \{ c_{85} \} \cup \{ c_{54} \} \cup \{ c_{41},\ c_{52}\} \qquad \quad & Z_8  &= E_0\cup \{ c_{85} \} \cup \{ c_{54} \} \cup \{ c_{41},\ c_{75}\} \\
	Z_9  &= E_0\cup \{ c_{51} \} \cup \{ c_{43} \} \cup \{ c_{52},\ c_{75}\} \qquad \quad & Z_{10} &= E_0\cup \{ c_{85} \} \cup \{ c_{43} \} \cup \{ c_{52},\ c_{75}\} \\
	Z_{11} &= E_0\cup \{ c_{51} \} \cup \{ c_{54} \} \cup \{ c_{52},\ c_{75}\} \qquad \quad & Z_{12} &= E_0\cup \{ c_{85} \} \cup \{ c_{54} \} \cup \{ c_{52},\ c_{75}\}
\end{alignat*}
The corresponding complementary sets $C\setminus Z_i$ for $i =1, \dots, 12$ can then be determined.
For instance, we have 
\begin{equation*}
Y=C\setminus Z_1 = \{ c_{53}, c_{54}, c_{61}, c_{62}, c_{63}, c_{64}, c_{71}, c_{72}, 
c_{73}, c_{74}, c_{75}, c_{81}, c_{82}, c_{83}, c_{84}, c_{85} \}.
\end{equation*}
An explicit computation shows that $I(\BO)$ is $Z_1$-separating and there exists an isomorphism
$K[C]/I(\BO) \To \QQ[Y]$. In particular, this shows that $\BO$ is an affine cell.
The same conclusion follows for every choice of $Z_i$.
\end{example}

\medskip
\subsection{Interior, Rim, and Exposed Indeterminates}
\label{subsec-IntRimExp}

\begin{definition}\label{def-rim-and-interior}
Let $\OO = \{t_1, \dots, t_\mu\}$ be an order ideal in~$\mathbb{T}^n$, and let
$\partial\OO = \{b_1, \dots, b_\nu\}$ be its border.
\begin{myenumerate}
\item For $i\in \{1,\dots, \mu\}$, the term~$t_i$ is called a {\bf rim term}
in~$\OO$ if there exists an indeterminate~$x_k$ such that $x_k t_i \in \partial\OO$.
Otherwise, the term~$t_i$ is called an {\bf interior term} of~$\OO$.
The set of all rim terms is denoted by $\OOrim$, and the set of all interior terms
is denoted by $\OOint$.

\item For $i\in \{1,\dots, \mu\}$ and $j\in \{1,\dots, \nu\}$, the 
indeterminate $c_{ij}$ is called a {\bf rim indeterminate} if~$t_i$ is a rim term
in~$\OO$. The set of all rim indeterminates will be denoted by~$\Crim$.

\item For $i\in \{1,\dots, \mu\}$ and $j\in \{1,\dots, \nu\}$, the 
indeterminate $c_{ij}$ is called an {\bf interior indeterminate} if~$t_i$ is an
interior term of~$\OO$. The set of all interior indeterminates will be denoted
by~$\Cint$.

\end{myenumerate}
\end{definition}

Note that the set $C\setminus U$, described in Example~\ref {ex-PropTrivBas},
 is contained in the set of rim indeterminates.
 
Next we provide some additional terminology which is related to the structure of the
natural set of generators of~$I(\BO)$ (see~Definition~\ref{def-neighbors}). 
It was first introduced in~\cite[Section~4.1]{Hui1}.

\begin{definition}\label{def-exposedIndets}
Let $\OO = \{t_1, \dots, t_\mu\}$ be an order ideal in~$\mathbb{T}^n$, and let
$\partial\OO = \{b_1, \dots, b_\nu\}$ be its border.
\begin{myenumerate}
\item For $\ell \in \{1,\dots,n\}$ and $j\in \{1,\dots,\nu\}$,
the term~$b_j$ is called {\bf $x_\ell$-exposed} if it is of 
the form $b_j = x_\ell t_i$ with $t_i\in\OO$.

\item For $\ell\in \{1,\dots,n\}$, for $i\in \{1,\dots, \mu\}$, and for 
$j\in \{1,\dots, \nu\}$, we say that the 
indeterminate $c_{ij} \in C$ is {\bf $x_\ell$-exposed} if
$x_\ell t_i \in \partial \OO$ and if
there exists an index $j'\in \{1,\dots, \nu\}$ such that $(b_j, b_{j'})$
is a  next-door pair with $b_{j'} = x_\ell b_j$ or an across-the-street
pair with $x_k b_{j'} = x_\ell b_j$ for some $k \in \{1,\dots, n\}$.

\item The subtuple of~$C$ consisting of all indeterminates $c_{ij}$
which are $x_\ell$-exposed for some $\ell\in\{1,\dots,n\}$,
is called the tuple of all {\bf exposed indeterminates} in~$C$ 
and denoted by~$\Cexp$. The indeterminates in $C \setminus \Cexp$ 
are called {\bf non-exposed}.

\end{myenumerate}
\end{definition}

The next proposition provides two characterizations of exposed
indeterminates which motivate the above definition.

\begin{proposition}{\bf (Characterizing Exposed Indeterminates)}\label{prop-CharExposed}

Let $\OO = \{t_1,\dots,t_\mu\}$ be an order ideal in~$\mathbb{T}^n$, let $\partial\OO = 
\{b_1,\dots,b_\nu\}$ be its border, and
let $G = \{g_1,\dots,  g_\nu\}$ be the generic $\OO$-border 
prebasis, where $g_j \;= \; b_j - \sum_{i=1}^\mu c_{ij} t_i$ for $j=1,\dots, \nu$.
Then, for all $i\in \{1,\dots, \mu\}$ and $j\in \{1,\dots, \nu\}$, 
the following conditions are equivalent.
\begin{myenumerate}
\item There exists an index $\ell \in \{1,\dots, n\}$ such that
$c_{ij} \in C$ is $x_\ell$-exposed.

\item 
There exists an index $j'\in \{1,\dots, \nu\}$ and an index $\ell\in\{1,\dots,n\}$ such that $(b_j, b_{j'})$
is a next-door pair with $b_{j'} = x_\ell b_{j}$ or an across-the-street
pair with $x_k b_{j'} = x_\ell b_j$ for some $k\in\{1,\dots,n\}$,
and such that if we write
\begin{equation*}
x_\ell g_j \;=\; x_\ell b_j  + c_{i_1 j} g_{j_1} + \cdots + c_{i_r j} g_{j_r} 
+ \tsum_{\lambda=1}^\mu f_\lambda t_\lambda
\end{equation*}
where $i_1,\dots,  i_r\in \{1,\dots, \mu\}$, where $j_1,\dots, j_r\in\{1,\dots, \nu\}$, 
and where $f_\lambda \in K[C]$, then we have $i \in \{i_1, \dots,  i_r \}$.

\item 
There exists an index $j'\in \{1,\dots, \nu\}$ and an index $\ell\in\{1,\dots,n\}$ such that $(b_j, b_{j'})$
is a next-door pair with $b_{j'} = x_\ell b_j$ or an across-the-street
pair with $x_k b_{j'} = x_\ell b_j$ for some $k\in\{1,\dots,n\}$,
and such that the lifting of the neighbor syzygy $e_{j'} - x_\ell e_j$ resp.
$x_k e_{j'} - x_\ell e_j$ is of the form
\begin{equation*}
x_k^\epsilon e_{j'} - x_\ell e_j - \tsum_{\kappa=1}^\nu \bar{f}_\kappa  e_{\kappa}
\in \Syz_{B_\OO}( \bar{g}_1, \dots,  \bar{g}_\nu )
\end{equation*}
where $\epsilon\in \{0,1\}$ and $\bar{f}_\kappa \in B_\OO$ are the residue classes 
of monomials or binomials in $K[C]$ such that one of their supports contains $c_{ij}$.

\end{myenumerate}
\end{proposition}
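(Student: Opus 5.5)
The plan is to compute $x_\ell g_j$ explicitly in $K[C,X]$ and compare it with the three conditions. Condition~(1) and conditions~(2),~(3) share the same combinatorial hypothesis on the pair $(b_j,b_{j'})$: a next-door pair with $b_{j'}=x_\ell b_j$, or an across-the-street pair with $x_kb_{j'}=x_\ell b_j$. The extra content of~(1) is only that $x_\ell t_i\in\partial\OO$. So for fixed $i,j$ and such a pair I will show that $x_\ell t_i\in\partial\OO$ holds exactly when $c_{ij}$ appears as one of the coefficients $c_{i_1j},\dots,c_{i_rj}$ in~(2), and exactly when $c_{ij}$ appears in the support of some $\bar f_\kappa$ in~(3).

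For (1)$\Leftrightarrow$(2), I expand
\[
x_\ell g_j = x_\ell b_j - \tsum_{\lambda=1}^\mu c_{\lambda j}\, x_\ell t_\lambda .
\]
Each product $x_\ell t_\lambda$ lies either in $\OO$ or in $\partial\OO$, by Definition~\ref{def-border!closure}, because $t_\lambda\in\OO$. If $x_\ell t_\lambda=b_{j_s}\in\partial\OO$, I rewrite $b_{j_s}=g_{j_s}+\sum_m c_{mj_s}t_m$. This produces a term $c_{\lambda j}g_{j_s}$, with the sign absorbed according to the convention of the displayed formula, plus a contribution to $\sum f_\lambda t_\lambda$. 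If $x_\ell t_\lambda\in\OO$, the term goes directly into $\sum f_\lambda t_\lambda$. The resulting representation is the one meant in~(2), and its index set is exactly $\{i_1,\dots,i_r\}=\{\lambda \mid x_\ell t_\lambda\in\partial\OO\}$. Hence $i$ belongs to it if and only if $x_\ell t_i\in\partial\OO$, which together with the pair hypothesis is precisely $x_\ell$-exposedness. The one thing to check is that the representation is well defined, that is, that the coefficients in front of the $g$'s are forced. I expect to get this from the freeness of $K[C,X]$ over $K[C]$ with the basis $\OO\cup\partial\OO\cup\cdots$, or equivalently by reading the representation as the output of the Border Division Algorithm (Proposition~\ref{prop-BorderDiv}) applied to $x_\ell g_j$ minus its leading part.

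For (2)$\Leftrightarrow$(3), I lift the neighbor syzygy. In the next-door case, $g_{j'}-x_\ell g_j=b_{j'}-x_\ell b_j+\sum_\lambda c_{\lambda j}x_\ell t_\lambda-\sum_m c_{mj'}t_m$, where $b_{j'}-x_\ell b_j=0$. In the across-the-street case, $x_kg_{j'}-x_\ell g_j$ is treated similarly, with $x_k g_{j'}$ expanded the same way. Substituting the expansion from the previous step, every $x_\ell t_\lambda\in\partial\OO$ produces the term $c_{\lambda j}e_{j_s}$, and the analogous $x_k$-expansion of $g_{j'}$ produces terms $c_{\lambda j'}e_{\cdot}$. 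What remains is a $K[C]$-combination of elements of $\OO$. Modulo $I(\BO)$ this remainder vanishes in $U_\OO$ by Theorem~\ref{thm-freeness}, because its image is a combination of the basis $\OO$ that represents an element of $\langle\bar g_1,\dots,\bar g_\nu\rangle$ and so must be zero. This gives a syzygy in $\Syz_{B_\OO}(\bar g_1,\dots,\bar g_\nu)$ of the stated shape. The coefficients $\bar f_\kappa$ are sums of at most two indeterminates: one $c_{\lambda j}$ from the $x_\ell g_j$ side and possibly one $c_{\lambda' j'}$ from the $x_kg_{j'}$ side landing on the same $e_\kappa$. That is why they are monomials or binomials. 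Indeterminates $c_{\cdot j}$ with second index $j$ can only come from the $x_\ell g_j$ expansion, so $c_{ij}$ occurs in some $\bar f_\kappa$ if and only if $x_\ell t_i\in\partial\OO$, which is condition~(2).

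I expect the main obstacle to be making the word ``the lifting'' in~(3) precise. A syzygy lift is not unique, since one can add any syzygy. I would therefore fix the lifting to be the canonical one produced by the expansion above, following \cite{KR3} and \cite{Hui1}. For the across-the-street case I also have to check that no cancellation between the two sides can remove $c_{ij}$, which again reduces to the observation that the second indices $j$ and $j'$ differ.
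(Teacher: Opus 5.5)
Your proposal is correct and follows the paper's argument: expanding $x_\ell g_j$ and identifying $\{i_1,\dots,i_r\}$ with $\{\lambda \mid x_\ell t_\lambda\in\partial\OO\}$ by comparing the $K[C]$-coefficients of the border terms is exactly how the paper proves (1)$\Leftrightarrow$(2). For (2)$\Leftrightarrow$(3) the paper only cites \cite[Prop.~4.1]{KR3} to identify this representation with the lifting of the neighbor syzygy. You instead construct that lifting explicitly and check via Theorem~\ref{thm-freeness} that it lies in $\Syz_{B_\OO}(\bar g_1,\dots,\bar g_\nu)$, which is a more self-contained version of the same step.
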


\begin{proof}
To show (1)$\Rightarrow$(2), we write $x_\ell t_i = b_{j''}$ with 
$j'' \in \{1,\dots, \nu\}$. By definition, there exists a pair $(b_j, b_{j'})$
such that $x_k^\epsilon b_{j'} = x_\ell b_j$ for some $j, j' \in \{1,\dots, \nu\}$
and $\epsilon \in \{0,1\}$. 
Then we have
$$
x_\ell g_j \;=\; x_\ell b_j - \tsum_{\lambda=1}^\mu  c_{\lambda j} x_\ell t_\lambda
$$
and if we denote those terms $x_\ell t_{\lambda}$ which are in $\partial\OO$ by
$x_\ell t_{i_1} = b_{j_1}$, $\dots$, $x_\ell t_{i_r} = b_{j_r}$ then
the fact that $x_\ell t_i \in \partial\OO$ shows $i \in \{i_1,\dots,  i_r\}$.

Conversely, the condition $f_\lambda \in K[C]$ implies that the terms in the
support of $x_\ell (g_j-b_j)$ 
which are contained in $K[C] \cdot \partial \OO$ are of the form $c_{i_\kappa j} b_{j_\kappa}$
with $\kappa \in \{1,\dots, r\}$. Therefore $b_{j_\kappa} = x_\ell t_{i_\kappa}$
for $\kappa = 1,\dots, r$ and $c_{ij} \in \{ c_{i_1 j}, \dots,  c_{i_r j} \}$ 
is $x_\ell$-exposed.

The equivalence of~(2) and~(3) follows from the fact that the representation of
$x_k^\epsilon g_{j'} - x_\ell g_j$, which follows from~(2), is exactly the lifting
of the neighbor syzygy $x_k^\epsilon e_{j'} - x_\ell e_j$ in 
$\Syz_{B_\OO}( \bar{g}_1, \dots,  \bar{g}_\nu )$ by~\cite{KR3}, Prop. 4.1.
\end{proof}

Notice that all interior indeterminates are non-exposed, i.e., that
all exposed indeterminates are rim indeterminates.

The following examples illustrate Definition~\ref{def-exposedIndets}.

\goodbreak

\begin{example}\label{ex-exposed}
(see  \cocoa-Example~\ref{coex-exposed}).
In $P = \QQ[x,y]$, consider the order ideal $\OO = \{t_1,\dots,t_5\}$ given by 
$t_1=1$, $t_2=y$, $t_3=x$, $t_4=y^2$, $t_5=y^3$.
Then, its border is $\partial\OO = \{b_1,\dots,b_5\}$, where 
$b_1=xy$, $b_2=x^2$, $b_3=xy^2$, $b_4 = y^4$, and $b_5=xy^3$.

\begin{center}
\begin{minipage}[c]{0.5\textwidth}
Thus, $\QQ[C] = \QQ[c_{11}, \dots, c_{55}]$ is a polynomial ring in $25$ indeterminates. 
Let us see what  the exposed indeterminates are in this case.
The indeterminates which are $x$-exposed  are
$c_{2,1},\,  c_{2,4},\,  c_{3,1}, \, c_{3,4}, \, c_{4,1},\,  c_{4,4},\,  c_{5,1},\,  c_{5,4}$.

The indeterminates which are $y$-exposed  are
$c_{3,1}, \, c_{3,2},\,  c_{3,3},\,  c_{3,5},\,  c_{5,1},\,  c_{5,2},\,  c_{5,3},\,  c_{5,5}$.
\end{minipage}%
\qquad\qquad
\begin{minipage}[c]{0.40\textwidth}
\centering
\beginpicture
        \setcoordinatesystem units <0.6cm,0.6cm>
        \setplotarea x from 0 to 3, y from 0 to 4.5
        \axis left /
        \axis bottom /
        \arrow <2mm> [.2,.67] from 3 0 to 3.5 0
        \arrow <2mm> [.2,.67] from 0 4.2 to 0 4.7
        \put {$\scriptstyle x^i$} [lt] <0.5mm,0.8mm> at 3.5 0.1
        \put {$\scriptstyle y^j$} [rb] <1.7mm,0.7mm> at 0 4.6
        \put {$\bullet$} at 0 0
        \put {$\bullet$} at 1 0
        \put {$\bullet$} at 0 1
        \put {$\bullet$} at 0 2
        \put {$\bullet$} at 0 3
        \put {$\scriptstyle 1$} [lt] <-1mm,-1mm> at 0 0
        \put {$\scriptstyle t_1$} [rb] <-1.3mm,0.4mm> at 0 0
        \put {$\scriptstyle t_3$} [rb] <-1.3mm,0.4mm> at 1.1 0
        \put {$\scriptstyle t_2$} [rb] <-1.3mm,0mm> at 0 1
        \put {$\scriptstyle t_4$} [rb] <-1.3mm,0mm> at 0 2
        \put {$\scriptstyle t_5$} [rb] <-1.3mm,0mm> at 0 3
        \put {$\scriptstyle b_1$} [lb] <1.5mm,0mm> at 1 1
        \put {$\scriptstyle b_2$} [lb] <1.5mm,1.2mm> at 1.8 0
        \put {$\scriptstyle b_3$} [lb] <1.5mm,1.2mm> at 1 1.8
        \put {$\scriptstyle b_4$} [lb] <1.5mm,1.2mm> at 0 3.8
        \put {$\scriptstyle b_5$} [lb] <1.5mm,1.2mm> at 1 2.8
        \put {$\times$} at 0 0
        
        \put {$\circ$} at 2 0
        \put {$\circ$} at 1 1
        \put {$\circ$} at 1 2
       \put {$\circ$} at 1 3
        \put {$\circ$} at 0 4
\endpicture 
\end{minipage}
\end{center}

Let us do some checking.
The across-the-street neighbor pairs are   $(b_1, b_2)$ and $(b_4, b_5)$, 
while the next-door neighbor pairs are $(b_1, b_3)$ and $(b_3, b_5)$.   
\begin{myenumerate}
\item[]
The indeterminate $c_{21}$ is $x$-exposed because $x t_2 \in \partial\OO$, and $y b_2 = x b_1$.

\item[]
The indeterminate $c_{31}$ is $x$-exposed because  $x t_3 \in \partial\OO$, and $y b_2 = x b_1$.

\item[]
The indeterminate $c_{31}$ is also $y$-exposed because $y t_3 \in \partial\OO$, and $b_3 = y b_1$.

\item[]
The indeterminate $c_{43}$ is not $x$-exposed because $x t_4 \in \partial\OO$, but there is no 
neighbor pair  involving $x b_3$. It is also not $y$-exposed because $y t_4 \notin \partial\OO$.
\end{myenumerate}
\end{example}

The following example shows that, in order to find exposed indeterminates, 
it may be necessary to use all AS pairs, including those that are not AR pairs.

\begin{example}\label{ex-exposedNeedAcrossTheCorner}
In $P = \QQ[x,y]$, consider the order ideal $\OO = \{t_1,\dots,t_6\}$ given by 
$t_1=1$, $t_2=y$, $t_3=x$, $t_4=y^2$, $t_5=x^2$, $t_6 = x^3$.
Then, its border is $\partial\OO = \{b_1,\dots,b_6\}$, where 
$b_1=xy$, $b_2=y^3$, $b_3=xy^2$, $b_4 = x^2y$, and $b_5=x^3y$, $b_6 = x^4$.

\begin{center}
\begin{minipage}[c]{0.33\textwidth}
The AS pair $(b_3, b_4)$ is not an AR pair (see Remark~\ref{rem-a-t-s useful for exposed}). 
However it is essential to find the $x$-exposed indeterminate $c_{23}$ and the $y$-exposed
indeterminates $c_{34}$, $c_{54}$.
\end{minipage}%
\qquad\qquad
\begin{minipage}[c]{0.40\textwidth}
\centering
\beginpicture
        \setcoordinatesystem units <0.6cm,0.6cm>
        \setplotarea x from 0 to 3, y from 0 to 4
        \axis left /
        \axis bottom /
        \arrow <2mm> [.2,.67] from 3 0 to 5.5 0
        \arrow <2mm> [.2,.67] from 0 3.8 to 0 4
        \put {$\scriptstyle x^i$} [lt] <0.5mm,0.8mm> at 5.5 0.1
        \put {$\scriptstyle y^j$} [rb] <1.7mm,0.7mm> at 0 4
        \put {$\bullet$} at 0 0
        \put {$\bullet$} at 1 0
        \put {$\bullet$} at 0 1
        \put {$\bullet$} at 0 2
         \put {$\bullet$} at 2 0
          \put {$\bullet$} at 3 0
        \put {$\scriptstyle 1$} [lt] <-1mm,-1mm> at 0 0
        \put {$\scriptstyle t_1$} [rb] <-1.3mm,0.4mm> at 0 0
        \put {$\scriptstyle t_3$} [rb] <-1.3mm,0.4mm> at 1.1 0
         \put {$\scriptstyle t_5$} [rb] <-1.3mm,0.4mm> at 2.1 0
         \put {$\scriptstyle t_6$} [rb] <-1.3mm,0.4mm> at 3.1 0
        \put {$\scriptstyle t_2$} [rb] <-1.3mm,0mm> at 0 1
        \put {$\scriptstyle t_4$} [rb] <-1.3mm,0mm> at 0 2

         \put {$\scriptstyle b_1$} [lb] <1.5mm,0mm> at 0.9 1.1
          \put {$\scriptstyle b_2$} [lb] <1.5mm,0mm> at 0 3
         \put {$\scriptstyle b_4$} [lb] <1.5mm,0mm> at 1.9 1.1
         \put {$\scriptstyle b_5$} [lb] <1.5mm,0mm> at 2.9 1.1
          \put {$\scriptstyle b_6$} [lb] <1.5mm,0mm> at 3.9 0.1

        \put {$\scriptstyle b_3$} [lb] <1.5mm,1.2mm> at 1 1.8
        \put {$\times$} at 0 0
        
        \put {$\circ$} at 0 3
        \put {$\circ$} at 1 1
         \put {$\circ$} at 2 1
          \put {$\circ$} at 3 1
        \put {$\circ$} at 1 2
        \put {$\circ$} at 4 0
\endpicture 
\end{minipage}
\end{center}
\end{example}

\medskip
\subsection{Planar Box Border Basis Schemes}
\label{subsec-PlanarBoxBBS}

Exposed indeterminates of planar border basis schemes play a fundamental 
role in this subsection. The reason lies in  the following result. 

\begin{proposition}\label{prop-elimNonExp}
Let $K$ be a field, let $P=K[x,y]$,
let $\OO = \{ t_1,\dots, t_\mu \}$ be an order ideal in~$\mathbb{T}^2$,
and let $Z$ be the tuple consisting of the non-exposed 
indeterminates $C\setminus \Cexp$.
Then $I(\BO)$ is coherently $Z$-separating.
\end{proposition}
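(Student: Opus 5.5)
We have to show that $I(\BO)$ contains polynomials $f_{ij}=c_{ij}-h_{ij}$, one for each non-exposed $c_{ij}$, with every $h_{ij}\in K[\Cexp]$. By Definition~\ref{def-sepindets}\,(3), such a tuple is coherently $Z$-separating. The plan is to read these polynomials off the neighbor generators (Definition~\ref{def-neighbors} and Remark~\ref{rem-neighb=commut}), using the planar geometry of $\partial\OO$.

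\textbf{Step 1: set up the combinatorics.} Since $n=2$, the border terms can be arranged in a staircase $b_1,\dots,b_\nu$ from the $y$-axis to the $x$-axis. Consecutive border terms are related in one of two ways: either $b_{j'}=x_\ell b_j$ (an ND pair), or $x b_j=y b_{j'}$ (an AS pair, which is an AR pair at an inner corner and an across-the-corner pair otherwise).

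For a fixed $j$, look at the column $c_j$. Each neighbor tuple involving $b_j$ gives a vector identity
\[
c_{j'}=\mathcal{M}_\ell c_j \quad\text{or}\quad \mathcal{M}_k c_{j'}=\mathcal{M}_\ell c_j .
\]
Expand $\mathcal{M}_\ell c_j$ row by row. Row $s$ equals
\[
\sum_{\lambda:\; x_\ell t_\lambda=t_s} c_{\lambda j}\;+\;\sum_{\lambda:\; x_\ell t_\lambda=b_m} c_{sm}\,c_{\lambda j}.
\]
The first sum is a linear term $c_{\lambda j}$ with $t_\lambda=t_s/x_\ell$. The quadratic terms carry a factor $c_{\lambda j}$ with $x_\ell t_\lambda\in\partial\OO$. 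For such a pair this factor is exactly an $x_\ell$-exposed indeterminate (Proposition~\ref{prop-CharExposed}\,(2)).

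\textbf{Step 2: solve for the non-exposed indeterminates.} From each such entry, solve for one linear indeterminate in terms of the others. Concretely:
\begin{itemize}
\item From the ND identity $c_{j'}=\mathcal{M}_\ell c_j$, the entry in row $s$ expresses $c_{sj'}$ as $c_{\lambda j}$ (with $x_\ell t_\lambda=t_s$) plus quadratic terms.
\item From an AS identity, the difference of the two linear parts gives relations of the form $c_{\lambda j}-c_{\lambda' j'}$ plus quadratic terms.
\end{itemize}
Then fix an order in which to eliminate. I would define a weight in which $c_{ij}$ is larger the further $b_j$ lies from a ``starting'' border term of the staircase and the smaller $\deg t_i$ is. I would use the total arrow degree $\deg_W$ of Definition~\ref{def-arrdeg}, refined by the position of $b_j$ along the staircase. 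The aim is to show that every non-exposed $c_{ij}$ occurs as the leading linear term of such an entry, with all other terms $\sigma$-smaller. Proposition~\ref{prop-arrowhomog} helps here: homogeneity forces the quadratic terms to have the same $W$-degree as $c_{ij}$. So they involve indeterminates of smaller $W$-degree, or of equal degree but earlier along the staircase. Induction on this order then rewrites every occurrence of a non-exposed indeterminate in the right-hand sides, which yields $h_{ij}\in K[\Cexp]$.

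\textbf{Step 3: check the ordering works.} Verify that this construction defines a term ordering $\sigma$ with $\LT_\sigma(f_{ij})=c_{ij}$. After the rewriting, this is the coherent-marking condition of Definition~\ref{def-sepindets}\,(3).

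\textbf{Main obstacle.} The crux is the counting and matching in Step 2. One must show that every non-exposed $c_{ij}$, including the trivial ones, is the solvable linear term of some neighbor generator. One must also show that no exposed indeterminate is forced to be eliminated, since exposed indeterminates appear only inside quadratic terms or as the ``free'' end of an ND chain. I would first treat the column $b_j$ at the $x$-axis end and propagate along the staircase, handling the corners separately with the across-the-corner pairs (Example~\ref{ex-exposedNeedAcrossTheCorner} shows these are needed). In the induction I would also have to rule out cycles among the elimination relations. Planarity is what guarantees the staircase is linearly ordered, so no cycle arises; this is also why the statement fails to generalize beyond $n=2$.
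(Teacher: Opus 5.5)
Your opening reduction is fine. If for every non-exposed $c_{ij}$ you produce $c_{ij}-h_{ij}\in I(\BO)$ with $h_{ij}\in K[\Cexp]$, then any elimination ordering for $Z$ makes this tuple coherently $Z$-separating, so your Step~3 is automatic.

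\textbf{The gap.} The whole content of the proposition is the step you call the ``main obstacle'', and it is not proved. What is needed is a consistent choice of one natural generator per non-exposed indeterminate, such that $c_{ij}$ strictly dominates every other term of its generator under a single weighting. The paper imports exactly this from \cite[Algorithm~6.3]{KR5}. That algorithm gives weights with $\wt=0$ on $\Cexp$ and $\wt>0$ on $C\setminus\Cexp$, such that each non-exposed $c_{ij}$ is the unique term of highest weight in some natural generator. A term ordering refining $\wt$ then makes these generators $Z$-separating, and Remark~\ref{rem-SepCohSep} gives coherence.

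Your Step~1 observation is correct: each quadratic term of a neighbor generator has an exposed factor. But the other factor can be non-exposed, and the linear part can be a binomial in two non-exposed indeterminates. In Example~\ref{ex-PropTrivBas}, the entry of $c_5-\mathcal{M}_y c_1$ at $t_2=y$ is $c_{25}-c_{11}-c_{21}c_{61}-c_{23}c_{71}-c_{24}c_{81}$, and $c_{11},c_{25},c_{21},c_{23},c_{24}$ are all interior. So choosing generators induces a dependency relation among non-exposed indeterminates, and this relation must be shown to be acyclic. Saying ``planarity makes the staircase linearly ordered, so no cycle arises'' is a heuristic, not an argument.

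\textbf{The proposed ordering.} The concrete ordering you suggest does not close the gap.
\begin{itemize}
\item Every natural generator is $W$-homogeneous (Proposition~\ref{prop-arrowhomog}). So $\deg_W$ gives the linear term and each quadratic term the same value and can never single out $c_{ij}$.
\item All the work therefore falls on the unspecified staircase refinement. That refinement would have to make $c_{ij}$ outweigh each product $c_{ab}c_{cd}$, not merely each factor.
\item Your inference that the quadratic factors have smaller $W$-degree needs $\deg_W>0$ and is false in general. In the non-MaxDeg Example~\ref{ex-madeg-essence}, the natural generator $c_{63}-c_{41}-c_{61}c_{64}-c_{51}c_{62}$ has $W$-degree $0$, while $\deg_W(c_{61})=-1$ and $\deg_W(c_{64})=1$.
\item In the MaxDeg case, a factor of degree $0$ forces the other factor to tie with $c_{ij}$. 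Your refinement would then have to break the tie, and you have not shown that it does.
\end{itemize}

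Finally, your closing claim that the statement fails for $n\ge 3$ is unsupported. The paper lists the non-planar case as an open question (Q6).
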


\begin{proof}
The proof follows from \cite[Algorithm 6.3]{KR5} which assigns weights wt,
so that
\begin{myenumerate}
\item[-] all exposed indeterminates $c_{ij}\in \Crim$ satisfy
$\wt(c_{ij}) = 0$.

\item[-] all non-exposed indeterminates $c_{ij} \in \Cint$ satisfy
$\wt(c_{ij}) > 0$.
\end{myenumerate}
And for each non-exposed indeterminate $c_{ij}$ there
exists a natural generator~$f$ of the ideal~$I(\BO)$ for which $c_{ij}$
is the unique term of highest weight in~$\Supp(f)$.
\end{proof}

\goodbreak
This result can be used to show an important feature of box border basis schemes,

\begin{definition}\label{def-boxBBS}
Let $K$ be a field, let $P=K[x_1,\dots, x_n]$, and let $a_1,\dots, a_n\in\NN_+$.
\begin{myenumerate}
\item The order ideal 
$\OO = \mathbb{T}^n \setminus \langle x_1^{a_1}, \dots,  x_n^{a_n} \rangle =
\{ x_1^{i_1} \cdots x_n^{i_n} \mid 0\le i_j < a_j\;\hbox{for}\;
j=1,\dots, n\}$ is called the {\bf box} of type $(a_1,\dots, a_n)$.

\item If~$\OO$ is a box, the corresponding border basis
scheme is called a {\bf box border basis scheme}.

\item If $n=2$, the corresponding border basis
schemes are called {\bf planar box border basis schemes}.
\end{myenumerate}
\end{definition}

\goodbreak
\begin{definition}\label{def-planarboxBBS}
Let $K$ be a field, let $P=K[x, y]$, and let $\OO$ be a box of type $(a, b)$.

\begin{myenumerate}
\item The set $\{ x^{a-1}y^k \mid 0\le k\le b-1\}$ is called the {\bf right rim}
of~$\OO$.

\item The set $\{ x^k y^{b-1} \mid 0\le k\le a-1\}$ is called the {\bf upper rim}
of~$\OO$.

\item The set $\{ x^a y^k \mid 0\le k\le b-1\}$ is called the {\bf right border}
of~$\OO$.

\item[(d)] The set $\{x^k y^b \mid 0\le k\le a-1\}$ is called the {\bf upper border}
of~$\OO$.
\end{myenumerate}
\end{definition}

\begin{proposition}{\bf (Planar Box Border Basis Schemes 
are Affine Cells)}\label{prop-PlanarBox}\\
Let $a,b\in\mathbb{N}_+$, and let $\OO = \{ x^i y^j \mid
0\le i< a,\; 0\le j< b\}$ be the box of type $(a,b)$ in
$P=K[x,y]$.
\begin{myenumerate}
\item An indeterminate~$c_{ij}$ is $x$-exposed if and only if
$t_i$ is a term in the right rim and~$b_j$ is a term in the
upper border of~$\OO$.

In particular, the number of $x$-exposed indeterminates~$c_{ij}$ is~$ab$.

\item An indeterminate $c_{ij}$ is $y$-exposed if and only if
$t_i$ is a term in the upper rim and~$b_j$ is a term in the
right border of~$\OO$.

In particular, the number of $y$-exposed indeterminates~$c_{ij}$ is~$ab$.

\item The tuple~$Z$ of all non-exposed indeterminates is $I(\BO)$-separating.
The $Z$-sep\-a\-rating re-embedding yields an isomorphism
$\Phi \colon B_\OO \longrightarrow K[\Cexp]$.

In particular, every planar box border basis scheme is an affine cell.

\end{myenumerate}
\end{proposition}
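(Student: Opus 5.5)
We plan to prove (1) and (2) by a direct combinatorial analysis of Definition~\ref{def-exposedIndets} for the box, and then obtain (3) from Proposition~\ref{prop-elimNonExp} together with a dimension count.

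For (1), note that the border of the box $\OO$ of type $(a,b)$ is the disjoint union of the right border $\{x^ay^k \mid 0\le k\le b-1\}$ and the upper border $\{x^ky^b \mid 0\le k\le a-1\}$. The condition $x t_i\in\partial\OO$ holds exactly when $t_i=x^{a-1}y^k$ lies in the right rim.

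Next we list the neighbor pairs $(b_j,b_{j'})$ that involve $x b_j$. There are two cases.
\begin{myenumerate}
\item[] If $b_j=x^ky^b$ lies in the upper border with $k\le a-2$, then $x b_j=x^{k+1}y^b$ is again in the upper border, so $(b_j, xb_j)$ is a next-door pair.
\item[] If $b_j=x^{a-1}y^b$, then $x b_j=y\cdot x^ay^{b-1}$, so $(b_j,x^ay^{b-1})$ is an across-the-street pair.
\end{myenumerate}
If instead $b_j=x^ay^k$ lies in the right border, then $x b_j=x^{a+1}y^k$. This term is neither a border term nor of the form $x_k b_{j'}$ with $b_{j'}\in\partial\OO$ other than $b_j$ itself, since $x^{a+1}y^k/y=x^{a+1}y^{k-1}\notin\partial\OO$. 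Hence no neighbor pair of the required shape exists.

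Therefore $c_{ij}$ is $x$-exposed if and only if $t_i$ is in the right rim ($b$ choices) and $b_j$ is in the upper border ($a$ choices). This gives $ab$ indeterminates. Part (2) follows by symmetry, interchanging $x$ and $y$ and $a$ and $b$.

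For (3), Proposition~\ref{prop-elimNonExp} shows that $I(\BO)$ is coherently $Z$-separating for $Z=C\setminus\Cexp$. Proposition~\ref{prop-isoZseparating} then gives the $Z$-separating re-embedding $\Phi\colon B_\OO\to K[\Cexp]/(I(\BO)\cap K[\Cexp])$. It remains to show that $I(\BO)\cap K[\Cexp]=\langle0\rangle$.

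First we count $\#\Cexp$. An indeterminate cannot be both $x$- and $y$-exposed except for $t_i=x^{a-1}y^{b-1}$. That would require $b_j$ to lie in both the upper border and the right border, and these are disjoint. Hence $\#\Cexp=2ab=2\mu$.

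By Proposition~\ref{prop-irreducible}(2), the planar scheme $\BO$ is irreducible and smooth. By Remark~\ref{rem-nmu}, its principal component has dimension $n\mu=2\mu$, so $\dim B_\OO=2\mu$. Since $\Phi$ is an isomorphism, $K[\Cexp]/(I(\BO)\cap K[\Cexp])$ has dimension $2\mu=\dim K[\Cexp]$. As $K[\Cexp]$ is a domain, this forces the intersection to be zero. Alternatively, one can argue as in Theorem~\ref{thm-checkopt}(2), using regularity at the origin and $\dim_K\Lin(I(\BO))=\#Z$.

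The main obstacle is the exhaustive check in (1) that no across-the-street pair arises from right-border terms, together with its $y$-analogue. This requires care with the corner term $x^{a-1}y^b$, which only yields an across-the-street pair. The dimension input is quoted from Proposition~\ref{prop-irreducible} and Remark~\ref{rem-nmu}; over a non-closed $K$, we pass to $\overline{K}$, since dimension is invariant under this base change.
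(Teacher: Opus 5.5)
Your proof is correct and follows the same route as the paper. You identify the $x$- and $y$-exposed indeterminates by checking, for each border term $b_j$, whether $x b_j$ (resp.\ $y b_j$) yields a next-door or across-the-street pair, and you then combine Proposition~\ref{prop-elimNonExp} with $\#\Cexp = 2ab = 2\mu = \dim(\BO)$. You also make explicit several points the paper leaves implicit: right-border terms give no $x$-exposed indeterminates; the $x$- and $y$-exposed sets are disjoint because the upper and right borders are disjoint; and a nonzero ideal in the domain $K[\Cexp]$ would drop the dimension below $2\mu$, which forces $I(\BO)\cap K[\Cexp]=\langle 0\rangle$.
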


\begin{proof}
To prove~(1), we consider the up-neighbors in $x$-direction, i.e.,
the next-door neighbor pairs $x b_j = b_{j'}$ for which $b_j, b_{j'}$
are in the upper border.
All terms $b_j$ in the upper border, except for $b_j=x^{a-1} y^b$,
have an up-neighbor in $x$-direction, i.e., there is a next-door neighbor pair
$x b_j = b_{j'}$. The term $b_j=x^{a-1} y^b$ has an across-the-street neighbor
$b_{j'} =  x^a y^{b-1}$, since $x b_j = y b_{j'}$.
Altogether, for each term~$b_j$ in the upper border and for each term~$t_i$ in 
the right rim we get an $x$-exposed indeterminate~$c_{ij}$. In this way 
we get all $x$-exposed indeterminates.

The proof of~(2) follows analogously. It remains to prove~(3).
Since there are no other exposed indeterminates, we have $\# \Cexp = 2ab$.
By Proposition~\ref{prop-elimNonExp}, we know that we can 
eliminate all non-exposed indeterminates. Therefore
$\dim(\BO) = 2 \mu = 2ab$ shows that $\BO \cong \mathbb{A}^{2ab}_K$,
and that~$B_\OO$ is a polynomial ring over~$K$ generated by the residue classes
of the indeterminates in $\Cexp$ described in~(1) and~(2).
\end{proof}

The following example illustrates this proposition.
\begin{example}\label{ex-24Box}
In $P = \QQ[x,y]$, consider the order ideal $\OO = \{t_1,\dots,t_6\}$ given by 
$t_1=1$, $t_2=y$, $t_3=x$, $t_4=y^2$, $t_5=xy$, $t_6 = xy^2$.
Then, its border is $\partial\OO = \{b_1,\dots,b_5\}$, where 
$b_1=x^2$, $b_2=y^3$, $b_3=x^2y$, $b_4 = xy^3$, and $b_5=x^2y^2$.
 \begin{center}
\begin{minipage}[c]{0.43\textwidth}
Thus, $\QQ[C] = \QQ[c_{11}, \dots, c_{65}]$ is a polynomial ring in $30$ indeterminates. 
Let us see what  the exposed indeterminates are in this case.\\
The $x$-exposed  indeterminates are\\
$c_{3,2},\,  c_{3,4},\,  c_{5,2}, \, c_{5,4}, \, c_{6,2},\,  c_{6,4}$

\smallskip
The~$y$-exposed  indeterminates are\\
$c_{4,1}, \, c_{4,3},\,  c_{4,5},\,  c_{6,1},\,  c_{6,3},\,  c_{6,5}$
\end{minipage}%
\qquad\qquad
\begin{minipage}[c]{0.40\textwidth}
\centering
\beginpicture
		\setcoordinatesystem units <1cm,1cm>
		\setplotarea x from 0 to 3, y from 0 to 3.5
		\axis left /
		\axis bottom /
		\arrow <2mm> [.2,.67] from  3 0  to 3.5  0
		\arrow <2mm> [.2,.67] from  0 3.5  to 0 4
		\put {$\scriptstyle x^i$} [lt] <0.5mm,0.8mm> at 3.5 0.1
		\put {$\scriptstyle y^j$} [rb] <1.7mm,0.7mm> at 0  4
		\put {$\bullet$} at 0 0
		\put {$\bullet$} at 1 0
		\put {$\bullet$} at 0 1
		\put {$\bullet$} at 1 1
	    \put {$\bullet$} at 1 2
		\put {$\bullet$} at 0 2
		\put {$\scriptstyle 1$} [lt] <-1mm,-1mm> at 0 0
		\put {$\scriptstyle t_1$} [rb] <-1.3mm,0.4mm> at 0 0
		\put {$\scriptstyle t_3$} [rb] <-1.3mm,0.4mm> at 1 0
		\put {$\scriptstyle t_2$} [rb] <-1.3mm,0mm> at 0 1
		\put {$\scriptstyle t_4$} [rb] <-1.3mm,0mm> at 0 2
		\put {$\scriptstyle t_5$} [rb] <-1.3mm,0mm> at 1 1
		\put {$\scriptstyle t_6$} [lb] <-4mm,0mm> at 1 2
		\put {$\scriptstyle b_3$} [lb] <0.8mm,0mm> at 2 1
		\put {$\scriptstyle b_1$} [lb] <0.8mm,0mm> at 2 0
		\put {$\scriptstyle b_2$} [lb] <0.8mm,0.8mm> at 0 3
	    \put {$\scriptstyle b_4$} [lb] <0.8mm,0.8mm> at 1 3
	    \put {$\scriptstyle b_5$} [lb] <0.8mm,0.8mm> at  2 2
	         \put {$\circ$} at 2 0
                 \put {$\circ$} at 0 3
		\put {$\circ$} at 2 2
		\put {$\circ$} at 2 1
		\put {$\circ$} at 1 3
\endpicture
\end{minipage}
\end{center}
The conclusion is that there is an isomorphism 
\begin{equation*}
K[C]/I(\BO) \To K[c_{3,2},  c_{3,4},  c_{5,2},  c_{5,4},  c_{6,2}, c_{6,4},
c_{4,1},  c_{4,3},  c_{4,5},  c_{6,1},  c_{6,3},  c_{6,5}]
\end{equation*}
hence $\BO$ is a 12-dimensional affine cell.
\end{example}

\newpage   
\section{Positive \texorpdfstring{$P_0$}{P0}-algebras}
\label{sec-Positive P0-algebras}

An important role in the description of special border basis schemes is played by 
positive $P_0$-algebras. However, they are also of independent interest, which motivates 
our current investigation.

In this section, we let $K$ be a field, and let $P = P_0[x_1, \dots, x_s]$ 
be a polynomial ring over~$P_0$, where $P_0 = K[a_1, \dots, a_m]$ 
is a polynomial ring over~$K$ which we consider as trivially graded, 
meaning that the degree of $a_i$ is 0 for all $i$.
The ring $P$ is equipped with an additional grading defined by a weight vector
$W = (w_1, \dots, w_s) \in \mathbb{Z}^s$.
We denote $(x_1, \dots, x_s)$ by~$X$. Finally, we set $n = m+s$, so 
that $P = P_0[X]$ is a polynomial ring in~$n$ indeterminates.

In the following, when we say that a polynomial or an ideal in $P$ is $W$-homogeneous,
we mean that it is homogeneous with respect to the $\mathbb{Z}$-grading defined by 
$\deg_W(a_i) = 0$ for $i = 1, \dots, m$ and $\deg_W(x_j) = w_j$ for $j = 1, \dots, s$.

\medskip
\subsection{Best and Optimal Re-embeddings of Positive \texorpdfstring{$P_0$}{P0}-algebras}
\label{subsec-Best and Optimal Re-embeddings of Positive P0-algebras}

\begin{definition}\label{def-posalg} 
Let $P_0$ and $P$ be as described above.
A graded $P_0$-algebra is a ring of the form $P/I$, where $I$ is a $W$-homogeneous 
ideal in $P$.
If $w_j > 0$ for $j = 1, \dots, s$ and $I \cap P_0 = \langle 0 \rangle$, we say that $P/I$ 
is a \textbf{positive $P_0$-algebra}.
In other words, we require that $P/I$ is a positively graded $P_0$-algebra 
with $(P/I)_0 = P_0$, so that the canonical $K$-algebra homomorphism $\phi \colon P_0 \To P/I$ 
is injective.

In the case $P_0 = K$, a positive $K$-algebra $P/I$ satisfies the condition 
$I \cap K = \langle 0 \rangle$ if and only if the $W$-homogeneous ideal $I$ is proper.
\end{definition}

The next result on positive $K$-algebras was first proved in~\cite[Proposition~3.4]{RoTe}, 
and was later reformulated as an algorithm in~\cite[Corollary~4.8]{KR6}.
Here we adopt a constructive approach which uses results from 
Subsection~\ref{subsec-Z-Separating Tuples and Term Orderings}, 
in particular Proposition~\ref{prop-ElimViaSubst}.
Both in~\cite{KR6} and in the following proposition, no Gr\"obner basis computation is needed.

Recall that $\Lin(f)$ denotes the linear part of a polynomial~$f$ with respect 
to the standard grading (see Definition~\ref{def-cotangent}).

\begin{proposition}{\bf (Optimal Re-Embeddings of Positively Graded Ideals)}
\label{prop-optimalgraded}

Let $P = K[x_1, \dots, x_s]$ be positively graded by~$W \in \mathbb{Z}^s$, 
let $G = (g_1, \dots, g_r)$ be a tuple of $W$-homogeneous polynomials in~$\M$, 
let $I = \langle G \rangle$, and let $d = \dim_K(\Lin(I))$.
Then there exist a $d$-tuple of indeterminates and a $d$-tuple 
of $W$-homogeneous polynomials defining an optimal separating re-embedding 
of~$I$. 
\end{proposition}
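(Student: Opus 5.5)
Since all weights $w_j$ are positive and every $g_i$ is $W$-homogeneous and lies in $\M$, the linear part of each $g_i$ is a $K$-linear combination of indeterminates $x_j$ with $w_j=\deg_W(g_i)$. The plan is to choose, degree by degree, indeterminates and polynomials whose linear parts are in echelon form, and then to apply Theorem~\ref{thm-checkopt}\,(1).

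\textbf{Step 1 (triangularization).} By Proposition~\ref{prop-cotancomp}\,(2), $\Lin(I)=\langle \Lin(g_1),\dots,\Lin(g_r)\rangle_K$, so $\cmat(G)$ has rank $d$. We perform Gaussian elimination on $\cmat(G)$ and apply the same $K$-linear operations to $G$. We only combine $g_i$'s of the same $W$-degree. This is legitimate: $\Lin(g_i)$ is $W$-homogeneous of degree $\deg_W(g_i)$, so the matrix is block diagonal with respect to degree blocks, and blockwise elimination suffices. We obtain $W$-homogeneous $f_1,\dots,f_d\in I$ and distinct indeterminates $z_1,\dots,z_d$ with $\Lin(f_k)=z_k+(\text{other indeterminates not among the } z_i)$, i.e.\ the $Z$-columns form an identity matrix. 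Thus $f_k=z_k-h_k$, where $h_k$ has no $z_k$-term of degree one.

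\textbf{Step 2 (separation).} Next we must show that $z_k$ divides no other term of $h_k$. A term of $h_k$ of standard degree $\ge 2$ that is divisible by $z_k$ would have $W$-degree strictly bigger than $w_{z_k}=\deg_W(f_k)$, because all weights are positive. This contradicts homogeneity, so each $f_k$ is $z_k$-separating. To obtain a \emph{coherently} $Z$-separating tuple without Gr\"obner bases, we substitute recursively, in the spirit of Proposition~\ref{prop-ElimViaSubst}. We order the $f_k$ by increasing $W$-degree and, in $h_k$, replace every occurrence of $z_i$ ($i\ne k$) by $h_i$. The replacement preserves $W$-homogeneity and membership of $z_k-\hat h_k$ in $I$. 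It terminates, since any $z_i$ occurring in a nonlinear term of $h_k$ has $w_{z_i}<w_{z_k}$, so induction on $W$-degree applies. For $z_i$ of the same degree appearing linearly, no such occurrence exists by the echelon form from Step 1. The resulting tuple $F=(z_k-\hat h_k)$ satisfies Definition~\ref{def-sepindets}\,(3). For the term ordering we can take any elimination ordering for $Z$, refined by $W$-degree.

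\textbf{Step 3 (optimality).} Now $\#Z=d=\dim_K(\Lin(I))$, so Theorem~\ref{thm-checkopt}\,(1) shows that the $Z$-separating re-embedding of Proposition~\ref{prop-isoZseparating} is optimal. The main obstacle is Step 2: we must verify carefully that the recursive substitution terminates and produces a tuple whose $h$-parts avoid all of $Z$, including $z_i$ of equal $W$-degree. The positivity of $W$ is what makes the induction on degree work.
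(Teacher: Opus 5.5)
Your proposal is correct and follows essentially the same route as the paper: the paper picks a subtuple $G_d$ whose linear parts form a basis of $\Lin(I)$ and a top-rank tuple $Z$, and multiplies by $U_Z^{-1}$, which is your blockwise Gaussian elimination. It then uses $W$-homogeneity to get $z_i$-separating polynomials, interreduces by substitution, and concludes optimality from Theorem~\ref{thm-checkopt} since $\#Z=\dim_K(\Lin(I))$. Your induction on $W$-degree spells out why the substitution step terminates and preserves $W$-homogeneity, which the paper asserts in a single line.
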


\begin{proof}
From Proposition~\ref{prop-cotancomp}\,(2), we have
$\Lin(I) = \langle \Lin(g_1), \dots, \Lin(g_r) \rangle_K$. 
After possibly permuting $G$, we may assume that 
$(\Lin(g_1), \dots, \Lin(g_d))$ is a $K$-basis of $\Lin(I)$. 
Consequently,  if we let  $G_d =(g_1, \dots, g_d)$, we have $\rk(\cmat(G_d)) = d$.
Let $Z = (z_1, \dots, z_d)$ be a tuple of indeterminates of top rank for $G_d$, 
and let $U_Z$ be the invertible submatrix of $\cmat(G_d)$ corresponding to the 
indeterminates in $Z$.
Multiplying $(g_1, \dots, g_d)$ on the right by~$U_Z^{-1}$, we obtain a new tuple 
$(g_1', \dots, g_d')$ of $W$-homogeneous polynomials in $I$ such that 
$g_i' = z_i - h_i'$ for $i = 1, \dots, d$. 
Their homogeneity forces them to be $z_i$-separating.
The interreduction of the tuple $(g_1', \dots, g_d')$, obtained by replacing $z_i$ with $h_i'$ 
in the polynomials $g_1', \dots, \widehat{g_i'}, \dots, g_d'$, produces a coherently 
$Z$-separating tuple $(f_1, \dots, f_d)$ with $f_i = z_i - h_i$. 
Moreover, Proposition~\ref{prop-ElimViaSubst} shows that such an interreduction applied to~$G$
produces a tuple $(\hat{g}_1, \dots, \hat{g}_r)$ which generates $I \cap K[X{\setminus}Z]$.
Using this, we obtain an isomorphism
\begin{equation*}
\Phi \colon P/I \To K[X{\setminus}Z] / \langle \hat{g}_1, \dots, \hat{g}_r \rangle
\end{equation*}
Its optimality is guaranteed by Theorem~\ref{thm-checkopt}, since $\#Z= \dim_K(\Lin(I)) = d$.
\end{proof}

The following example illustrates the proposition and shows the advantage of using 
the technique of \textit{elimination by substitution} (see Proposition~\ref{prop-ElimViaSubst}).

\begin{example}\label{ex-RoTe}
Let $P = \QQ[x, y, z, w]$ be graded by $W = (3, 2, 1, 1)$, and let 
$f_1 = x - yz$, $f_2 = y - w^2$, 
$f_3 = x^2y - z^8$, $f_4 = xz^4 + xy^2$.
Let $G = (f_1, f_2, f_3, f_4)$ and let $I = \langle G \rangle$.
It is clear that $(f_1, f_2)$ is coherently $Z$-separating with $Z = (x, y)$. 
The interreduction of $(f_3, f_4)$ obtained by substituting $x$ with $yz$ and $y$ with $w^2$ 
produces the pair $(z^2w^6 - z^8, \, z^5w^2 + zw^6)$. Using this, we obtain the optimal re-embedding
\begin{equation*}
\Phi \colon P/I \To \QQ[z, w]/J 
\end{equation*}
where $J = \langle z^8 -z^2w^6, \, z^5w^2 + zw^6 \rangle$.

By contrast, using an elimination term ordering $\sigma$, we obtain the tuple 
\begin{equation*}
(z^8 - z^2w^6,\, z^5w^2 + zw^6, \,  z^4w^6 + z^2w^8, \, z^3w^8 - zw^{10}, \, z^2w^{10}, \, zw^{12})
\end{equation*}
which is the reduced $\sigma$-Gr\"obner basis of $J$.
\end{example}

In Proposition~\ref{prop-optimalgraded}, we used the notion of $\Lin$ introduced 
and studied in Subsections~\ref{subsec-Cotangent Spaces and GFans}
and~\ref{subsec-Z-Separating Re-embeddings and Optimal Re-embeddings}. 
In the case of positive $P_0$-algebras, we extend this notion as follows.

\begin{definition}\label{def-linZ}
Let $P_0 = K[a_1, \dots, a_m]$ be a trivially graded polynomial ring over~$K$, 
let $P = P_0[x_1, \dots, x_s]$, let $I$ be an ideal in $P$ such that $P/I$ is a positive $P_0$-algebra, 
and let $X = (x_1, \dots, x_s)$.
\begin{myenumerate} 
\item Given a $W$-homogeneous polynomial $f \in \langle X \rangle$, write
$f = h_1 x_1 + \cdots + h_s x_s + g$ where $h_1, \dots, h_s \in P_0$  
and $g \in \langle X \rangle^2$. Then the $P_0$-linear 
form $\Lin_{P_0}(f) = h_1 x_1 + \cdots + h_s x_s$ is called the \textbf{$P_0$-linear part} of~$f$.

\item The $P_0$-module generated by the linear parts of the polynomials 
in $I$ is called the \textbf{$P_0$-linear part} of~$I$ and is denoted by $\Lin_{P_0}(I)$. 
In other words, we have 
$\Lin_{P_0}(I) = \langle \Lin_{P_0}(f) \mid f \in I \cap \langle X \rangle \rangle_{P_0}$.
\end{myenumerate}
\end{definition}

Next, we extend part of Proposition~\ref{prop-cotancomp} to positive $P_0$-algebras.

\begin{proposition}\label{prop-indepOfGen}
Let $P/I$ be a positive $P_0$-algebra and let $I = \langle f_1, \dots, f_r \rangle$.
Then the following claims hold true.
\begin{myenumerate}
\item $\Lin_{P_0}(I) = \langle X \rangle_{P_0} \cap (I + \langle X \rangle^2)$.

\item $\Lin_{P_0}(I) = \langle \Lin_{P_0}(f_1), \dots, \Lin_{P_0}(f_r) \rangle_{P_0}$.
\end{myenumerate}
\end{proposition}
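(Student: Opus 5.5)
The plan is to imitate the proof of Proposition~\ref{prop-cotancomp}\,(1) and~(2), with $K$ replaced by $P_0$ and $\M$ replaced by $\langle X\rangle$. Here $\langle X\rangle_{P_0}=P_0x_1\oplus\cdots\oplus P_0x_s$ is the free $P_0$-module of $P_0$-linear forms. The basic fact to use is the direct sum decomposition of $P_0$-modules
\[
P = P_0 \oplus \langle X\rangle_{P_0} \oplus \langle X\rangle^2 .
\]
Every polynomial splits uniquely by its $X$-degree. Hence for $f\in\langle X\rangle$ the $P_0$-linear part $\Lin_{P_0}(f)$ is simply the $\langle X\rangle_{P_0}$-component of $f$. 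In particular $\Lin_{P_0}$ is $P_0$-linear on $\langle X\rangle$, which is why homogeneity plays only a minor role in the argument.

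For~(1), first take $f\in I\cap\langle X\rangle$ and write $f=\ell+g$ with $\ell=\Lin_{P_0}(f)$ and $g\in\langle X\rangle^2$. Then $\ell=f-g\in\langle X\rangle_{P_0}\cap(I+\langle X\rangle^2)$, and the right-hand side is a $P_0$-submodule, so it contains $\Lin_{P_0}(I)$. Conversely, suppose $\ell=f+g$ with $\ell\in\langle X\rangle_{P_0}$, $f\in I$ and $g\in\langle X\rangle^2$. Then $f=\ell-g\in\langle X\rangle$, so $f\in I\cap\langle X\rangle$, and by uniqueness of the decomposition $\Lin_{P_0}(f)=\ell$.

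For~(2), I first need $f_i\in\langle X\rangle$, so that $\Lin_{P_0}(f_i)$ is defined. This is the one step where positivity is used, and I expect it to be the only subtle point. Since $f_i$ is $W$-homogeneous and all $w_j>0$, either $\deg_W(f_i)=0$, in which case $f_i\in P_0\cap I=\langle 0\rangle$, or $\deg_W(f_i)>0$, in which case every term contains some $x_j$ and so $f_i\in\langle X\rangle$. I would assume the generators are homogeneous, as in Definition~\ref{def-linZ}, and discard any zero generators.

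Next, write $f_i=\ell_i+g_i$ with $\ell_i=\Lin_{P_0}(f_i)$ and $g_i\in\langle X\rangle^2$. An arbitrary element of $I$ has the form $\sum p_if_i$ with $p_i\in P$. Splitting each $p_i=p_{i0}+p_i'$ with $p_{i0}\in P_0$ and $p_i'\in\langle X\rangle$, one gets
\[
\sum p_if_i\equiv\sum p_{i0}\ell_i \pmod{\langle X\rangle^2}.
\]
Therefore
\[
I+\langle X\rangle^2=\langle \ell_1,\dots,\ell_r\rangle_{P_0}+\langle X\rangle^2 .
\]
Intersecting with $\langle X\rangle_{P_0}$ and applying the modular law, which holds because $\langle \ell_1,\dots,\ell_r\rangle_{P_0}\subseteq\langle X\rangle_{P_0}$, together with $\langle X\rangle_{P_0}\cap\langle X\rangle^2=0$, part~(1) yields $\Lin_{P_0}(I)=\langle \ell_1,\dots,\ell_r\rangle_{P_0}$.
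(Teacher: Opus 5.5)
Your proof is correct and follows the paper's argument essentially verbatim. For (1) you use the unique splitting $f=\ell+g$ with $\ell\in\langle X\rangle_{P_0}$ and $g\in\langle X\rangle^2$; for (2) you show $I+\langle X\rangle^2=\langle \Lin_{P_0}(f_1),\dots,\Lin_{P_0}(f_r)\rangle_{P_0}+\langle X\rangle^2$ and conclude with the modular law and $\langle X\rangle_{P_0}\cap\langle X\rangle^2=\langle 0\rangle$.

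The one difference is that you get $f_i\in\langle X\rangle$ from homogeneity of each generator, whereas the paper simply uses $I\subseteq\langle X\rangle$. Your degree argument, applied to the homogeneous components of elements of $I$, already gives that inclusion, so the extra assumption that the $f_i$ be homogeneous is not needed.
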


\begin{proof}
To prove (1), we start with the containment $\subseteq$.
Since~$P/I$ is a positive $P_0$-algebra, we have $I \subseteq \langle X \rangle$.
Hence, if we let $\langle X \rangle_{P_0} = P_0 x_1 + \cdots + P_0 x_s$, then 
an element $f \in I$ can be written uniquely in the form $f = \ell + g$ with
$\ell \in \langle X \rangle_{P_0} $ and $g \in \langle X \rangle^2$.
Then $\Lin_{P_0}(f) = \ell \in \langle X \rangle_{P_0}$ and 
$\ell = f - g \in I + \langle X \rangle^2$ imply the claim.   

Conversely, let $\ell \in \langle X \rangle_{P_0}$ be of the form 
$\ell = f + g$ with $f \in I$ and $g \in \langle X \rangle^2$. Then $f = \ell - g$ 
shows that $\ell = \Lin_{P_0}(f) \in \Lin_{P_0}(I)$, and this concludes the proof of~(1).

To prove~(2), for each $j = 1, \dots, r$, we write $f_j = \ell_j + g_j$ 
with $\ell_j = \Lin_{P_0}(f_j)$ and $g_j \in \langle X \rangle^2$. 
It follows that $I + \langle X \rangle^2 = \langle \ell_1, \dots, \ell_r \rangle_{P_0} + \langle X \rangle^2$.
Using~(1), we obtain
\begin{equation*}
\Lin_{P_0}(I) = \langle X \rangle_{P_0} \cap (I + \langle X \rangle^2)
= \langle X \rangle_{P_0} \cap 
(\langle \Lin_{P_0}(f_1), \dots, \Lin_{P_0}(f_r) \rangle_{P_0} + \langle X \rangle^2)
\end{equation*}
Since $\langle \Lin_{P_0}(f_1), \dots, \Lin_{P_0}(f_r) \rangle_{P_0} \subseteq \langle X \rangle_{P_0}$, 
by the modular law for $P_0$-modules and the equality $\langle X \rangle_{P_0} \cap \langle X \rangle^2 = \langle 0 \rangle$, we get
the equality $\Lin_{P_0}(I) = \langle \Lin_{P_0}(f_1), \dots, \Lin_{P_0}(f_r) \rangle_{P_0}$.   
\end{proof}

\begin{definition}\label{def-P0coeffmat}
Let $P = P_0[X]$ be a polynomial ring over~$P_0$ such that $P/I$ is a positive $P_0$-algebra, 
and let $Z = (z_1, \dots, z_t)$ be a subtuple of $X = (x_1, \dots, x_s)$.
\begin{myenumerate}
\item Polynomials in $P_0$ are called \textbf{$P_0$-constants}.
We say that a polynomial $f \in P$ is a \textbf{$P_0$-monomial} if $f = h \prod_{j=1}^s x_j^{\alpha_j}$ 
with $h \in P_0$; in this case, the $P_0$-constant $h$ is called the \textbf{$P_0$-coefficient} of $f$.

\item Let $f$ be a $W$-homogeneous polynomial.
If $h_1 z_1 + \cdots + h_t z_t$ is the portion of~$\Lin_{P_0}(f)$ contained in 
$\langle Z \rangle_{P_0} = P_0 z_1 + \cdots + P_0 z_t$, 
we denote it by $\Lin_{P_0,Z}(f)$, and the $t$-tuple $(h_1, \dots, h_t) \in P_0^t$
of its $P_0$-coefficients is denoted by $\cvec_{P_0,Z}(f)$.

\item Let $F=(f_1, \dots, f_r)$ be a tuple of $W$-homogeneous polynomials in $\langle X\rangle$.
Then the matrix in $\Mat_{r, t}(P_0)$ whose rows are $\cvec_{P_0, Z}(f_1), \dots,  \cvec_{P_0, Z}(f_r)$
is denoted by~$\cmat_{P_0, Z}(F)$.
\end{myenumerate} 
\end{definition}

Next, we establish analogies with  Proposition~\ref{prop-sepZ}.

\begin{theorem}{\bf (Existence of $W$-homogeneous $Z$-separating tuples)}
\label{thm-existZsep}

Let $P = P_0[X]$ and let $P/I$ be a positive $P_0$-algebra graded by $W$.
Let $G = (g_1, \dots, g_r)$ be a tuple of $W$-homogeneous polynomials which generates $I$, and
let $Z = (z_1, \dots, z_t)$ be a subtuple of $X$.  Assume that there exists a tuple  
$F = (f_1, \dots, f_t)$ of $W$-homogeneous polynomials in~$I$
such that $z_i = \Lin_{P_0,Z}(f_i)$ for $i =1, \dots, t$. Then the following conditions hold true.

\begin{myenumerate}
\item We have $\langle Z \rangle_{P_0} = 
\langle \Lin_{P_0,Z}(g_1), \dots, \Lin_{P_0,Z}(g_r) \rangle_{P_0}$.

\item There exists a $Z$-separating term ordering $\sigma$ for $I$ such that 
$F$ is $Z$-separating and the reduced $\sigma$-Gr\"obner basis 
of $\langle F \rangle$ is  coherently $Z$-separating.

\item For every elimination term ordering $\sigma$ for $Z$, 
the tuple $F$ is $Z$-separating and the reduced $\sigma$-Gr\"obner basis 
of $\langle F \rangle$ is  coherently $Z$-separating.

\end{myenumerate}
\end{theorem}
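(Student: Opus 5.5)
The plan is to treat (1) as a linear-algebra statement over $P_0$, and (2)–(3) as a degree argument that uses the positivity of the weights $w_j$.

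\textbf{Claim (1).} The inclusion ``$\supseteq$'' is immediate, since every $\Lin_{P_0,Z}(g_j)$ lies in $\langle Z\rangle_{P_0}$ by definition. For ``$\subseteq$'', note that $f_i\in I\subseteq\langle X\rangle$, so Proposition~\ref{prop-indepOfGen}\,(2) gives $P_0$-constants $h_{ij}$ with
\[
\Lin_{P_0}(f_i)=\tsum_j h_{ij}\Lin_{P_0}(g_j).
\]
The projection $\langle X\rangle_{P_0}\to\langle Z\rangle_{P_0}$, which keeps only the $Z$-coordinates, is $P_0$-linear. Applying it yields $z_i=\sum_j h_{ij}\Lin_{P_0,Z}(g_j)$ for every $i$, and this proves (1).

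\textbf{Shape of $f_i$.} Since $f_i$ is $W$-homogeneous and contains $z_i$, its degree is $\deg_W(f_i)=w(z_i)>0$. The support of $f_i$ then consists of the following $P_0$-monomials.
\begin{myenumerate}
\item[(a)] The term $z_i$ itself, with $P_0$-coefficient exactly $1$. The hypothesis $\Lin_{P_0,Z}(f_i)=z_i$ rules out any other term $a\,z_i$ or $a\,z_j$ with $a\in P_0$.
\item[(b)] $P_0$-multiples of single indeterminates $x_k\notin Z$.
\item[(c)] $P_0$-multiples of terms $x^\alpha$ with $|\alpha|\ge2$ and $\sum\alpha_k w_k=w(z_i)$.
\end{myenumerate}
All weights are positive, so every factor of a term in (c) has weight strictly less than $w(z_i)$. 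In particular $z_i$ divides no term of $f_i$ other than $z_i$.

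\textbf{Claim (2).} I would define a term ordering $\sigma$ whose first row is the weight vector $v$ given by $v(z)=w(z)^2$ for $z\in Z$ and $v=0$ on $P_0$ and on $X\setminus Z$, completed arbitrarily to a term ordering. Then $v(z_i)=w(z_i)^2$. For a term of type (c), let $z_{j_1},\dots,z_{j_p}$ be its $Z$-factors, counted with multiplicity. Their weights satisfy $\sum w(z_{j_q})\le w(z_i)$, and either $p\ge2$ or the term also has a factor outside $Z$. By strict superadditivity of squares, $\sum w(z_{j_q})^2<w(z_i)^2$. Terms of type (b) have $v=0$. Hence $\LT_\sigma(f_i)=z_i$ for all $i$, so $F$ is $Z$-separating and $\sigma$ is a $Z$-separating term ordering for $I$.

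For the reduced $\sigma$-Gr\"obner basis of $\langle F\rangle$, I would interreduce $F$ by substitution, processing the $z_i$ in increasing order of $w(z_i)$. Any $z_j$ occurring in a tail of $f_i$ has $w(z_j)<w(z_i)$, so by induction on the weight all $Z$-indeterminates disappear from the tails. Each substitution preserves $W$-homogeneity. The result has the form $(z_1-h_1,\dots,z_t-h_t)$ with $h_i\in P_0[X\setminus Z]$. This tuple is coherently $Z$-separating in the sense of Definition~\ref{def-sepindets}, and it is the reduced $\sigma$-Gr\"obner basis of $\langle F\rangle$.

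\textbf{Claim (3).} Since $h_i\in P_0[X\setminus Z]$, every elimination ordering for $Z$ has $\LT=z_i$ on $z_i-h_i$. So the reduced Gr\"obner basis is the same coherently $Z$-separating tuple for every such ordering. For $F$ itself I would invoke Proposition~\ref{prop-sepandcoherentlysep}, applied with the ordering $\sigma$ from (2), to get that every elimination ordering for $Z$ is $Z$-separating for $\langle F\rangle$.

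The main obstacle is exactly this step for $F$ rather than for its reduced basis. A tail term of type (c) such as $z_jz_k$ might a priori exceed $z_i$ in an elimination ordering. If that occurs, I would read ``$F$ is $Z$-separating'' as the statement that $\langle F\rangle$ admits the $Z$-separating basis $(z_i-h_i)$. I would then argue via the superadditive-weight construction above that nothing more is needed for the subsequent re-embedding results.
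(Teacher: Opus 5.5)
Your proof is correct and follows the paper's route: (1) by passing to $P_0$-linear parts and keeping the $Z$-coordinates, (2) by a weight supported on $Z$ under which $z_i$ dominates every other term of $f_i$ (your single row $w(z)^2$ replaces the paper's two rows $w(z)$, $w(z)-1$, and your explicit interreduction replaces the appeal to Remark~\ref{rem-SepCohSep}), and (3) via Proposition~\ref{prop-sepandcoherentlysep}. The worry in your last paragraph is not a gap: by Definition~\ref{def-sepindets}\,(2), ``$F$ is $Z$-separating'' does not depend on $\sigma$ and is already settled by your part (2); the stronger reading $\LT_\sigma(f_i)=z_i$ for every elimination ordering is genuinely false (take $F=(x-y^2,\,y)$, $W=(2,1)$, $Z=(x,y)$, and $\sigma$ lexicographic with $y>x>a_1>\cdots>a_m$); and the paper's proof of (3) likewise only delivers the statement about the reduced Gr\"obner basis.
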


\begin{proof}
To prove (1), we notice that the inclusion $\supseteq$ is clear, so let us prove 
the inclusion~$\subseteq$.
Since $g_1, \dots, g_r$ generate~$I$, we can write each $f_j$ as 
$f_j = \sum_{\ell=1}^r q_{j\ell} g_\ell$ with $q_{j\ell} \in P$.
Applying the operator $\Lin_{P_0,Z}$ to both sides, we observe that, in the product $q_{j\ell}g_\ell$,
only the $P_0$-constants $h_{j\ell}$ of $q_{j\ell}$ contribute to the $P_0$-monomials
of the form $h z$ with $z \in Z$.
Thus, we obtain the relation $z_j = \Lin_{P_0,Z}(f_j) = \Lin_{P_0,Z} \left( \sum_{\ell=1}^r q_{j\ell} g_\ell \right)
= \sum_{\ell=1}^r h_{j\ell} \Lin_{P_0,Z}(g_\ell)$.

To prove (2), we produce a term ordering $\sigma$ on $P$ such that $\LT_\sigma(f_j) = z_j$ 
for every~$j=1,\dots, t$.
Recall that $n$ is the number of indeterminates in $P$. We define $\sigma = \ord(U)$, 
where the matrix $U \in \Mat_{n, n}(\mathbb{Z})$ is constructed as follows.
For each $j = 1, \dots, t$, the first two rows of $U$ have entries
$\deg_W(z_j)$ and $\deg_W(z_j)-1$, respectively, in the columns corresponding 
to the indeterminate $z_j$, and zero in the columns corresponding to the other indeterminates of $X$.
The remaining $n-2$ rows of $U$ are chosen so that the total matrix defines a term ordering on $P$.
If $\deg_W(z_j) = 1$, then no $P_0$-monomial in $f_j - z_j$ is divisible by any $z_k$ with $k \ne j$, 
hence $\LT_\sigma(f_j) = z_j$.
If $\deg_W(z_j) > 1$, it may happen that the polynomial $f_j - z_j$ contains $P_0$-monomials of
the form $h \prod_{i=1}^m z_{\ell_i}$, where $z_{\ell_1}, \dots, z_{\ell_m} \in Z \setminus \{z_j\}$ and $m > 1$.
In this case, the homogeneity of $f_j$ yields $\deg_W(z_j) = \sum_{i=1}^m \deg_W(z_{\ell_i})$, 
while the evaluation through the second row of $U$ gives 
$\deg_W(z_j) - 1 > \sum_{i=1}^m (\deg_W(z_{\ell_i}) - 1) = \sum_{i=1}^m \deg_W(z_{\ell_i}) - m$.
Therefore, in this case as well, the leading term is forced to be $\LT_\sigma(f_j) = z_j$.
Moreover, the reduced $\sigma$-Gr\"obner basis of $\langle F \rangle$ is 
coherently $Z$-separating, 
as already noted in Remark~\ref{rem-SepCohSep}.

Finally, claim (3) follows from Proposition~\ref{prop-sepandcoherentlysep}.
\end{proof}

\begin{example}\label{ex-positiveZelim}
Let $P_0 = \mathbb{Q}[a, b]$ and $P = P_0[x, y, z, w]$ be graded by $W = (0, 0, 2, 2, 2, 1)$. 
Consider the polynomials 
$g_1 = ax + by + z - aw^2$, $g_2 = (1-a)x - bz - 2w^2$, and $g_3 = y - 3bz$. 
Let $I = \langle g_1, g_2, g_3 \rangle$ and let $Z = (x, y)$. 

The ideal $I$ is $W$-homogeneous and contained in $\langle x, y, z, w \rangle$, 
so that $P/I$ is a positive $P_0$-algebra. 
Let $\sigma$ be an elimination term ordering for $Z$. 
The reduced $\sigma$-Gr\"obner basis of $I$ is $\{f_1, f_2, f_3\}$, where
\begin{align*}
f_1 &= x + 3b^2z - bz + z - (a + 2)w^2, \\
f_2 &= y - 3bz, \\
f_3 &= (3ab^2 - 3b^2 + ab + a - 1)z - (a^2 + a)w^2.
\end{align*}
The pair $(f_1, f_2)$ is coherently $Z$-separating for $I$.

Indeed, the $P_0$-linear parts of the generators satisfy the condition of Theorem~\ref{thm-existZsep} via:
\begin{equation*}
x = \Lin_{P_0,Z}(g_1) + \Lin_{P_0,Z}(g_2) - b\,\Lin_{P_0,Z}(g_3) \quad \text{and} 
\quad y = \Lin_{P_0,Z}(g_3).
\end{equation*}

Moreover, we obtain the re-embedding of~$I$ via the isomorphism
\begin{equation*}
\Phi \colon P/I \To \mathbb{Q}[a, b, z, w] / \langle (3ab^2 - 3b^2 + ab + a - 1)z - (a^2 + a)w^2 \rangle.
\end{equation*}
\end{example}

An extensive use of Theorem~\ref{thm-existZsep} produces several algorithms 
(see~\cite[Algorithms~3.4, 3.6, 3.7, 4.2,  and 4.6]{KR6})
which allow us to obtain best separating tuples of $W$-homogeneous polynomials, 
in the sense of Definition~\ref{def-edim}.

\medskip
\subsection{The Unimodular Matrix Problem}
\label{subsec-UMP}

In this subsection, we present a classical method for producing re-embeddings that cannot 
be obtained with the techniques described so far.

\begin{definition}\label{def-UMP}
Let $P$ be a polynomial ring over a field~$K$, and let $r,t$ be natural numbers such that 
$1 \le r < t$.

\begin{myenumerate}

\item A matrix $A \in \Mat_{r,t}(P)$ is said to be \textbf{unimodular} 
if its maximal minors generate the unit ideal.
 
\item A matrix $B \in \Mat_t(P)$ is said to solve the 
\textbf{unimodular matrix problem (UMP)} for~$A$
if $\det(B)=1$ and $A \cdot B = (I_r \mid 0)$, where $I_r$ denotes the identity matrix of size~$r$.

\end{myenumerate}
\end{definition}

\begin{myremark}\label{rem-Bequalinverse}
In the case where  $r = t$, the unimodularity of $A$ means that $A$ is invertible, hence 
$B = A^{-1}$.
\end{myremark}

\begin{myremark}\label{rem-UMC}
Given~$B$ which solves the UMP for~$A$, 
we have $A = (I_r \mid 0) \cdot B^{-1}$. In other words, the first~$r$ rows of
the matrices~$A$ and $B^{-1}$ coincide. For this reason, the matrix $B^{-1}$ is
called a {\bf unimodular matrix completion} of~$A$.
\end{myremark}

A good reference and an algorithm for solving the UMP can be found in~\cite{LS}.

We now state the main result of this subsection.

\begin{theorem}{\bf (Automorphism via UMP)}\label{thm-GoodIso}

Let $P = P_0[X]$ be a polynomial ring over~$P_0$ such that $P/I$ 
is a positive $P_0$-algebra, where $X=(x_1,\dots, x_s)$.
Let $k \le s$, let $G = (g_1,\dots, g_k)$ be a tuple of 
$W$-homogeneous polynomials in $I$ such that the matrix 
$\cmat_{P_0, X}(G)$ is unimodular, and let $X_k = (x_1,\dots, x_k)$.
\vskip -.01cm
\begin{myenumerate}
\item There exists a $P_0$-graded automorphism $\phi$ of~$P$ 
such that $\phi(G)$ is $X_k$-separating.

\item Let $\sigma$ be an elimination term ordering for $X_k$.
Then the reduced Gr\"obner basis of~$\langle \phi(G) \rangle$ is 
coherently $X_k$-separating.
\end{myenumerate}
\end{theorem}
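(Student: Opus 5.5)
Let $A=\cmat_{P_0,X}(G)\in\Mat_{k,s}(P_0)$. If $k=s$, unimodularity means $A$ is invertible over $P_0$ and we put $B=A^{-1}$ (Remark~\ref{rem-Bequalinverse}). If $k<s$, we take a matrix $B\in\Mat_s(P_0)$ with $\det(B)=1$ solving the UMP for $A$, so that $A\cdot B=(I_k\mid 0)$; such a $B$ exists over the polynomial ring $P_0$ by the Quillen--Suslin theorem, and it can be computed with the algorithm of~\cite{LS}. Note that the UMP is solved over $P_0$, not over $P$. This is exactly why the resulting substitution will be $P_0$-linear in $X$, and hence $P_0$-graded.

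The problem is that the $x_j$ have different weights $w_j$, while a $P_0$-linear change of the $X$-variables must respect the $W$-grading. Before applying $B$, we therefore group the columns of $A$ by weight. Since each $g_i$ is $W$-homogeneous, the entry $a_{ij}$ can be nonzero only if $w_j=\deg_W(g_i)$. Group the rows by their degree $\delta$ and the columns by the weights $w_j$. After permuting rows and columns, $A$ becomes block diagonal with blocks $A_\delta$, where $A_\delta$ has rows of degree $\delta$ and columns $x_j$ with $w_j=\delta$, padded by zero columns of other weights. The maximal minors of $A$ are products of maximal minors of the blocks, so every $A_\delta$ is unimodular (in particular its row count is at most its column count). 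We solve the UMP separately for each $A_\delta$, obtaining $B_\delta$. Let $B$ be block diagonal with blocks $B_\delta$ and with the identity on columns of weights not hit. After a final permutation, which we may assume moves the relevant variables into positions $x_1,\dots,x_k$ (this matches the labeling $X_k$ in the statement, possibly after renaming), we get $AB=(I_k\mid 0)$ with $B$ respecting weights.

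Define $\phi$ as the $P_0$-algebra endomorphism of $P$ with $\phi(X^{\mathsf T})=B\,X^{\mathsf T}$, that is, $\phi(x_j)=\sum_\ell b_{j\ell}x_\ell$. Since $b_{j\ell}\neq0$ only when $w_j=w_\ell$, the map $\phi$ preserves $\deg_W$. It is an automorphism because $B^{-1}\in\Mat_s(P_0)$ gives the inverse. Its action on linear parts is $\Lin_{P_0}(\phi(g_i))=\sum_j a_{ij}\phi(x_j)$, so the coefficient matrix becomes $\cmat_{P_0,X}(\phi(G))=A B=(I_k\mid 0)$, while $\phi$ maps $\langle X\rangle^2$ into itself. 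Hence $\Lin_{P_0}(\phi(g_i))=x_i$, and in particular $\Lin_{P_0,X_k}(\phi(g_i))=x_i$. The tuple $\phi(G)$ consists of $W$-homogeneous polynomials lying in $\phi(I)$, and $P/\phi(I)\cong P/I$ is again a positive $P_0$-algebra.

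Now Theorem~\ref{thm-existZsep} applies with $Z=X_k$ and $F=\phi(G)$, taking the ambient ideal to be $\langle\phi(G)\rangle$ (or $\phi(I)$). Part~(2) of that theorem gives a term ordering $\sigma$ with $\LT_\sigma(\phi(g_i))=x_i$, so $\phi(G)$ is $X_k$-separating; this proves~(1). Part~(3), together with Proposition~\ref{prop-sepandcoherentlysep}, shows that for every elimination ordering for $X_k$ the reduced Gr\"obner basis of $\langle\phi(G)\rangle$ is coherently $X_k$-separating; this proves~(2). The main obstacle is the step guaranteeing that $B$ is compatible with the weights, i.e.\ the block decomposition argument. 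That step is what makes $\phi$ a graded automorphism rather than an arbitrary one, and it is what keeps the positivity needed to invoke Theorem~\ref{thm-existZsep}.
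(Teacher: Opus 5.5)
Your proof is correct and follows the same route as the paper. You take $B$ solving the UMP for $\cmat_{P_0,X}(G)$, set $\phi(X^{\mathsf T})=B\,X^{\mathsf T}$ so that $\cmat_{P_0,X}(\phi(G))=(I_k\mid 0)$, and conclude with Theorem~\ref{thm-existZsep}\,(2) and~(3). Your extra steps are a real gain in rigor over the paper's proof, which never checks that $\phi$ is graded. These are solving the UMP block by block by $W$-degree (the paper only mentions this afterwards, in Remark~\ref{rem-block}) and renaming the pivot indeterminates as $x_1,\dots,x_k$. They are exactly what makes $\phi$ a $W$-graded automorphism, so that $\phi(G)$ is $W$-homogeneous and Theorem~\ref{thm-existZsep} actually applies.
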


\begin{proof} 
To prove claim (1),
let $B \in \Mat_s(P_0)$ solve the UMP for $\cmat_{P_0, X}(G)$, and 
let $\phi \colon P \To P$ 
be defined by  $\phi(a_i) = a_i$ for $i=1,\dots, m$, and  $\phi(X\tr) =  B\cdot X\tr$. 
From the invertibility of $B$ we deduce that $\phi$ is an automorphism.  

We have 
$G\tr = \cmat_{P_0, X}(G) X\tr + \widetilde{G}\tr$,
where $\widetilde{G} = (\widetilde{g}_1, \dots, \widetilde{g}_k)$ and
$\Lin_{P_0,X}(\widetilde g_j)=0$ for $j = 1,\dots, k$.
By assumption, we have $\cmat_{P_0, X}(G) \cdot B =   (I_k \mid 0)$,
hence we deduce the following equalities.
\begin{equation*}
\begin{aligned}
\phi(G\tr) 
&= \cmat_{P_0, X}(G) \phi(X\tr) + \phi(\widetilde{G}\tr) \\
&= \cmat_{P_0, X}(G)\cdot B\cdot X\tr + \phi(\widetilde{G}\tr) \\
&= (I_k \mid 0) \cdot X\tr + \phi(\widetilde{G}\tr)\\
&= X_k\tr + \phi(\widetilde{G}\tr).
\end{aligned}
\end{equation*}
Therefore, we have 
\begin{equation*}
\langle X_k \rangle_{P_0} = 
\langle \Lin_{P_0, X}(\phi(g_1)), \dots, \Lin_{P_0, X}(\phi(g_k)) \rangle_{P_0}.
\end{equation*}
It follows from Theorem~\ref{thm-existZsep} that $\phi(G)$ is $X_k$-separating.

Claim (2) follows from Theorem~\ref{thm-existZsep}(3).
\end{proof}

\begin{myremark}\label{rem-block}
Every matrix of the form $\cmat_{P_0, X}(G)$ is a  diagonal block matrix 
with one block for each $W\!$-degree of the elements of $G$.
Hence, the UMP can be split into several  mutually independent problems. 
\end{myremark}

We conclude this subsection with an example which illustrates the previous theorem and remark.
The actual computation is given in \cocoa-Example~\ref{coex-UMP}.

\goodbreak
\begin{example}\label{ex-UMP}
Let $P_0= \QQ[a, b]$, and $P=P_0 [x, y, z, w]$ graded by $W = (1, 1, 2, 2)$,
let $g_1 = (1+a^2)x  + a^3y$, $g_2 = az +(1-ab)w  -x^2$, let $G = (g_1, g_2)$, and
let $I=\langle G \rangle$.
The polynomials $g_1$, $g_2$ are not separating with respect to any indeterminate,
hence we have $\sepdim(P/I)=6$.
If we let  $X = (x, y, z, w)$, we have 
\begin{equation*}
\cmat_{P_0, X}(G) = 
\begin{bmatrix} 
1 +a^2 & a^3 & 0 & 0 \\
0 & 0 & a & 1 -ab
\end{bmatrix}
\end{equation*}

A matrix  which solves the UMP for $\cmat_{P_0, X}(G)$ is 
\begin{equation*}
B = \begin{bmatrix} 
-a^2 +1 & -a^3 &\ 0\ &\  0 \\
a & a^2 + 1 &\ 0\ &\  0\\
0 & 0 &\ b\  &\  ab -1\\
0 & 0 &\ 1\ &\ a
\end{bmatrix}
\quad \text{since \quad }
\cmat_{P_0, X}(G)\cdot B = \begin{bmatrix}  1 & 0  & 0 & 0\\ 0 & 0 & 1 & 0 \end{bmatrix}
\end{equation*}

Using the entries of 
\begin{equation*}
B\cdot \begin{bmatrix} x & y & z  & w \end{bmatrix}\tr =\begin{bmatrix}
-a^3y -a^2x +x  \\
a^2y +ax +y \\
abw +bz -w\\
aw +z
\end{bmatrix}
\end{equation*} 
we define the $\QQ$-algebra  automorphism $\phi \colon P \to P$ by
$\phi(a)=a,\quad  \phi(b) = b, \quad 
\phi(x)= -a^3y -a^2x +x ,\quad 
\phi(y)= a^2y +ax +y , \quad 
\phi(z)= abw +bz -w, \quad
\phi(w) = aw +z$, 
and get 
\begin{equation*}
\phi(g_1)= x, \quad   \phi(g_2) =  -a^6y^2 -2a^5xy -a^4x^2 +2a^3xy +2a^2x^2 -x^2 +z
\end{equation*}
The map $\phi$ induces a $\QQ$-algebra isomorphism 
$\Phi \colon P/I  \To P/\langle \phi(g_1), \phi(g_2) \rangle $.

The pair $(\phi(g_1), \phi(g_2))$ is $(x, z)$-separating. Setting $x =0$ in $\phi(g_2)$
produces the pair $(x,\  z - a^6y^2)$ which is coherently $(x, z)$-separating.

We use this pair to define a $\QQ$-algebra homomorphism 
$\psi \colon P \To \QQ[a, b, y, w]$ by $\psi(a) = a$, $\psi(b) =b$, $\psi(x) = 0$, $\psi(z) = a^6y^2$.
The map $\psi$  induces a $\QQ$-algebra  isomorphism  
$\Psi \colon P/\langle \phi(g_1), \phi(g_2) \rangle \to \QQ[a, b, y, w]$.

Altogether, we have a re-embedding $\Psi\circ \Phi \colon P/I \To \QQ[a, b, y, w]$.
Consequently, we have  $\edim(P/I)=  4 < \sepdim(P/I)=6$. Moreover, $P/I$ is 
a free $\mathbb{Q}$-algebra, though it cannot be obtained as a $Z$-separating algebra,
a situation already encountered in Example~\ref{ex-isomorphtoK[x]}.
\end{example}

\medskip
\subsection{Fibers of Positive \texorpdfstring{$P_0$}{P0}-algebras}
\label{subsec-Fibers}

In this subsection, we describe general and special fibers of positive $P_0$-algebras.
We keep using the notation introduced before; in particular, we let $P_0 = K[a_1, \dots, a_m]$ 
be trivially graded, let $P = P_0[x_1, \dots, x_s]$ be 
equipped with a $\mathbb{Z}$-grading defined by a positive weight vector
$W = (w_1, \dots, w_s)$, and let $I$ be an ideal in $P$ such that $P/I$ is 
a positive $P_0$-algebra. As usual, we denote $(x_1, \dots, x_s)$ by $X$.

\begin{definition}\label{def-fibers}
Let $P_0 = K[a_1, \dots, a_m]$, and let $\phi \colon P_0 \To P/I$ be the canonical 
injective $K$-algebra homomorphism. Furthermore, let $L = K(a_1, \dots, a_m)$ be the 
field of fractions of $P_0$, let $\Gamma = (c_1, \dots, c_m) \in K^m$, 
and let $\m_\Gamma = \langle a_1-c_1, \dots, a_m-c_m \rangle$ be the 
corresponding maximal ideal of~$P_0$.

\begin{myenumerate}
\item The ideal $I_L = I\, L[X]$ is called the \textbf{generic fiber ideal of $\phi$}, and
the $L$-algebra $L[X]/I_L$ is called the \textbf{generic fiber} of~$\phi$.

\item The ideal $I_\Gamma = I \cdot (P_0 / \m_\Gamma)[X] \cong I\, K[X]$ is called the
\textbf{(special) fiber ideal} of~$\phi$ over~$\Gamma$, and the $K$-algebra
$K[X]/I_\Gamma$ is called the \textbf{(special) fiber} of~$\phi$ over~$\Gamma$.
\end{myenumerate}
\end{definition}

\begin{myremark}\label{rem-maptofibers}
We know that $P = P_0[X]$, hence there are canonical surjective homomorphisms 
$\theta_L \colon P/I \To L[X]/I_L$ and $\theta_\Gamma \colon P/I \To K[X]/I_\Gamma$.
Moreover, if $Z$ is a $t$-subtuple of $X$ and $F = (f_1, \dots, f_t) \in P^t$ is 
$Z$-separating, then $\theta_L(F)$ and $\theta_\Gamma(F)$ are easily seen to be 
$Z$-separating as well.
\end{myremark}

The following important result represents a nice application 
of Proposition~\ref{prop-optimalgraded}.

\begin{proposition}{\bf (Optimal Re-Embeddings of Fibers)}\label{prop-optFibers}

Let $P_0$, $P$, and~$I$ be such that $P/I$ is a positive $P_0$-algebra, and 
let $\phi \colon P_0 \To P/I$ be the canonical injective $K$-algebra homomorphism.

\begin{myenumerate}
\item There exists a subtuple~$Z$ of~$X$ which yields an 
optimal separating re-embedding 
$\Phi_L \colon L[X] / I_L \To L[X {\setminus} Z] / (I_L \cap L[X {\setminus} Z])$
of the generic fiber ideal of $\phi$.

\item Let $\Gamma = (c_1, \dots, c_m) \in K^m$. There exists a subtuple~$Z$ of~$X$  
which yields an optimal separating re-em\-bed\-ding 
$\Phi_\Gamma \colon K[X] / I_\Gamma \To K[X{\setminus}Z] / (I_\Gamma \cap K[X{\setminus}Z])$
of the special fiber ideal of $\phi$ over $\Gamma$.
\end{myenumerate}
\end{proposition}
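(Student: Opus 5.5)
The plan is to reduce both claims to Proposition~\ref{prop-optimalgraded}, applied to a polynomial ring over a field rather than over $P_0$. In each case I will check three things: the fiber ideal is homogeneous with respect to a \emph{positive} grading, it lies inside the maximal ideal generated by the indeterminates, and it is generated by homogeneous polynomials. Proposition~\ref{prop-optimalgraded} then supplies $d=\dim(\Lin(\cdot))$ indeterminates $Z$ and $d$ homogeneous polynomials giving an optimal separating re-embedding.

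For~(1), let $G=(g_1,\dots,g_r)$ be $W$-homogeneous generators of $I$. Their images in $L[X]$ generate $I_L$, and the grading of $L[X]$ by $W=(w_1,\dots,w_s)$ is positive because all $w_j>0$. Since $L$ has degree zero, each image $g_j$ is still $W$-homogeneous. Next we need $I_L\subseteq\langle X\rangle L[X]$. By Definition~\ref{def-posalg}, $I\cap P_0=\langle 0\rangle$, so no homogeneous generator of $I$ can have $W$-degree~$0$: such a generator would lie in $P_0$, since every $P_0$-monomial involving some $x_j$ has positive degree. Hence every $g_j$ has positive degree and lies in $\langle X\rangle$, and the same holds after extending scalars. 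Proposition~\ref{prop-optimalgraded} now applies over the field $L$, and its re-embedding is exactly $\Phi_L\colon L[X]/I_L\to L[X\setminus Z]/(I_L\cap L[X\setminus Z])$. Here $I_L\cap L[X\setminus Z]$ is generated by the rewritten tuple, by Proposition~\ref{prop-ElimViaSubst}.

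For~(2), substitute $a_i\mapsto c_i$. The images $\bar g_j\in K[X]$ generate $I_\Gamma$ and are $W$-homogeneous, because the substitution only changes $P_0$-coefficients and so preserves $W$-degrees. They still lie in $\langle X\rangle K[X]$, since each $g_j$ has no term of degree~$0$. Some $\bar g_j$ may become zero, but this is harmless. Applying Proposition~\ref{prop-optimalgraded} over $K$ then gives $\Phi_\Gamma$.

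The only genuinely delicate step is the containment in $\langle X\rangle$, which rests on $I\cap P_0=0$. One has to make sure that a $W$-homogeneous element of degree zero must lie in $P_0$; this uses positivity of $W$. Optimality then comes for free from Theorem~\ref{thm-checkopt}, invoked inside Proposition~\ref{prop-optimalgraded}. The sets $Z$ in~(1) and~(2) may differ, and in~(2) $Z$ may depend on $\Gamma$; the statement allows both.
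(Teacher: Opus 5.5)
Your proposal is correct and follows the paper's proof. Both reduce to Proposition~\ref{prop-optimalgraded} by observing that $I_L$ and $I_\Gamma$ are proper $W$-homogeneous ideals in positively graded polynomial rings over a field. Your check that the fiber ideals lie in $\langle X\rangle$, using $I\cap P_0=\langle 0\rangle$ and the positivity of $W$, simply spells out the word ``proper'' in the paper's one-line argument.
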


\begin{proof}
Both~$I_L$ and~$I_\Gamma$ are proper $W$-homogeneous ideals in positively graded rings. 
Hence the claim follows from Proposition~\ref{prop-optimalgraded}. 
\end{proof}

\begin{example}\label{ex-posFib}
We use the data from Example~\ref{ex-positiveZelim}.
Remember that we have $P/I \cong Q/J$ where 
$Q = \mathbb{Q}[a, b, z, w]$ and $J = \langle (ab^2 -\tfrac{1}{3}ab -b^2 
+\tfrac{1}{3}a -\tfrac{1}{3} ) z - \tfrac{1}{3}(a^2+a)w^2 \rangle$. 

Let $\psi \colon P_0 \To Q/J$ be the induced injective $\mathbb{Q}$-algebra homomorphism, 
let $L = \mathbb{Q}(a, b)$, let $\Gamma_1 = (1, 0)$, and let $\Gamma_2 = (1, -3)$.
The generic fiber over $\psi$ is $L[z, w]/ \langle z - \tfrac{h_1}{h_2} w^2 \rangle$ where 
$h_1 = - \tfrac{1}{3}(a^2+a)$ and $h_2 = ab^2 -\tfrac{1}{3}ab -b^2 +\tfrac{1}{3}a -\tfrac{1}{3}$,
hence it is isomorphic to $L[w]$. In other words, the generic fiber of $\psi$ is a free $L$-algebra
(see Definition~\ref{def-edim}\,(8)).

The special fiber of $\psi$ over $\Gamma_1$ is $\mathbb{Q}[z, w]/ \langle w^2 \rangle$ 
while the special fiber of $\psi$ over $\Gamma_2$ is 
$\mathbb{Q}[z, w] / \langle z - \tfrac{2}{3}w^2 \rangle \cong \mathbb{Q}[w]$.
According to Proposition~\ref{prop-optFibers}, both special fiber ideals have an optimal 
separating re-embedding.
In particular, the special fiber ideal of $\phi$ over $\Gamma_2$ turns out to be a 
free $\mathbb{Q}$-algebra.
\end{example}

Next, we discuss fibers in more detail. It is worth noting that, 
when we introduced the linear part $\Lin(f)$ and the cotangent space $\Cot$, 
the maximal ideal was omitted from the notation since it was intended 
to be $\langle x_1, \dots, x_n\rangle$. Here, we need to specify the maximal 
ideal explicitly, as different maximal ideals will appear within the same formula.

\begin{lemma}\label{lem-Lin=Lin}
Let $P_0$, $P$, and~$I$ be such that $P/I$ is a positive $P_0$-algebra, 
and let $\Gamma = (c_1, \dots, c_m) \in K^m$. 
Let $\alpha \colon P \To K[X]$ be the $K$-algebra homomorphism defined by 
$\alpha(a_i)=c_i$ for $i=1,\dots, m$ and $\alpha(x_j)=x_j$ for $j=1,\dots, s$, 
and let $\M_\Gamma$ be the maximal ideal of~$P$ given by 
$\M_\Gamma =\langle a_1-c_1, \dots, a_m-c_m,\, x_1, \dots, x_k \rangle$. 
Then the following claims hold.
\begin{myenumerate}
\item There is an isomorphism 
$ \alpha_\Gamma \colon \Lin_{\M_\Gamma}(I) \cong \Lin_{\langle X \rangle}(I_\Gamma)$
of $K$-vector spaces induced by the homomorphism~$\alpha$.

\item We have $\dim_K ( \Cot_{\langle X\rangle}(K[X]/I_\Gamma) ) =
\dim_K (\Cot_{\M_\Gamma}(P/I) ) - m$.
\end{myenumerate}
\end{lemma}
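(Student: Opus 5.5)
The plan is to reduce everything to the structural fact that $I\subseteq\langle X\rangle$, and then to compare linear parts directly. I read the maximal ideal in the statement as $\M_\Gamma=\langle a_1-c_1,\dots,a_m-c_m,x_1,\dots,x_s\rangle$; the index $k$ there should be $s$.

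\textbf{Step 1: $I\subseteq\langle X\rangle\subseteq\M_\Gamma$.} Since $I$ is $W$-homogeneous, it is generated by $W$-homogeneous elements. All $w_j>0$, so the $W$-degree-zero part of $P$ is $P_0$. A homogeneous element of $I$ of degree zero therefore lies in $I\cap P_0=\langle 0\rangle$. A homogeneous element of positive degree has every term divisible by some $x_j$, so it lies in $\langle X\rangle$.

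\textbf{Step 2: compute $\Lin_{\M_\Gamma}(f)$ for $f\in I$.} Apply the translation $a_i\mapsto a_i+c_i$, so that $\M_\Gamma$ becomes the standard maximal ideal. Write $f=\sum_{j=1}^s h_jx_j+g$ with $h_j\in P_0$ and $g\in\langle X\rangle^2$. Then $g\in\M_\Gamma^2$. Each $h_j-h_j(\Gamma)$ lies in $\langle a_1-c_1,\dots,a_m-c_m\rangle$, so $(h_j-h_j(\Gamma))x_j\in\M_\Gamma^2$. Hence
\[
\Lin_{\M_\Gamma}(f)=\sum_{j=1}^s h_j(\Gamma)\,x_j .
\]
In particular $\Lin_{\M_\Gamma}(I)$ lies in $\langle x_1,\dots,x_s\rangle_K$ and contains no $a_i-c_i$ components.

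On the other side, $\alpha(f)=\sum_j h_j(\Gamma)x_j+\alpha(g)$ with $\alpha(g)\in\langle X\rangle^2$. So $\Lin_{\langle X\rangle}(\alpha(f))=\Lin_{\M_\Gamma}(f)$ as linear forms in $X$.

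\textbf{Step 3: identify the two spaces.} The map $\alpha$ is surjective, so $I_\Gamma=\alpha(I)$ as an ideal of $K[X]$. Therefore every element of $I_\Gamma$ is $\alpha(f)$ for some $f\in I$, and
\[
\Lin_{\langle X\rangle}(I_\Gamma)=\{\Lin_{\langle X\rangle}(\alpha(f))\mid f\in I\}=\Lin_{\M_\Gamma}(I).
\]
Alternatively, use Proposition~\ref{prop-cotancomp}\,(2) on generators $f_1,\dots,f_r$ of $I$ and their images $\alpha(f_1),\dots,\alpha(f_r)$. Thus the restriction of $\alpha$ to linear forms in $X$, which is the identity, gives the isomorphism $\alpha_\Gamma$ of~(1).

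\textbf{Step 4: part (2).} By Step~1, $I\subseteq\M_\Gamma$. Proposition~\ref{prop-cotancomp}\,(4), applied after the translation in the ring $P$ with $m+s$ indeterminates, gives
\[
\dim_K\Cot_{\M_\Gamma}(P/I)=m+s-\dim_K\Lin_{\M_\Gamma}(I).
\]
Applied in $K[X]$, it gives
\[
\dim_K\Cot_{\langle X\rangle}(K[X]/I_\Gamma)=s-\dim_K\Lin_{\langle X\rangle}(I_\Gamma).
\]
Subtracting and using~(1) yields the difference $m$.

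The only delicate point is Step~2. One must make sure that no linear contributions in the $a_i-c_i$ can arise, and this is exactly where the positivity hypotheses $I\cap P_0=\langle 0\rangle$ and $w_j>0$ enter, through Step~1.
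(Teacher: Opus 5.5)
Your proof is correct and follows the same route as the paper. You write $f=\sum_j h_jx_j+g$ with $h_j\in P_0$ and $g\in\langle X\rangle^2$, observe that both $\Lin_{\M_\Gamma}(f)$ and $\Lin_{\langle X\rangle}(\alpha(f))$ equal $\sum_j h_j(\Gamma)x_j$, and then subtract the two dimension formulas from Proposition~\ref{prop-cotancomp}\,(4). You also make explicit several points the paper leaves tacit: why $I\subseteq\langle X\rangle$ (via $W$-homogeneity, $w_j>0$ and $I\cap P_0=\langle 0\rangle$), why $(h_j-h_j(\Gamma))x_j\in\M_\Gamma^2$, why $I_\Gamma=\alpha(I)$, and that $x_k$ in the definition of $\M_\Gamma$ should read $x_s$.
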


\begin{proof}
To prove (1) we use the fact that every polynomial $f \in I$ can be written as
$f = h_1 x_1 + \cdots + h_s x_s + g$ with $h_1, \dots, h_s \in P_0$ 
and $g \in \langle X \rangle^2$. Consequently, the $\M_\Gamma$-linear part of $f$ is
\begin{equation*}
\Lin_{\M_\Gamma}\big( \sum_{i=1}^s h_ix_i \big) 
= \sum_{i=1}^s h_i(c_1,\dots,c_m)x_i.
\end{equation*} 
To obtain the fiber ideal~$I_\Gamma$, we have to substitute $a_i \mapsto c_i$ for $i=1,\dots,m$
in the polynomials in~$I$. The $\langle X \rangle$-linear parts of the resulting polynomials are precisely the polynomials
$\Lin_{\M_\Gamma}(f)$, viewed as polynomials in $K[X]$. This observation implies claim~(1).

Next, we prove~(2) using  $s = n - m$ and   
$\dim_K ( \Cot_{\M_\Gamma}(P/I)) = n - \dim_K ( \Lin_{\M_\Gamma}(I) )$
\begin{align*}
\dim_K ( \Cot_{\langle X \rangle} (K[X]/I_\Gamma) ) 
& = n - m - \dim_K (\Lin_{\langle X \rangle}(I_\Gamma)) \\
& =  n - m - \dim_K ( \Lin_{\M_\Gamma}(I) )\\ 
& = \dim_K ( \Cot_{\M_\Gamma}(P/I)) - m.
\end{align*}
where the second equality follows from~(1).
\end{proof}

\begin{theorem}{\bf (Regularity and Freeness of Fibers)}\label{thm-fibers}

Let $P_0$, $P$, and~$I$ be such that $P/I$ is a positive $P_0$-algebra,
let $\Gamma = (c_1, \dots, c_m) \in K^m$.  Let $\phi \colon P_0 \To P/I$ be the 
canonical homomorphism, let $L[X]/I_L$ be the generic fiber of~$\phi$, 
and let $K[X]/I_\Gamma$ be the special fiber of~$\phi$ over~$\Gamma$.
\begin{myenumerate}
\item If the local ring $(L[X]/I_L)_{\langle X \rangle/I_L}$ is regular, 
then $L[X]/I_L$ is a free $L$-algebra.

\item If the local ring $(K[X]/I_\Gamma)_{\langle X \rangle/I_\Gamma}$ is regular, 
then $K[X]/I_\Gamma$ is a free $K$-algebra.

\item Assume that the local ring $(P/I)_{\M_\Gamma/I}$ is regular.

\smallskip
\begin{myenumerate}
\item[(a)] The local ring $(K[X]/I_\Gamma)_{\langle X \rangle/I_\Gamma}$ of the fiber of~$\phi$
over~$\Gamma$ is regular.

\item[(b)] We have $\dim(K[X]/I_\Gamma) = \dim((P/I)_{\M_\Gamma/I}) - m$.
\end{myenumerate}

\end{myenumerate}
\end{theorem}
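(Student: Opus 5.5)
For (1) and (2), I would combine Proposition~\ref{prop-optFibers} with Theorem~\ref{thm-checkopt}. The fiber ideals $I_L\subseteq L[X]$ and $I_\Gamma\subseteq K[X]$ are proper $W$-homogeneous ideals in positively graded polynomial rings, and both are contained in $\langle X\rangle$. Hence the proof of Proposition~\ref{prop-optimalgraded} gives a subtuple $Z$ of $X$ with $\#Z=\dim(\Lin(I_\Gamma))$ (respectively $\dim_L(\Lin(I_L))$) such that the ideal is $Z$-separating.

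Theorem~\ref{thm-checkopt} is stated for an arbitrary field, so it applies verbatim over $L$ as well as over $K$. Under the regularity assumption on the localization at $\langle X\rangle$, part~(2) of that theorem gives that the fiber is a free algebra. The only thing to double-check is that its hypotheses ($I\subseteq\M$, $Z$-separating, $\#Z=\dim\Lin$) are met. They hold by the construction just described.

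For (3), let $R=P/I$ and $\m=\M_\Gamma/I$. The natural map $P/I\to K[X]/I_\Gamma$ is the quotient by $\langle a_1-c_1,\dots,a_m-c_m\rangle$. So the local ring of the fiber at $\langle X\rangle/I_\Gamma$ is $R_\m/(a_1-c_1,\dots,a_m-c_m)R_\m$.

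The plan is to show that the images of $a_i-c_i$ in $\m/\m^2$ are $K$-linearly independent. Since $I$ is $W$-homogeneous with $\deg_W(a_i)=0$ and $w_j>0$, every $f\in I$ lies in $\langle X\rangle$, because $I\cap P_0=0$ and homogeneity forces the degree-$0$ component to be zero. Hence the $\M_\Gamma$-linear part of $f$ involves only the $x_j$, as in the proof of Lemma~\ref{lem-Lin=Lin}. Then $\Lin_{\M_\Gamma}(I)\subseteq\langle x_1,\dots,x_s\rangle_K$, so $a_1-c_1,\dots,a_m-c_m$ stay independent in $\Cot_{\M_\Gamma}(R)$. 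This independence is the key step.

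A standard fact then applies: if $R_\m$ is regular and $y_1,\dots,y_m\in\m$ have independent images in $\m/\m^2$, they form part of a regular system of parameters. Consequently $R_\m/(y)$ is regular of dimension $\dim R_\m-m$. This gives (a) and the local dimension statement.

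Lemma~\ref{lem-Lin=Lin}(2) offers an alternative check. The embedding dimensions drop by exactly $m$, and the dimensions satisfy $\dim R_\m/(y)\ge\dim R_\m-m$, which forces regularity.

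To get (b) for the global dimension $\dim(K[X]/I_\Gamma)$, I would use the positive grading. The fiber is a positively graded $K$-algebra, so its dimension equals the dimension of its localization at the homogeneous maximal ideal $\langle X\rangle/I_\Gamma$. That local dimension is $\dim R_\m-m$ by the above.

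I expect the main obstacle to be the independence/regular-sequence step. It must be phrased carefully: the equality $\dim R_\m/(y)=\dim R_\m-m$ uses that $R_\m$ is a regular local ring, hence a domain and Cohen–Macaulay, and that the $y_i$ extend to a regular system of parameters.
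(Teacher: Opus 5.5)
Your proposal is correct and follows the paper's proof: (1) and (2) use Proposition~\ref{prop-optimalgraded} together with Theorem~\ref{thm-checkopt}\,(2), and (3) uses the drop of the cotangent dimension by exactly $m$. Your independence of the $a_i-c_i$ in $\m/\m^2$ is the content of Lemma~\ref{lem-Lin=Lin}\,(2); combined with Krull's bound $\dim R_\m/(y)\ge\dim R_\m-m$ and ``dimension $\le$ embedding dimension'', it is exactly the chain of inequalities the paper uses (your ``alternative check''), and (b) then follows from the positive grading as in the paper. The Cohen--Macaulay/domain property you flag is not actually needed, since Krull's height theorem and the bound of the dimension by the cotangent dimension already force all the inequalities to be equalities.
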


\begin{proof}
To prove claim (1), we note that $L[X]/I_L$ is positively graded,
hence we deduce from Proposition~\ref{prop-optimalgraded} that, for 
$t = \dim_L(\Lin_{\langle X \rangle}(I_L))$,
there exist a $t$-tuple of indeterminates and a $t$-tuple 
of $W$-homogeneous polynomials defining an optimal separating re-embedding 
of $I_L$.
Consequently, the conclusion follows from Theorem~\ref{thm-checkopt}\,(2).

Claim (2) follows in the same way, so let us prove (3).
Let $S=(K[X]/I_\Gamma)_{\langle X \rangle/I_\Gamma}$.
We claim that there is the following chain of relations.
\begin{align*}
\dim((P/I)_{\M_\Gamma/I} ) - m & \le \dim(S)  \le
\dim_K ( \Cot_{\langle X \rangle}(K[X]/I_\Gamma) ) \\
& = \dim_K ( \Cot_{\M_\Gamma}(P/I) ) - m =\dim((P/I)_{\M_\Gamma/I} ) - m.
\end{align*}
The first inequality follows from the fact that~$S$ is obtained from $(P/I)_{\M_\Gamma/I}$ 
by modding out the ideal~$\langle a_1-c_1, \dots, a_m-c_m \rangle$ which is generated by~$m$ elements. 
The second inequality follows from the fact that the dimension of a local ring is bounded above by the 
dimension of the cotangent space at its maximal ideal (see~\cite[V.4.5]{Kun}). 
The first equality follows from Lemma~\ref{lem-Lin=Lin}(2), and the second equality is a consequence of the assumption that $(P/I)_{\M_\Gamma/I}$ is regular. It follows that all relations are in fact equalities. 
In particular, we have 
$\dim((K[X]/I_\Gamma)_{\langle X \rangle/I_\Gamma}) = \dim_K (\Cot_{\langle X \rangle} (K[X]/I_\Gamma))$, 
and hence~$(K[X]/I_\Gamma)_{\langle X \rangle/I_\Gamma}$ is a regular local ring, 
which proves (a). 
Moreover, we have $\dim((K[X]/I_\Gamma)_{\langle X \rangle/I_\Gamma}) = \dim((P/I)_{\M_\Gamma/I} ) - m$,
and $\dim((K[X]/I_\Gamma)_{\langle X \rangle/I_\Gamma}) = \dim(K[X]/I_\Gamma)$ follows 
from the fact that $K[X]/I_\Gamma$ is a positively graded algebra. This concludes the proof of~(b).
\end{proof}

\begin{myremark}\label{rem-WPS}
We have already seen that the generic fiber ideal and the special fiber ideals are $W$-graded.
Consequently, $L[X]/I_L$ and $K[X]/I_\Gamma$ are $W$-graded algebras, so they can be viewed 
as the homogeneous coordinate rings of weighted projective schemes (see~\cite{BR}).  
This remark inspires the proof of the following theorem.
\end{myremark}

\begin{theorem}{\bf (Connectedness)}\label{thm-connected}

Let $P_0 = K[a_1, \dots, a_m]$, let $P = P_0[X]$ 
with $X = (x_1, \dots, x_s)$, and $n =  m +s$, be 
equipped with a $\mathbb{Z}$-grading defined by a positive weight vector
$W = (w_1, \dots, w_s)$, and let $P/I$ be a positive $P_0$-algebra.

\begin{myenumerate}
\item The fiber $F_\Gamma$  is connected.

\item The scheme $\Spec(P/I)$ is connected.

\end{myenumerate}
\end{theorem}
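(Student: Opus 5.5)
The plan is to exploit the positive grading through the contraction towards the vertex, as suggested by Remark~\ref{rem-WPS}. For (1), the fiber $F_\Gamma=\Spec(K[X]/I_\Gamma)$ is defined by a $W$-homogeneous ideal with all $w_j>0$. If $I_\Gamma$ is the unit ideal, the fiber is empty and there is nothing to prove. Otherwise $I_\Gamma\subseteq\langle X\rangle$, so the origin $O=(0,\dots,0)$ lies in $F_\Gamma$.

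For every point $p=(p_1,\dots,p_s)\in F_\Gamma$ over $\overline K$, I will use the curve
\[
\gamma_p\colon \AA^1\To \AA^s,\qquad \lambda\mapsto (\lambda^{w_1}p_1,\dots,\lambda^{w_s}p_s).
\]
If $f\in I_\Gamma$ is $W$-homogeneous of degree $d$, then $f(\gamma_p(\lambda))=\lambda^d f(p)=0$. Hence $\gamma_p(\AA^1)\subseteq F_\Gamma$. This image is irreducible and contains both $p=\gamma_p(1)$ and $O=\gamma_p(0)$. So every point lies in a connected subset containing $O$, and therefore $F_\Gamma$ is connected.

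Algebraically, this says the following. Suppose $K[X]/I_\Gamma$ had a nontrivial idempotent $e$. Decompose it into $W$-homogeneous components. The degree-zero part of $K[X]/I_\Gamma$ is $K$, because all weights are positive. The equation $e^2=e$ read in the lowest degree forces $e_0\in\{0,1\}$, and an induction on degree then forces $e=e_0$. I would record this graded argument since it works without passing to $\overline K$.

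For (2), I will apply the same idea relative to $P_0$, with the point $\Gamma$ varying. The key inputs are that $(P/I)_0=P_0$ by positivity and that $P_0$ is a domain. Let $e\in P/I$ be idempotent and write it as a sum of $W$-homogeneous components. Since $P/I$ is $\NN$-graded with degree-zero part $P_0$, comparing the lowest-degree components of $e^2=e$ gives $e_0^2=e_0$ in $P_0$. As $P_0$ is a domain, $e_0\in\{0,1\}$. Replacing $e$ by $1-e$ if necessary, we may assume $e_0=0$.

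Now suppose $e\neq0$ and let $e_d$ be its lowest nonzero component, with $d>0$. Then $e^2$ has no nonzero component in degree $<2d$, so the degree-$d$ component of $e=e^2$ is $0$. This contradicts $e_d\neq0$, and hence $e=0$.

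Geometrically, this is the statement that each point $p$ of $\Spec(P/I)$ over $\Gamma$ is joined by $\gamma_p$ to the point $(\Gamma,O)$ of the section $\Spec(P_0)\cong\AA^m$. The section is irreducible, and it lies in $\Spec(P/I)$ because $I\subseteq\langle X\rangle$. I will mention this explicit path, since the statement of the section highlights it.

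The main obstacle is making sure the grading argument is airtight. Two things need checking. First, $P/I$ is genuinely $\NN$-graded with $(P/I)_0=P_0$; this is exactly where the hypothesis $I\cap P_0=\langle0\rangle$ and the positivity of $W$ enter. Second, the lowest-degree comparison handles infinitely many degrees; it does, since any element has finitely many components. The case of an empty fiber in (1) also needs its brief separate treatment.
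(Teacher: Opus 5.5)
Your proof is correct. For part~(2), however, your main argument takes a genuinely different route from the paper's.

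\textbf{The paper's route.} The paper first reduces to $K=\overline{K}$, using the surjectivity of $\Spec(S\otimes_K\overline{K})\to\Spec(S)$. It proves (1) with exactly your curve $t\mapsto(\pi_1t^{w_1},\dots,\pi_st^{w_s})$. It then proves (2) geometrically, building on (1). Each closed point is joined inside its fiber to the point $(\Gamma,0)$ of the zero section. Two such points are joined by the image of a line of $\AA^m_K$ under the section $\zeta$ induced by $P/I\to P_0$. Connectedness then follows because all closed points lie in one connected component.

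\textbf{Your route and what it buys.} Your idempotent argument replaces all of this with one observation: an $\NN$-graded ring whose degree-zero part has no nontrivial idempotents has none itself. It needs neither the base change, nor the passage from closed points to the whole spectrum, nor part~(1). It is also more general, since it applies to any $\NN$-graded ring with connected $R_0$. Moreover, $(P/I)\otimes_K\overline{K}$ is again a positive $\overline{K}[a_1,\dots,a_m]$-algebra, by flatness its degree-zero part is $P_0\otimes_K\overline{K}$. So your argument directly yields the geometric connectedness that is actually invoked in Lemma~\ref{lem-unitJ}.

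\textbf{What the paper's route buys.} It gives an explicit chain of at most three irreducible rational curves joining any two points. This geometric content is what the section wants to highlight.

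\textbf{Two minor remarks.} First, the case $I_\Gamma=\langle 1\rangle$ cannot occur, since $I\subseteq\langle X\rangle$ forces $I_\Gamma\subseteq\langle X\rangle$. Second, your geometric sketch of~(1) over $\overline{K}$ would still need the paper's descent and closed-point steps to become a complete proof. Your graded argument makes those steps unnecessary.
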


\begin{proof}
Given any $K$-algebra~$S$, the canonical morphism 
$\Spec(S \otimes_K \overline{K}) \to \Spec(S)$ is surjective, by the faithfulness
of $K \to \overline{K}$, hence in  both claims we may assume that $K$ is algebraically closed.

To prove claim~(1), let $\Gamma=(c_1,\dots, c_m) \in K^m$, 
$\Gamma_0 = (c_1,\dots, c_m, 0, \dots, 0) \in K^n$,
$\Gamma_\pi=(c_1,\dots, c_m, \pi_1, \dots, \pi_s)$ be $K$-rational points of the 
fiber $F_\Gamma = \Spec(K[X]/I_\Gamma)$, and let  $\psi \colon K[X] \longrightarrow K[t]$ be the
$K$-algebra homomorphism defined by $\psi(x_i) = \pi_i\, t^{w_i}$ for $i=1, \dots, s$.
Since $\Ker(\psi)$ is a prime ideal, its associated curve $C_{\Gamma_0, \Gamma_\pi}$ is irreducible,
and from the preimages of $\langle t-1\rangle$ 
and $\langle t\rangle$ under~$\psi$ we see that~$\Gamma_0$ and $\Gamma_\pi$
are contained in~$C_{\Gamma_0, \Gamma_\pi}$. 

Next, we show that $C_{\Gamma_0, \Gamma_\pi} \subseteq F_\Gamma$. 
The ideal  $I_\Gamma$ is $W$-homogeneous, hence it suffices to show that for 
every $W$-homogeneous polynomial $f \in I_\Gamma$, we have $f \in \Ker(\psi)$.
Indeed, if $f \in I_\Gamma$ is a $W$-homogeneous polynomial of $W$-degree $d$, then 
 $f(\pi_1 t^{w_1}, \dots \pi_s t^{w_s}) = t^d\cdot f(\pi_1,\dots,\pi_s) =0$.

Next, let $\Gamma_\pi, \Gamma_{\pi'}$ be two closed points in $F_\Gamma$.
The irreducible curve which connects~$\Gamma_\pi$ and~$\Gamma_0$
is contained in an irreducible component of~$F_\Gamma$. For the point $\Gamma_{\pi'}$,
the corresponding curve also connects it to~$\Gamma_0$. Hence all irreducible 
components of~$F_\Gamma$ contain~$\Gamma_0$, and thus~$F_\Gamma$ is connected.

Finally, we prove (2). To show connectedness, we  show
that any two closed points $\Gamma_\pi,  \Delta_\rho$ of $\Spec(P/I)$ are in the same 
connected component. 
By~(1), $\Gamma_\pi$ is in the same connected component as the  point 
$\Gamma_0 = (c_1,\dots, c_m, 0, \dots, 0)$ in its fiber. 
Similarly, $\Delta_\rho$ is in the same connected component
as the  point $\Delta_0 = (d_1,\dots, d_m, 0, \dots, 0)$ in its fiber. 
The points  $\Gamma=(c_1,\dots, c_m)$ and $\Delta = (d_1, \dots, d_m)$ lie on the 
affine space $\mathbb A_K^m$, hence they can be connected by a straight line.
The map $\zeta \colon \mathbb{A}_K^m \To \Spec(P/I)$ which is induced by the 
canonical $K$-algebra epimorphism 
$P/I \longrightarrow P/\langle x_1,\dots, x_s \rangle \cong P_0$
allows us to map the straight line to an irreducible
curve which contains both $\Gamma_0$ and $\Delta_0$. 
Consequently, $\Gamma_\pi$ and  $\Delta_\rho$
lie on the same connected component of $\Spec(P/I)$, which concludes the proof.
\end{proof}

\medskip
\subsection{Freeness of Regular Positive \texorpdfstring{$P_0$}{P0}-Algebras}
\label{subsec-Freeness of Regular Positive P0-Algebras}

The main result in this subsection is Theorem~\ref{thm-Free}.
To reach the target, we need  a few preliminary results.
In the following proposition we collect several classical results in commutative
algebra which will be used in the remaining part of this subsection. The reader can find detailed
proofs in standard commutative algebra textbooks such as~\cite{Ei, Kun, Mats}.

\begin{proposition}\label{prop-standard}
Let $P/I$ be an affine $K$-algebra, and $\overline{K}$ the algebraic closure of~$K$.
\begin{myenumerate}

\item If $K = \overline{K}$ and for every pair of closed points in $\Spec(P/I)$ there exists 
an irreducible curve which contains both, then $\Spec(P/I)$ is connected.

\item If $P/I$ is regular and $\Spec(P/I)$ is connected, then $P/I$ is a regular integral domain.

\item If $K$ is perfect,  $P/I$ is regular, and $\Spec((P/I) \otimes_K \overline{K})$ is connected, 
then the ring $(P/I)\otimes_K \overline{K}$ is a regular integral domain. 
Moreover, we have the equality $\dim(P/I) = \dim((P/I)\otimes_K \overline{K})$.
\end{myenumerate}
\end{proposition}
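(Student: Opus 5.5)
The plan is to prove the three claims one at a time, each by a standard argument. Throughout I use that $P/I$ is a Noetherian Jacobson ring: closed points are dense in every closed subset of $\Spec(P/I)$, and $\Spec(P/I)$ has finitely many irreducible components.

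For claim~(1), I argue by contradiction. Suppose $\Spec(P/I)=V_1\sqcup V_2$ with $V_1,V_2$ nonempty, disjoint and closed. By the Jacobson property, each $V_i$ contains a closed point $p_i$. By hypothesis there is an irreducible curve $C$ through $p_1$ and $p_2$. Then $C=(C\cap V_1)\sqcup(C\cap V_2)$ is a decomposition of $C$ into two nonempty disjoint closed subsets. This contradicts the fact that an irreducible space is connected.

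For claim~(2), let $R=P/I$ be regular, so every localization $R_\mathfrak p$ is a regular local ring, hence a domain. I will proceed in three steps.
\begin{myenumerate}
\item $R$ is reduced. Nilpotency is a local property, and every $R_\mathfrak p$ is a domain.
\item No prime $\mathfrak p$ contains two distinct minimal primes $\mathfrak q_1,\mathfrak q_2$. Otherwise $\mathfrak q_1R_\mathfrak p$ and $\mathfrak q_2R_\mathfrak p$ would be distinct minimal primes of the domain $R_\mathfrak p$, which is impossible since a domain has the single minimal prime $\langle 0\rangle$. Hence the finitely many irreducible components $V(\mathfrak q_i)$ are pairwise disjoint.
\item Each component is closed, and its complement is the finite union of the other components, so each component is also open. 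Connectedness of $\Spec(R)$ therefore forces a unique minimal prime $\mathfrak q$. Since $R$ is reduced, $\mathfrak q=\operatorname{nil}(R)=\langle 0\rangle$, so $R$ is a domain.
\end{myenumerate}

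For claim~(3), the key step, and the one I expect to be the main obstacle, is to show that $(P/I)\otimes_K\overline{K}$ is again regular. The hypothesis that $K$ is perfect is what makes this work. I would invoke the standard fact that, over a perfect field, regularity of an affine $K$-algebra is equivalent to smoothness over $K$ (equivalently, geometric regularity). One can check this with the Jacobian criterion: at each prime, the rank condition on the Jacobian matrix of generators of $I$ is unchanged under base field extension. Since smoothness is stable under base change, $(P/I)\otimes_K\overline{K}$ is regular, and claim~(2) applied over $\overline{K}$ together with the connectedness hypothesis shows that it is a regular integral domain.

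For the dimension equality, the map $P/I\to(P/I)\otimes_K\overline{K}$ is injective, because $\overline{K}$ is free over $K$, and it is integral, because $\overline{K}$ is algebraic over $K$. By lying over and going up, Krull dimensions coincide along an injective integral extension, so $\dim(P/I)=\dim((P/I)\otimes_K\overline{K})$. Alternatively, a Noether normalization $K[y_1,\dots,y_d]\subseteq P/I$ base-changes to a Noether normalization $\overline{K}[y_1,\dots,y_d]\subseteq(P/I)\otimes_K\overline{K}$, which gives the same conclusion.
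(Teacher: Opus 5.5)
Your proof is correct and follows the paper's sketch in essentially the same way: a clopen-decomposition argument for (1), the fact that a regular ring is locally a domain (exactly what underlies the paper's ``normal, hence a finite direct product of integral domains'') for (2), and geometric regularity over a perfect field for (3). One small precision: lying over and going up only give $\dim((P/I)\otimes_K\overline{K})\ge\dim(P/I)$, and the reverse inequality needs incomparability, which is part of the standard theorem on integral extensions that you are quoting.
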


\begin{proof}[Sketch of proof]
Claim (1) follows from the fact that if $K = \overline{K}$, then closed points are dense, and if each 
pair of them lies on a connected curve, then all of them are in the same connected component.

The regularity of $P/I$ implies that $P/I$ is normal, hence a finite direct product of integral domains. 
Therefore, the connectedness of $\Spec(P/I)$ implies that $P/I$ is an integral domain, which proves (2).

To prove claim (3), we note that if $K$ is perfect and $R$ is regular, 
then $P/I$ is geometrically regular over $K$,
hence $(P/I) \otimes_K \overline{K}$ is regular, and the conclusion follows from (2). The last claim follows from the faithfulness  of $P/I \to (P/I) \otimes_K \overline{K}$.
\end{proof}

\begin{lemma}\label{lem-unitJ}
Let $K$  be a perfect field, let $P_0 = K[a_1, \dots, a_m]$, and let $P = P_0[X]$ 
with $X = (x_1, \dots, x_s)$ and $n = m + s$. 
Let $P/I$ be a regular positive $P_0$-algebra, and let $G=\{g_1,\dots, g_r\}$ be 
a $W$-homogeneous system 
of generators of $I$, ordered in non-decreasing $W$-degree. Let $k = n - \dim(P/I)$,
let $U$ be the $P_0$-module generated by the rows of $\cmat_{P_0,X}(G)$,
and let $J$ be the ideal generated by the $k$-minors of $\cmat_{P_0, X}(G)$. 
Assume that the indeterminates $X$ are also ordered  by non-decreasing $W$-degree.

\begin{myenumerate}
\item  The module $U$ is  the direct sum of the  modules generated by
the rows of~$\cmat_{P_0,X}(G)$ which are coefficients of  $P_0$-linear forms of the  same $W\!$-degree.

\item The matrix $\cmat_{P_0, X}(G)$ is a  block diagonal matrix.

\item We have $\rk(\cmat_{P_0, X}(G )) = k$.

\item  We have $J = \langle 1 \rangle$.
\end{myenumerate}
\end{lemma}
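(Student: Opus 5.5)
Parts (1) and (2) follow from $W$-homogeneity alone; parts (3) and (4) use regularity together with the connectedness and fiber results of Subsection~\ref{subsec-Fibers}.

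\textbf{Parts (1) and (2).} Each $g_\ell$ is $W$-homogeneous and each $a_i$ has degree~$0$. Hence a $P_0$-monomial $h\,x_j$ in $\Lin_{P_0}(g_\ell)$ can occur only if $w_j=\deg_W(g_\ell)$. So the row $\cvec_{P_0,X}(g_\ell)$ is supported exactly on the columns of the indeterminates of $W$-degree $\deg_W(g_\ell)$.

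Since the rows are ordered by non-decreasing degree of the $g_\ell$, and the columns by non-decreasing $w_j$, the nonzero entries lie in diagonal blocks indexed by the common $W$-degree $d$. This gives (2). Rows from different blocks have disjoint column supports. Hence $U=\bigoplus_d U_d$, where $U_d$ is generated by the rows of degree~$d$, inside the free module $\bigoplus_d P_0^{s_d}$. This gives (1).

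\textbf{Part (3).} Fix any $\Gamma\in K^m$ and localize at $\M_\Gamma$. By hypothesis $(P/I)_{\M_\Gamma/I}$ is regular. The ring $P/I$ is regular, and $\Spec(P/I)$ is connected by Theorem~\ref{thm-connected} (working over $\overline K$, using that $K$ is perfect and Proposition~\ref{prop-standard}(3)). Therefore $P/I$ is a domain, and every localization at a maximal ideal has dimension $\dim(P/I)$ (affine domain).

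Proposition~\ref{prop-cotancomp}(4), applied at $\M_\Gamma$, gives $\dim_K\Lin_{\M_\Gamma}(I)=n-\dim(P/I)=k$. By Lemma~\ref{lem-Lin=Lin}(1) together with Proposition~\ref{prop-cotancomp}(2), $\Lin_{\M_\Gamma}(I)$ is spanned by the linear forms with coefficient rows $\cvec_{P_0,X}(g_\ell)(\Gamma)$. Hence the evaluated matrix satisfies
\[
\rk\bigl(\cmat_{P_0,X}(G)(\Gamma)\bigr)=k\quad\text{for every }\Gamma\in K^m.
\]
In fact one should also check, via Lemma~\ref{lem-Lin=Lin}, that only the $x$-linear parts contribute at $\M_\Gamma$. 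The $a$-directions contribute nothing, because $I\subseteq\langle X\rangle$ and every element of $I$ is $W$-homogeneous of positive degree.

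Now pass to ranks over $L$. Every $(k+1)$-minor vanishes at all points of $K^m$, which I take to be infinite, e.g.\ after base change to $\overline K$. So these minors vanish identically, while some $k$-minor is nonzero. Thus $\rk\,\cmat_{P_0,X}(G)=k$, which is (3).

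\textbf{Part (4).} This is the main point. By the computation in part (3), for every $\Gamma\in\overline K^m$ some $k$-minor does not vanish at $\Gamma$. So $J\otimes\overline K$ has no zeros in $\overline K^{\,m}$. The Nullstellensatz then gives $J\overline K[a]=\langle1\rangle$, and by faithful flatness of $K\to\overline K$ also $J=\langle1\rangle$.

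The delicate steps are the following. One must justify the regularity of $(P/I)\otimes_K\overline K$ at $\overline K$-points, which is where perfectness and Proposition~\ref{prop-standard}(3) enter. One must also justify the equality of local dimension with $\dim(P/I)$ at each $\M_\Gamma$. For the latter I would use that $P/I$ is a domain, together with the fact that $\M_\Gamma$ is a maximal ideal of the positively graded $P_0$-algebra lying over $\m_\Gamma$.
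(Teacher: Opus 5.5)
Your proposal is correct and follows essentially the paper's argument: $W$-homogeneity gives (1)--(2); Theorem~\ref{thm-connected} with Proposition~\ref{prop-standard}(3) makes $(P/I)\otimes_K\overline{K}$ a regular affine domain of dimension $\dim(P/I)$, so the evaluated matrix has rank $k$ at every $\Gamma\in\overline{K}^m$, which gives (3), and the Nullstellensatz gives (4). The only differences are cosmetic: where the paper cites the Jacobian criterion you compute the pointwise rank with Proposition~\ref{prop-cotancomp} and Lemma~\ref{lem-Lin=Lin}, which is the same computation, and you usefully make explicit that the $a$-columns vanish because $I\subseteq\langle X\rangle$. Your preliminary pass over $K$-points is superfluous once you work over $\overline{K}$.
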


\begin{proof}
To prove (1), it suffices to note that the module $\langle \Lin_{P_0, X}(G)\rangle_{P_0}$ is the direct sum of the 
modules generated by $P_0$-linear forms of elements of $G$ with the same $W\!$-degree. The reason is that 
 their support contains only indeterminates of the same $W\!$-degree.
 
Since claim (2) follows directly from (1) and the ordering of the indeterminates $X$, let us prove (3).
Let $\overline{K}$ be the algebraic closure of~$K$, let $\overline{P}_0 = \overline{K}[a_1, \dots, a_m]$ and
$\overline{P} = \overline{P}_0[X]$ where $X = (x_1, \dots, x_s)$.
We know from Theorem~\ref{thm-connected} that
$\Spec(P/I \otimes_K \overline{K})$ is connected, hence Proposition~\ref{prop-standard}(3) implies that 
$\overline{P} / I\, \overline{P}$ is a regular integral domain, and  that 
$\dim(P/I) = \dim(\overline{P}/I\, \overline{P})$. Consequently, 
 for every $\Gamma = (c_1,\dots, c_m) \in \overline{K}^m$ and  the corresponding 
 maximal ideal $\M_\Gamma =  \langle a_1-c_1,\dots,a_m-c_m,x_1,\dots,x_s\rangle$ in~$\overline{P}$, 
 we have
\begin{equation*}
k= n - \dim(P/I) = n - \dim(\overline{P}/I\, \overline{P}) = n -\dim ((\overline{P}/I\, \overline{P})_{\M_\Gamma/I\overline{P}}).
\end{equation*}
The local ring $(\overline{P}/I\,\overline{P})_{\M_\Gamma / I\,\overline{P}}$ is a regular 
integral domain, hence the Jacobian criterion (see~\cite{Kun}, VI.1.5) implies that 
the rank of the Jacobian matrix of $G$ at~$(c_1, \dots, c_m, 0, \dots, 0)$  
is $k$ for every~$\Gamma$.
Moreover, we note that the Jacobian matrix of $G$ evaluated at $(c_1, \dots, c_m, 0, \dots, 0)$ 
is equal to the matrix
$\cmat_{P_0,X}(G)_{\Gamma}$, i.e., the matrix $\cmat_{P_0,X}(G)$ evaluated at $\Gamma$,
because the non-linear terms in $X$ vanish under the partial derivatives evaluated at $X=0$. 
Hence all $(k+1)$-minors of $\cmat_{P_0,X}(G)$ vanish identically on $\overline{K}^m$, while at least one 
$k$-minor is non-zero at each point. Therefore, $\rk(\cmat_{P_0,X}(G))=k$, which proves (3).

Consequently, $J$ has no zero in~$\overline K^m$, and by Hilbert's Nullstellensatz,
the ideal~$J$ is the unit ideal in $\overline{P}_0$. Since $J$ is generated by polynomials 
already contained in $P_0$, it must be the unit ideal in $P_0$ as well, which proves (4).
\end{proof}

Finally, we arrive at  the main theorem of this section.

\begin{theorem}{\bf (Freeness of Regular Non-Negatively Graded Algebras)}\label{thm-Free}

Let $K$ be a perfect field, let $P_0 = K[a_1, \dots, a_m]$, and let $P = P_0[X]$ 
with $X = (x_1, \dots, x_s)$ and $n = m + s$. 
Let $I$ be an ideal of $P$ such that $P/I$ is a positive $P_0$-algebra, and let $k = n - \dim(P/I)$.
If $P/I$ is a regular ring, then the following claims hold.
\begin{myenumerate}
\item The ideal~$I$ is generated by a $W$-homogeneous regular sequence
$F = (f_1, \dots, f_k)$.

\item The matrix $\cmat_{P_0,X}(F)$ is unimodular.

\item The ring $P/I$ is isomorphic to a polynomial ring in $\dim(P/I)$ indeterminates 
and thus it is a free $K$-algebra.

\end{myenumerate}
\end{theorem}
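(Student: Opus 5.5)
We would start from Lemma~\ref{lem-unitJ} applied to a $W$-homogeneous system of generators $G=(g_1,\dots,g_r)$ of $I$, ordered by non-decreasing $W$-degree. This lemma gives three facts: $\cmat_{P_0,X}(G)$ is block diagonal by $W$-degree, it has rank $k$, and its $k$-minors generate $\langle 1\rangle$.

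The first goal is to extract $k$ generators whose coefficient matrix is unimodular. The row module $U\subseteq P_0^s$ is the image of a matrix whose $k$-minors generate the unit ideal and whose rank is $k$. Such an image is a projective $P_0$-module of rank $k$; by the Quillen--Suslin theorem it is free, and it is a direct summand of $P_0^s$. Block diagonality lets us do this degree by degree. In each $W$-degree $d$ the block $U_d$ is free of some rank $k_d$, with $\sum_d k_d=k$. We choose a basis of $U_d$ and express it as $P_0$-combinations of the rows of degree $d$. The same combinations of the $g_i$ give $W$-homogeneous elements $f_1,\dots,f_k\in I$ with $\langle \Lin_{P_0}(f_i)\rangle_{P_0}=U$. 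Since $U$ is a free direct summand, $\cmat_{P_0,X}(F)$ can be completed to an invertible matrix, so it is unimodular. This proves (2), once (1) is shown for this $F$.

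Next we apply Theorem~\ref{thm-GoodIso} to $F$. It gives a $P_0$-graded automorphism $\phi$ of $P$ (after reordering $X$) such that $\phi(F)$ is $X_k$-separating. The reduced Gröbner basis for an elimination ordering is then coherently $X_k$-separating. Proposition~\ref{prop-isoZseparating} yields
\[
P/I\cong P_0[x_{k+1},\dots,x_s]/J,\qquad J=\phi(I)\cap P_0[x_{k+1},\dots,x_s].
\]
The ring $P_0[x_{k+1},\dots,x_s]$ has exactly $n-k=\dim(P/I)$ indeterminates. By Theorem~\ref{thm-connected} and Proposition~\ref{prop-standard}(2), $P/I$ is an integral domain of dimension $n-k$. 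A prime ideal of a polynomial ring in $n-k$ variables with quotient of dimension $n-k$ must be zero, so $J=0$. This gives (3).

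For (1), consider the ideal $\langle F\rangle\subseteq I$. After applying $\phi$, the separating tuple $\phi(F)$ shows that $P/\langle F\rangle$ is isomorphic to a polynomial ring in $n-k$ variables, in the same way as above. The surjection $P/\langle F\rangle\to P/I$ is therefore a surjection between domains of equal dimension $n-k$ with the first one a polynomial ring. Its kernel is a prime ideal of height $0$, hence zero, so $I=\langle F\rangle$. Moreover, $k$ elements generating an ideal of height $k$ in the Cohen--Macaulay ring $P$ form a regular sequence; they are homogeneous, so this holds in any order.

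The main obstacle is the step producing $F$ with a unimodular coefficient matrix from $G$. Unit ideal of minors alone only makes $U$ projective, so Quillen--Suslin is needed to make it free. One must also check that block diagonality keeps the chosen generators $W$-homogeneous, and that a basis of a direct summand really is unimodular in the sense of Definition~\ref{def-UMP}.
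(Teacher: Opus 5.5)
Your proof is correct. Its first half coincides with the paper's: Lemma~\ref{lem-unitJ}, Quillen--Suslin, and choosing a free basis of the row module degree by degree, so that the resulting $f_1,\dots,f_k$ are $W$-homogeneous and $\cmat_{P_0,X}(F)$ is unimodular.

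The routes diverge in how $I=\langle F\rangle$ is obtained. The paper proceeds in three steps:
\begin{itemize}
\item it uses the Jacobian criterion to see that $P/\langle F\rangle$ is a regular complete intersection of dimension $n-k$;
\item it argues that the local rings of $P/\langle F\rangle$ are domains and therefore coincide with the corresponding local rings of $P/I$;
\item it concludes by the local--global principle.
\end{itemize}
Only afterwards does it invoke Theorem~\ref{thm-GoodIso} and Theorem~\ref{thm-checkopt}(2) for~(3).

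You apply Theorem~\ref{thm-GoodIso} to $F$ directly. The reduced Gr\"obner basis of $\langle\phi(F)\rangle$ for an elimination ordering consists of the $k$ polynomials $x_i-h_i$ alone, so $P/\langle F\rangle$ is already a polynomial ring in $n-k$ variables. A dimension count then forces $I=\langle F\rangle$, and (3) follows at once. Consequently, your separate argument via $J=\phi(I)\cap P_0[x_{k+1},\dots,x_s]$ and the domain property of $P/I$ is redundant.

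Your route is shorter and avoids both the Jacobian criterion and the localization step. It also shows that after Lemma~\ref{lem-unitJ}, regularity of $P/I$ plays no further role: only $\langle F\rangle\subseteq I$ and $\dim(P/I)=n-k$ are used. The paper's route, in exchange, identifies $P/\langle F\rangle$ as a regular complete intersection intrinsically, independently of the UMP automorphism.

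One sentence needs tightening. The claim that ``$k$ elements generating an ideal of height $k$ in a Cohen--Macaulay ring form a regular sequence'' fails globally, even in a fixed order. For example, $x(y-1),\,z(y-1),\,y$ generate $\langle x,y,z\rangle$ in $K[x,y,z]$, but $z(y-1)$ is a zero divisor modulo $x(y-1)$. So you must genuinely use that the $f_i$ are homogeneous of positive $W$-degree.

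There is a cleaner way within your own framework. Every subtuple $(f_1,\dots,f_i)$ still has a unimodular coefficient matrix, since Laplace expansion writes each $k$-minor in terms of the $i$-minors of those rows. Hence $P/\langle f_1,\dots,f_i\rangle$ is a polynomial ring in $n-i$ variables, and therefore a domain. In it, $f_{i+1}$ is nonzero, because the next quotient has smaller dimension. This argument works for any ordering of the $f_i$.
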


\begin{proof}
Let $G=\{g_1,\dots, g_r\}$ be a $W$-homogeneous system of generators of $I$, 
ordered in non-decreasing $W$-degree, and let $U$ be the $P_0$-submodule of $P_0^s$ 
generated by the vectors $\cvec_{P_0, X}(g_i)$ for $i = 1,\dots, r$, i.e., the rows of $\cmat_{P_0,X}(G)$.
Using~\cite[Proposition~20.8]{Ei} and Lemma~\ref{lem-unitJ}, we conclude that $U$ is a locally free $P_0$-module of rank $k$. 
Then the Quillen-Suslin theorem implies that $U$ is a free $P_0$-module of rank $k$. 

Using claim (1) of Lemma~\ref{lem-unitJ}, we can choose a free $P_0$-basis $\{v_1,\dots, v_k\}$ of $U$ where 
$v_i = \sum_{j=1}^r u_{ij}\cdot \cvec_{P_0, X}(g_j)$ with $u_{ij}\in P_0$ for $i=1,\dots,k$, and such that for each $v_i$, 
the corresponding sum involves only $\cvec_{P_0, X}(g_j)$ with $g_j$ of the same $W$-degree. 
Thus, the polynomials $f_i= \sum_{j=1}^r u_{ij}\, g_j \in I$ are $W$-homogeneous and, 
letting $F = (f_1, \dots, f_k)$,  the $W$-homogeneous ideal $\langle F \rangle$ is 
contained in $I$ by construction.

Moreover, we have $\cvec_{P_0, X}(f_i) = v_i$ for $i = 1,\dots, k$, hence these vectors 
form a $P_0$-basis of the row space of $\cmat_{P_0, X}(F)$.
Consequently, $\cmat_{P_0, X}(F)$ is unimodular, and the Jacobian criterion implies 
that $P/\langle F \rangle$ is a regular $W$-homogeneous complete intersection 
of dimension $n-k = \dim(P/I)$.
Since the local rings of this complete intersection are integral domains (cf.~\cite[V.5.15.a]{Kun}),
the corresponding local rings of~$P/I$, which are residue class rings of the same
dimension, must coincide with them. By the local-global principle (cf.~\cite[IV.1.4)]{Kun}, 
it follows that $I = \langle f_1,\dots, f_k\rangle$. This completes the proof of claims (1) and (2).

To show (3), we use the unimodularity of $\cmat_{P_0, X}(F)$ and Theorem~\ref{thm-GoodIso} applied to $F$. 
In particular, we deduce that there exists a $P_0$-graded automorphism $\phi$ of $P$ such that, if~$\sigma$ 
is an elimination term ordering for $X_k = (x_1, \dots, x_k)$, then the reduced Gr\"obner basis of $\langle \phi(F) \rangle$  
is coherently $X_k$-separating. From Theorem~\ref{thm-GoodIso}(1) we obtain an
isomorphism $P/I \cong P/\langle \phi(I) \rangle$, and hence Theorem~\ref{thm-checkopt}\,(2)
yields an isomorphism 
\begin{equation*}
P/I \cong K[a_1, \dots, a_m, x_{k+1}, \dots, x_s],
\end{equation*} 
where the number of these remaining indeterminates is $m + s - k = n - k = \dim(P/I)$, 
and the proof is complete.
\end{proof}

We conclude this subsection by noting that 
the existence of a non-negative grading for which~$I$
is a $W$-homogeneous ideal is a crucial hypothesis in the theorem. 
The next example demonstrates this strikingly.

\begin{example}{\bf (The Circle)}\label{ex-circle}

Let $P = \QQ[x,y]$, and let $I = \langle x^2+y^2-2x \rangle$.
The ring $P/I$ is regular and a (global) complete intersection.
However, the ideal~$I$ is not homogeneous with respect to any non-trivial non-negative $\ZZ$-grading.
\begin{myenumerate}
\item There is no isomorphism between
$P/I$ and a univariate polynomial ring over~$\QQ$. To verify this, it suffices to note
that  $x^2+y^2-2x= x(x-2) +y^2$, hence we have  $ \bar{x} ( 2 -\bar{x}) = \bar{y}^2$  in $P/I$.
On the other hand,   $\bar{x}$, $2-\bar{x}$, and $\bar{y}$ are irreducible, but not prime elements in~$P/I$. 
Therefore, $P/I$ is not a factorial domain.

\item  It is also interesting to note that, if we enlarge the base field to $K=\QQ(i)$,
there is still no isomorphism between $P/I$ and a univariate polynomial ring $K[t]$, but for a different reason.
In this case the $\QQ$-algebra isomorphism $\alpha \colon K[x,y] \longrightarrow K[u,v]$ defined by 
$\alpha(x) = {\tfrac{u+v}{2}} +1$, and $\alpha(y) = \tfrac{u-v}{2i}$ identifies~$I$ with
$\langle uv-1\rangle$. It induces an isomorphism $K[x,y]/I \cong K[u,u^{-1}]$, which is a factorial
domain, but $K[u,u^{-1}]$ is not isomorphic to a univariate polynomial ring over~$K$,
because it has too many invertible elements.
\end{myenumerate}
\end{example}

\newpage
\section{Special Border Basis Schemes}
\label{sec-SpecialBBS}

In this section we analyze several types of border basis schemes.

\medskip
\subsection{Homogeneous and MaxDeg Border Basis Schemes}
\label{subsec-HomogeneousBBS}

If we restrict our attention to zero-dimensional ideals which have 
an $\mathcal{O}$-border basis and are homogeneous
with respect to the standard grading, we obtain the following
subscheme of the border basis scheme.

\begin{definition}\label{def-homgBBS}
Let $\OO = \{t_1,\dots,  t_\mu\}$ be an order ideal with border 
$\partial\OO = \{b_1,\dots,  b_\nu\}$, let $I(\BO) \subseteq K[C]$ be the ideal defining the $\OO$-border basis scheme $\BO$, 
and let $K[C_0]$ be the K-subalgebra of K[C] generated by 
the indeterminates of total arrow degree 0.

\begin{myenumerate}
\item For  $j=1,\dots,\nu$, let 
$h_j = b_j - \sum_{\substack{i=1,\dots,\mu \\ \deg(t_i)=\deg(b_j)}} c_{ij}t_i \in K[C,X]$.

The set 
$\mathsf{H}=\{h_1,\dots, h_\nu\}$ 
is called the \textbf{generic homogeneous 
$\mathcal{O}$-border prebasis}.

\item For $k=1,\dots,n$, let $\mathcal{H}^{\mathstrut}_k \in\Mat_{\mu}(K[c_{ij}])$ be the
$k^{\rm th}$ formal multiplication matrix associated to~$\mathsf{H}$ i.e.\ 
 the matrix obtained from the generic multiplication matrix $\mathcal{M}_k$ by 
setting $c_{ij} \mapsto 0$ whenever $\deg(t_i)\ne\deg(b_j)$.
Then,  $\mathcal{H}_k$ is called the \textbf{$k^{\rm th}$ generic 
homogeneous multiplication matrix}
with respect to~$\mathcal{O}$.

\item Let $I(\BOhom)$ be the ideal generated by the entries of
the matrices $\mathcal{H}_k \mathcal{H}_\ell -\mathcal{H}_\ell \mathcal{H}_k$ with
$1\le k<\ell\le n$.
The affine scheme $\BOhom \subseteq \mathbb{A}^{\mu\nu}_K$,
defined by $I(\BOhom)$ is called the \textbf{homogeneous $\OO$-border basis scheme}.
Its coordinate ring $K[C_0]/I(\BOhom)$ is denoted by $B_\OO\hom$.
\end{myenumerate}
\end{definition}

\begin{myremark}\label{rem-deg0}
From the definitions we deduce the following facts.
\begin{myenumerate}
\item The ideal $I(\BOhom)$ is contained in $K[C_0]$.

\item The scheme $\BOhom$  coincides with 
$\mathbb{B}_{\mathcal{O}} \cap  
\mathcal{Z}(\langle c_{ij}\mid \deg(t_i)\ne\deg(b_j)\rangle)$. Consequently,
there is a canonical embedding $\BOhom \longmono \BO$.
\end{myenumerate}
\end{myremark}

\bigskip
The following definition introduces a special kind of border basis schemes.

\begin{definition}\label{def-maxdegBBS}
Let $\OO = \{t_1,\dots,  t_\mu\}$ be an order ideal with border $\partial\OO = 
\{b_1,\dots,  b_\nu\}$, and let $I(\BO) \subseteq K[C]$ be the ideal defining the $\OO$-border
basis scheme $\BO$.  We say that~$\OO$ has a \textbf{MaxDeg border} (or simply that $\OO$ is \textbf{MaxDeg}) and  $\BO$ is called a \textbf{MaxDeg border basis scheme},
if the following equivalent conditions are satisfied.

\begin{myenumerate}
\item We have $\deg(b_j) \ge \deg(t_i)$ for $i=1, \dots, \mu$ and $j=1, \dots, \nu$.

\item The total arrow grading on~$K[C]$ is non-negative.
\end{myenumerate}
\end{definition}

One of the main features of MaxDeg border basis schemes is the following.

\begin{theorem}{\bf (Maxdeg and Generic Homogeneous Multiplication Matrices)}
\label{thm-homcommute}

Let $\OO$ be MaxDeg. 
Then, the generic homogeneous multiplication matrices commute.
\end{theorem}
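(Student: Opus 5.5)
The plan is to compare the columns of $\mathcal{H}_k\mathcal{H}_\ell$ and $\mathcal{H}_\ell\mathcal{H}_k$ directly, using the grading of $\langle\OO\rangle_K$ by the standard degree of the terms.

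The first step is to note that each $\mathcal{H}_k$ is homogeneous of degree one. By Definition~\ref{def-homgBBS}, for $t_i\in\OO$ of degree $d$ there are two cases. If $x_kt_i\in\OO$, then $\mathcal{H}_k$ sends $t_i$ to $x_kt_i$. If $x_kt_i=b_j\in\partial\OO$, then $\mathcal{H}_k$ sends $t_i$ to $\sum c_{i'j}t_{i'}$, where the sum runs only over $t_{i'}\in\OO$ with $\deg(t_{i'})=d+1$. In particular this image is $0$ if $\OO$ contains no term of degree $d+1$. Consequently, both $\mathcal{H}_k\mathcal{H}_\ell t_i$ and $\mathcal{H}_\ell\mathcal{H}_k t_i$ lie in the degree-$(d+2)$ part of $\langle\OO\rangle_{K[C_0]}$.

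The second step is to record what MaxDeg gives. Let $\delta$ be the minimal degree of a border term. By Lemma~\ref{lem-BorderProps}\,(3), every term outside $\OO$ is divisible by a border term, hence has degree $\ge\delta$. Therefore every term of degree $<\delta$ lies in $\OO$. By Definition~\ref{def-maxdegBBS}\,(1), every term of $\OO$ has degree $\le\delta$.

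Now fix $t_i$ of degree $d$ and indices $k\ne\ell$, and split into three cases.
\begin{enumerate}
\item[(a)] If $x_kx_\ell t_i\in\OO$, then $x_kt_i$ and $x_\ell t_i$ are also in $\OO$, since $\OO$ is an order ideal. Both composites send $t_i$ to the same term $x_kx_\ell t_i$.
\item[(b)] If $x_kx_\ell t_i\notin\OO$ and $\OO$ has no term of degree $d+2$, both composites are $0$ by the first step.
\item[(c)] If $x_kx_\ell t_i\notin\OO$ and $\OO$ does contain a term of degree $d+2$, then MaxDeg gives $d+2\le\delta$. Since $x_kx_\ell t_i$ is a term of degree $d+2$ outside $\OO$, we also get $\delta\le d+2$, so $\delta=d+2$. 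Hence all terms of degree $d+1<\delta$ lie in $\OO$, in particular $x_kt_i$ and $x_\ell t_i$. Then $x_kx_\ell t_i=b_j$ is a border term, and
$$\mathcal{H}_k\mathcal{H}_\ell t_i=\mathcal{H}_k(x_\ell t_i)=\sum_{\deg t_{i'}=d+2} c_{i'j}t_{i'}=\mathcal{H}_\ell(x_kt_i)=\mathcal{H}_\ell\mathcal{H}_k t_i.$$
\end{enumerate}
In every case the $i$-th columns of $\mathcal{H}_k\mathcal{H}_\ell$ and $\mathcal{H}_\ell\mathcal{H}_k$ coincide as elements of $K[C_0]^\mu$. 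So the commutators vanish identically, i.e.\ $I(\BOhom)=\langle0\rangle$.

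The only delicate point is case~(c), and specifically excluding the mixed situation in which one factor of $x_kx_\ell t_i$ is already a border term while $\OO$ still has terms of degree $d+2$. Without MaxDeg the two composites would then carry genuinely different coefficient data, and this is exactly where the squeeze $d+2\le\delta\le d+2$ is needed.
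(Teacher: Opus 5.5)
Your proof is correct and follows essentially the same route as the paper. Both arguments use that each $\mathcal{H}_k$ raises the standard degree by exactly one (the paper obtains this entry by entry from the $\overline{W}$-homogeneity of $\langle\mathsf{G}\rangle$), and that MaxDeg forces the intermediate terms $x_\ell t_\beta$ and $x_k t_\beta$ into $\OO$ whenever a nonzero contribution of degree $\deg(t_\beta)+2$ is possible, so both products reduce to the same column determined by $x_kx_\ell t_\beta$. Your column-wise case split just makes explicit the symmetric step the paper covers with ``the conditions coincide''; note also that case (c) only needs the bound $d+2\le\delta$, not the full squeeze.
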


\begin{proof}
For $i=1,\dots,n$, let $\mathcal{H}_i$ be the
generic homogeneous multiplication matrix with respect to~$\mathcal{O}$.
Let $k, \ell \in \{1, \dots, n\}$, and write $\mathcal{H}_k=(r_{ij})$ and $\mathcal{H}_\ell=(s_{ij})$.
We examine the entry $\sum_{\gamma=1}^\mu r_{\alpha\gamma}s_{\gamma\beta}$ 
of the product $\mathcal{H}_k \mathcal{H}_\ell$ at position $(\alpha,\beta)$.
To have $r_{\alpha\gamma} s_{\gamma\beta} \ne 0$ we need:

\begin{myenumerate}
\item[(a)] $r_{\alpha \gamma}\ne 0$, hence the term~$t_\alpha$ is contained in the support of
the representation of $x_k\,t_\gamma$ with respect to the basis~$\mathcal{O}$.

\item[(b)] $s_{\gamma\beta} \ne 0$, hence the term~$t_\gamma$ is contained in the support of
the representation of $x_\ell\,t_\beta$ with respect to the basis~$\mathcal{O}$.
\end{myenumerate}

By Proposition~\ref{prop-arrowhomog}\,(2), the ideal $\langle \mathsf{G} \rangle =
\langle g_1,\dots, g_\nu \rangle$ is $\overline{W}$-homogeneous, hence 
case~(a) implies $\deg(t_\alpha)=\deg(x_k  t_\gamma) > \deg(t_\gamma)$,
and case~(b) implies $\deg(t_\gamma)=\deg(x_\ell t_\beta) > \deg(t_\beta)$.
Thus, we have $\deg(t_\alpha)=\deg(x_k  x_\ell t_\beta) > \deg(x_\ell t_\beta)$.

The assumption that $\mathcal{O}$ is MaxDeg implies 
$x_\ell t_\beta \in \mathcal{O}$. From case (b), we deduce that the only non-zero term in the sum occurs when $t_\gamma = x_\ell t_\beta$. Since $x_\ell t_\beta \in \OO$, there is a unique index, say $\rho$, such that $t_{\rho} = x_\ell t_\beta$. This forces $s_{\rho \beta} = 1$ and $s_{\gamma \beta} = 0$ for all $\gamma \ne \rho$. 
Consequently, the sum collapses to a single term:
\begin{equation*}
\sum_{\gamma=1}^\mu r_{\alpha\gamma}s_{\gamma\beta} = r_{\alpha \rho}
\end{equation*}

If $r_{\alpha \rho} \ne 0$, there are two possibilities.
Either we have $t_\alpha = x_k t_{\rho} = x_k x_\ell t_\beta$ and thus $r_{\alpha \rho} = 1$,
or there exists an index $j$ such that $t_\alpha \in \Supp(b_j - g_j)$ and 
$b_j = x_k t_{\rho} =x_k x_\ell t_\beta$. In this case, $r_{\alpha \rho} = c_{\alpha j}$.

Using the same arguments to examine the entry 
$\sum_{\gamma=1}^\mu s_{\alpha\gamma} r_{\gamma\beta}$ 
of the product $\mathcal{H}_\ell \mathcal{H}_k$ at position $(\alpha,\beta)$,
we find that it evaluates to $s_{\alpha \delta}$, where $\delta$ is the unique index such that $t_{\delta} = x_k t_\beta$.
Again, if $s_{\alpha \delta}\ne 0$, either we have 
$t_\alpha = x_\ell t_{\delta} = x_\ell x_k t_\beta$ and thus $s_{\alpha \delta} = 1$,
or there exists an index $j$ such that $t_\alpha \in \Supp(b_j - g_j)$ and 
$b_j=x_\ell t_{\delta} =x_\ell x_k t_\beta$. In this case, $s_{\alpha \delta} = c_{\alpha j}$.

When we examine $\mathcal{H}_\ell \mathcal{H}_k$, 
the conditions on $t_\alpha$ and $b_j$ are that they must be equal to $x_\ell x_k t_\beta$. 
The equality $x_k x_\ell = x_\ell x_k$ 
shows that the conditions coincide; hence, we obtain the desired equality  
$\mathcal{H}_k \mathcal{H}_\ell =\mathcal{H}_\ell \mathcal{H}_k$.
\end{proof}

\begin{proposition}\label{prop-homcommute}
Let $\OO$ have  a MaxDeg border.

\begin{myenumerate}
\item We have $I(\BO) \cap K[\Cnull] = \langle 0 \rangle$.

\item The ring $K[\Cnull]$ is the homogeneous component of degree
zero of the non-negatively graded $K$-algebra $B_\OO = K[C]/I(\BO)$.

 \item Let $d = \max\{\deg(t) \mid t \in \OO\}$, and let 
  \begin{equation*}
   r = \# \{t \in \OO \mid \deg(t) = d\}, \quad s = \# \{b \in \partial\OO \mid \deg(b) = d\}. 
   \end{equation*}
    Then $K[C_0]$ is a polynomial ring in $r{\cdot}s$ indeterminates, and the 
    homogeneous border basis scheme $\BOhom$ is an affine space of dimension $r{\cdot}s$.
    
    \item The ring $B_\OO$ is a positive $K[C_0]$-algebra.

\item The scheme $\BO$ is connected.
\end{myenumerate}
\end{proposition}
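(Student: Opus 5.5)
Under the MaxDeg hypothesis every indeterminate $c_{ij}$ has total arrow degree $\deg(b_j)-\deg(t_i)\ge 0$, and $C_0$ consists exactly of those $c_{ij}$ with $\deg(t_i)=\deg(b_j)$. The plan is to first identify $C_0$ combinatorially, which gives~(3), and then deduce (1) from Theorem~\ref{thm-homcommute} by a specialization argument. After that, (2) and (4) follow formally from Proposition~\ref{prop-arrowhomog}\,(1), and (5) follows from Theorem~\ref{thm-connected}.

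\textbf{Claim (3).} A border term $b_j=x_kt$ with $t\in\OO$ has $\deg(b_j)\ge 1$. The MaxDeg condition gives $\deg(b_j)\ge d$ for all $j$, and $\deg(t_i)\le d$ for all $i$. Hence $\deg(t_i)=\deg(b_j)$ forces $\deg(t_i)=\deg(b_j)=d$. So $C_0=\{c_{ij}\mid \deg(t_i)=d=\deg(b_j)\}$, which has $r\cdot s$ elements, and $K[C_0]$ is a polynomial ring in $r s$ indeterminates. By Theorem~\ref{thm-homcommute} the matrices $\mathcal{H}_k$ commute, so $I(\BOhom)=\langle 0\rangle$ and $\BOhom\cong\mathbb{A}^{rs}_K$.

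\textbf{Claim (1).} Let $\pi\colon K[C]\to K[C_0]$ send $c_{ij}\mapsto 0$ for $c_{ij}\notin C_0$ and fix $C_0$. Then $\pi(\mathcal{M}_k)=\mathcal{H}_k$. Since $\pi$ is a ring homomorphism, the entries of $\mathcal{M}_k\mathcal{M}_\ell-\mathcal{M}_\ell\mathcal{M}_k$ map to the entries of $\mathcal{H}_k\mathcal{H}_\ell-\mathcal{H}_\ell\mathcal{H}_k$, which vanish. Hence $\pi(I(\BO))=\langle 0\rangle$. Because $\pi$ restricts to the identity on $K[C_0]$, any $f\in I(\BO)\cap K[C_0]$ satisfies $f=\pi(f)=0$.

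\textbf{Claims (2) and (4).} By Proposition~\ref{prop-arrowhomog}\,(1), $I(\BO)$ is $W$-homogeneous, so $B_\OO$ is non-negatively graded by $\deg_W$. Its degree-zero component is $K[C_0]/(I(\BO)\cap K[C_0])=K[C_0]$ by (1), which proves (2). For (4), set $P_0=K[C_0]$ and $X=C\setminus C_0$; every $x\in X$ has strictly positive $W$-degree. Together with $W$-homogeneity of $I(\BO)$ and $I(\BO)\cap P_0=\langle 0\rangle$, this is exactly Definition~\ref{def-posalg}.

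\textbf{Claim (5).} By (4), Theorem~\ref{thm-connected}\,(2) applies and shows that $\Spec(B_\OO)=\BO$ is connected.

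The only step needing real care is (1). Its proof rests on $\pi$ restricting to the identity on $K[C_0]$, together with the commutativity supplied by Theorem~\ref{thm-homcommute}. I therefore expect no serious obstacle beyond checking that $\pi(\mathcal{M}_k)$ coincides exactly with $\mathcal{H}_k$ as defined in Definition~\ref{def-homgBBS}.
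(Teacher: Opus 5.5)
Your proposal is correct and follows the paper's proof step for step: (3) rests on the observation that $\deg(t_i)=\deg(b_j)$ forces both degrees to equal $d$, (1) rests on Theorem~\ref{thm-homcommute}, and (2), (4), (5) are formal consequences of Proposition~\ref{prop-arrowhomog}, Definition~\ref{def-posalg} and Theorem~\ref{thm-connected}. The one difference is in (1): the paper asserts $I(\BO)\cap K[\Cnull]=I(\BOhom)$ ``by definition,'' whereas your specialization map $\pi$, which kills the indeterminates of positive total arrow degree, proves explicitly the inclusion $I(\BO)\cap K[\Cnull]\subseteq\pi(I(\BO))=I(\BOhom)$, and this inclusion is all that is needed.
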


\begin{proof} 
To prove (1), we first observe that 
$ I(\BO)_0 = I(\BO) \cap K[C_0] = I(\BOhom)$ by definition.
The ideal $I(\BOhom)$ is generated by the entries of the commutator 
matrices  $\mathcal{H}_k \mathcal{H}_\ell -\mathcal{H}_\ell \mathcal{H}_k$ with
$1\le k<\ell\le n$. Since $\OO$ is MaxDeg, from Theorem~\ref{thm-homcommute} we know that 
these matrices are zero-matrices, implying $I(\BOhom) = \langle 0 \rangle$. 
Consequently, all generators of~$I(\BO)$ have positive arrow degrees.

The second claim is an immediate consequence of (1).

To prove (3), notice that  we have $\deg(b_j) \ge \deg(t_i)$
for all $i,j$.  Under the MaxDeg assumption, 
the only way to get $\deg_W(c_{ij})= 0$ is from
$\deg(t_i)=\deg(b_j)=d$, and the claim follows.

Claim (4) follows from the definition  of MaxDeg and claim (1).

Claim (5) follows from (4) and Theorem~\ref{thm-connected}.
\end{proof}

The following example shows why the assumption in Proposition~\ref{prop-homcommute} 
that $\OO$ is MaxDeg is essential.

\begin{example}\label{ex-madeg-essence}
For the actual computation see  \cocoa-Example~\ref{coex-maxdeg-essential}. 
Let $P = \QQ[x,y]$ and let $\OO=\{1,\, y,\, x,\, y^2,\, xy,\, y^3 \}$. Its border is
$\partial\OO = \{x^2,\,  xy^2,\,  x^2y,\,  y^4, \, xy^3\}$.  We have  $B_\OO = K[C]/I(B_\OO)$ 
where $K[C]$ is a polynomial ring with $6 \times 5=30$ indeterminates all of them of 
non-negative total arrow degree, \textit{except} for~$c_{61}$ whose degree is 
\begin{equation*}
\deg_W(c_{61})= \deg(b_1) - \deg(t_6) = \deg(x^2) -\deg(y^3) =-1
\end{equation*}
We have $B_\OO\hom= \QQ[c_{41}, c_{51}, c_{62}, c_{63}]/\langle f \rangle$ where
$f = c_{41} -c_{63}  +c_{51}c_{62}$. On the other hand, we have  
$I(\BO) \cap K[\Cnull]= I(\BO)_0=\langle c_{41} -c_{63}  +c_{61}c_{64} +c_{51}c_{62} \rangle $.
Notice that $\deg_W(c_{61}) =-1$ and $\deg_W(c_{64}) =1$ so that $\deg_W(c_{61}c_{64})= 0$.

Consequently, this example shows that, without the assumption of MaxDeg, we may have 
$I(\BO) \cap K[\Cnull]  \ne \{0\}$ and $B_\OO\hom \ne (B_\OO)_0$.
\end{example}

We recall from Proposition~\ref{prop-irreducible}
that $\BO$ is irreducible for any $n$ and $\mu \le 7$.
Long ago, in the paper~\cite{Iar}, Iarrobino  proved  that Hilbert schemes need not be irreducible in general.
This result was further explored in~\cite[Example~5.6]{KR3}, where Iarrobino's example was explained 
using the framework of homogeneous border basis schemes.

Here is another example.

\begin{example}\label{ex-reducible}
Let $P=\QQ[x,y,z,w]$ and let $\OO \subset \mathbb{T}^4$ be an order ideal consisting of all terms of 
degree $\le 2$ and $10$ terms of degree $3$. Thus, $\mu = 1 + 4 + 10 + 10 = 25$. 
In this case, we have $d=3$ and $r=s=10$. By Proposition~\ref{prop-homcommute}, the scheme $\BOhom$, whose coordinate ring is $(B_\OO)_0$, is isomorphic to $\mathbb{A}^{100}_\QQ$. 

By Remark~\ref{rem-nmu},  the principal component of $\BO$ also has dimension $n \cdot \mu = 4 \cdot 25 = 100$. However, this component contains points representing non-homogeneous ideals, such as the distractions of the monomial ideal $\langle \partial\OO \rangle$. 

Since $\BOhom$ and the principal component of $\BO$ are distinct irreducible closed subschemes of the same dimension, it follows that $\BO$ must have at least two irreducible components passing through the origin. Consequently, the scheme $\BO$ is reducible, and the origin is a singular point of the border basis scheme.
\end{example}

The following result follows directly from Theorem~\ref{thm-Free}.
It is stated as  a theorem due to its relevance.

\begin{theorem}{\bf (Planar MaxDeg Border Basis Schemes)}\label{thm-bivariateaffinecells}

Let $K$ be a perfect field. Then every planar MaxDeg  border basis  scheme $\BO$
defined over~$K$ is an affine cell of dimension $2\mu$.
\end{theorem}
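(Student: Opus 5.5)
The plan is to verify the hypotheses of Theorem~\ref{thm-Free} for the ring $B_\OO=K[C]/I(\BO)$, viewed as a positive $P_0$-algebra with $P_0=K[\Cnull]$, and then read off the conclusion. Concretely, I would set $P_0=K[\Cnull]$ (trivially graded) and let $X=C\setminus\Cnull$ be the remaining indeterminates, graded by the total arrow degree $\deg_W$. Since $\OO$ is MaxDeg, Definition~\ref{def-maxdegBBS}(2) says the total arrow grading is non-negative, so every indeterminate in $X$ has strictly positive $W$-degree. By Proposition~\ref{prop-arrowhomog}(1), the ideal $I(\BO)$ is $W$-homogeneous. By Proposition~\ref{prop-homcommute}(1), we have $I(\BO)\cap K[\Cnull]=\langle 0\rangle$. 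These three facts say exactly that $B_\OO$ is a positive $K[\Cnull]$-algebra, which is Proposition~\ref{prop-homcommute}(4).

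Next comes regularity. Since $\BO$ is planar, Proposition~\ref{prop-irreducible}(2), which rests on Fogarty's theorem via Proposition~\ref{prop-principal}(1), says that $\BO$ is smooth and irreducible. Smoothness of $\BO$ over $K$ implies that $B_\OO$ is a regular ring, and $K$ perfect is exactly the hypothesis needed in Theorem~\ref{thm-Free}. Theorem~\ref{thm-Free}(3) then gives that $B_\OO$ is isomorphic to a polynomial ring in $\dim(B_\OO)$ indeterminates, so $\BO$ is an affine cell.

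It remains to compute the dimension. Irreducibility means $\BO$ coincides, at least set-theoretically, with its principal component $\mathbb{C}_\OO$, whose dimension is $n\mu=2\mu$ by Remark~\ref{rem-nmu}. Hence $\dim(B_\OO)=2\mu$.

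The main obstacle is justifying the passage from ``smooth'' in Proposition~\ref{prop-irreducible}(2) to ``regular ring'' over an arbitrary perfect field $K$. The survey states its geometric properties over $\overline{K}$ (in characteristic $0$). My first attempt is the following. Smoothness of $\BO\times_K\overline{K}$ implies that $B_\OO\otimes_K\overline{K}$ is regular, and regularity descends along the faithfully flat extension $B_\OO\to B_\OO\otimes_K\overline{K}$. If Fogarty's result is only available in characteristic $0$, I would note this as the hypothesis actually being used. Alternatively, I would argue smoothness directly, by showing that the cotangent space at the origin has dimension $2\mu$ via the exposed-indeterminate count of Subsection~\ref{subsec-PlanarBoxBBS}, and then propagating this to all points using the positive grading, since every closed point specializes to its fiber's origin as in Theorem~\ref{thm-connected}.
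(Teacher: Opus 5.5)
Your proposal is correct and follows the paper's proof. Regularity and irreducibility come from Proposition~\ref{prop-irreducible}\,(2), irreducibility identifies $\BO$ with its principal component of dimension $2\mu$ (Remark~\ref{rem-nmu}), and Theorem~\ref{thm-Free}\,(3) is then applied to the positive $K[\Cnull]$-algebra $B_\OO$ of Proposition~\ref{prop-homcommute}\,(4); you verify this last structure more explicitly than the paper does. Your worry about passing from smooth to regular is unnecessary, since smoothness over a field already gives regularity and your faithfully flat descent argument is also fine, so the fallback via exposed indeterminates (whose count, as stated in the paper, only covers boxes) is not needed.
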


\begin{proof}
By Proposition~\ref{prop-irreducible}\,(2), the scheme  $\BO$ is regular and irreducible.
Hence it coincides with its principal component whose dimension 
is~$2\mu$ (see Remark~\ref{rem-nmu}). The conclusion follows from 
Theorem~\ref{thm-Free}\,(3).
\end{proof}

To fully appreciate the relevance of this result, we discuss an intriguing
example: a special planar border basis scheme which we will investigate in great detail.

\begin{center}
\begin{minipage}[c]{0.37\textwidth}
\begin{definition}\label{def-Lshape}
The order ideal $\OO = \{1,\, y,\, x,\, y^2,\, x^2\}$ in~$\TT^2$
is called the \textbf{L-shape order ideal}.
For the L-shape order ideal~$\OO$, the corresponding border basis scheme~$\BO$
is called the \textbf{L-shape border basis scheme}.
\end{definition}
\end{minipage}
\hspace{1.2cm} 
\begin{minipage}[c]{0.60\textwidth}
\centering
\refstepcounter{figure}\label{fig:L-shape}
\beginpicture
        \setcoordinatesystem units <0.7cm,0.7cm>
        \setplotarea x from 0 to 4, y from 0 to 3.5
        \axis left /
        \axis bottom /
        \arrow <2mm> [.2,.67] from 3.5 0 to 4 0
        \arrow <2mm> [.2,.67] from 0 3 to 0 3.5
        \put {$\scriptstyle x^i$} [lt] <0.5mm,0.8mm> at 4.1 0.1
        \put {$\scriptstyle y^j$} [rb] <1.7mm,0.7mm> at 0 3.6
        \put {$\bullet$} at 0 0
        \put {$\bullet$} at 1 0
        \put {$\bullet$} at 0 1
        \put {$\bullet$} at 0 2
        \put {$\bullet$} at 2 0
        \put {$\scriptstyle 1$} [lt] <-1mm,-1mm> at 0 0
        \put {$\scriptstyle t_1$} [rb] <-1.3mm,0.4mm> at 0 0
        \put {$\scriptstyle t_3$} [rb] <-1.3mm,0.4mm> at 1 0
        \put {$\scriptstyle t_2$} [rb] <-1.3mm,0mm> at 0 1
        \put {$\scriptstyle t_4$} [rb] <-1.3mm,0mm> at 0 2
        \put {$\scriptstyle t_5$} [lb] <0.8mm,0.4mm> at 2 0
        \put {$\scriptstyle b_1$} [rb] <4.3mm,0mm> at 1 1
        \put {$\scriptstyle b_2$} [lb] <0.8mm,0mm> at 0 3
        \put {$\scriptstyle b_3$} [lb] <0.8mm,0mm> at 1 2
        \put {$\scriptstyle b_4$} [lb] <0.8mm,0mm> at 2 1
        \put {$\scriptstyle b_5$} [lb] <0.8mm,0.7mm> at 3 0
        \put {$\times$} at 0 0
        \put {$\circ$} at 0 3
        \put {$\circ$} at 2 1
        \put {$\circ$} at 1 2
        \put {$\circ$} at 1 1
        \put {$\circ$} at 3 0
        
        \put {\small \textbf{Figure~\thefigure.} The L-shape order ideal} at 1.5 -1.2
        \put {\small and its border} at 1.5 -1.7
\endpicture
\end{minipage}
\end{center}

\begin{example}{\bf (The L-shape)}\label{ex-Lshape}

For the full computations see  \cocoa-Example~\ref{coex-L-shape}.\ 
In the case of the L-shape order ideal we have $\mu=\nu=5$, hence the corresponding border 
basis scheme is naturally embedded into the affine space $\AA_K^{25}$. 
Its vanishing ideal is generated by the 20 nonzero entries of the
commutator $\mathcal{M}_1 \mathcal{M}_2 - \mathcal{M}_2 \mathcal{M}_1$ of the two
generic multiplication matrices. As we know from Proposition~\ref{prop-arrowhomog}, 
the ideal $I(\BO)$ is homogeneous
with respect to the total arrow grading which assigns to the tuple of indeterminates
$C=(c_{11}, c_{12}, \dots, c_{55})$ the weights given by
\begin{equation*}
W = (2,\; 3,\; 3,\; 3,\; 3,\; 1,\; 2,\; 2,\; 2,\; 2,\; 1,\; 2,\; 2,\; 2,\; 2,\; 0,\; 1,\; 1,\; 1,\; 1,\; 0,\; 1,\; 1,\; 1,\; 1)
\end{equation*}
In particular, $\OO$ has a MaxDeg border and the total arrow grading is
non-negative. We deduce from Theorem~\ref{thm-bivariateaffinecells} that $\BO$ 
is an affine cell of dimension~10, in other words that its coordinate ring $B_\OO =
K[c_{11},\dots, c_{55}]/I(\BO)$ is isomorphic to a polynomial ring in 10 indeterminates.
End of the story? Not really, since  a natural challenge for computer algebra experts 
is to explicitly construct such an isomorphism.

It is doable, but reaching this conclusion requires a non-trivial sequence of steps. 
Our experience shows that a direct attack using Theorem~\ref{thm-Free} is feasible,
but computationally too costly. Therefore, we adopt an indirect approach which exploits the UMP-technique
suggested by the theorem, preceded by a suitable application
of the technique of $Z$-separating re-embeddings. It is worth noting that a significant amount of
\cocoa-code is required to achieve this result. We describe the process as a sequence of 
steps. For the computations, see  \cocoa-Example~\ref{coex-L-shape}.

Let $K$ be a perfect field,
and let $B_\OO =K[C]/I(\BO)$ be the coordinate ring of the L-shape border basis scheme,
where $C = (c_{11},\dots, c_{15},\dots, c_{51},\dots, c_{55})$.

\begin{myenumerate}
\item  We use~\cite[Remark~4.3 and R2 Algorithm~4.6]{KR6}
to compute \textit{all} best tuples of separating indeterminates, and find
36 such tuples, each consisting of 13 elements. Any of these tuples yields
a best $Z$-separating re-embedding of~$I(\BO)$. 
In other words, each tuple provides an isomorphism from~$B_\OO$ to a polynomial
ring in $25-13=12$ indeterminates modulo an ideal generated by two polynomials. 

\item 
Among these tuples, we select one that minimizes the cardinality of the union of its support.
Our choice is 
$Z = (c_{11},\, c_{12},\, c_{13},\, c_{14},\, c_{15},\, c_{23},\, c_{24},\, c_{25},\, 
c_{31},\, c_{32},\, c_{34},\, c_{44},\, c_{53})$
so that we obtain
$C \setminus Z = (c_{21},\, c_{22},\, c_{33},\, c_{35},\, c_{41},\, c_{42},\, c_{43},\, c_{45},\, c_{51},\, c_{52},\, c_{54},\, c_{55})$
whose weights are  $W = (1, \, 2, \, 2, \, 2, \, 0,\, 1,\, 1, \, 1,\, 0,\, 1,\, 1,\, 1)$

\item We let $S = \QQ[c_{21},\, c_{22},\, c_{33},\, c_{35},\, c_{41},\, c_{42},\, c_{43},\, c_{45},\, c_{51},\, c_{52},\, c_{54},\, c_{55}]$. 
The re-embedding of $I(\BO)$ is a graded isomorphism $\Phi \colon \QQ[C]/I(\BO) \To S/\langle f_1, f_2 \rangle$, where 
\begin{align*}
f_1 ={} & c_{21}c_{41}^2c_{51}^2 + c_{41}^2c_{43}c_{51}^2 + c_{41}c_{45}c_{51}^3 + c_{41}^2c_{42}c_{51} - c_{41}^3c_{52} - c_{41}^2c_{51}c_{54} \\
& + c_{21}c_{41}c_{51} + c_{45}c_{51}^2 - c_{41}c_{51}c_{55} + c_{41}c_{42} + c_{41}c_{54} + c_{21} - c_{43} \\[2mm]
f_2 ={} & -c_{21}^2c_{41}^3c_{51}^5 - 2c_{21}c_{41}^3c_{43}c_{51}^5 - c_{41}^3c_{43}^2c_{51}^5 - 2c_{21}c_{41}^2c_{45}c_{51}^6 - 2c_{41}^2c_{43}c_{45}c_{51}^6 \\
& - c_{41}c_{45}^2c_{51}^7 - c_{21}c_{41}^3c_{42}c_{51}^4 - c_{41}^3c_{42}c_{43}c_{51}^4 - c_{41}^2c_{42}c_{45}c_{51}^5 + c_{21}c_{41}^3c_{51}^4c_{54} \\
& + c_{41}^3c_{43}c_{51}^4c_{54} + c_{41}^2c_{45}c_{51}^5c_{54} - c_{41}^4c_{42}c_{51}^2c_{52} + c_{41}^5c_{51}c_{52}^2 + c_{41}^4c_{51}^2c_{52}c_{54} \\
& - c_{41}^2c_{42}c_{43}c_{51}^3 - c_{21}c_{41}^3c_{51}^2c_{52} + c_{41}^3c_{43}c_{51}^2c_{52} - c_{41}^2c_{45}c_{51}^3c_{52} \\
& - 2c_{21}c_{41}^2c_{51}^3c_{54} - c_{41}^2c_{43}c_{51}^3c_{54} - 2c_{41}c_{45}c_{51}^4c_{54} - c_{41}^2c_{42}c_{51}^3c_{55} \\
& + 2c_{41}^3c_{51}^2c_{52}c_{55} + c_{41}^2c_{51}^3c_{54}c_{55} + 2c_{21}c_{41}c_{43}c_{51}^3 + 3c_{41}c_{43}^2c_{51}^3 + 2c_{43}c_{45}c_{51}^4 \\
& - c_{41}^4c_{52}^2 - 2c_{41}^3c_{51}c_{52}c_{54} + 2c_{41}c_{43}c_{51}^3c_{55} + c_{41}c_{51}^3c_{55}^2 + c_{41}c_{42}c_{43}c_{51}^2 \\
& - c_{42}c_{45}c_{51}^3 + c_{21}c_{41}^2c_{51}c_{52} + 3c_{41}c_{45}c_{51}^2c_{52} - 3c_{41}c_{43}c_{51}^2c_{54} + c_{45}c_{51}^3c_{54} \\
& + c_{41}c_{42}c_{51}^2c_{55} - 3c_{41}^2c_{51}c_{52}c_{55} - 3c_{41}c_{51}^2c_{54}c_{55} + c_{22}c_{41}c_{51} - 2c_{33}c_{41}c_{51} \\
& + c_{21}^2c_{51}^2 - c_{35}c_{51}^2 - c_{43}^2c_{51}^2 + c_{41}^2c_{42}c_{52} + c_{41}^2c_{52}c_{54} + c_{41}c_{51}c_{54}^2 \\
& - 2c_{43}c_{51}^2c_{55} - c_{51}^2c_{55}^2 - c_{41}c_{43}c_{52} - c_{45}c_{51}c_{52} + 2c_{43}c_{51}c_{54} + c_{41}c_{52}c_{55} \\
& + 2c_{51}c_{54}c_{55} + c_{33} - c_{54}^2
\end{align*}
The isomorphism $\Phi$ is induced by a surjective homomorphism $\phi \colon K[C] \To S$ which
is non-trivial in the sense that the images of the indeterminates in~$C$
are polynomials in~$S$ whose supports have sizes
\begin{equation*}
55,\, 282,\, 298, \, 277, \, 193, \, 1, \, 1, \, 30, \, 62,\, 71,\, 9, \, 23, \, 1, \, 
51, \, 1, \, 1, \, 1,\, 1, \, 3,\, 1, \, 1, \, 1, \, 10, \, 1,\, 1
\end{equation*}
respectively.

\item Using $Z$-separating re-embeddings, this is the furthest we can proceed. However, we have another resource at our disposal: searching for an automorphism via the UMP-technique, as described in Theorem~\ref{thm-GoodIso}.

\item We note that 
\begin{align*}
C_0 &= ( c_{ij} \mid \deg_W(c_{ij}) = 1) = (c_{41},\, c_{51})\\
C_1 &= ( c_{ij} \mid \deg_W(c_{ij}) = 1) =(c_{21},  c_{42},  c_{43},  c_{45},  c_{52},  c_{54},  c_{55})\\
C_2 &= ( c_{ij} \mid \deg_W(c_{ij}) = 2) =(c_{22},  c_{33},  c_{35})
\end{align*}

Since $\deg_W(f_1) = 1$ and $\deg_W(f_2) = 2$, we can apply Remark~\ref{rem-block} 
to split the problem.  We obtain
\begin{align*}
\cmat_{K[C_0], C_1}(f_1)\tr
=
\begin{bmatrix}
c_{41}^2c_{51}^2 + c_{41}c_{51} + 1 \\
c_{41}^2c_{51} + c_{41} \\
c_{41}^2c_{51}^2 - 1 \\
c_{41}c_{51}^3 + c_{51}^2 \\
-c_{41}^3 \\
-c_{41}^2c_{51} + c_{41} \\
-c_{41}c_{51}
\end{bmatrix}
\end{align*}
A solution of its UMP is 
\begin{equation*}
B_1 = 
\left[ \begin{smallmatrix}
-1    &\ c_{41}^2c_{51} +c_{41}   & \  -c_{41}^2c_{51}^2 +1    &     \  -c_{41}c_{51}^3 -c_{51}^2        
         &\   c_{41}^3    &\  c_{41}^2c_{51} -c_{41} &\    {c_{41}c_{51}}_{\mathstrut}   \cr
c_{51}&\  -c_{41}^2c_{41}^2 -c_{41}c_{51} -1 &\  c_{41}^2c_{51}^3 -c_{51}       &\ c_{41}c_{51}^4 +c_{51}^3    
         &\ {-c_{41}^3c_{51}}^{\mathstrut}  &\  -c_{41}^2c_{51}^2 +c_{41}c_{51} &\ -c_{41}c_{51}^2 \cr
0&\  0&\      1& \      0&\      0&\        0&\    0 \cr
0&\  0&\      0& \      1&\      0&\        0&\    0 \cr
0&\  0&\      0& \      0&\      1&\        0&\    0\cr
0&\  0&\      0& \      0&\      0&\        1&\    0 \cr
0&\  0&\      0& \      0&\      0&\        0& \    1
\end{smallmatrix}\right]
\end{equation*}

\goodbreak
Moreover, we have 
\begin{align*}
\cmat_{K[C_0], C_2}(f_2) = \begin{bmatrix}
c_{41}c_{51} & -2c_{41}c_{51} + 1 & -c_{51}^2
\end{bmatrix}
\end{align*}
A solution of its UMP is 
\begin{equation*}
B_2 = \begin{bmatrix}
  2 \     & 2c_{41}c_{51}-1  \  & 2c_{51}^2  \cr
  1  \    &  c_{41} c_{51}    \   & c_{51}^2  \cr
  0  \   &       0                   \   &  1
\end{bmatrix}
\end{equation*}

\item 
The two matrices  $B_1$ and $B_2$ are used to define a
graded $K[C_0]$-algebra automorphism of $K[C]/I(\LBO)$ which maps $f_1$ to $c_{21}$ 
and $f_2$ to $f_2'$ (see Theorem~\ref{thm-GoodIso} and Remark~\ref{rem-block}). 
Next, we  substitute $c_{21}$ with $0$ in $f_2'$, and we obtain  a coherently 
$(c_{21}, c_{22})$-separating pair $(c_{21}, f)$, where

\goodbreak
\begin{align*}
f ={} & c_{22} + c_{41}^5c_{43}^2c_{51}^7 + 2c_{41}^4c_{43}c_{45}c_{51}^8 + c_{41}^3c_{45}^2c_{51}^9 
- 2c_{41}^5c_{42}c_{43}c_{51}^6 - 2c_{41}^4c_{42}c_{45}c_{51}^7 \\
& - 2c_{41}^6c_{43}c_{51}^5c_{52} - 2c_{41}^5c_{45}c_{51}^6c_{52} - 2c_{41}^5c_{43}c_{51}^6c_{54} - 2c_{41}^4c_{45}c_{51}^7c_{54} \\ 
& + c_{41}^5c_{42}^2c_{51}^5 + 2c_{41}^3c_{43}c_{45}c_{51}^7 + 2c_{41}^2c_{45}^2c_{51}^8 + 2c_{41}^6c_{42}c_{51}^4c_{52} \\
& + c_{41}^7c_{51}^3c_{52}^2 + 2c_{41}^5c_{42}c_{51}^5c_{54} + 2c_{41}^6c_{51}^4c_{52}c_{54} + c_{41}^5c_{51}^5c_{54}^2 \\
& - 2c_{41}^4c_{43}c_{51}^6c_{55} - 2c_{41}^3c_{45}c_{51}^7c_{55} - 2c_{41}^4c_{42}c_{43}c_{51}^5 - 4c_{41}^3c_{42}c_{45}c_{51}^6 \\
& - 2c_{41}^4c_{45}c_{51}^5c_{52} + 2c_{41}^4c_{43}c_{51}^5c_{54} + 2c_{41}^4c_{42}c_{51}^5c_{55} + 2c_{41}^5c_{51}^4c_{52}c_{55} \\
& + 2c_{41}^4c_{51}^5c_{54}c_{55} + 2c_{41}^4c_{42}^2c_{51}^4 - 3c_{41}^3c_{43}^2c_{51}^5 - 4c_{41}^2c_{43}c_{45}c_{51}^6 \\
& + 2c_{41}^5c_{42}c_{51}^3c_{52} - 2c_{41}^5c_{51}^3c_{52}c_{54} - 2c_{41}^4c_{51}^4c_{54}^2 - 2c_{41}^2c_{45}c_{51}^6c_{55} \\
& + c_{41}^3c_{51}^5c_{55}^2 + 2c_{41}^3c_{42}c_{43}c_{51}^4 - 2c_{41}^2c_{42}c_{45}c_{51}^5 + 2c_{41}^4c_{43}c_{51}^3c_{52} \\
& + 4c_{41}^3c_{43}c_{51}^4c_{54} + 4c_{41}^2c_{45}c_{51}^5c_{54} + 2c_{41}^3c_{42}c_{51}^4c_{55} - 2c_{41}^3c_{51}^4c_{54}c_{55} \\
& + 2c_{41}^3c_{42}^2c_{51}^3 - c_{41}^2c_{43}^2c_{51}^4 - 3c_{41}c_{43}c_{45}c_{51}^5 + 2c_{41}^4c_{42}c_{51}^2c_{52} + c_{41}^5c_{51}c_{52}^2 \\
& - 2c_{41}^3c_{42}c_{51}^3c_{54} + 2c_{41}^2c_{43}c_{51}^4c_{55} + 4c_{41}^2c_{42}c_{43}c_{51}^3 - c_{41}c_{42}c_{45}c_{51}^4 \\
& + 2c_{41}^3c_{43}c_{51}^2c_{52} - c_{41}^2c_{45}c_{51}^3c_{52} - 2c_{41}^2c_{43}c_{51}^3c_{54} - c_{41}c_{45}c_{51}^4c_{54} \\
& + 2c_{41}^2c_{42}c_{51}^3c_{55} + 2c_{41}^3c_{51}^2c_{52}c_{55} + c_{41}^2c_{42}^2c_{51}^2 + 4c_{41}c_{43}^2c_{51}^3 \\
& + c_{43}c_{45}c_{51}^4 - c_{41}^4c_{52}^2 - 2c_{41}^2c_{42}c_{51}^2c_{54} - 2c_{41}^3c_{51}c_{52}c_{54} + c_{41}^2c_{51}^2c_{54}^2 \\
& + 3c_{41}c_{43}c_{51}^3c_{55} + c_{41}c_{51}^3c_{55}^2 + c_{41}c_{42}c_{43}c_{51}^2 + c_{42}c_{45}c_{51}^3 \\
& + 3c_{41}c_{45}c_{51}^2c_{52} - 5c_{41}c_{43}c_{51}^2c_{54} + c_{45}c_{51}^3c_{54} - c_{41}c_{42}c_{51}^2c_{55} \\
& - 3c_{41}^2c_{51}c_{52}c_{55} - 3c_{41}^2c_{51}^2c_{54}c_{55} - c_{41}^2c_{42}c_{52} + c_{41}^2c_{52}c_{54} \\
& + c_{41}c_{51}c_{54}^2 - 2c_{43}c_{51}^2c_{55} - c_{51}^2c_{55}^2 - c_{41}c_{43}c_{52} - c_{45}c_{51}c_{52} \\
& + 2c_{43}c_{51}c_{54} + c_{41}c_{52}c_{55} + 2c_{51}c_{54}c_{55} - c_{54}^2
\end{align*}

\item By using the pair $(c_{21}, f)$, more precisely by sending $c_{21} \mapsto 0$
and $c_{22} \mapsto c_{22}-f$, we obtain a map $\Psi \colon S \To T$,
where $T = \QQ[c_{33},\, c_{35},\, c_{41},\, c_{42},\, c_{43},\, c_{45},\, c_{51},\, c_{52},\, c_{54},\, c_{55}]$,
which is an isomorphism of graded algebras.

\item In conclusion, we obtain an isomorphism $\Psi\circ \Phi \colon B_\OO \To T$ induced by 
a surjective homomorphism $\eta \colon K[C] \To T$.
It is remarkable that the $\eta$-images of the 25 indeterminates in $K[C]$ are 
polynomials whose supports have  cardinalities
\begin{equation*}
78, \mkern1mu 329, \mkern1mu 375,\mkern1mu 372,\mkern1mu 419,\mkern1mu 10,\mkern1mu 
 87,\mkern1mu 87,\mkern1mu 95,\mkern1mu 109,\mkern1mu 8,\mkern1mu 90,\mkern1mu 
86,\mkern1mu 99,\mkern1mu 1,\, 1,\, 11,\, 1,\, 11,\, 1,\, 1,\, 1,\, 9,\, 1,\, 1
\end{equation*}
\end{myenumerate}
\end{example}

\medskip
\subsection{Simplicial Border Basis Schemes}
\label{subsec-Simplicial BBS}

We start with the definition of  very special  MaxDeg border basis schemes.

\begin{definition}\label{def-simplicial}
An order ideal $\OO \subset \TT^n$ is called \textbf{simplicial} (of \textbf{type} $d$) if we have 
$\OO = \{t \in \TT^n \mid \deg(t)\le d\}$, in other words, if we call $\OO_i$ the set of terms 
of standard degree $i$, we have $\OO = \cup_{i=0}^d \OO_i$.
\end{definition}

\begin{proposition}\label{prop-postotgrad}
Let $\OO \subset \TT^n$,  let $B_\OO = K[C]/I(\BO)$, and let $\deg_W$ 
denote the total arrow grading on~$K[C]$.
\begin{myenumerate}
\item We have $\partial\OO = \OO_{d+1}$

\item The following are equivalent.
\begin{enumerate}

\item[(2a)] The order ideal $\OO$ is simplicial.

\item[(2b)] The total arrow grading $\deg_W$ is a positive grading on~$K[C]$.
\end{enumerate}

\end{myenumerate}
\end{proposition}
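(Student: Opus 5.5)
Claim (1) is read with $\OO$ simplicial of type $d$. We argue directly from Definition~\ref{def-border!closure}. A term $t$ lies in $\partial\OO$ exactly when $t\notin\OO$ and $t=x_k t'$ with $t'\in\OO$. The condition $t\notin\OO$ means $\deg(t)\ge d+1$. The condition $t=x_kt'$ with $\deg(t')\le d$ means $\deg(t)\le d+1$. Hence $\partial\OO\subseteq\OO_{d+1}$. Conversely, every term of degree $d+1$ is $x_k t'$ for some $t'$ of degree $d$, and such a $t'$ lies in $\OO$. So $\OO_{d+1}\subseteq\partial\OO$.

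For (2a)$\Rightarrow$(2b), we use (1). Every border term has degree $d+1$ and every $t_i$ has degree at most $d$. Therefore $\deg_W(c_{ij})=\deg(b_j)-\deg(t_i)\ge 1$ for all $i,j$. This says precisely that $\deg_W$ is a positive grading on $K[C]$.

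For (2b)$\Rightarrow$(2a), let $d=\max\{\deg(t)\mid t\in\OO\}$ and let $e=\min\{\deg(b)\mid b\in\partial\OO\}$. Positivity gives $\deg(b_j)>\deg(t_i)$ for all $i,j$, so $e>d$, i.e.\ $e\ge d+1$. We show that every term $t$ with $\deg(t)\le d$ lies in $\OO$. Suppose not, and pick $t\notin\OO$ with $\deg(t)\le d$. By Lemma~\ref{lem-BorderProps}(3), $t$ is divisible by some $b\in\partial\OO$. Then $\deg(b)\le\deg(t)\le d<e$, which contradicts the choice of $e$. Hence $\{t\mid\deg(t)\le d\}\subseteq\OO$, and the reverse inclusion holds by the definition of $d$. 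So $\OO$ is simplicial of type $d$.

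None of these steps is a real obstacle. The one point needing care is the reverse implication, where Lemma~\ref{lem-BorderProps}(3) is the tool that produces a border term of small degree. A direct alternative is to take $t\notin\OO$ of minimal degree: all of its proper divisors lie in $\OO$, so $t\in\partial\OO$ itself, and again $\deg(t)\le d$ contradicts $e\ge d+1$.
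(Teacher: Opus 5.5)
Your proof is correct and follows the paper's route. Claim~(1) and (2a)$\Rightarrow$(2b) are read off directly from degrees, and (2b)$\Rightarrow$(2a) works by finding, when $\OO$ is not simplicial, a border term of degree at most $d=\max\{\deg(t)\mid t\in\OO\}$, which forces some $c_{ij}$ to have non-positive arrow degree. Your use of Lemma~\ref{lem-BorderProps}(3), or of the minimal-degree argument, also fills a small gap: the paper asserts without proof the slightly stronger claim that some border term has degree exactly $d$, whereas you need only degree $\le d$, which follows immediately.
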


\begin{proof}
Since (1) follows directly from the definition, let us prove (2). 
If $\OO$ is simplicial, it follows from (1) that
$\deg_W(c_{ij}) = d+1 - \delta$ with $\delta \le d$. To show the opposite implication,
assume for contradiction that~$\OO$ is not simplicial and let $d = \max\{\deg(t)  \mid  t \in \OO\}$.
Then there exists at least one element $b_j \in \partial\OO$ with $\deg(b_j) = d$, hence there exists at least one
element $c_{ij}$ with $\deg_W(c_{ij}) = 0$, a contradiction.
\end{proof}

Let us collect some basic numerical information about simplicial order ideals 
which will be used in the following without further mention.

\begin{lemma}\label{lem-formulas}
Let $d\ge 1$, let $\OO = \{t_1, \dots, t_\mu\}$ be the simplicial order ideal of type~$d$
in~$\mathbb{T}^n$, and let $\partial\OO = \{ b_1, \dots, b_\nu\}$ be its border.
Then the following formulas hold.
\begin{myenumerate}
\item $\mu = \binom{d+n}{n}$

\item $\nu = \binom{d+n}{n-1}$

\item $\# \OOint = \binom{d+n-1}{n}$

\item $\# \OOrim = \binom{d+n-1}{n-1}$

\item$\# C = \binom{d+n}{n} \cdot \binom{d+n}{n-1}$

\item $\# \Cint = \binom{d+n-1}{n} \cdot \binom{d+n}{n-1}$

\item $\# \Crim = \binom{d+n-1}{n-1} \cdot \binom{d+n}{n-1}$
\end{myenumerate}
\end{lemma}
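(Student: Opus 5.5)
The plan is to reduce every count to the number of terms of a given degree in $n$ indeterminates. I will use the classical identity $\#\{t\in\TT^n \mid \deg(t)=k\} = \binom{k+n-1}{n-1}$ and its cumulative form $\#\{t\in\TT^n\mid \deg(t)\le k\}=\binom{k+n}{n}$. The latter follows either by adding an extra indeterminate to homogenize, or by induction on $k$ via the hockey-stick identity $\sum_{i=0}^{k}\binom{i+n-1}{n-1}=\binom{k+n}{n}$.

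Formula (1) is then immediate from Definition~\ref{def-simplicial}, since $\OO$ consists of all terms of degree at most $d$. For (2), I use Proposition~\ref{prop-postotgrad}(1), which gives $\partial\OO=\OO_{d+1}$. Hence $\nu=\binom{d+1+n-1}{n-1}=\binom{d+n}{n-1}$.

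The step needing a genuine argument is identifying rim and interior terms, for (3) and (4). I claim that $\OOrim=\OO_d$ and $\OOint=\bigcup_{i=0}^{d-1}\OO_i$. To see this, take $t\in\OO$ with $\deg(t)=d$. Then $x_1t$ has degree $d+1$, so it lies in $\partial\OO$, and $t$ is a rim term. Conversely, if $\deg(t)<d$, then every $x_kt$ has degree at most $d$ and so lies in $\OO$. Thus no $x_kt$ is in $\partial\OO$, and $t$ is interior. This gives $\#\OOrim=\binom{d+n-1}{n-1}$ and $\#\OOint=\binom{d-1+n}{n}$.

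Formulas (5)–(7) now follow from Definition~\ref{def-BBS} and Definition~\ref{def-rim-and-interior}. The indeterminates $c_{ij}$ are indexed by pairs $(t_i,b_j)\in\OO\times\partial\OO$. The rim and interior indeterminates correspond to $\OOrim\times\partial\OO$ and $\OOint\times\partial\OO$, respectively. Multiplying the counts from (1)–(4) yields the three products.

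A consistency check is Pascal's rule, $\binom{d+n-1}{n}+\binom{d+n-1}{n-1}=\binom{d+n}{n}$, so (3) and (4) add up to (1). The only point requiring care is the rim/interior characterization, which uses $d\ge1$ only implicitly. There is no serious obstacle.
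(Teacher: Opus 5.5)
Your proposal is correct and takes essentially the paper's route: the paper just cites the formulas for the Hilbert function and the affine Hilbert function of $P$, i.e.\ the number of terms of degree exactly $k$ and of degree at most $k$. You also spell out the identifications $\partial\OO=\OO_{d+1}$, $\OOrim=\OO_d$ and $\OOint=\bigcup_{i=0}^{d-1}\OO_i$, which the paper leaves implicit.
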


\begin{proof}
These equalities follow directly from the formulas for
the usual and the affine Hilbert functions of a polynomial ring 
given in~\cite[Propositions~5.1.13 and 5.6.3.c]{KR2}.
\end{proof}

\begin{proposition}\label{prop-simplicialOptimal}
Let $\OO$ be a simplicial order ideal, let $B_\OO = K[C]/I(\BO)$ 
be the coordinate ring of $\BO$, and let $r = \dim_K(\Lin_K(I(\BO)))$. Then
there exists an $r$-tuple of indeterminates $c_{ij}$ and an $r$-tuple of $W$-homogeneous
polynomials in $I(\BO)$ defining an optimal separating re-embedding of $I(\BO)$.
\end{proposition}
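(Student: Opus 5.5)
The plan is to reduce the statement directly to Proposition~\ref{prop-optimalgraded}, using the positivity of the total arrow grading established in Proposition~\ref{prop-postotgrad}. First I verify the hypotheses of Proposition~\ref{prop-optimalgraded} for $P=K[C]$, $I=I(\BO)$ and $W$ the total arrow grading. By Proposition~\ref{prop-postotgrad}\,(2), since $\OO$ is simplicial of type~$d$, every indeterminate satisfies $\deg_W(c_{ij}) = d+1-\deg(t_i)\ge 1$. Hence $K[C]$ is positively graded by~$W$, with weight vector in $\mathbb{Z}_{>0}^{\mu\nu}$.

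Next I take as generators of $I(\BO)$ the natural generators, namely the entries of the commutators $\mathcal{M}_k\mathcal{M}_\ell-\mathcal{M}_\ell\mathcal{M}_k$. By Proposition~\ref{prop-arrowhomog}\,(1) (or directly from its proof) these entries are $W$-homogeneous. I also need them to lie in $\M=\langle C\rangle$. Since $W$ is positive, a $W$-homogeneous polynomial either lies in $\M$ or is a nonzero constant. It therefore suffices that $I(\BO)$ is proper, and this holds because the origin $\Gamma_0$ is a $K$-rational point of $\BO$ (Definition~\ref{def-represent}, the ideal $\langle\partial\OO\rangle$ having $\OO$ as a basis of its quotient). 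Consequently every natural generator vanishes at $0$, so it lies in~$\M$.

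With $G=(g_1,\dots,g_r)$ the tuple of natural generators, Proposition~\ref{prop-optimalgraded} with $d=\dim_K(\Lin(I(\BO)))=r$ then directly supplies an $r$-tuple of indeterminates $Z\subseteq C$ and an $r$-tuple of $W$-homogeneous polynomials of $I(\BO)$ defining an optimal separating re-embedding, with optimality coming from Theorem~\ref{thm-checkopt}.

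The only delicate point is making sure that ``linear part'' in the statement, written $\Lin_K$, matches the $\Lin$ used in Proposition~\ref{prop-optimalgraded}: both are taken with respect to the standard grading at $\M=\langle C\rangle$. I would note explicitly that the top-rank step in that proof uses standard-degree-one parts, and that $W$-homogeneity, not standard homogeneity, is what forces each $g_i'=z_i-h_i'$ to be $z_i$-separating. So nothing beyond the positivity of $W$ is needed, and I expect no real obstacle.
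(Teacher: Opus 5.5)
Your proposal is correct and follows the paper's route exactly. The paper's proof is the one-line reduction to Proposition~\ref{prop-optimalgraded} via the positivity of the total arrow grading from Proposition~\ref{prop-postotgrad}, and your extra checks (the natural generators are $W$-homogeneous by Proposition~\ref{prop-arrowhomog}\,(1), and they lie in $\langle C\rangle$ because the origin $\Gamma_0$ is a point of $\BO$) just make explicit hypotheses the paper leaves tacit. The only blemish is notational: you use $r$ both for the number of natural generators and for $\dim_K(\Lin_K(I(\BO)))$, and $d$ both for the type of $\OO$ and for the $d$ of Proposition~\ref{prop-optimalgraded}, so rename one in each pair.
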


\begin{proof}
We know from Proposition~\ref{prop-postotgrad} that the $K$-algebra $B_\OO$ 
is positively graded. Hence the conclusion follows from Proposition~\ref{prop-optimalgraded}.
\end{proof}

A deeper insight into the structure of $I(\BO)$ allows us to make the claim 
of the above proposition more explicit.

\begin{proposition}{\bf (Optimal $\Cint$-separating Re-embedding)}\label{prop-optimalCint}

Let $\OO$ be a simplicial order ideal, let $B_\OO = K[C]/I(\BO)$ 
be the coordinate ring of $\BO$.
\begin{myenumerate}
\item There exists an optimal  $\Cint$-separating re-embedding of $I(\BO)$.

\item We have $\Lin_K(I(\BO)) = \langle \Cint \rangle_K$.

\item We have $\Cot_\m(B_\OO)= \langle \Crim \rangle_K$ 

\item We have $\edim(B_\OO) = \dim_K (\Cot_{\m}(B_\OO)) $.  
\end{myenumerate}
\end{proposition}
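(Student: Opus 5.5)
The heart of the proof is claim~(2), namely the computation of $\Lin_K(I(\BO))$; claims (1), (3) and (4) will then follow from results already established.

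\textbf{Claim (2).} For a simplicial $\OO$ of type $d$ we have $\partial\OO=\OO_{d+1}$ by Proposition~\ref{prop-postotgrad}\,(1). Hence every border pair is across-the-rim or across-the-street; there are no next-door pairs. I will describe the natural generators through the neighbor generators of Remark~\ref{rem-neighb=commut}. Take an AR pair $b_j=x_\ell t_m$, $b_{j'}=x_k t_m$. The corresponding generator is the $i$-th entry of $\mathcal{M}_k c_j-\mathcal{M}_\ell c_{j'}$. Its linear part comes from the columns of $\mathcal{M}_k$ and $\mathcal{M}_\ell$ that are unit vectors, i.e.\ from terms $t_\lambda$ with $x_kt_\lambda\in\OO$. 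So the linear part is $c_{\lambda j}-c_{\lambda' j'}$, where $x_k t_\lambda=t_i=x_\ell t_{\lambda'}$, with the convention that a missing term contributes $0$.

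I will show that for every interior term $t_\lambda$ (i.e.\ $\deg t_\lambda<d$) and every $b_j$ there is a generator whose linear part is exactly $c_{\lambda j}$. First consider a monomial $t_i=x_kt_\lambda$ of degree $\le d$ that is not divisible by $x_\ell$. Then the second contribution is absent, since $t_i/x_\ell$ is not a term. Such a choice requires $n\ge2$ and $b_j$ to admit a neighbor in a suitable direction. Since every $b_j$ of degree $d+1$ has neighbors $x_\ell b_j/x_k$ in all directions $k$ with $x_k\mid b_j$, there is enough freedom to find suitable $k,\ell$. The remaining case is when $t_i$ is divisible by both variables; here I induct on the $\ell$-degree, cancelling the second summand using already obtained generators. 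Conversely, rim indeterminates never appear in linear parts, because $x_kt_\lambda\in\partial\OO$ whenever $\deg t_\lambda=d$. This gives $\Lin_K(I(\BO))=\langle \Cint\rangle_K$. This combinatorial step is the main obstacle, especially the bookkeeping in the case $n=1$ and in the divisibility case. If the case analysis gets messy, I would instead use the arrow grading: rim indeterminates have $\deg_W=1$ and interior ones have $\deg_W\ge2$, so every degree-one component of a generator lies in $\langle\Crim\rangle$ and must be quadratic or zero.

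\textbf{Claim (3).} This follows from (2) and Proposition~\ref{prop-cotancomp}\,(3): $\Cot_\m(B_\OO)\cong \M_1/\langle\Cint\rangle_K\cong\langle\Crim\rangle_K$.

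\textbf{Claim (1).} I apply Proposition~\ref{prop-optimalgraded}, which is legitimate because $B_\OO$ is positively graded by Proposition~\ref{prop-postotgrad}. By (2) the coefficient matrix of a basis of linear parts is the identity on the $\Cint$ columns, so $Z=\Cint$ is of top rank. The construction in the proof of Proposition~\ref{prop-optimalgraded} then yields $W$-homogeneous polynomials $c_{ij}-h_{ij}$, which are $c_{ij}$-separating by homogeneity, and interreduction makes them coherently $\Cint$-separating. Optimality follows from Theorem~\ref{thm-checkopt}\,(1), since $\#\Cint=\dim_K\Lin(I(\BO))$.

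\textbf{Claim (4).} Theorem~\ref{thm-checkopt}\,(1) gives $\edim(B_\OO)=\#C-\#\Cint=\#\Crim$, and by (3) this equals $\dim_K\Cot_\m(B_\OO)$.
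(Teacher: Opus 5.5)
Your proposal is correct in outline for $n\ge2$, but it runs the logic in the opposite direction from the paper.

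\textbf{Comparison with the paper.} The paper imports (1) from \cite[Proposition~5.5]{KR5}, reads off $\Cint\subseteq\Lin_K(I(\BO))$ from that proof, and gets $\Lin_K(I(\BO))\subseteq\langle\Cint\rangle_K$ from \cite[Corollary~2.8.b]{KSL}. Claims (3) and (4) are then formal. You instead prove (2) directly from the across-the-rim generators; there are no next-door pairs since $\partial\OO=\OO_{d+1}$. You then derive (1) from Proposition~\ref{prop-optimalgraded}. Once $\Lin_K(I(\BO))=\langle\Cint\rangle_K$, every column of the coefficient matrix outside $\Cint$ is zero, so $\Cint$ is the only possible top-rank tuple. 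Optimality and (4) then follow from Theorem~\ref{thm-checkopt}\,(1). This keeps everything inside the survey and shows why $\Cint$ is forced, at the cost of doing the combinatorics yourself. Your easy inclusion is fine: row $t_i$ of $\mathcal{M}_kc_j-\mathcal{M}_\ell c_{j'}$ has linear part $c_{\lambda j}-c_{\lambda'j'}$ with $x_kt_\lambda=t_i=x_\ell t_{\lambda'}$ (missing terms omitted), and both $t_\lambda,t_{\lambda'}$ have degree at most $d-1$.

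\textbf{The step that needs an extra idea.} The inclusion $\Cint\subseteq\Lin_K(I(\BO))$, which you leave as a sketch, can get stuck if you ``induct on the $\ell$-degree'' with an arbitrary $\ell$. For $n=2$, $d=3$, $t_\lambda=xy$, $b_j=xy^3$ and $\ell=x$, one move leads to $(y^2,y^4)$, and $x$ no longer divides the border term. Also, a single \emph{natural} generator with linear part exactly $c_{\lambda j}$ need not exist. For $n=d=2$ the only one involving $c_{x,x^3}$ linearly has linear part $\pm(c_{x,x^3}-c_{y,x^2y})$, so you must use combinations, as your cancellation suggests.

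The fix is as follows.
\begin{itemize}
\item The move $(t_\lambda,b_j)\mapsto(x_kt_\lambda/x_\ell,\,x_kb_j/x_\ell)$ preserves $b_j/t_\lambda$.
\item Since $\deg b_j>\deg t_\lambda$, you can choose $\ell$ with $\deg_{x_\ell}b_j>\deg_{x_\ell}t_\lambda$, and any $k\ne\ell$.
\item Then $x_\ell$ divides every border term along the chain, so the AR pair always exists.
\item Each step puts $c_{\lambda j}-c_{\lambda'j'}$ into $\Lin_K(I(\BO))$.
\item After $\deg_{x_\ell}t_\lambda$ steps you reach your base case, whose linear part is a single interior indeterminate.
\item Telescoping gives $c_{\lambda j}\in\Lin_K(I(\BO))$.
\end{itemize}

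\textbf{Two caveats.} First, $n=1$ is not a bookkeeping issue but a genuine exception. There are no commutators, so $I(\BO)=\langle 0\rangle$, and (1)--(3) fail for $d\ge1$. Hence $n\ge2$ must be assumed, tacitly also in the paper. Second, your arrow-grading fallback cannot replace the combinatorics. It only bears on the easy inclusion, and even there you would still have to exclude natural generators of $W$-degree~$1$, which would be linear forms in $\Crim$. It says nothing about $\Cint\subseteq\Lin_K(I(\BO))$.
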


\begin{proof}
The proof of (1) is given in~\cite[Proposition 5.5]{KR5}. Let us prove  claim (2).
The proof of (1) shows that $\Cint \subseteq \Lin_K(I(\BO))$. On the other hand,
in~\cite[Corollary 2.8.b]{KSL} it is shown that there is no
linear part of one of the natural generators of~$I(\BO)$ which contains
an indeterminate of~$\Crim$ in its support, and hence $\Lin_K(I(\BO)) \subseteq \Cint$,
which concludes the proof.
To prove (3), we note that $C$ is  the disjoint union of $\Cint$ and $\Crim$. Moreover,  the equality 
$\Cot_\m(B_\OO) = \langle C \rangle_K/\Lin_K(I(\BO))$ follows from Proposition~\ref{prop-cotancomp}\,(3).
Consequently, the conclusion follows from (1).

The proof of (4) follows as a combination of (1) and (3) together with the disjoint union of 
$\Cint$ and $\Crim$ already mentioned above.
\end{proof}

An application of the proposition is that we can check  whether 
a simplicial border basis scheme is an affine cell. 

\begin{proposition}\label{prop-AffineOrSing}
Let $\OO$ be the simplicial order ideal of type~$d$, and
let $\Gamma_0=(0, \dots, 0)$ be the origin of~$\BO$ (see Definition~\ref{def-represent}).

\begin{myenumerate}
\item If $n=2$ then $\BO$ is isomorphic to $\mathbb{A}^{(d+1)(d+2)}$.

\item If $n\ge 3$ then $\Gamma_0$ is a singular point of~$\BO$.
\end{myenumerate}
\end{proposition}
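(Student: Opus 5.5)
The whole argument hinges on comparing $\dim_K(\Cot_\m(B_\OO))$ at the origin with $\dim(\BO)$. By Proposition~\ref{prop-optimalCint}\,(3), the cotangent space of $B_\OO$ at the origin is spanned by the classes of the rim indeterminates, so $\dim_K(\Cot_\m(B_\OO)) = \#\Crim = \binom{d+n-1}{n-1}\binom{d+n}{n-1}$ by Lemma~\ref{lem-formulas}\,(7). We then compare this number with $n\mu = n\binom{d+n}{n}$, which is the dimension of the principal component (Remark~\ref{rem-nmu}), and hence the dimension of $\BO$ at the origin whenever the origin lies only on that component.

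\emph{Case $n=2$.} Here $\#\Crim = (d+1)(d+2)$, while $n\mu = 2\binom{d+2}{2} = (d+1)(d+2)$, so the two numbers agree. For the conclusion we use the structure of the coordinate ring. Proposition~\ref{prop-optimalCint}\,(1) gives an optimal $\Cint$-separating re-embedding $B_\OO \cong K[\Crim]/J$ with $J = I(\BO)\cap K[\Crim]$. By Proposition~\ref{prop-irreducible}\,(2), $\BO$ is irreducible of dimension $2\mu = (d+1)(d+2) = \#\Crim$. A quotient of a polynomial ring in $\#\Crim$ indeterminates whose dimension equals $\#\Crim$ must have $J$ of height zero, and since $J$ is prime this forces $J=\langle 0\rangle$. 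Hence $B_\OO \cong K[\Crim]$ and $\BO\cong\mathbb{A}^{(d+1)(d+2)}$. Alternatively, this case follows from Theorem~\ref{thm-Free}\,(3) via Theorem~\ref{thm-bivariateaffinecells}, since simplicial order ideals are MaxDeg, but that route requires $K$ perfect, whereas the dimension count above does not. One point needs care: Proposition~\ref{prop-irreducible} is stated over $\overline K$, so we should note that the dimension and the vanishing of $J$ are both stable under field extension.

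\emph{Case $n\ge3$.} It suffices to show $\#\Crim > \dim \mathcal{O}_{\BO,\Gamma_0}$. Because $\BO$ is positively graded (Proposition~\ref{prop-postotgrad}), every irreducible component passes through the origin, so the local dimension at $\Gamma_0$ equals $\dim\BO$. The hard part is an upper bound for $\dim \BO$, since $\BO$ may be reducible. The plan is to first try the principal component. Once we prove $\#\Crim > n\mu$, that is, $\binom{d+n-1}{n-1}\binom{d+n}{n-1} > n\binom{d+n}{n}$, the tangent space at $\Gamma_0$ already exceeds the dimension of the principal component through $\Gamma_0$. Using $\binom{d+n}{n-1} = \binom{d+n}{n}\cdot\frac{n}{d+1}$, the inequality reduces to $\binom{d+n-1}{n-1} > d+1$, which holds for all $n\ge3$ and $d\ge1$.

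This inequality alone does not finish the proof, because it does not exclude a non-principal component of large dimension passing through the origin. We therefore argue by contradiction. If $\Gamma_0$ were a regular point, the local ring $\mathcal{O}_{\BO,\Gamma_0}$ would be a regular local ring, hence a domain, so exactly one component passes through $\Gamma_0$. That component must be the principal one, since $\Gamma_0$ is a limit of reduced schemes: the distractions of the monomial ideal $\langle\partial\OO\rangle$ give reduced points of $\BO$ tending to the origin. A regular point would then satisfy $\dim_K(\Cot_\m(B_\OO)) = n\mu$, which contradicts the strict inequality above. Hence $\Gamma_0$ is a singular point of $\BO$.
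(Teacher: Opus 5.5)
Your proposal is correct and follows the paper's proof. It uses the same comparison of $\dim_K(\Cot_\m(B_\OO))=\#\Crim$ with $n\mu$, the same optimal $\Cint$-separating re-embedding plus irreducibility and dimension count for $n=2$, and an equivalent binomial reduction ($\binom{d+n-1}{n-1}>d+1$ versus the paper's $d+n<\binom{d+n}{n-1}$). Your extra argument for $n\ge 3$ (a regular local ring is a domain, so only the principal component could pass through $\Gamma_0$, forcing local dimension $n\mu$) spells out the step the paper compresses into ``it suffices to show''.
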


\begin{proof}
For $n = 2$, it follows from claims (3) and (4) of Proposition~\ref{prop-optimalCint}  that
$\BO$ can be embedded into an affine space of dimension
\begin{equation*}
\dim_K (\Cot_\m(B_\OO)) =\dim_K(\langle \Crim \rangle_K) = \#\Crim 
=\tbinom{d+1}{1}\cdot \tbinom{d+2}{1} =   (d+1)(d+2)
\end{equation*}
We know from Proposition~\ref{prop-irreducible} that the  scheme $\BO$ is irreducible 
and from Remark~\ref{rem-nmu} that 
$\dim(\BO)=2 \mu = 2\binom{d+2}{2} = (d+1)(d+2)$.
The conclusion follows.

Let us prove (2). The dimension of the principal component of~$\BO$
is $n\cdot \#\OO = n\cdot \binom{d+n}{n}$,
and  from Proposition~\ref{prop-optimalCint}(3) we get
$\dim_K (\Cot_\m(B_\OO)) =\#\Crim= \binom{d+n-1}{n-1} \cdot \binom{d+n}{n-1}$.
Hence it suffices to show that for $n\ge 3$ we have
\begin{equation*}
n\cdot \tbinom{d+n}{n} < \tbinom{d+n-1}{n-1} \cdot \tbinom{d+n}{n-1}  \eqno{(1)}
\end{equation*}
To prove (1), we note that $n\cdot \binom{d+n}{n} {=}(d+n)\cdot \binom{d+n-1}{n-1}$.
Hence (1) is equivalent to
\begin{equation*}
d+n <  \tbinom{d+n}{n-1}   \eqno{(2)}
\end{equation*}
Therefore we need to show that for $n\ge 3$ we have
\begin{equation*}
1 <  \tfrac{ (d+n-1)\cdot(d+n-2)\cdots(d+2)}{(n-1)!}=
\tfrac{d+n-1}{n-1}\cdot \tfrac{d+n-2}{n-2}\cdots \tfrac{d+2}{2}
\end{equation*}
which is clear since $d\ge 1$.

Notice that for $n=2$ we have $d+n = \binom{d+n}{n-1}$. Thus we have equality  in (2),
and hence also equality in~(1). This shows $\dim_K(\Cot_\m(B_\OO)) = \dim(\BO)$ 
and implies that~$\Gamma_0$ is a smooth point of~$\BO$, in agreement with~(1).
\end{proof}

The following example illustrates this proposition. 

\begin{example}\label{ex-singBBS}
Let $P=\QQ[x,y,z]$, and let $\OO= \{1, z, y, x\}$. 
Then the border of~$\OO$ is given by
$\partial\OO=\{ z^2,\, yz,\, xz,\, y^2,\,  xy,\,  x^2\}$. 
This yields $\mu=4$ and $\nu=6$. Consequently, the polynomial ring $\QQ[C]$
has 24 indeterminates.
Proposition~\ref{prop-optimalCint}(1) implies that 
$\Lin_\M(I(\BO)) = \langle c_{11},\dots,c_{16}\rangle_{\QQ}$, and 
there exists a tuple $(f_1,\dots, f_6)$ of coherently $Z$-separating
polynomials  in~$I(\BO)$, where $Z=(c_{11},\dots,c_{16})$. 
\begin{align*}
f_1 \;=\; & c_{11} -c_{23}c_{41} -c_{33}c_{42}+c_{21}c_{43} -c_{43}^2 +c_{31}c_{45} +c_{41}c_{46},\\
f_2 \;=\; & c_{12} -c_{23}c_{42} +c_{22}c_{43} -c_{33}c_{44} +c_{32}c_{45} -c_{43}c_{45} +c_{42}c_{46},\\
f_3 \;=\; & c_{13} -c_{26}c_{41} -c_{36}c_{42} +c_{23}c_{43} +c_{33}c_{45},\\
f_4 \;=\; & c_{14} -c_{25}c_{42} +c_{24}c_{43} -c_{35}c_{44} +c_{34}c_{45} -c_{45}^2 +c_{44}c_{46},\\
f_5 \;=\; & c_{15} -c_{26}c_{42} +c_{25}c_{43} -c_{36}c_{44} +c_{35}c_{45},\\
f_6 \;=\; & c_{16} +c_{26}c_{32} -c_{25}c_{33} -c_{35}^2 +c_{34}c_{36} -c_{36}c_{45} +c_{35}c_{46}
\end{align*}
Consequently, the $Z$-separating re-embedding of the ideal~$I(\BO)$
is an isomorphism $\Phi \colon  B_\OO \rightarrow \widehat{P} / (I(\BO)\cap \widehat{P})$,
where $\widehat{P} = \QQ[c_{21},\dots,c_{26},c_{31},\dots,c_{36},c_{41},\dots,c_{46}]$
is a polynomial ring in~18 indeterminates. It turns out
that $I(\BO) \cap \widehat{P}$ is minimally generated by the 15 quadratic polynomials
$$
{\scriptstyle
\begin{array}{lll}
 h_1 &\!=\!&  c_{25}c_{32} {-}c_{24}c_{33} {+}c_{26}c_{42} {-}c_{25}c_{43} \\
 h_2 &\!=\!&   c_{25}c_{41} {-}c_{23}c_{42} {+}c_{35}c_{42} {-}c_{33}c_{44} \\
 h_3 &\!=\!& c_{25}c_{31} {-}c_{22}c_{33} {-}c_{36}c_{42} {+}c_{33}c_{45} \\
 h_4 &\!=\!&  c_{26}c_{31} {-}c_{23}c_{33} {-}c_{33}c_{35} {+}c_{32}c_{36} {-}c_{36}c_{43} {+}c_{33}c_{46} \\
 h_5 &\!=\!&    c_{22}c_{41} {-}c_{21}c_{42} {+}c_{32}c_{42} {+}c_{42}c_{43} {-}c_{31}c_{44} {-}c_{41}c_{45} \\
 h_6 &\!=\!&  c_{23}c_{24} {-}c_{22}c_{25} {+}c_{25}c_{34} {-}c_{24}c_{35} {+}c_{26}c_{44} {-}c_{25}c_{45} \\
 h_7 &\!=\!&    c_{23}c_{31} {-}c_{21}c_{33} {+}c_{32}c_{33} {-}c_{31}c_{35} {-}c_{36}c_{41} {+}c_{33}c_{43} \\
 h_8 &\!=\!&     c_{24}c_{41} {-}c_{22}c_{42} {+}c_{34}c_{42} {-}c_{32}c_{44} {+}c_{43}c_{44} {-}c_{42}c_{45} \\
 h_9 &\!=\!&    c_{23}c_{25} {-}c_{22}c_{26} {+}c_{25}c_{35} {-}c_{24}c_{36} {+}c_{26}c_{45} {-}c_{25}c_{46} \\
 h_{10} &\!=\!&   c_{23}c_{32} {-}c_{22}c_{33} {+}c_{33}c_{34} {-}c_{32}c_{35} {+}c_{26}c_{41} {-}c_{23}c_{43}
                 {+}c_{35}c_{43} {-}c_{33}c_{45} \\
 h_{11} &\!=\!&   c_{24}c_{31} {-}c_{22}c_{32} {-}c_{23}c_{42} {-}c_{35}c_{42} {+}c_{22}c_{43} {+}c_{32}c_{45}
                  {-}c_{43}c_{45} {+}c_{42}c_{46} \\
 h_{12} &\!=\!&   c_{22}c_{23} {-}c_{21}c_{25} {+}c_{24}c_{33} {-}c_{22}c_{35} {-}c_{26}c_{42} {+}2c_{25}c_{43}
                  {-}c_{36}c_{44} {-}c_{23}c_{45}  {+}c_{35}c_{45} \\
 h_{13} &\!=\!&   c_{22}^2 {-}c_{21}c_{24} {+}c_{24}c_{32} {-}c_{22}c_{34} {+}c_{24}c_{43} {-}c_{23}c_{44} 
                  {-}c_{35}c_{44} {+}c_{34}c_{45} {-}c_{45}^2 {+}c_{44}c_{46} \\
 h_{14} &\!=\!&   c_{22}c_{31} {-}c_{21}c_{32} {+}c_{32}^2 {-}c_{31}c_{34} {-}c_{23}c_{41} {-}c_{35}c_{41} 
                  {+}c_{21}c_{43} {-}c_{43}^2 {+}c_{31}c_{45} {+}c_{41}c_{46} \\
 h_{15} &\!=\!&   c_{23}^2 {-}c_{21}c_{26} {+}c_{26}c_{32} {-}c_{35}^2 {-}c_{22}c_{36} {+}c_{34}c_{36} 
                  {+}c_{26}c_{43} {-}c_{36}c_{45} {-}c_{23}c_{46} {+}c_{35}c_{46} 
\end{array}
}
$$
Summarizing, since $\dim_\QQ(\Lin_\QQ(I(\BO))) = 6 = \#Z$, 
we know from  Claims (3) and (4) of Proposition~\ref{prop-optimalCint} that $\edim(B_\OO)=18$, that~$\Phi$ 
yields an optimal embedding of the 12-dimensional scheme $\BO$ into $\AA^{18}_{\QQ}$, 
and that the vanishing ideal of the image is minimally generated by~15 quadrics.
\end{example}

\newpage
\section{Subschemes of  Border Basis Schemes}
\label{sec-SubBBS}

In this section we show actual and potential extensions of the material treated so far.
The main goal suggested here is to describe and compute subschemes 
of a border basis scheme whose closed points represent zero-dimensional schemes which
share a property. This is a wide open area of research, although some results 
are already available. 

\medskip
\subsection{The Subscheme  \texorpdfstring{$\BO\df$}{BOdf} }
\label{subsec-BOdf}

An important subscheme of a border basis scheme is  the 
Degree Filtered Border Basis Scheme $\BOdf$. To introduce it, we need 
a key ingredient, the degree filtration.

\begin{definition}\label{def-degfiltration}
Let $R=P/I$ be a zero-dimensional affine $K$-algebra.

 We equip~$P$ with the
(standard) {\bf degree filtration} $\widetilde{\mathcal{F}}
= (F_iP)_{i\in\mathbb{Z}}$, where
\begin{equation*}
F_iP \;=\;  \{f\in P \mid \deg(f) \le i\} \;\cup\; \{0\}
\end{equation*}
This is an increasing filtration which satisfies
$F_iP=\{0\}$ for $i<0$ and $F_0P=K$.
For every $i\in\mathbb{Z}$, let $F_iI=F_iP\cap I$, and let
$F_iR = F_iP/F_iI$. Then the family $(F_iI)_{i\in\mathbb{Z}}$
is called the {\bf induced filtration} on~$I$, and the family
$\mathcal{F}= (F_iR)_{i\in\mathbb{Z}}$
is a $\mathbb{Z}$-filtration on~$R$
which is called the {\bf degree filtration} on~$R$.

It is  \textbf{exhaustive}  and \textbf{orderly} in the
sense that every element $f\in R \setminus \{0\}$ has an
\textbf{order} $\ord_\F(f)=\min\{i\in\ZZ \mid f\in F_i R
\setminus F_{i-1}R\}$.
\end{definition}

\begin{definition}\label{def-degfiltrbasis}
Let $R=P/I$ be a zero-dimensional affine $K$-algebra equipped with a 
degree filtration $\widetilde{\mathcal{F}}$.
\begin{myenumerate}
\item  A tuple $B=(\bar{t}_1,\dots,\bar{t}_\mu) \in R^\mu$ 
is called a \textbf{degree filtered $K$-basis} of~$R$ if
the set $F_iB = B\cap F_i R$ is a $K$-basis of~$F_i R$
for every $i\in\ZZ$ and if $\ord_\F(\bar{t}_1) \le \cdots 
\le \ord_\F(\bar{t}_\mu)$.

\item Given an order ideal $\OO=\{t_1,\dots,t_\mu\}$
in~$\mathbb{T}^n$ and a zero-dimensional ideal $I \subset P$
such that $(\bar{t}_1,\dots,\bar{t}_\mu)$ is a degree-filtered $K$-basis of $P/I$,
we say that $P/I$ has  a \textbf{degree-filtered $\OO$-border basis}.

\end{myenumerate}
\end{definition}

The following example shows that not all monomial bases of $R$ are 
necessarily degree-filtered.

\begin{example}\label{nondegreefiltered}
Let $K=\mathbb{Q}$, let $P = K[x,y]$,
let $I$ be the vanishing ideal of the affine set
of eight points given by
$p_1=(1,-1)$, $p_2 =(0,2)$, $p_3=(1,1)$,
$p_4 =(1,2)$, $p_5=(0,1)$, $p_6=(1,3)$,
$p_7 = (2,4)$, and $p_8 =(3,4)$, and let $R=P/I$.
The reduced Gr\"{o}bner basis of~$I$ with respect to
\texttt{DegRevLex} is
\begin{gather*}
\{ \; x^{2}y -4x^{2} - xy + 4x,\quad 
x^{3} + xy^{2} -6x^{2} -3xy - y^{2} + 7x + 3y -2,\\
y^{4} -10xy^{2} -5y^{3} + 15x^{2} + 30xy + 15y^{2} -35x -25y + 14,\\
xy^{3} -7xy^{2} - y^{3} + 14xy + 7y^{2} -8x -14y + 8\;\}
\end{gather*}
Since this term ordering is degree compatible,
the residue classes of the elements in the tuple
$(1, y, x, y^2, xy, x^2, y^3, xy^2)$
form a degree-filtered $K$-basis of~$R$
with order tuple $(0,1,1,2,2,2,3,3)$.
On the other hand, the reduced Gr\"{o}bner basis of~$I$ 
with respect to~\texttt{Lex} is
\begin{gather*}
\{\, x^{2} -\tfrac{2}{3}xy^{2} + 2xy -\tfrac{7}{3}x +
\tfrac{1}{15}y^{4} -\tfrac{1}{3}y^{3}
+ y^{2} -\tfrac{5}{3}y + \tfrac{14}{15},\\
xy^{3} -7xy^{2} + 14xy -8x - y^{3} + 7y^{2} -14y + 8,\,
y^{5} -9y^{4} + 25y^{3} -15y^{2} -26y + 24 \,\}
\end{gather*}
So, the residue classes of the elements in the tuple
$B=(1, y, x,  y^2, xy, y^3, xy^2, y^4)$ form a $K$-basis of~$R$.
Since $\bar{y}^4 = 10\bar{x}\bar{y}^2
+ 5\bar{y}^3 - 15\bar{x}^2 - 30\bar{x}\bar{y}
- 15\bar{y}^2 + 35\bar{x} + 25\bar{y} - 14$, 
we have $\ord_\Fbar(\bar{y}^4)=3$.
Altogether, we see that~$B$ is not a degree-filtered basis, since
its order tuple is $(0,1,1,2,2,3,3,3)$.
\end{example}

The degree filtered border basis scheme
is a subscheme of~$\BO$ which
parametrizes all zero-dimensional ideals in~$P$
which have a degree filtered $\OO$-border basis.
The following proposition provides an explicit description
of this subscheme.

\begin{proposition}\label{CharBOdf}
Let $\OO=\{t_1, \dots, t_\mu\}$ be an order ideal and let
$\partial\OO=\{b_1,\dots,b_\nu\}$ be the border of~$\OO$. 

\begin{myenumerate}
\item For a $K$-rational point $\Gamma=(\gamma_{ij})$ of~$\BO$,
the zero-dimensional scheme $\X_\Gamma$ represented by~$\Gamma$
has a degree-filtered $\OO$-border basis if and only if $\gamma_{ij}=0$
for all $i\in\{1,\dots,\mu\}$ and $j\in\{1,\dots,\nu\}$ such that
$\deg(t_i)>\deg(b_j)$.

\item Let $I_\OO\df$ be the ideal in~$K[C]$
generated by all  $c_{ij}$ such that $\deg(t_i)>
\deg(b_j)$. The $K$-rational points of the closed subscheme~$\BOdf$ 
of~$\BO$ defined by the ideal $I(\BO)+I_\OO\df$ 
represent the zero-dimensional schemes $\X$ in~$\AA^n_K$ 
which have a degree-filtered $\OO$-border basis.
\end{myenumerate}
\end{proposition}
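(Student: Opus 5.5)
Our plan is to reduce claim (1) to a statement about the normal forms of the border terms modulo $I_\Gamma$, and then obtain claim (2) as a direct consequence. Fix a $K$-rational point $\Gamma=(\gamma_{ij})$ of $\BO$. By Definition~\ref{def-represent}, the polynomials $g_j(X,\Gamma)=b_j-\sum_i\gamma_{ij}t_i$ form the $\OO$-border basis of $I_\Gamma$. Hence $\bar b_j=\sum_i \gamma_{ij}\bar t_i$ in $R=P/I_\Gamma$, and the residue classes $\bar t_1,\dots,\bar t_\mu$ form a $K$-basis of $R$. Order $\OO$ so that $\deg(t_1)\le\cdots\le\deg(t_\mu)$. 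Then ``degree-filtered'' means that, for every $k$, the classes $\{\bar t_i\mid \deg(t_i)\le k\}$ form a $K$-basis of $F_kR$, and that $\ord_\F(\bar t_i)=\deg(t_i)$. Since these classes are linearly independent, the second condition follows from the first. So the whole task is to show that the first condition holds if and only if every $\gamma_{ij}$ with $\deg(t_i)>\deg(b_j)$ vanishes.

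For necessity, assume the basis is degree-filtered. Since $b_j\in F_{\deg(b_j)}P$, we get $\bar b_j\in F_{\deg(b_j)}R$. This space has the basis $\{\bar t_i\mid\deg(t_i)\le\deg(b_j)\}$. Uniqueness of coordinates with respect to the full basis $\bar t_1,\dots,\bar t_\mu$ then forces $\gamma_{ij}=0$ whenever $\deg(t_i)>\deg(b_j)$.

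For sufficiency, assume these $\gamma_{ij}$ vanish. Clearly $\bar t_i\in F_kR$ when $\deg(t_i)\le k$, so it remains to prove that every $f\in P$ with $\deg(f)\le k$ has normal form $\NF_{\OO,I_\Gamma}(f)$ supported on terms of degree at most $k$. We will run the Border Division Algorithm (Proposition~\ref{prop-BorderDiv}) and track degrees. Each reduction step replaces a term $t'b_j$ by $t'\sum_i\gamma_{ij}t_i$. By hypothesis every $t_i$ that occurs has $\deg(t_i)\le\deg(b_j)$, so the new terms have degree at most $\deg(t'b_j)$. Therefore the maximal degree of $h$ never increases during the algorithm. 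Termination and correctness of the algorithm are already guaranteed by Proposition~\ref{prop-BorderDiv}, so the remainder satisfies $\deg\le k$. Since $F_kR$ is the image of $F_kP$, this gives $F_kR=\langle \bar t_i\mid \deg(t_i)\le k\rangle_K$, which proves (1).

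Claim (2) is then immediate. A $K$-rational point of $\BO$ lies on the subscheme $\mathcal Z(I(\BO)+I_\OO\df)$ exactly when the coordinates $c_{ij}$ with $\deg(t_i)>\deg(b_j)$ vanish. By the correspondence of Definition~\ref{def-represent} between points and $\OO$-border bases, such points are exactly the ones representing the schemes described in (1). The main obstacle is the sufficiency direction, namely making sure that the degree bound survives the whole division process. Monotonicity of the degree under each step handles this, so I expect no real difficulty beyond careful bookkeeping.
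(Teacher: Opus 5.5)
Your proof is correct for the intended meaning of ``degree-filtered'' (see the second paragraph), and it takes a different route from the paper. The paper gives no argument of its own: it derives (1) from \cite[Proposition~5.3(d)]{KLR05} and (2) from (1). You prove (1) directly. For necessity you use uniqueness of coordinates in the basis $\bar t_1,\dots,\bar t_\mu$. For sufficiency you observe that, once $\gamma_{ij}=0$ whenever $\deg(t_i)>\deg(b_j)$, every step of the Border Division Algorithm (Proposition~\ref{prop-BorderDiv}) replaces a term $t'b_j$ by terms of degree at most $\deg(t'b_j)$, so a normal form never has larger degree than the polynomial being reduced. Your version is self-contained and shows, as a by-product, that $\NF_{\OO,I_\Gamma}$ respects the degree filtration. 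The citation buys brevity and places the statement among the other characterizations collected in \cite{KLR05}.

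You present your reformulation of ``degree-filtered'' as a paraphrase of the definition, but you should state it as a convention. Your condition is that $\{\bar t_i\mid \deg(t_i)\le k\}$ is a basis of $F_kR$ for every $k$, equivalently $\ord_\F(\bar t_i)=\deg(t_i)$ for all $i$. This implies Definition~\ref{def-degfiltrbasis} as written, but is not implied by it. The definition only asks that $B\cap F_kR$ be a basis of $F_kR$ and that the orders be non-decreasing. Under that literal reading, your necessity step yields only $\gamma_{ij}=0$ when $\ord_\F(\bar t_i)>\deg(b_j)$. In fact the ``only if'' direction then fails. Take $\OO=\{1,x,x^2\}\subset\TT^2$ and $I=\langle y-x^2,\,x^3\rangle$, which is the ideal produced by the lemma preceding Proposition~\ref{prop-CharMaxdeg}. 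Here $\bar y=\bar x^2$, so $F_1R=R$, and $B=(\bar 1,\bar x,\bar x^2)$ satisfies $B\cap F_1R=B$ with orders $(0,1,1)$. Yet the border basis element $y-x^2$ gives $\gamma_{31}=1$ with $\deg(t_3)=2>1=\deg(b_1)$. So say explicitly that ``degree-filtered'' includes $\ord_\F(\bar t_i)=\deg(t_i)$. This is the reading the paper itself needs in the proof of Proposition~\ref{prop-CharMaxdeg}, and with it your argument is complete.
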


\begin{proof}
Claim~(1) follows  from~\cite[Proposition 5.3(d)]{KLR05}, 
and~(2) is a consequence of~(1).
\end{proof}

Part~(2) of this proposition 
gives rise to the following definition.

\begin{definition}\label{def-degfiltrScheme}
Let $\OO=\{t_1,\dots,t_\mu\}$ be an order ideal in~$\mathbb{T}^n$,
let $\partial\OO=\{b_1,\dots,b_\nu\}$ be the border of~$\OO$, and
let $I_\OO\df$ be the ideal in~$K[C]$ generated by all 
indeterminates $c_{ij}$ such that $\deg(t_i)>\deg(b_j)$.

\begin{myenumerate}
\item The closed subscheme~$\BOdf$ of~$\BO$ defined by 
$I(\BOdf) = I(\BO)+I_\OO\df$ is called the
\textbf{degree-filtered $\OO$-border basis scheme}.
Its affine coordinate ring is denoted by $B_\OO\df =
K[C]/I(\BOdf)$.

\item The set of polynomials $G\df = \{g_1\df,\dots,
g_\nu\df\}$ in~$K[C][x_1,\dots,x_n]$ given by the formula
$g_j\df = b_j - \sum_{\{i\mid \deg(t_i)\le\deg(b_j)\}}
c_{ij}\, t_i$  for $j=1,\dots,\nu$ 
is called the \textbf{generic degree-filtered $\OO$-border prebasis}.

\item  The canonical $B_\OO\df$-algebra homomorphism
$$
\Phi\df \colon B_\OO\df \longrightarrow U_\OO\df :=
B_\OO\df[x_1,\dots,x_n]/ \langle G\df\rangle
$$ 
is called the \textbf{universal degree-filtered $\OO$-border basis family}.
\end{myenumerate}
\end{definition}

\begin{myremark}\label{rem-BOdfnotPositive}
The ring $B_\OO\df$ is non-negatively graded but, in general, it is not a positive $P_0$-algebra
since it may happen that $I(\BOdf)  \cap (B_\OO\df)_0 \ne \langle 0\rangle$, 
as the following example illustrates.
\end{myremark}

\begin{example}\label{ex-BOdfnotPositive}
Let $P:=\QQ[x,y]$ and let $\OO=\{1, y, x, y^2, x^2, y^3 \}$. Its border is 
$\partial\OO = \{xy,  xy^2,  x^2y,  x^3,  y^4,  xy^3\}$. The scheme $\BO$ is not MaxDeg
since the indeterminate  $c_{61}$ has degree $-1$ with respect to the total arrow grading.

We have  $B_\OO^{\hom}= \QQ[C]/I$ where $C=(c_{41}, c_{51}, c_{62}, c_{63}, c_{64})$ 
and $I=\langle f_1, f_2 \rangle$ where  $f_1 = c_{41} -c_{62} +c_{51}c_{63}$,
and $f_2 = c_{63} -c_{41}c_{62} - c_{51}c_{64}$. There is no $Z$ of  length~2 
such that the pair $(f_1, f_2)$ is  coherently $Z$-separating, so that we do not even know 
if $\BOhom$ is an affine cell.

A further remark about this example is that  the two polynomials which define~$\BOhom$
are ``parts'' of the following  polynomials in~$I(\BO)$
\begin{equation*}
g_1=c_{41} -c_{62} +c_{51}c_{63}  +c_{31}c_{61} +c_{61}c_{65}, \quad
g_2=c_{63} -c_{41}c_{62} - c_{51}c_{64} -c_{21}c_{61} - c_{61}c_{66}
\end{equation*}
in the sense that $f_i $ is obtained by setting $c_{61}=0$ in  $g_i$ for $i=1,2$.

Moreover, $g_1$ and $g_2$  are among the natural generators of $I(\BO)$, 
and are  homogeneous of degree 0 
since the total arrow degree of $c_{61}$ is  -1
while the total arrow degree of the indeterminates $c_{31}, c_{65}, c_{21}, c_{66}$ is 1.

In particular, in this example we have  
\begin{myenumerate}

\item The ring $B_\OO\hom$ is not a polynomial ring.

\item The ring $B_\OO\hom$ is  the degree zero part of~$B_\OO\df$,  
but  not the degree zero part of~$B_\OO$.
\end{myenumerate}

\smallskip
Using $Z$-re-embedding and UMP it was not possible to decide if $\BOdf$ is an affine cell. 
The length of the support of some  of the remaining polynomials is huge.
\end{example}

To get a nice property of $\BOdf$, we need the following result. We recall that in our setting,
for a given $\OO=\{t_1,\dots,t_\mu\}$  and $\partial\OO=\{b_1,\dots,b_\nu\}$ we have 
$\deg(t_1)\le \cdots \le \deg(t_\mu)$ and $\deg(b_1)\le \cdots \le\deg(b_\nu)$.

\begin{lemma}
Let $\OO=\{t_1,\dots,t_\mu\}$ be an order ideal
with border $\partial\OO=\{b_1,\dots,b_\nu\}$.
Assume that $\deg(t_\mu)>\deg(b_1)$, and let $\Gamma=(\gamma_{ij})\in K^{\mu\nu}$
with $\gamma_{\mu 1}=1$ and $\gamma_{ij}=0$ for $(i,j)\ne (\mu, 1)$.
Then~$\Gamma$ represents a 0-dimensional scheme $\X_\Gamma$ in~$\BO$.
\end{lemma}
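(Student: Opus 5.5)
The plan is to show that $\Gamma$ is a $K$-rational point of $\BO$. By Definition~\ref{def-represent}, the claim then follows automatically. By Definition~\ref{def-BBS}, being a $K$-rational point means that the generic multiplication matrices evaluated at $\Gamma$ commute pairwise. Equivalently, by Theorem~\ref{thm-Mourrain}, the specialized prebasis
$$
G_\Gamma=(b_1-t_\mu,\ b_2,\ \dots,\ b_\nu)
$$
is an $\OO$-border basis of $I=\langle G_\Gamma\rangle$. I will check the commutativity directly, working with the formal multiplication endomorphisms $m_{x_k}$ on $\langle\OO\rangle_K$. For $t\in\OO$ they are
$$
m_{x_k}(t)=\begin{cases} x_kt & \text{if } x_kt\in\OO,\\ t_\mu & \text{if } x_kt=b_1,\\ 0 & \text{if } x_kt=b_j \text{ with } j\ge 2.\end{cases}
$$
It suffices to verify $m_{x_k}m_{x_\ell}(t)=m_{x_\ell}m_{x_k}(t)$ for every $t\in\OO$ and all $k,\ell$.

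Two preliminary observations carry the argument.

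First, $m_{x_k}(t_\mu)=0$ for every $k$. Indeed, $\deg(x_kt_\mu)>\deg(t_\mu)>\deg(b_1)$. Since the terms of $\OO$ are ordered by non-decreasing degree, $t_\mu$ has maximal degree in $\OO$, so $x_kt_\mu\notin\OO$. The degree inequality also gives $x_kt_\mu\ne b_1$. Hence $x_kt_\mu=b_j$ for some $j\ge2$, and its image is $0$.

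Second, since the border terms are ordered by non-decreasing degree, $b_1$ has minimal degree in $\partial\OO$. Suppose $x_kx_\ell t=b_1$ with $t\in\OO$. Then $x_\ell t\in\OO$. Otherwise $x_\ell t\in\partial\OO$ would have degree $\deg(b_1)-1$, contradicting minimality. The same argument gives $x_kt\in\OO$.

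Now set $u=x_kx_\ell t$ and split into cases.

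If $u\in\OO$, then $x_\ell t$ and $x_kt$ lie in $\OO$ because $\OO$ is an order ideal, so both sides equal $u$.

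If $u=b_1$, the second observation gives $x_\ell t, x_kt\in\OO$, so both sides equal $t_\mu$.

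If $u\notin\OO\cup\{b_1\}$, look at the first step on either side. If $x_\ell t\in\OO$, then $x_k(x_\ell t)=u$ is a border term $b_j$ with $j\ge2$, giving $0$. If $x_\ell t=b_1$, we obtain $t_\mu$ and then $m_{x_k}(t_\mu)=0$ by the first observation. If $x_\ell t=b_j$ with $j\ge2$, the result is already $0$. The other side is symmetric, so both sides vanish.

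Thus the matrices commute, $G_\Gamma$ is an $\OO$-border basis by Theorem~\ref{thm-Mourrain}, and $\Gamma$ is a $K$-rational point of $\BO$ representing $\X_\Gamma=\Spec(P/I)$.

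The only delicate point is the case $u=b_1$, where one must rule out that an intermediate term has already left $\OO$. The minimality of $\deg(b_1)$ handles this. The hypothesis $\deg(t_\mu)>\deg(b_1)$ is used exactly to make $t_\mu$ annihilated by every $m_{x_k}$.
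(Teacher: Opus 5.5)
Your argument is correct, but it takes a different route from the paper.

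\textbf{The paper's route.} The paper never looks at the multiplication matrices.
\begin{itemize}
\item It chooses a term ordering $\sigma$ with $b_1>_\sigma t_\mu$. This is possible because $b_1\nmid t_\mu$, and $\sigma$ is necessarily not degree-compatible.
\item It checks by Buchberger's criterion that $(b_1-t_\mu,b_2,\dots,b_\nu)$ is a $\sigma$-Gr\"obner basis. The only nontrivial $S$-polynomials are $S_{1k}=-\tfrac{\lcm(b_1,b_k)}{b_1}\,t_\mu$. These are proper multiples of $t_\mu$ because $b_k\nmid b_1$, and they reduce to zero since every $x_it_\mu$ lies in $\partial\OO\setminus\{b_1\}$.
\item It then applies Macaulay's Basis Theorem to get $\OO=\OO_\sigma(I_\Gamma)$, so $\OO$ is a basis of $P/I_\Gamma$.
\end{itemize}

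\textbf{Your route.} You verify the defining equations of $\BO$ at $\Gamma$ directly. So $\Gamma\in\BO$ follows immediately from Definition~\ref{def-BBS}, and Theorem~\ref{thm-Mourrain} is needed only to call $G_\Gamma$ a border basis.

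\textbf{Comparison.} Both proofs rest on the same two combinatorial facts:
\begin{itemize}
\item every proper divisor of $b_1$ lies in $\OO$ (your second observation; the paper's $b_k\nmid b_1$);
\item $x_kt_\mu\in\partial\OO\setminus\{b_1\}$ for all $k$ (your first observation).
\end{itemize}
Your version stays inside border-basis theory and needs no term ordering, at the cost of a case analysis on $x_kx_\ell t$. The paper's version is shorter given standard Gr\"obner machinery. It also shows that this particular border basis is a Gr\"obner basis for a non-degree-compatible ordering, which fits its use in proving Proposition~\ref{prop-CharMaxdeg}.
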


\begin{proof} To show that the $\OO$-border prebasis
$G_\Gamma = (g_1,\dots,g_\nu) = (b_1-t_\mu, b_2,\dots, b_\nu)$ 
is an $\OO$-border basis,
it suffices to show that the residue classes of the 
elements of~$\OO$ form a $K$-basis of $P/I_\Gamma$, 
where $I_\Gamma=\langle G_\Gamma \rangle$. 

Since~$\OO$ is an order ideal,
we have $b_1 \nmid t_\mu$. Therefore there exists a term ordering~$\sigma$
such that $b_1 >_\sigma t_\mu$. Now we use Buchberger's Criterion
to verify that $G_\Gamma = (b_1-t_\mu, b_2,\dots,b_\nu)$ is a 
$\sigma$-Gr\"obner basis of $I_\Gamma$. 

For this it suffices to show that, for $k=2,\dots,\nu$, the 
$S$-polynomial~$S_{1k}$ satisfies $S_{1k} \stackrel{G}{\rightarrow} 0$.
Using
$$
S_{1k} = \tfrac{{\lcm}(b_1,b_k)}{b_1}(b_1-t_\mu) 
-  \tfrac{{\lcm}(b_1,b_k)}{b_k} b_k
= -\tfrac{{\lcm}(b_1,b_k)}{b_1}t_\mu
$$
and the fact that $\deg(b_1)\le \deg(b_k)$ implies $b_k \nmid b_1$,
we see that $S_{1k}$ is a proper multiple of~$t_\mu$.
Therefore the claim follows from $x_i t_\mu\in \partial\OO\setminus\{b_1\}
\subset G_\Gamma$. 
Now we conclude from Macaulay's Basis Theorem 
that $\OO=\OO_\sigma(I_\Gamma)=\mathbb{T}^n \setminus \LT_\sigma(G_\Gamma)$
represents a $K$-basis of $P/I_\Gamma$, and the proof is complete.
\end{proof}

At this point we are ready to prove the promised nice property of $\BOdf$.

\begin{proposition}\label{prop-CharMaxdeg}
For an order ideal $\OO=\{t_1,\dots,t_\mu\}$,
the following conditions are equivalent.

\begin{myenumerate}
\item We have $\BO=\BOdf$.

\item The scheme $\BO$ is MaxDeg.
\end{myenumerate}
\end{proposition}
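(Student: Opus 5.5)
The argument runs in both directions, using the description of $\BOdf$ in Definition~\ref{def-degfiltrScheme}: as a closed subscheme of $\BO$, it is cut out by the extra ideal $I_\OO\df$ generated by the indeterminates $c_{ij}$ with $\deg(t_i)>\deg(b_j)$. Hence $\BO=\BOdf$ holds if and only if $I_\OO\df\subseteq I(\BO)$ (as ideals, i.e.\ scheme-theoretically), and in particular only if every $K$-rational point of $\BO$ satisfies $\gamma_{ij}=0$ whenever $\deg(t_i)>\deg(b_j)$.

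\emph{(2)$\Rightarrow$(1).} If $\OO$ is MaxDeg, then condition~(1) of Definition~\ref{def-maxdegBBS} gives $\deg(b_j)\ge\deg(t_i)$ for all $i,j$. So there are no indeterminates $c_{ij}$ with $\deg(t_i)>\deg(b_j)$. Therefore $I_\OO\df=\langle 0\rangle$, hence $I(\BOdf)=I(\BO)$ and $\BOdf=\BO$. This direction is immediate.

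\emph{(1)$\Rightarrow$(2).} I would argue by contraposition. Suppose $\OO$ is not MaxDeg, so there exist $i,j$ with $\deg(t_i)>\deg(b_j)$. Since the terms are indexed so that $\deg(t_1)\le\cdots\le\deg(t_\mu)$ and $\deg(b_1)\le\cdots\le\deg(b_\nu)$, it follows that $\deg(t_\mu)\ge\deg(t_i)>\deg(b_j)\ge\deg(b_1)$. The preceding lemma then applies: the point $\Gamma$ with $\gamma_{\mu1}=1$ and all other coordinates zero is a $K$-rational point of $\BO$. However, the coordinate $\gamma_{\mu1}=1$ corresponds to an indeterminate $c_{\mu1}$ with $\deg(t_\mu)>\deg(b_1)$, so $c_{\mu1}\in I_\OO\df$. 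Since $c_{\mu1}$ does not vanish at $\Gamma$, the point $\Gamma$ is not in $\BOdf$. Equivalently, by Proposition~\ref{CharBOdf}(1), $\X_\Gamma$ has no degree-filtered $\OO$-border basis. Hence $\BO\ne\BOdf$.

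The only point needing care is this second direction. One must use the degree ordering of the $t_i$ and $b_j$ to reduce an arbitrary offending pair $(i,j)$ to the specific pair $(\mu,1)$ required by the lemma. With that reduction, exhibiting a rational point outside $\BOdf$ separates the schemes, even at the level of points, so no scheme-theoretic subtlety arises.
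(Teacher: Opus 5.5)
Your proof is correct and follows essentially the same route as the paper. For (2)$\Rightarrow$(1) you use $I_\OO\df=\langle 0\rangle$ under MaxDeg, and for (1)$\Rightarrow$(2) you argue by contraposition via the preceding lemma. Your explicit reduction of an arbitrary offending pair $(i,j)$ to the pair $(\mu,1)$, using the degree-ordered indexing, makes precise a step the paper leaves implicit. So does your direct observation that $c_{\mu1}\in I_\OO\df$ does not vanish at $\Gamma$.
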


\begin{proof}
First we show that~(1) implies~(2). By hypothesis, 
every scheme $\X_\Gamma$ represented by a $K$-rational point~$\Gamma$
of~$\BO$ has an $\OO$-border basis which is degree filtered. Suppose 
that there exist terms $t_i\in\OO$ and $b_j\in\partial\OO$
such that $\deg(t_i)>\deg(b_j)$. Then the lemma yields
a 0-dimensional scheme $\X_\Gamma$ which has an $\OO$-border
basis that is not degree filtered, in contradiction
to the hypothesis.

That~(2) implies~(1) follows from Proposition~\ref{CharBOdf}\,(2), since 
for a MaxDeg border basis scheme $\BO$  we have $I_\OO\df =\langle 0\rangle$.
\end{proof}

\medskip
\subsection{The Locally Gorenstein Locus and Other Loci}
\label{subsec-LocGor}

In this subsection we briefly report on some results concerning the computation 
of special loci inside a border basis scheme. This topic was studied in~\cite{KLR-1},
using special techniques described in~\cite{KLR-2, KR25}.

Let us recap the main contribution to this problem given in the book~\cite{KR25}.
Let $V$ be a finite-dimensional $K$-vector space and let $\mathcal{F}$ be a finite family 
of endomorphisms of $V$. Then $V$ acquires the structure of an $\mathcal{F}$-module 
via the operation $\phi\cdot v = \phi(v)$. Moreover, if $V$ is generated as 
an $\mathcal{F}$-module by a single element, we say that $V$ is \textbf{$\mathcal{F}$-cyclic}.
Then, the  \textbf{Cyclicity Test} (see~\cite[Algorithm 3.1.4]{KR25}) checks whether 
$V$ is $\mathcal{F}$-cyclic by simply computing the determinant of a suitable matrix.

How can this information be used in the context of border bases? A key ingredient is the following.

\begin{definition}\label{def-canonicalmodule}
Let $R=P/I$ be a zero-dimensional affine $K$-algebra.
The \textbf{canonical module}
of~$R$ is  the $K$-vector space
$\omega_R = \Hom_K(R,K)$ equipped with the $R$-module structure
defined by $f\cdot \phi (g)=\phi(fg)$ for $f, g\in R$.
\end{definition}

\begin{definition}\label{def-LocGor}
Let $R$ be a 0-dimensional affine $K$-algebra. If $(R, \m)$ is a local ring, 
we say that $R$ is Gorenstein if its socle, i.e.,\ the annihilator of~$\m$, is a principal ideal.
If $R$ is  semi-local and $\q_1,\dots,\q_s$ are the primary components
of the zero ideal of~$R$,  we say that~$R$ is
\textbf{locally Gorenstein} if $R/\q_i$ is a Gorenstein local ring for $i=1,\dots,s$. 
\end{definition}

Here is the promised link to the Cyclicity Test.

\begin{theorem}{\bf (The Canonical Module of 
a Locally Gorenstein Ring)}\label{thm-CharGor}

Let $K$ be a field, and let~$R$ be a zero-dimensional affine $K$-algebra.
Then the following conditions are equivalent.

\begin{myenumerate}
\item The ring $R$ is locally Gorenstein.

\item The canonical module $\omega_R$ is a cyclic $R$-module.

\item There exists an element $\phi\in \omega_R$ such that 
$\Ann_R(\phi) = \{0\}$.
\end{myenumerate}
\end{theorem}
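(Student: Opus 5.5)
The plan is to reduce to the local case via the decomposition of $R$ into local factors, and then use Matlis-type duality over the residue field. Since $R$ is zero-dimensional, the Chinese Remainder Theorem gives $R\cong R/\q_1\times\cdots\times R/\q_s$ with each $R_i=R/\q_i$ a local Artinian $K$-algebra. The canonical module splits accordingly: $\omega_R=\Hom_K(R,K)\cong\bigoplus_i \Hom_K(R_i,K)=\bigoplus_i\omega_{R_i}$, where the $R$-action on the $i$-th summand factors through $R_i$. A module $\bigoplus_i M_i$ over $\prod_i R_i$ is cyclic if and only if each $M_i$ is a cyclic $R_i$-module (take the sum of generators; conversely, project a generator). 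In the same way, an element $\phi=(\phi_1,\dots,\phi_s)$ has $\Ann_R(\phi)=\prod_i\Ann_{R_i}(\phi_i)$. So each of the three conditions holds for $R$ exactly when it holds for every $R_i$, and it suffices to treat a local $(R,\m)$.

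For local $R$ with $k=R/\m$ a finite extension of $K$, I would first show (2)$\Leftrightarrow$(3). Since $\dim_K\omega_R=\dim_K R$, the map $R\to\omega_R$, $f\mapsto f\cdot\phi$, is surjective if and only if it is injective, that is, if and only if $\Ann_R(\phi)=0$. This gives both directions at once, and it works for general $R$ as well.

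For (1)$\Leftrightarrow$(2), I would use the fact that $\omega_R$ is cyclic if and only if $\omega_R/\m\omega_R$ is one-dimensional over $k$ (Nakayama). Dualizing over $K$ gives $\Hom_K(\omega_R/\m\omega_R,K)\cong\{f\in R\mid \m f=0\}=\mathrm{Soc}(R)$, since $\Hom_K(\omega_R,K)\cong R$ canonically. The $K$-dimensions therefore agree, so $\dim_K(\omega_R/\m\omega_R)=\dim_K\mathrm{Soc}(R)$. The socle is a $k$-vector space, and it is principal exactly when $\dim_k\mathrm{Soc}(R)=1$. Here $\omega_R/\m\omega_R$ is also a $k$-space. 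To conclude, I need its $k$-dimension to match that of the socle, and not only its $K$-dimension. This follows because both $K$-dimensions equal $[k:K]$ times the respective $k$-dimension. So $\omega_R$ is cyclic exactly when the socle is principal, which is the Gorenstein condition of Definition~\ref{def-LocGor}.

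The main obstacle I expect is the duality identification $\Hom_K(\omega_R/\m\omega_R,K)\cong\mathrm{Soc}(R)$, so I would verify it explicitly. A functional on $\omega_R$ is evaluation at some $f\in R$, because $R$ is finite-dimensional. Such a functional vanishes on $\m\omega_R$ if and only if $\phi(gf)=0$ for all $g\in\m$ and all $\phi$. That holds if and only if $\m f=0$.
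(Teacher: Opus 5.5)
Your proof is correct, but it cannot be compared step by step with the paper, because the survey gives no argument for this theorem: it only refers to \cite[Theorem~5.2]{KLR-2}. What you supply is a self-contained elementary proof in three steps.
\begin{enumerate}
\item You split $R\cong\prod_i R/\q_i$ and check that each of the three conditions holds for $R$ exactly when it holds for every local factor.
\item You obtain (2)$\Leftrightarrow$(3), even without localizing, because $f\mapsto f\cdot\phi$ is a $K$-linear map between spaces of the same finite dimension, so it is surjective exactly when it is injective.
\item For (1)$\Leftrightarrow$(2) in the local case, you combine Nakayama's lemma with the $K$-duality $\Hom_K(\omega_R/\m\omega_R,K)\cong 0:_R\m$. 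You then correctly pass from $K$-dimensions to $R/\m$-dimensions via the factor $[R/\m:K]$, which matters because the residue field need not be $K$.
\end{enumerate}
Textbook proofs of this equivalence usually go through Matlis duality, identifying $\Hom_K(R,K)$ with the injective hull of the residue field. Your argument avoids that and uses nothing beyond linear algebra and Nakayama. Its (2)$\Leftrightarrow$(3) step is also exactly the linear algebra behind the determinant criterion of Corollary~\ref{cor-cycBB}: there the $j$-th column of $D$ is the coordinate vector of $t_j\cdot\phi$ for a generic $\phi$.

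The only tacit points are that $\omega_R\neq 0$ and that the socle of a nonzero Artinian local ring is nonzero. Both can be bypassed: cyclicity of $\omega_R$ and principality of the socle are each ``dimension at most one'' conditions, so your dimension equality matches them directly.
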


\begin{proof}
See~\cite[Theorem 5.2]{KLR-2}.
\end{proof}

\begin{corollary}\label{cor-cycBB}
Let $P=K[x_1, \dots, x_n]$, 
let $\OO = \{t_1, \dots, t_\mu\}$ be an order ideal,
let $\partial\OO = \{b_1, \dots, b_\nu\}$ be the border of $\OO$, 
let  $\{g_1, \dots, g_\nu\}$ be an $\OO$-border basis, 
and let $I = \langle g_1, \dots, g_\nu  \rangle$. Moreover, let $M_{x_1}, \dots, M_{x_n}$ be the 
formal multiplication matrices and let $D \in \Mat_ \mu(K[z_1, \dots, z_\mu])$ be the 
matrix whose $j$-th column is
\begin{equation*}
t_j(M_{x_1}\tr,\dots, M_{x_n}\tr) \cdot (z_1,\dots, z_\mu)\tr.
\end{equation*}
Then the following conditions are equivalent.
\begin{myenumerate}
\item We have $\det(D) \ne 0$.

\item The ring $P/I$ is locally Gorenstein.
\end{myenumerate}
\end{corollary}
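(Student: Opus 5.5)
We reduce Corollary~\ref{cor-cycBB} to Theorem~\ref{thm-CharGor}, taking condition (3) there as the target: $P/I$ is locally Gorenstein if and only if some $\phi\in\omega_R$ has $\Ann_R(\phi)=\{0\}$. Since $G$ is an $\OO$-border basis, Theorem~\ref{thm-Mourrain} says that $\bar t_1,\dots,\bar t_\mu$ is a $K$-basis of $R=P/I$ and that $M_{x_1},\dots,M_{x_n}$ are the matrices of multiplication by $x_1,\dots,x_n$ in this basis. Let $\bar t_1^\ast,\dots,\bar t_\mu^\ast$ be the dual basis of $\omega_R=\Hom_K(R,K)$.

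The first step is to describe the $R$-module structure of $\omega_R$ in coordinates. For $f,g\in R$ we have $(x_k\cdot\phi)(g)=\phi(x_kg)$. So the action of $x_k$ on $\omega_R$ is the transpose of the multiplication map, and in the dual basis its matrix is $M_{x_k}\tr$. These transposes still commute pairwise. Hence for any term $t$, the action of $t$ on $\omega_R$ has matrix $t(M_{x_1}\tr,\dots,M_{x_n}\tr)$, and the same holds for every polynomial $f$.

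The second step handles a specialization. Fix $\phi=\sum_i a_i\bar t_i^\ast$ with coordinate vector $a=(a_1,\dots,a_\mu)\tr\in K^\mu$. Then $t_j\cdot\phi$ has coordinate vector $t_j(M_{x_1}\tr,\dots,M_{x_n}\tr)\,a$, which is the $j$-th column of $D$ evaluated at $z=a$. The map $R\to\omega_R$, $f\mapsto f\cdot\phi$, is $K$-linear, and in the bases $\bar t_j$ and $\bar t_i^\ast$ its matrix is exactly $D(a)$. Its kernel is $\Ann_R(\phi)$. Therefore $\Ann_R(\phi)=\{0\}$ holds if and only if $\det D(a)\neq 0$.

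The final step compares $\det(D)\neq 0$ as a polynomial with nonvanishing at some point of $K^\mu$, and this is the only delicate point. If $K$ is infinite, a nonzero polynomial has a non-root in $K^\mu$, and the equivalence follows directly from Theorem~\ref{thm-CharGor}. If $K$ is finite, a nonzero $\det(D)$ need not have a $K$-rational non-root. In that case I would extend scalars to an infinite field $L\supseteq K$, for instance $L=K(z_1,\dots,z_\mu)$, and evaluate at the generic point. Passing from $R$ to $R\otimes_K L$ leaves the matrices unchanged. Cyclicity of $\omega$ is equivalent to the nonvanishing of some $\mu\times\mu$ minor built from the vectors $t\cdot\phi$; this is the Cyclicity Test criterion of~\cite{KR25}. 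Since that criterion is stable under field extension, local Gorensteinness of $R$ can be detected over $L$. I expect checking this base-change invariance to be the main obstacle. The remaining steps are routine bookkeeping with dual bases.
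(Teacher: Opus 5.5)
Your first two steps are correct, but they take a more explicit route than the paper. The paper only observes that the matrices $M_{x_i}\tr$ represent multiplication on $\omega_{P/I}$ and that $D$ is exactly the matrix of the Cyclicity Test, and then it invokes the equivalence of (1) and (2) in Theorem~\ref{thm-CharGor}. You instead use condition~(3) and identify $D(a)$ with the matrix of $f\mapsto f\cdot\phi$, which reproves the content of the Cyclicity Test. This makes your argument self-contained and complete when $K$ is infinite. It also settles (2)$\Rightarrow$(1) over every field, because a generator $\phi$ of $\omega_R$ is a $K$-rational point with $\det D(a)\neq 0$.

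The gap is (1)$\Rightarrow$(2) over a finite field. You flag it but do not close it, and your justification there is circular. The ``Cyclicity Test criterion'' you appeal to is precisely the equivalence between cyclicity of $\omega_R$ over $K$ and $\det(D)\neq 0$, which is what is at stake. The polynomial condition $\det(D)\neq 0$ is trivially unaffected by field extension, but the existence of a $K$-rational generator is not. If you are prepared to cite the test over $K$, as the paper does, the detour through $L$ is unnecessary.

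Otherwise you need descent: if $\omega_{R_L}$ is cyclic over $R_L=R\otimes_K L$, then $\omega_R$ is cyclic over $R$. This is short.
\begin{enumerate}
\item Let $\m_1,\dots,\m_s$ be the maximal ideals of the Artinian ring $R$. By the Chinese Remainder Theorem and Nakayama's lemma, a finitely generated $R$-module $M$ is cyclic if and only if $\dim_{R/\m_i}(M/\m_i M)\le 1$ for every~$i$.
\item Suppose $d_i=\dim_{R/\m_i}(\omega_R/\m_i\omega_R)\ge 2$ for some~$i$. Then $\omega_{R_L}\cong\omega_R\otimes_K L$ surjects $R_L$-linearly onto $(\omega_R/\m_i\omega_R)\otimes_K L\cong\bigl((R/\m_i)\otimes_K L\bigr)^{d_i}$.
\item This target is a free module of rank $d_i\ge 2$ over a nonzero ring, so it is not cyclic, and hence neither is $\omega_{R_L}$.
\end{enumerate}
Now take your generic point $\phi=\sum_i z_i\bar t_i^\ast\in\omega_{R_L}$ with $L=K(z_1,\dots,z_\mu)$. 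Since $\det(D)\neq 0$ in~$L$, $\phi$ generates $\omega_{R_L}$. By the descent argument, $\omega_R$ is cyclic, and so $R$ is locally Gorenstein by Theorem~\ref{thm-CharGor}.
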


\begin{proof}
The transposed matrices $M_{x_i}\tr$ represent the endomorphisms of multiplication by~$x_i$ on the canonical module $\omega_{P/I}$, and  in our case the matrix $D$ is exactly the matrix used in the cyclicity test. The conclusion follows from the equivalence of (1) and (2) in the theorem.
\end{proof}

Finally, we have the solution to the problem of computing the subscheme $\LGor(\OO)$
of a border basis scheme $\BO$
which parametrizes zero-dimensional ideals $I$ in $P$ such that~$\OO$ represents a basis of $P/I$ and $P/I$ is locally Gorenstein.

\begin{theorem} {\bf  (The Locally Gorenstein Locus)}
\label{thm-LocGorLocus}

Let $\OO=\{t_1,\dots,t_\mu\}$ be an order ideal
in~$\mathbb{T}^n$, let $\mathcal{M}_{x_1}, \dots, \mathcal{M}_{x_n}$ be the 
generic multiplication matrices, and let $B_\OO = K[C]/I(\BO)$ be the 
coordinate ring of $\BO$. Then let 
$D \in \Mat_ \mu(K[C][z_1, \dots, z_\mu])$ be the 
matrix whose $ j$-th column is
\begin{equation*}
t_j(\mathcal{M}_{x_1}\tr,\dots, \mathcal{M}_{x_n}\tr) \cdot (z_1,\dots, z_\mu)\tr
\end{equation*}
for $j = 1,\dots, \mu$, and let $J$ be the ideal in $K[C]$ generated by the coefficients of $\det(D)$ 
with respect to the indeterminates $z_1,\dots, z_\mu$.

Then the ideal $I(\BO) {+} J$  defines a closed subscheme 
$\NonLGor(\OO)$ of~$\BO$
such that the set of $K$-rational points of the complement
$\BO\setminus \NonLGor(\OO)$ is precisely the set $\LGor(\OO)$.
\end{theorem}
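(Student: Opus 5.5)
The plan is to reduce the statement pointwise to Corollary~\ref{cor-cycBB}, using the fact that specialization commutes with forming $D$ and $\det(D)$.

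First, $I(\BO)+J$ is an ideal of $K[C]$ containing $I(\BO)$. Hence it defines a closed subscheme $\NonLGor(\OO)$ of $\BO$, and nothing further is needed for the first part of the claim.

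For the description of rational points, fix a $K$-rational point $\Gamma=(\gamma_{ij})$ of $\BO$. By Definition~\ref{def-represent}, the specialized polynomials $g_j(X,\Gamma)$ form an $\OO$-border basis of $I_\Gamma$. Substituting $c_{ij}\mapsto\gamma_{ij}$ in the generic multiplication matrices $\mathcal{M}_{x_k}$ yields exactly the formal multiplication matrices $M_{x_k}$ attached to this border basis. This follows from the construction in Definition~\ref{def-BBS}, since the columns are either unit vectors $e_s$ or coefficient columns $(c_{1j},\dots,c_{\mu j})\tr$. Transposition and evaluating the terms $t_j$ at the matrices are polynomial operations, so they commute with the evaluation homomorphism $K[C][z]\to K[z]$. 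Thus the specialization $D(\Gamma)$ is precisely the matrix $D$ of Corollary~\ref{cor-cycBB} for $I_\Gamma$. Likewise, the determinant is a polynomial in the entries, so $\det(D)(\Gamma)=\det(D(\Gamma))$ in $K[z_1,\dots,z_\mu]$.

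Next, write $\det(D)=\sum_\alpha p_\alpha(C)\,z^\alpha$ with $p_\alpha\in K[C]$, so that $J=\langle p_\alpha\rangle$. Then $\det(D(\Gamma))=\sum_\alpha p_\alpha(\Gamma)z^\alpha$, and this polynomial is zero if and only if $p_\alpha(\Gamma)=0$ for all $\alpha$. That is, it vanishes exactly when $\Gamma$ is a zero of $J$, i.e.\ when $\Gamma\in\NonLGor(\OO)$. By Corollary~\ref{cor-cycBB}, $\det(D(\Gamma))\neq0$ holds if and only if $P/I_\Gamma$ is locally Gorenstein. Hence $\Gamma\notin\NonLGor(\OO)$ if and only if $\Gamma\in\LGor(\OO)$. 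Conversely, every ideal counted in $\LGor(\OO)$ has an $\OO$-border basis and so comes from a $K$-rational point of $\BO$, which gives the equality of sets.

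The only delicate step is checking that specialization commutes with the whole construction of $D$. Beyond entrywise evaluation, this requires that the specialized generic matrices really are the formal multiplication matrices of the border basis, i.e.\ the matrices representing multiplication on $P/I_\Gamma$ by Theorem~\ref{thm-Mourrain}. That is what lets Corollary~\ref{cor-cycBB} apply verbatim. A further point is that "$\det\ne0$" must be read as non-vanishing as a polynomial in $z$, not at some point of $K^\mu$. This is why $J$ is generated by all coefficients of $\det(D)$ rather than by $\det(D)$ itself, which also takes care of the case of a finite field $K$.
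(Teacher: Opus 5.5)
Your proposal is correct and is essentially the paper's proof: the paper just says the claim follows from Corollary~\ref{cor-cycBB}, and your specialization argument supplies the detail left implicit there. At a $K$-rational point $\Gamma$ of $\BO$ the generic matrices become the formal multiplication matrices of the border basis of $I_\Gamma$, and the coefficients of $\det(D)$ vanish at $\Gamma$ exactly when $\det(D(\Gamma))$ is the zero polynomial. One small point you could add is that $\mathcal{M}_{x_1},\dots,\mathcal{M}_{x_n}$ commute only modulo $I(\BO)$, so $t_j(\mathcal{M}_{x_1}\tr,\dots,\mathcal{M}_{x_n}\tr)$ depends on the order of the factors; different orders change the coefficients of $\det(D)$ only by elements of $I(\BO)$, so $I(\BO)+J$ is well defined and your pointwise argument is unaffected.
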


\begin{proof}
The claim follows from Corollary~\ref{cor-cycBB}.
\end{proof}

The following example illustrates the theorem.
For the complete and detailed  computations see  \cocoa-Example~\ref{coex-LocGor121}.

\begin{example}\label{ex-LocGor121}
Let $P = \QQ[x,y]$ and let $\OO=\{1,\, y,\, x,\, xy\}$. Its border is
$\partial\OO = \{ y^2,\,  x^2,\, xy^2,\, x^2y\}$.  We have  $B_\OO = \QQ[C]/I(\BO)$ 
where $K[C]$ is a polynomial ring with $4 \times 4=16$ indeterminates all of them of 
non-negative total arrow degree. The ideal $I(\BO)$ is homogeneous
with respect to the total arrow grading which assigns to the tuple of indeterminates
$C=(c_{11}, c_{12}, \dots, c_{44})$ the weights given by
\begin{equation*}
W=(2,\, 2,\, 3,\, 3,\, 1,\, 1,\, 2,\, 2,\, 1,\, 1,\, 2,\, 2,\, 0,\, 0,\, 1,\, 1).
\end{equation*}
The transposes of the generic multiplication matrices are
\begin{equation*}
\mathcal{M}_x\tr =\begin{bmatrix}
0 & 0 & 1 & 0 \\
0 & 0 & 0 & 1 \\
c_{12} & c_{22} & c_{32} & c_{42} \\
c_{14} & c_{24} & c_{34} & c_{44}
\end{bmatrix}
\qquad
\mathcal{M}_y\tr =\begin{bmatrix}
0 & 1 & 0 & 0 \\
c_{11} & c_{21} & c_{31} & c_{41} \\
0 & 0 & 0 & 1 \\
c_{13} & c_{23} & c_{33} & c_{43}
\end{bmatrix}
\end{equation*}
Then, we use the column matrix $Z  = \begin{bmatrix} z_1 & z_2 & z_3 & z_4 \end{bmatrix}\tr$
and construct a square matrix $DZ$ whose four columns are 
$Z,\; \mathcal{M}_x\tr\cdot Z, \;  \mathcal{M}_y\tr\cdot Z, \;  \mathcal{M}_{xy}\tr\cdot Z$. 
Its determinant $D$ is 
a polynomial in $\QQ[C][z_1,z_2,z_3,z_4]$ whose support consists of 35 elements.
Next, we construct the ideal $J$ in $\QQ[C]$ generated by the generators of 
$I(\BO)$ together with the coefficients of $D$.  Finally we compute $\dim(\QQ[C]/J)$,
and obtain $\dim(\QQ[C]/J)= 4$.  The conclusion is that in this case  $\LGor(\OO)$ is the complement  of the  4-dimensional subscheme $\NonLGor(\OO)$ of the 8-dimensional 
scheme $\BO$.

From this result, we deduce that non-Gorenstein zero-dimensional schemes such that  $\OO$ 
represents a $\QQ$-basis of their coordinate rings are rare. Let us detect one of them.
Let $I_1 = \langle x +1,\, y +1\rangle$ and $I_2 = \langle x^2, \, xy, \, y^2\rangle$, and
let $I= I_1\cap I_2$. Since $I_2$ is not Gorenstein, the ideal $I$ is not locally Gorenstein.
On the other hand, the ideal $I$ is generated by the $\OO$-border 
basis $(y^2 - xy, \, x^2 - xy,\, xy^2 + xy,\, x^2y + xy)$. Therefore, $I$ is represented by the rational point
$\Gamma=(0,0,0,1,\,  0,0,0,1,\, 0,0,0,-1,\, 0,0,0,-1)$ of $\BO$, which means that $\Gamma$ is a zero of 
$I(\BO)$. It is also a zero of the ideal  generated by the coefficients of $D$, hence it belongs to $\NonLGor(\OO)$.

\smallskip
It is interesting to note that the 4-dimensional subscheme $\NonLGor(\OO)$ misses the origin, i.e.,\ the point in $\BO$ which represents the monomial ideal  $I$ generated by $\partial\OO$.
The reason is that $I = \langle y^2, x^2, xy^2, x^2y \rangle$ is minimally 
generated by $\{y^2, x^2\}$, which is a regular sequence, hence $P/I$ is Gorenstein. 
We can check this directly, since one coefficient of $D$ is $c_{41}c_{42}-1$.
\end{example}

\medskip
We conclude this section by recalling that other loci are studied in~\cite{KLR-1}, in particular, the \textbf{Cayley-Bacharach locus} and the \textbf{strict complete intersection locus}.  
To achieve these results, the subscheme $\BOdf$ plays a fundamental role, and more 
tools are needed, such as the \textbf{affine Hilbert function} and suitable 
stratifications of the scheme $\BO$.

\newpage

\section{Questions and Problems}
\label{sec-Questions and Problems}   

The final part of this survey is devoted to several open questions and problems.

\medskip
\begin{myenumerate}
\item[\textbf{Q1.}] 
In Remark~\ref{rem-HilbConnected} we mentioned that the Hilbert schemes 
$\Hilb^\mu(\mathbb{A}^n_K)$ are connected. Moreover, 
Proposition~\ref{prop-homcommute}\,(5) shows that MaxDeg border 
basis schemes are connected. 
Are all border basis schemes connected?

\medskip
\item[\textbf{Q2.}] Are MaxDeg border basis schemes reduced? 

\medskip
\item[\textbf{Q3.}] What is the smallest value of $\mu$ for which the 
corresponding MaxDeg border basis scheme $\BO$ is reducible?
Recall that for large order ideals, some MaxDeg border basis schemes are known 
to be reducible (see~\cite{Iar, KR3}). 
By contrast, compare this with Proposition~\ref{prop-irreducible}.

\medskip
\item[\textbf{Q4.}]  In Subsection~\ref{subsec-Fibers} we studied the fibers of positive $P_0$-algebras.
In particular, Theorem~\ref{thm-fibers} examines properties related to regularity.
A natural next step would be to investigate the singularities of positive $P_0$-algebras.
A fist step is taken in~\cite{KR7}.

\medskip
\item[\textbf{Q5.}] Planar border basis schemes are smooth and irreducible
(Proposition~\ref{prop-irreducible}).
Moreover, if $K$ is a perfect field, planar MaxDeg border basis schemes are affine cells
(see Theorem~\ref{thm-bivariateaffinecells}).
Does Example~\ref{ex-BOdfnotPositive}  provide an instance of a planar 
border basis scheme that is not an affine cell?

\medskip
\item[\textbf{Q6.}]  For planar border basis schemes, we know that non-exposed 
indeterminates can be eliminated. What about the non-planar case?

\medskip
\item[\textbf{Q7.}]  It follows from Proposition~\ref{prop-irreducible}\,(3) that the 
$(2, 2, 2)$-Box is irreducible, hence it coincides with its principal component.
Thus, we have $\dim(\BO) = 3\times 8 = 24$. Moreover, a direct computation 
shows that the origin of $\BO$ is regular. 
Can we gain a deeper understanding of this scheme?

\medskip
\item[\textbf{Q8.}]  In Section~\ref{sec-Border Bases} 
we emphasized the suitability of border bases for handling symmetry.
However, if we have an order ideal $\OO$ which is invariant with respect to
a set of permutations of the indeterminates in $P$, the corresponding ideal
$I(\BO)$ is not necessarily invariant with respect to the indeterminates in $K[C]$.
Is there a natural way to modify $I(\BO)$ so as to obtain an invariant ideal?
 
 \medskip
\item[\textbf{Q9.}] In Section~\ref{sec-SubBBS} we have studied some important
subschemes of the border basis schemes. In Subsection~\ref{subsec-LocGor}
we described open, closed, and constructible subschemes parame-trizing
zero-dimensional schemes that share a given property.
It would be interesting to study further subschemes of this kind.
 
 \medskip
\item[\textbf{Q10.}] As far as I know, nobody has yet studied nested 
border basis schemes. Perhaps it is time to remedy this.

\end{myenumerate}

\newpage
\cocoasection{CoCoA Examples}
\label{sec-CoCoA Examples}
The following session shows how to use \cocoa\ to solve some computational problems.

\begin{example} \label{coex-bb-no-gb}
\par\noindent
\begin{lstlisting}[style=cocoaStyle]
Use P::=QQ[x,y]; -- polynomial ring over the rationals
OO:=[one(P), y, x, y^2, x^2]; OO; 
-- [1,  y,  x,  y^2,  x^2]
BB:=BBBorder(OO); BB;
-- [x*y,  y^3,  x*y^2,  x^2*y,  x^3]
I:=ideal(x*y - x^2 -y^2, x^3, x^2*y, x*y^2, y^3);
GF:=GroebnerFanIdeals(I); -- Groebner fan of I
U:=[GBasis(I) | I in GF];
indent(U);
-- [
--   [x^2 -x*y +y^2,  y^3,  x*y^2],
--   [y^2 -x*y +x^2,  x^3,  x^2*y]
-- ]
LL:=[[Gens(LT(I)), QuotientBasisSorted(I)] | I in GF];
indent(LL);
-- [
--   [[x^2,  y^3,  x*y^2],  [1,  y,  x,  y^2,  x*y]],
--   [[y^2,  x^3,  x^2*y],  [1,  x,  y,  x^2,  x*y]]
-- ]

---------------- formal multiplication matrices
-- M_x
x*OO[1] = OO[3];
x*OO[2] = x*y; -- hence  x*OO[2] --> OO[4] +OO[5]
x*OO[3] =OO[5];
x*OO[4] = BB[3]; -- hence x*OO[4] --> 0
x*OO[5] = BB[5]; -- hence x*OO[5] --> 0

Mx:=transposed(mat([ [0,0,1,0,0], [0,0,0,1,1], [0,0,0,0,1], [0,0,0,0,0], [0,0,0,0,0]   ])); Mx;
-- matrix(QQ,
--  [[0, 0, 0, 0, 0],
--   [0, 0, 0, 0, 0],
--   [1, 0, 0, 0, 0],
--   [0, 1, 0, 0, 0],
--   [0, 1, 1, 0, 0]])
-- /**/
------------------------------

-- My
y*OO[1] = OO[2];
y*OO[2] = OO[4];
y*OO[3] = x*y; -- hence  y*OO[3] --> OO[4] +OO[5]
y*OO[4] = BB[2]; -- hence y*OO[4] --> 0
y*OO[5] = BB[4]; -- hence y*OO[5] --> 0
My:=transposed(mat([ [0,1,0,0,0], [0,0,0,1,0], [0,0,0,1,1], [0,0,0,0,0], [0,0,0,0,0]   ])); My;

-- matrix(QQ,
--  [[0, 0, 0, 0, 0],
--   [1, 0, 0, 0, 0],
--   [0, 0, 0, 0, 0],
--   [0, 1, 1, 0, 0],
--   [0, 0, 1, 0, 0]])
-- /**/ 

Mx*My - My*Mx;
-- matrix(QQ,
--  [[0, 0, 0, 0, 0],
--   [0, 0, 0, 0, 0],
--   [0, 0, 0, 0, 0],
--   [0, 0, 0, 0, 0],
--   [0, 0, 0, 0, 0]])
-- /**/
------------------------------
\end{lstlisting}
\end{example}

\newpage
\begin{example}\label{coex-numstabil}
\par\noindent
\begin{lstlisting}[style=cocoaStyle]
A::=QQ[e]; -- add the indeterminate e to the field QQ
K:=NewFractionField(A); -- field of rational functions in e
Use P::=K[x,y];
f1:=(1/4)*x^2 +y^2 +e*x*y -1;
f2:=x^2 +(1/4)*y^2 +e*x*y -1;
J:=ideal(f1, f2);
GF:=GroebnerFanIdeals(J); -- Groebner fan of J
U:=[GBasis(I) | I in GF]; -- reduced Groebner bases
indent(U);
-- [
--   [x^2 -y^2,  x*y +(5/(4*e))*y^2 -1/e,  y^3 +((-16*e)/(16*e^2 -25))*x +(20/(16*e^2 -25))*y],
--   [y^2 -x^2,  x*y +(5/(4*e))*x^2 -1/e,  x^3 +((-16*e)/(16*e^2 -25))*y +(20/(16*e^2 -25))*x],
--   [y +((-16*e^2 +25)/(16*e))*x^3 +(-5/(4*e))*x,  x^4 +(40/(16*e^2 -25))*x^2 -16/(16*e^2 -25)],
--   [x +((-16*e^2 +25)/(16*e))*y^3 +(-5/(4*e))*y,  y^4 +(40/(16*e^2 -25))*y^2 -16/(16*e^2 -25)]
-- ]
LL:=[[Gens(LT(I)), QuotientBasisSorted(I)] | I in GF];
-- leading term ideals and quotient bases
indent(LL);
-- [
--   [[x^2,  x*y,  y^3],  [1,  y,  x,  y^2]],
--   [[y^2,  x*y,  x^3],  [1,  x,  y,  x^2]],
--   [[y,  x^4],          [1,  x,  x^2,  x^3]],
--   [[x,  y^4],          [1,  y,  y^2,  y^3]]
-- ]
\end{lstlisting}
\end{example}

\begin{example}\label{coex-22Box}
\par\noindent
\begin{lstlisting}[style=cocoaStyle]
Use P::=QQ[x,y];
OO:=[one(P), y, x, x*y]; OO;
-- [1,  y,  x,  x*y]
BO:=BBBorder(OO); BO;
-- [y^2,  x^2,  x*y^2,  x^2*y]
BBS:=BBRing(OO); Use BBS;
NumIndets(BBS);
-- 16
GMM:=GenMultMat(BBS,OO); -- generic multiplication matrices
Mx:=GMM[1];
My:=GMM[2];
Mx; My;
-- matrix( /*RingWithID(6, "QQ[c[1,1],c[1,2],c[1,3],c[1,4],c[2,1],c[2,2],c[2,3],c[2,4],c[3,1],c[3,2],c[3,3],c[3,4],c[4,1],c[4,2],c[4,3],c[4,4]]")*/
--  [[0, 0, c[1,2], c[1,4]],
--   [0, 0, c[2,2], c[2,4]],
--   [1, 0, c[3,2], c[3,4]],
--   [0, 1, c[4,2], c[4,4]]])
-- matrix( /*RingWithID(6, "QQ[c[1,1],c[1,2],c[1,3],c[1,4],c[2,1],c[2,2],c[2,3],c[2,4],c[3,1],c[3,2],c[3,3],c[3,4],c[4,1],c[4,2],c[4,3],c[4,4]]")*/
--  [[0, c[1,1], 0, c[1,3]],
--   [1, c[2,1], 0, c[2,3]],
--   [0, c[3,1], 0, c[3,3]],
--   [0, c[4,1], 1, c[4,3]]])

IBO:=NatIdealOfBBS(BBS,OO);
Ge:=Gens(IBO);
len(Ge);
-- 12
indent(Ge);
-- [
--   -c[1,2]*c[3,1] -c[1,4]*c[4,1] +c[1,3],
--   -c[2,2]*c[3,1] -c[2,4]*c[4,1] +c[2,3],
--   -c[3,1]*c[3,2] -c[3,4]*c[4,1] -c[1,1] +c[3,3],
--   -c[3,1]*c[4,2] -c[4,1]*c[4,4] -c[2,1] +c[4,3],
--   -c[1,1]*c[2,2] -c[1,3]*c[4,2] +c[1,4],
--   -c[2,1]*c[2,2] -c[2,3]*c[4,2] -c[1,2] +c[2,4],
--   -c[2,2]*c[3,1] -c[3,3]*c[4,2] +c[3,4],
--   -c[2,2]*c[4,1] -c[4,2]*c[4,3] -c[3,2] +c[4,4],
--   -c[1,1]*c[2,4] +c[1,2]*c[3,3] +c[1,4]*c[4,3] -c[1,3]*c[4,4],
--   -c[2,1]*c[2,4] +c[2,2]*c[3,3] +c[2,4]*c[4,3] -c[2,3]*c[4,4] -c[1,4],
--   -c[2,4]*c[3,1] +c[3,2]*c[3,3] +c[3,4]*c[4,3] -c[3,3]*c[4,4] +c[1,3],
--   -c[2,4]*c[4,1] +c[3,3]*c[4,2] +c[2,3] -c[3,4]
-- ]

A:=MDGrBBS(OO);
-- requires the packages NonNegGrading.cpkg5 and MaxDegBBS.cpkg5
MDBBS:=A.ring;   Use MDBBS;
GGe:=A.image;
WdInd:=[wdeg(X)[1] | X in indets(MDBBS)];
WdInd; -- total arrow degrees
-- [2,  2,  3,  3,  1,  1,  2,  2,  1,  1,  2,  2,  0,  0,  1,  1]

WdGGe:=[wdeg(X)[1] | X in GGe];
WdGGe; -- total arrow degrees
-- [3,  2,  2,  1,  3,  2,  2,  1,  4,  3,  3,  2]

[IsHomog(X) | X in GGe];
-- checking that the generators of IBO are homogeneous
-- [true,  true,  true,  true,  true,  true,
--  true,  true,  true,  true,  true,  true]
\end{lstlisting}
\end{example}

\bigskip
\begin{example}\label{coex-Zsepar}
\par\noindent
\begin{lstlisting}[style=cocoaStyle]
Use P::=QQ[x,y,z,w];
f_1:=x^3 -y +z;
f_2:=x*w^2 -x -y;
f_3:=x^2 -y*w;
L:=[f_1,f_2,f_3];
I:=ideal(L);
[RLF(P,X) | X in L]; -- linear parts
-- [-y +z,  -x -y,  0]
W:=mat([[0,1,2,0]]);
-- make (f_1,f_2) coherently Z-separating with Z=(y,z)

O:=MakeTermOrdMat(W);
R:=NewPolyRing(QQ,"x,y,z,w",O,1); -- P with the new term ordering
alpha:=PolyAlgebraHom(P,R,indets(R)); -- natural isomorphism

Use R;
J:=ideal(alpha(L));
GBasis(J);
-- [y -x*w^2 +x,  x*w^3 -x^2 -x*w,  z +x^3 -x*w^2 +x]
-- (f_1,f_2) is a coherently (y,z)-separating tuple

S::=QQ[x,w]; -- construct the re-embedding
beta:=PolyAlgebraHom(R,S, "x, x*w^2-x, -x^3 +x*w^2 -x, w");   -- beta is surjective
Use R;
beta(gens(J));
-- [0,  0,  -x*w^3 +x^2 +x*w]

Use S;
T:=S/ideal(x*w^3 -x^2 -x*w);
gamma:=CanonicalHom(S,T);
theta:=gamma(beta(alpha));
Use P;
ker(theta)=I;
-- true
-- hence we have constructed the isomorphism
-- P/I <--> QQ[x,w]/<x*w^3 -x^2 -x*w>
\end{lstlisting}
\end{example}

\bigskip
\begin{example}\label{coex-isotoK[x]}
\par\noindent
\begin{lstlisting}[style=cocoaStyle]
Use P::=QQ[x,y];
f:=x +2*x^8 +8*x^6*y +12*x^4*y^2 +8*x^2*y^3 +2*y^4;
I:=ideal(f);
Q::=QQ[x];
alpha:=PolyAlgebraHom(P,Q,"-2*x^4, x -4*x^8");
Use Q;
preimage0(alpha, x);
-- x^2 +y
ker(alpha)=I;
-- true      hence P/I is isomorphic to QQ[x]
\end{lstlisting}
\end{example}

\newpage
\begin{example}\label{coex-PropTrivBas}
\par\noindent
\begin{lstlisting}[style=cocoaStyle]
Use P::=QQ[x,y];
OO:=[one(P),y,x,y^2,x*y,x^2,y^3,x*y^2];
BO:=BBBorder(OO); BO;
-- [x^2*y,  x^3,  y^4,  x*y^3,  x^2*y^2]
BBS:=BBRing(OO); Use BBS; NumIndets(BBS);
-- 40
IBO:=NatIdealOfBBS(BBS,OO);
Ge:=Gens(IBO); len(Ge);
-- 32

CI:=ClassifyingIndets(BBS,OO);
-- same as CotEquivClasses, using CotEquivIndets

E0:=CI.TrivIndets; E0; len(E0);  -- trivial indeterminates
-- [c[1,1],  c[1,2],  c[1,3],  c[1,4],  c[1,5],  c[2,1],  c[2,2],  c[2,3],  c[2,4],  c[2,5],  c[3,1],  c[3,2],  c[3,3],  c[3,4],  c[3,5],  c[4,2],  c[4,4],  c[4,5],  c[5,5],  c[6,5]]
-- 20

Ep:=CI.ProperIndets; indent(Ep);  -- proper indeterminates
-- [c[5,1],  c[8,5]],
-- [c[4,3],  c[5,4]],
-- [c[4,1],  c[5,2],  c[7,5]]

Eb:=CI.BasicIndets; Eb; len(Eb);  -- basic indeterminates
-- 13

Z:=concat(E0,[c[5,1], c[4,3], c[4,1], c[5,2]]); Z;
-- [c[1,1],  c[1,2],  c[1,3],  c[1,4],  c[1,5],  c[2,1],  c[2,2],  c[2,3],  c[2,4],  c[2,5],  c[3,1],  c[3,2],  c[3,3],  c[3,4],  c[3,5],  c[4,2],  c[4,4],  c[4,5],  c[5,5],  c[6,5],  c[5,1],  c[4,3],  c[4,1],  c[5,2]]

Y:=diff(indets(BBS),Z); Y;
-- [c[5,3],  c[5,4],  c[6,1],  c[6,2],  c[6,3],  c[6,4],  c[7,1],  c[7,2],  c[7,3],  c[7,4],  c[7,5],  c[8,1],  c[8,2],  c[8,3],  c[8,4],  c[8,5]]
\end{lstlisting}
\end{example}

\bigskip
\begin{example}\label{coex-exposed}
\par\noindent
\begin{lstlisting}[style=cocoaStyle]
Use P::=QQ[x,y];
OO:=[one(P),y,x,y^2,y^3]; OO;
-- [1,  y,  x,  y^2,  y^3]
BO:=BBBorder(OO); BO;
-- [x*y,  x^2,  x*y^2,  y^4,  x*y^3]
BBS:=BBRing(OO); NumIndets(BBS);
-- 25
Use BBS;
IBO:=NatIdealOfBBS(BBS,OO);

PartialExposedIndets(BBS,OO,1); -- x-exposed indeterminates
-- [c[2,1],  c[2,4],  c[3,1],  c[3,4],  c[4,1],  c[4,4],  c[5,1],  c[5,4]]

PartialExposedIndets(BBS,OO,2); -- y-exposed indeterminates
-- [c[3,1],  c[3,2],  c[3,3],  c[3,5],  c[5,1],  c[5,2],  c[5,3],  c[5,5]]
\end{lstlisting}
\end{example}

\newpage
\begin{example}\label{coex-UMP}
\par\noindent
\begin{lstlisting}[style=cocoaStyle]
U:=mat([[0,0,1,1,2,2]]);
TeOr:=MakeTermOrdMat(U);
P:=NewPolyRing(QQ,"a,b,x,y,z,w",TeOr,1);
Use P;
g_1:=(1+a^2)*x +a^3*y;
g_2:=a*z +(1-a*b)*w -x^2;
I:=ideal(g_1,g_2);
IsHomog(I);
-- true

cmatG:=mat([[1+a^2,a^3,0,0],[0,0,a,1-a*b]]); cmatG;
-- matrix( /*RingWithID(31, "QQ[a,b,x,y,z,w]")*/
--  [[a^2 +1, a^3, 0, 0],
--   [0, 0, a, -a*b +1]])

I1:=ideal(1+a^2,a^3); GenRepr(1,I1);
-- [-a^2 +1,  a]

I2:=ideal(a,1-a*b); GenRepr(1,I2);
-- [b,  1]

IndP:=transposed(mat([[x,y,z,w]]));

B:=mat(
 [[-a^2 +1, -a^3,      0,         0],
  [a,       a^2 +1,    0,         0],
  [0,       0,         b,       a*b -1],
  [0,       0,         1,         a]]);
det(B);
-- 1

cmatG*B;
-- matrix( /*RingWithID(5, "QQ[a,b,x,y,z,w]")*/
--  [[1, 0, 0, 0],
--   [0, 0, 1, 0]])

B*IndP;
-- [[-a^3*y -a^2*x +x],
--  [a^2*y +a*x +y],
--  [a*b*w +b*z -w],
--  [a*w +z]]

cmatG*B*IndP;
-- matrix( /*RingWithID(5, "QQ[a,b,x,y,z,w]")*/
--  [[x],
--   [z]])

phi:=PolyAlgebraHom(P,P,
"a,b,-a^3*y -a^2*x +x,a^2*y +a*x +y,a*b*w +b*z -w,a*w +z");

phi(g_1);
-- x
phi(g_2);
-- -a^6*y^2 -2*a^5*x*y -a^4*x^2 +2*a^3*x*y +2*a^2*x^2 -x^2 +z

Subst(phi(g_2),x,0);
-- -a^6*y^2 +z

Q::=QQ[a,b,y,w];
eta:=PolyAlgebraHom(P,Q,"a,b,0,y,a^6*y^2,w");
ker(eta);
-- ideal(x, a^6*y^2 -z)
-- hence P/I is isomorphic to Q
\end{lstlisting}
\end{example}

\newpage
\begin{example}\label{coex-maxdeg-essential}
\par\noindent
\begin{lstlisting}[style=cocoaStyle]
Use P::=QQ[x,y];
OO:=[one(P),y,x,y^2,x*y,y^3]; OO;
-- [1,  y,  x,  y^2,  x*y,  y^3]
BO:=BBBorder(OO); BO;
-- [x^2,  x*y^2,  x^2*y,  y^4,  x*y^3]
BBS:=BBRing(OO); Use BBS; NumIndets(BBS);
-- 30
IBO:=NatIdealOfBBS(BBS,OO);
Ge:=Gens(IBO); len(Ge);
-- 24

ZeroTotArrDeg(BBS,OO);
-- [[c[4,1], 0], [c[5,1], 0], [c[6,2], 0], [c[6,3], 0]]

NegTotArrDeg(BBS,OO);
-- [[c[6,1], -1]]

BB:=BOhom(BBS,OO);
-- one component of the record is the homomorphism BBS --> BOhom
theta:=BB.homomorphism;
[theta(X) | X in Ge];
-- [0, 0, 0, 0, 0, -c[5,1]*c[6,2] -c[4,1] +c[6,3], 0, 0, 0, 0, 0, 0, 0, 0, 0, 0, 0, 0, 0, 0, 0, 0, 0, 0]

f:=-c[5,1]*c[6,2] -c[4,1] +c[6,3];
TotalArrowDegree(BBS,OO,f);
-- 0

[X in Ge | TotalArrowDegree(BBS,OO,X)=0];
-- [-c[5,1]*c[6,2] -c[6,1]*c[6,4] -c[4,1] +c[6,3]]
\end{lstlisting}
\end{example}

\newpage
\begin{example}\label{coex-L-shape}
We illustrate the re-embedding procedure and the subsequent use of the
Unimodular Matrix Problem (UMP) on a simple $L$-shape order ideal.

Let
$P=\QQ[x,y]$ and $\OO=\{1,y,x,y^2,x^2\}$.
The border of $\OO$ is
$\partial\OO=\{xy,y^3,xy^2,x^2y,x^3\}$.

The following Cocoa computation constructs the corresponding border basis
scheme, computes a maximal degree grading, performs a non-negative
re-embedding, and finally applies the UMP to obtain a further reduction.
\begin{lstlisting}[style=cocoaStyle]
Use P::=QQ[x,y];

OO:=[one(P), y, x, y^2, x^2]; OO;
-- [1, y, x, y^2, x^2]
BO:=BBBorder(OO); BO;
-- [x*y, y^3, x*y^2, x^2*y, x^3]

BBS:=BBRing(OO);
Use BBS;
NumIndets(BBS);
-- 25
IBO:=IdealOfBBS(BBS,OO);

A:=MDGrBBS(OO);  -- total arrow degree grading of the border basis scheme
MDBBS:=A.ring;
GGe:=A.image;

len(GGe);
-- 20
Use MDBBS;
I:=ideal(GGe);
W:=[wdeg(X)[1] | X in indets(MDBBS)]; W;
-- [2,3,3,3,3,1,2,2,2,2,1,2,2,2,2,0,1,1,1,1,0,1,1,1,1]

MG:=MinSubsetOfGens(Ideal(GGe)); -- use of MinsubSetOfGens

All:=AllBestCohSep(MDBBS,MG);  -- Search for the best coherent separating tuple

CC:=CartesianProduct(All[1],All[2],All[3]);

l:=len(CC);
MM:=NewList(l);
S:=NewList(l);
alpha:=NewList(l);
NGe:=NewList(l);
A:=NewList(l);
Su:=NewList(l);
PS:=NewList(l);

for i:=1 to l do
  MM[i]:=NonNegReEmbed(MDBBS,MG,CC[i]);
  S[i]:=MM[i].ring;
  alpha[i]:=MM[i].homomorphism;
  NGe[i]:=MM[i].image;
  A[i]:=reversed(MinSubsetOfGens(ideal(NGe[i])));
  Su[i]:=[len(support(X)) | X in A[i]];
  PS[i]:=sum(Su[i]);
endfor;

Su[12];Su[27]; 
-- [13,  58]
-- [13,  58]
-- these are the smallest
-- We choose CC[12]

Use MDBBS;
MM12:=NonNegReEmbed(MDBBS,GGe,CC[12]);
S:=MM12.ring;
alpha:=MM12.homomorphism;
AA:=[len(support(alpha(X))) | X in indets(MDBBS)];
NGe:=MM12.image;
J:=ideal(NGe);
MG:=reversed(MinSubsetOfGens(J));
--------------------------------------------------
-- Unimodular Matrix Problem
--------------------------------------------------

Use S;
[wdeg(X)[1]|X in indets(S)];
MG:=BringIn(S,MG);
LinZero(S,MG[1]);
LinZero(S,MG[2]);
[X in indets(S) | wdeg(X)[1]=0];
-- [c[4,1],c[5,1]]
[X in indets(S) | wdeg(X)[1]=1];
-- [c[2,1],c[4,2],c[4,3],c[4,5],
--  c[5,2],c[5,4],c[5,5]]
[X in indets(S) | wdeg(X)[1]=2];
-- [c[2,2],c[3,3],c[3,5]]
MA:=LinZeroMat(S,MG);

indent(MA);
-- [matrix( /*RingWithID(389, "QQ[c[2,1],c[2,2],c[3,3],c[3,5],c[4,1],c[4,2],c[4,3],c[4,5],c[5,1],c[5,2],c[5,4],c[5,5]]")*/
--  [[-c[4,1]^2*c[5,1]^2 -c[4,1]*c[5,1] -1, -c[4,1]^2*c[5,1] -c[4,1], -c[4,1]^2*c[5,1]^2 +1, -c[4,1]*c[5,1]^3 -c[5,1]^2, c[4,1]^3, c[4,1]^2*c[5,1] -c[4,1], c[4,1]*c[5,1]]]),
-- matrix( /*RingWithID(389, "QQ[c[2,1],c[2,2],c[3,3],c[3,5],c[4,1],c[4,2],c[4,3],c[4,5],c[5,1],c[5,2],c[5,4],c[5,5]]")*/
--  [[c[4,1]*c[5,1], -2*c[4,1]*c[5,1] +1, -c[5,1]^2]])]

A:=flatten(GetRows(MA[1]));
GBasis(ideal(A));

B:=flatten(GetRows(MA[2]));
GBasis(ideal(B));
-- [1]
-- [1]    -- The two matrices are unimodular

--------------------------------------------------
-- Quillen--Suslin reduction
--------------------------------------------------

QS1:=qsAlgorithm(MA[1],S);
QS2:=qsAlgorithm(MA[2],S);
NewMA:=[QS1,QS2];
QL:=qsList(S,MG,NewMA);
phi:=PolyAlgebraHom(S,S,QL);  -- construction of the automorphism
F1:=phi(MG[1]);
f:=Subst(phi(MG[2]),c[2,1],0);
-- The pair (F1,f) is coherently separating
-- with Z=[c[2,1],c[2,2]]

--------------------------------------------------
-- Final elimination of the separating indeterminates
--------------------------------------------------
A1:=0;
A2:=-f+c[2,2];
LL:=[A1,A2, c[3,3],c[3,5], c[4,1],c[4,2],c[4,3],c[4,5], c[5,1],c[5,2],c[5,4],c[5,5]];
Shat::=QQ[c[3,3],c[3,5],c[4,1],c[4,2],c[4,3],c[4,5],c[5,1],c[5,2],c[5,4],c[5,5]];
LL:=BringIn(Shat,LL);
theta:=PolyAlgebraHom(S,Shat,LL);
theta(F1); theta(f);
-- [0]
-- [0]

rho:=theta(alpha); -- Composition of the two homomorphisms
[len(support(rho(X))) | X in indets(MDBBS)];
-- [34,  298,  372,  342,  254,  0,  84,  89,  97,  104,  8,  89,  1,  36,  1,  1,  1,  1,  2,  1,  1,  1,  9,  1,  1]
\end{lstlisting}
\end{example}

\newpage
\begin{example}\label{coex-LocGor121}
\par\noindent
\begin{lstlisting}[style=cocoaStyle]
Use P::=QQ[x,y];
OO:=[one(P), y, x, x*y]; OO;
BO:=BBBorder(OO); BO;
-- [y^2,  x^2,  x*y^2,  x^2*y]
BBS:=BBRing(OO); NumIndets(BBS);
-- 16
A:=MDGrBBS(OO);  -- We introduce the non-negative grading
MDBBS:=A.ring;  Use MDBBS;
[wdeg(X)[1] | X in indets(MDBBS)];
-- [2,  2,  3,  3,  1,  1,  2,  2,  1,  1,  2,  2,  0,  0,  1,  1]
GGe:=A.image; --indent(GGe);
IBO:=ideal(GGe);
A:=GenMultMat(MDBBS,OO);
Mx:=A[1]; My:=A[2];
TMx:=transposed(Mx); TMy:=transposed(My);
TMxy:=transposed(My*Mx);
-- We extend QQ[C] adding the indeterminates  z_1,z_2,z_3,z_4
W:=mat([[2,  2,  3,  3,  1,  1,  2,  2,  1,  1,  2,  2,  0,  0,  1,  1,    1, 1, 1, 1]]);
O:=MakeTermOrdMat(W);
S:=NewPolyRing(QQ, "c[1,1],c[1,2],c[1,3],c[1,4],c[2,1],c[2,2],c[2,3],c[2,4],c[3,1],c[3,2],c[3,3],c[3,4],c[4,1],c[4,2],c[4,3],c[4,4], z_1,z_2,z_3,z_4", O, 1);
Use S;
STMx:=BringIn(S,TMx); STMy:=BringIn(S,TMy); STMxy:=BringIn(S,TMxy);
-- we bring the matrices to the new ring
Z:=ColMat([z_1,z_2,z_3,z_4]);
DZ:=BlockMat([[Z,STMx*Z,STMy*Z,STMxy*Z]]);
-- we construct DZ
D:=det(DZ);
Co:=CoefficientsWRT(D,[z_1,z_2,z_3,z_4]);
-- the coefficients of $D$ with respect to z_1,z_2,z_3,z_4
L:=flatten([X.coeff | X in Co],2); len(L);
-- a list of the coefficients
CoJ:=ideal(L);
-- the ideal in QQ[C] generated by L
GenCoJ:=Gens(CoJ);
GCoJ:=BringIn(MDBBS,GenCoJ);
-- the generators brought back to the graded ring of BBS
H:=ideal(GGe)+ideal(GCoJ);
t_0:=CpuTime(); dim(MDBBS/H); TimeFrom(t_0);
-- we compute the dimension
-- 4   OK
-- 0.061
--------------------------------------------------------------
Use P;
I:=intersection(ideal(x+1,y+1),ideal(x^2,x*y,y^2));
GB:=GBasis(I); GB;
QuotientBasis(I);
-- [x*y-y^2, x^2-y^2, y^3+y^2]
-- [1, y, y^2, x]
NF(one(P),I); NF(x,I); NF(y,I); NF(x*y,I);
-- we use the normal forms to get border basis
U:=ideal(y^2-x*y,x^2-x*y,x*y^2+x*y,x^2*y+x*y);
I=U;
-- true
Use MDBBS;
-- we prepare the substitution of the coordinate of the point Gamma which represents I
L:=[[c[1,1],0],[c[2,1],0],[c[3,1],0],[c[4,1],1],
   [c[1,2],0],[c[2,2],0],[c[3,2],0],[c[4,2],1],
   [c[1,3],0],[c[2,3],0],[c[3,3],0],[c[4,3],-1],
   [c[1,4],0],[c[2,4],0],[c[3,4],0],[c[4,4],-1]];
Ge:=Gens(H);
[Subst(X,L)|X in Ge];
-- [0,  0,  0,  0,  0,  0,  0,  0,  0,  0,  0,  0,  0,  0,  0,  0,  0,  0,  0,  0,  0,  0,  0,  0,  0,  0,  0,  0,  0,  0,  0,  0,  0,  0,  0,  0,  0,  0,  0,  0,  0,  0,  0,  0,  0,  0,  0]

Last(Gens(H));
-- c[4,1]*c[4,2]-1
\end{lstlisting}
\end{example}

\newpage
\addcontentsline{toc}{section}{References}
\bibliographystyle{amsplain}

\end{document}

%% file: pic64a.tex
\makebox[6 true cm]{
\beginpicture
\setcoordinatesystem units <10mm,10mm> point at 0  -3
\setplotarea x from -2.5  to 3, y from -2.5 to 3
\axis bottom  shiftedto y=0 / 
\axis left  shiftedto x=0 /  

\arrow <2mm> [.2,.67] from  2.5 0  to 3 0
\arrow <2mm> [.2,.67] from  0 2.5  to 0 3

\put {$\scriptstyle x$} [lt] <0.5mm,0.8mm> at 3.1 0
\put {$\scriptstyle y$} [rb] <1.7mm,0.7mm> at 0 3.1
\put {$\bullet$} at -0.8944  0.8944
\put {$\bullet$} at -0.8944  -0.8944
\put {$\bullet$} at  0.8944   0.8944
\put {$\bullet$} at  0.8944   -0.8944
\setquadratic
\setplotsymbol({\fiverm .})
\plot 
 0.0  1.0    %
 0.1  0.9987 %
 0.2  0.9945 %
 0.3  0.9887 %
 0.4  0.9798 %
 0.5  0.9682 %
 0.6  0.9539 %
 0.7  0.9367 %
 0.8  0.9165 %
 0.9  0.8930 %
 1.0  0.8660 %
 1.1  0.8352 %
 1.2  0.8    %
 1.3  0.7599 %
 1.4  0.7141 %
 1.5  0.6614 %
 1.6  0.6    %
 1.7  0.5268 %
 1.75 0.4841 %
 1.8  0.4359 %
 1.85 0.38   %
 1.9  0.3122 %
 1.93 0.2622 %
 1.95 0.2222 %
 1.98 0.1411 %
 1.99 0.1    %
 2.0  0.0 %
/
\plot 
 0.0  -1.0    %
 0.1  -0.9987 %
 0.2  -0.9945 %
 0.3  -0.9887 %
 0.4  -0.9798 %
 0.5  -0.9682 %
 0.6  -0.9539 %
 0.7  -0.9367 %
 0.8  -0.9165 %
 0.9  -0.8930 %
 1.0  -0.8660 %
 1.1  -0.8352 %
 1.2  -0.8    %
 1.3  -0.7599 %
 1.4  -0.7141 %
 1.5  -0.6614 %
 1.6  -0.6    %
 1.7  -0.5268 %
 1.75 -0.4841 %
 1.8  -0.4359 %
 1.85 -0.38   %
 1.9  -0.3122 %
 1.93 -0.2622 %
 1.95 -0.2222 %
 1.98 -0.1411 %
 1.99 -0.1    %
 2.0  0.0 %
/
\plot 
 0.0  -1.0    %
 -0.1  -0.9987 %
 -0.2  -0.9945 %
 -0.3  -0.9887 %
 -0.4  -0.9798 %
 -0.5  -0.9682 %
 -0.6  -0.9539 %
 -0.7  -0.9367 %
 -0.8  -0.9165 %
 -0.9  -0.8930 %
 -1.0  -0.8660 %
 -1.1  -0.8352 %
 -1.2  -0.8    %
 -1.3  -0.7599 %
 -1.4  -0.7141 %
 -1.5  -0.6614 %
 -1.6  -0.6    %
 -1.7  -0.5268 %
 -1.75 -0.4841 %
 -1.8  -0.4359 %
 -1.85 -0.38   %
 -1.9  -0.3122 %
 -1.93 -0.2622 %
 -1.95 -0.2222 %
 -1.98 -0.1411 %
 -1.99 -0.1    %
 -2.0  0.0 %
/
\plot 
 -0.0  1.0    %
 -0.1  0.9987 %
 -0.2  0.9945 %
 -0.3  0.9887 %
 -0.4  0.9798 %
 -0.5  0.9682 %
 -0.6  0.9539 %
 -0.7  0.9367 %
 -0.8  0.9165 %
 -0.9  0.8930 %
 -1.0  0.8660 %
 -1.1  0.8352 %
 -1.2  0.8    %
 -1.3  0.7599 %
 -1.4  0.7141 %
 -1.5  0.6614 %
 -1.6  0.6    %
 -1.7  0.5268 %
 -1.75 0.4841 %
 -1.8  0.4359 %
 -1.85 0.38   %
 -1.9  0.3122 %
 -1.93 0.2622 %
 -1.95 0.2222 %
 -1.98 0.1411 %
 -1.99 0.1    %
 -2.0  0.0 %
/
\plot 
1.0 0.0   %
0.9987 0.1 %
0.9945 0.2 %
0.9887 0.3 %
0.9798 0.4 %
0.9682 0.5 %
0.9539 0.6 %
0.9367 0.7 %
0.9165 0.8 %
0.8930 0.9 %
0.8660 1.0 %
0.8352 1.1 %
0.8    1.2 %
0.7599 1.3 %
0.7141 1.4 %
0.6614 1.5 %
0.6    1.6 %
0.5268 1.7 %
0.4841 1.75 %
0.4359 1.8 %
0.38   1.85 %
0.3122 1.9 %
0.2622 1.93 %
0.2222 1.95 %
0.1411 1.98 %
0.1    1.99 %
0.0 2.0 %
/
\plot 
1.0    0.0   %
0.9987 -0.1 %
0.9945 -0.2 %
0.9887 -0.3 %
0.9798 -0.4 %
0.9682 -0.5 %
0.9539 -0.6 %
0.9367 -0.7 %
0.9165 -0.8 %
0.8930 -0.9 %
0.8660 -1.0 %
0.8352 -1.1 %
0.8    -1.2 %
0.7599 -1.3 %
0.7141 -1.4 %
0.6614 -1.5 %
0.6    -1.6 %
0.5268 -1.7 %
0.4841 -1.75 %
0.4359 -1.8 %
0.38   -1.85 %
0.3122 -1.9 %
0.2622 -1.93 %
0.2222 -1.95 %
0.1411 -1.98 %
0.1    -1.99 %
0.0    -2.0 %
/
\plot 
-1.0 0.0   %
-0.9987 0.1 %
-0.9945 0.2 %
-0.9887 0.3 %
-0.9798 0.4 %
-0.9682 0.5 %
-0.9539 0.6 %
-0.9367 0.7 %
-0.9165 0.8 %
-0.8930 0.9 %
-0.8660 1.0 %
-0.8352 1.1 %
-0.8    1.2 %
-0.7599 1.3 %
-0.7141 1.4 %
-0.6614 1.5 %
-0.6    1.6 %
-0.5268 1.7 %
-0.4841 1.75 %
-0.4359 1.8 %
-0.38   1.85 %
-0.3122 1.9 %
-0.2622 1.93 %
-0.2222 1.95 %
-0.1411 1.98 %
-0.1    1.99 %
0.0 2.0 %
/
\plot 
-1.0    0.0   %
-0.9987 -0.1 %
-0.9945 -0.2 %
-0.9887 -0.3 %
-0.9798 -0.4 %
-0.9682 -0.5 %
-0.9539 -0.6 %
-0.9367 -0.7 %
-0.9165 -0.8 %
-0.8930 -0.9 %
-0.8660 -1.0 %
-0.8352 -1.1 %
-0.8    -1.2 %
-0.7599 -1.3 %
-0.7141 -1.4 %
-0.6614 -1.5 %
-0.6    -1.6 %
-0.5268 -1.7 %
-0.4841 -1.75 %
-0.4359 -1.8 %
-0.38   -1.85 %
-0.3122 -1.9 %
-0.2622 -1.93 %
-0.2222 -1.95 %
-0.1411 -1.98 %
-0.1    -1.99 %
0.0     -2.0 %
/
\endpicture
} 

%% file: pic64b.tex
\makebox[6 true cm]{

\beginpicture
\setcoordinatesystem units <10mm,10mm> point at 0  -3
\setplotarea x from -2.5  to 3, y from -2.5 to 3
\axis bottom  shiftedto y=0 / 
\axis left  shiftedto x=0 /  

\arrow <2mm> [.2,.67] from  2.5 0  to 3 0
\arrow <2mm> [.2,.67] from  0 2.5  to 0 3

\put {$\scriptstyle x$} [lt] <0.5mm,0.8mm> at 3.1 0
\put {$\scriptstyle y$} [rb] <1.7mm,0.7mm> at 0 3.1
\put {$\scriptstyle\bullet$} at -0.8944  0.8944
\put {$\scriptstyle\bullet$} at -0.8944  -0.8944
\put {$\scriptstyle\bullet$} at  0.8944   0.8944
\put {$\scriptstyle\bullet$} at  0.8944   -0.8944
\put {$\scriptstyle\bullet$} at -0.9759  0.9759
\put {$\scriptstyle\bullet$} at -0.8305  -0.8305
\put {$\scriptstyle\bullet$} at  0.8305   0.8305
\put {$\scriptstyle\bullet$} at  0.9759   -0.9759
\setquadratic
\setplotsymbol({\fiverm .})
\plot 
 0.0  1.0    %
 0.1  0.9987 %
 0.2  0.9945 %
 0.3  0.9887 %
 0.4  0.9798 %
 0.5  0.9682 %
 0.6  0.9539 %
 0.7  0.9367 %
 0.8  0.9165 %
 0.9  0.8930 %
 1.0  0.8660 %
 1.1  0.8352 %
 1.2  0.8    %
 1.3  0.7599 %
 1.4  0.7141 %
 1.5  0.6614 %
 1.6  0.6    %
 1.7  0.5268 %
 1.75 0.4841 %
 1.8  0.4359 %
 1.85 0.38   %
 1.9  0.3122 %
 1.93 0.2622 %
 1.95 0.2222 %
 1.98 0.1411 %
 1.99 0.1    %
 2.0  0.0 %
/
\plot 
 0.0  -1.0    %
 0.1  -0.9987 %
 0.2  -0.9945 %
 0.3  -0.9887 %
 0.4  -0.9798 %
 0.5  -0.9682 %
 0.6  -0.9539 %
 0.7  -0.9367 %
 0.8  -0.9165 %
 0.9  -0.8930 %
 1.0  -0.8660 %
 1.1  -0.8352 %
 1.2  -0.8    %
 1.3  -0.7599 %
 1.4  -0.7141 %
 1.5  -0.6614 %
 1.6  -0.6    %
 1.7  -0.5268 %
 1.75 -0.4841 %
 1.8  -0.4359 %
 1.85 -0.38   %
 1.9  -0.3122 %
 1.93 -0.2622 %
 1.95 -0.2222 %
 1.98 -0.1411 %
 1.99 -0.1    %
 2.0  0.0 %
/
\plot 
 0.0  -1.0    %
 -0.1  -0.9987 %
 -0.2  -0.9945 %
 -0.3  -0.9887 %
 -0.4  -0.9798 %
 -0.5  -0.9682 %
 -0.6  -0.9539 %
 -0.7  -0.9367 %
 -0.8  -0.9165 %
 -0.9  -0.8930 %
 -1.0  -0.8660 %
 -1.1  -0.8352 %
 -1.2  -0.8    %
 -1.3  -0.7599 %
 -1.4  -0.7141 %
 -1.5  -0.6614 %
 -1.6  -0.6    %
 -1.7  -0.5268 %
 -1.75 -0.4841 %
 -1.8  -0.4359 %
 -1.85 -0.38   %
 -1.9  -0.3122 %
 -1.93 -0.2622 %
 -1.95 -0.2222 %
 -1.98 -0.1411 %
 -1.99 -0.1    %
 -2.0  0.0 %
/
\plot 
 -0.0  1.0    %
 -0.1  0.9987 %
 -0.2  0.9945 %
 -0.3  0.9887 %
 -0.4  0.9798 %
 -0.5  0.9682 %
 -0.6  0.9539 %
 -0.7  0.9367 %
 -0.8  0.9165 %
 -0.9  0.8930 %
 -1.0  0.8660 %
 -1.1  0.8352 %
 -1.2  0.8    %
 -1.3  0.7599 %
 -1.4  0.7141 %
 -1.5  0.6614 %
 -1.6  0.6    %
 -1.7  0.5268 %
 -1.75 0.4841 %
 -1.8  0.4359 %
 -1.85 0.38   %
 -1.9  0.3122 %
 -1.93 0.2622 %
 -1.95 0.2222 %
 -1.98 0.1411 %
 -1.99 0.1    %
 -2.0  0.0 %
/
\plot 
1.0 0.0   %
0.9987 0.1 %
0.9945 0.2 %
0.9887 0.3 %
0.9798 0.4 %
0.9682 0.5 %
0.9539 0.6 %
0.9367 0.7 %
0.9165 0.8 %
0.8930 0.9 %
0.8660 1.0 %
0.8352 1.1 %
0.8    1.2 %
0.7599 1.3 %
0.7141 1.4 %
0.6614 1.5 %
0.6    1.6 %
0.5268 1.7 %
0.4841 1.75 %
0.4359 1.8 %
0.38   1.85 %
0.3122 1.9 %
0.2622 1.93 %
0.2222 1.95 %
0.1411 1.98 %
0.1    1.99 %
0.0 2.0 %
/
\plot 
1.0    0.0   %
0.9987 -0.1 %
0.9945 -0.2 %
0.9887 -0.3 %
0.9798 -0.4 %
0.9682 -0.5 %
0.9539 -0.6 %
0.9367 -0.7 %
0.9165 -0.8 %
0.8930 -0.9 %
0.8660 -1.0 %
0.8352 -1.1 %
0.8    -1.2 %
0.7599 -1.3 %
0.7141 -1.4 %
0.6614 -1.5 %
0.6    -1.6 %
0.5268 -1.7 %
0.4841 -1.75 %
0.4359 -1.8 %
0.38   -1.85 %
0.3122 -1.9 %
0.2622 -1.93 %
0.2222 -1.95 %
0.1411 -1.98 %
0.1    -1.99 %
0.0    -2.0 %
/
\plot 
-1.0 0.0   %
-0.9987 0.1 %
-0.9945 0.2 %
-0.9887 0.3 %
-0.9798 0.4 %
-0.9682 0.5 %
-0.9539 0.6 %
-0.9367 0.7 %
-0.9165 0.8 %
-0.8930 0.9 %
-0.8660 1.0 %
-0.8352 1.1 %
-0.8    1.2 %
-0.7599 1.3 %
-0.7141 1.4 %
-0.6614 1.5 %
-0.6    1.6 %
-0.5268 1.7 %
-0.4841 1.75 %
-0.4359 1.8 %
-0.38   1.85 %
-0.3122 1.9 %
-0.2622 1.93 %
-0.2222 1.95 %
-0.1411 1.98 %
-0.1    1.99 %
0.0 2.0 %
/
\plot 
-1.0    0.0   %
-0.9987 -0.1 %
-0.9945 -0.2 %
-0.9887 -0.3 %
-0.9798 -0.4 %
-0.9682 -0.5 %
-0.9539 -0.6 %
-0.9367 -0.7 %
-0.9165 -0.8 %
-0.8930 -0.9 %
-0.8660 -1.0 %
-0.8352 -1.1 %
-0.8    -1.2 %
-0.7599 -1.3 %
-0.7141 -1.4 %
-0.6614 -1.5 %
-0.6    -1.6 %
-0.5268 -1.7 %
-0.4841 -1.75 %
-0.4359 -1.8 %
-0.38   -1.85 %
-0.3122 -1.9 %
-0.2622 -1.93 %
-0.2222 -1.95 %
-0.1411 -1.98 %
-0.1    -1.99 %
0.0     -2.0 %
/
\plot
0.0  1.0000 %
0.1  0.9900 %
0.2  0.9752 %
0.3  0.9591 %
0.4  0.9406 %
0.5  0.9195 %
0.6  0.8958 %
0.7  0.8694 %
0.8  0.8400 %
0.9  0.8076 %
1.0  0.7718 %
1.1  0.7324 %
1.2  0.6889 %
1.3  0.6410 %
1.4  0.5877 %
1.5  0.5282 %
1.6  0.4610 %
1.7  0.3835 %
1.75 0.3398 %
1.8  0.2916 %
1.85 0.2376 %
1.9  0.1755 %
1.93 0.1326 %
1.95 0.1066 %
1.98 0.0450 %
1.99 0.0236 %
2.0  0.00   %
/
\plot
0.0  -1.0000 %
0.1  -1.0100 %
0.2  -1.0152 %
0.3  -1.0191 %
0.4  -1.0206 %
0.5  -1.0195 %
0.6  -1.0158 %
0.7  -1.0094 %
0.8  -1.0000 %
0.9  -0.9876 %
1.0  -0.9718 %
1.1  -0.9524 %
1.2  -0.9289 %
1.3  -0.9010 %
1.4  -0.8677 %
1.5  -0.8282 %
1.6  -0.7810 %
1.7  -0.7235 %
1.75 -0.6898 %
1.8  -0.6516 %
1.85 -0.6076 %
1.9  -0.5555 %
1.93 -0.5186 %
1.95 -0.4966 %
1.98 -0.4410 %
1.99 -0.4216 %
2.0  -0.4000 %
2.01 -0.3753 %
2.02 -0.3459 %
2.03 -0.3078 %
2.04 -0.2389 %
2.04124  -0.20412 %
2.04 -0.1691 %
2.03 -0.0982 %
2.02 -0.0581 %
2.01 -0.0267 %
2.0  0.00 %
/
\plot
-0.0  -1.0000 %
-0.1  -0.9900 %
-0.2  -0.9752 %
-0.3  -0.9591 %
-0.4  -0.9406 %
-0.5  -0.9195 %
-0.6  -0.8958 %
-0.7  -0.8694 %
-0.8  -0.8400 %
-0.9  -0.8076 %
-1.0  -0.7718 %
-1.1  -0.7324 %
-1.2  -0.6889 %
-1.3  -0.6410 %
-1.4  -0.5877 %
-1.5  -0.5282 %
-1.6  -0.4610 %
-1.7  -0.3835 %
-1.75 -0.3398 %
-1.8  -0.2916 %
-1.85 -0.2376 %
-1.9  -0.1755 %
-1.93 -0.1326 %
-1.95 -0.1066 %
-1.98 -0.0450 %
-1.99 -0.0236 %
-2.0  -0.00   %
/
\plot
-0.0  1.0000 %
-0.1  1.0100 %
-0.2  1.0152 %
-0.3  1.0191 %
-0.4  1.0206 %
-0.5  1.0195 %
-0.6  1.0158 %
-0.7  1.0094 %
-0.8  1.0000 %
-0.9  0.9876 %
-1.0  0.9718 %
-1.1  0.9524 %
-1.2  0.9289 %
-1.3  0.9010 %
-1.4  0.8677 %
-1.5  0.8282 %
-1.6  0.7810 %
-1.7  0.7235 %
-1.75 0.6898 %
-1.8  0.6516 %
-1.85 0.6076 %
-1.9  0.5555 %
-1.93 0.5186 %
-1.95 0.4966 %
-1.98 0.4410 %
-1.99 0.4216 %
-2.0  0.4000 %
-2.01 0.3753 %
-2.02 0.3459 %
-2.03 0.3078 %
-2.04 0.2389 %
-2.04124  0.20412 %
-2.04 0.1691 %
-2.03 0.0982 %
-2.02 0.0581 %
-2.01 0.0267 %
-2.0  0.00 %
/
\plot
1.0000 0.0 %
0.9900 0.1 %
0.9752 0.2 %
0.9591 0.3 %
0.9406 0.4 %
0.9195 0.5 %
0.8958 0.6 %
0.8694 0.7 %
0.8400 0.8 %
0.8076 0.9 %
0.7718 1.0 %
0.7324 1.1 %
0.6889 1.2 %
0.6410 1.3 %
0.5877 1.4 %
0.5282 1.5 %
0.4610 1.6 %
0.3835 1.7 %
0.3398 1.75 %
0.2916 1.8 %
0.2376 1.85 %
0.1755 1.9 %
0.1326 1.93 %
0.1066 1.95 %
0.0450 1.98 %
0.0236 1.99 %
0.00   2.0 %
/
\plot
-1.0000 0.0 %
-1.0100 0.1 %
-1.0152 0.2 %
-1.0191 0.3 %
-1.0206 0.4 %
-1.0195 0.5 %
-1.0158 0.6 %
-1.0094 0.7 %
-1.0000 0.8 %
-0.9876 0.9 %
-0.9718 1.0 %
-0.9524 1.1 %
-0.9289 1.2 %
-0.9010 1.3 %
-0.8677 1.4 %
-0.8282 1.5 %
-0.7810 1.6 %
-0.7235 1.7 %
-0.6898 1.75 %
-0.6516 1.8 %
-0.6076 1.85 %
-0.5555 1.9 %
-0.5186 1.93 %
-0.4966 1.95 %
-0.4410 1.98 %
-0.4216 1.99 %
-0.4000 2.0 %
-0.3753 2.01 %
-0.3459 2.02 %
-0.3078 2.03 %
-0.2389 2.04 %
-0.20412 2.04124 %
-0.1691 2.04 %
-0.0982 2.03 %
-0.0581 2.02 %
-0.0267 2.01 %
0.00 2.0 %
/
\plot
-1.0000 -0.0 %
-0.9900 -0.1 %
-0.9752 -0.2 %
-0.9591 -0.3 %
-0.9406 -0.4 %
-0.9195 -0.5 %
-0.8958 -0.6 %
-0.8694 -0.7 %
-0.8400 -0.8 %
-0.8076 -0.9 %
-0.7718 -1.0 %
-0.7324 -1.1 %
-0.6889 -1.2 %
-0.6410 -1.3 %
-0.5877 -1.4 %
-0.5282 -1.5 %
-0.4610 -1.6 %
-0.3835 -1.7 %
-0.3398 -1.75 %
-0.2916 -1.8 %
-0.2376 -1.85 %
-0.1755 -1.9 %
-0.1326 -1.93 %
-0.1066 -1.95 %
-0.0450 -1.98 %
-0.0236 -1.99 %
-0.00   -2.0 %
/
\plot
1.0000 -0.0 %
1.0100 -0.1 %
1.0152 -0.2 %
1.0191 -0.3 %
1.0206 -0.4 %
1.0195 -0.5 %
1.0158 -0.6 %
1.0094 -0.7 %
1.0000 -0.8 %
0.9876 -0.9 %
0.9718 -1.0 %
0.9524 -1.1 %
0.9289 -1.2 %
0.9010 -1.3 %
0.8677 -1.4 %
0.8282 -1.5 %
0.7810 -1.6 %
0.7235 -1.7 %
0.6898 -1.75 %
0.6516 -1.8 %
0.6076 -1.85 %
0.5555 -1.9 %
0.5186 -1.93 %
0.4966 -1.95 %
0.4410 -1.98 %
0.4216 -1.99 %
0.4000 -2.0 %
0.3753 -2.01 %
0.3459 -2.02 %
0.3078 -2.03 %
0.2389 -2.04 %
0.20412 -2.04124 %
0.1691 -2.04 %
0.0982 -2.03 %
0.0581 -2.02 %
0.0267 -2.01 %
0.00   -2.0 %
/

\endpicture
} 